\newcommand{\beq}{\begin{equation}}
\newcommand{\eeq}{\end{equation}}
\newcommand{\n}{{\bf n}}
\newcommand{\D}{\partial^{\varphi}} 
\newcommand{\V}{\mathcal{V}_{t}}
\newcommand{\N}{{\bf N}}
\def\eps{\varepsilon}
\newtheorem{prop}{Proposition}
\newtheorem{theoreme}[prop]{Theorem}
\newtheorem{lem}[prop]{Lemma}
\newtheorem{cor}[prop]{Corollary}
\newtheorem{defi}[prop]{Definition}
\newtheorem{rem}[prop]{Remark}
\numberwithin{equation}{section}
\numberwithin{prop}{section}
\begin{document}
\title{ Uniform regularity   and  vanishing viscosity limit for the free surface Navier-Stokes
 equations}

\author{Nader Masmoudi}
\address{Courant Institute, 251 Mercer Street, 
New York, NY 10012-1185, 
USA,}
\email{masmoudi@courant.nyu.edu  }
\author{Frederic Rousset}
\address{IRMAR, Universit\'e de Rennes 1, campus de Beaulieu,  35042  Rennes cedex, France}
\email{ frederic.rousset@univ-rennes1.fr  }
\date{}
\begin{abstract}
We study the inviscid limit of the free boundary Navier-Stokes 
equations. We  prove 
the existence of solutions  on  a uniform time interval
by  using a suitable functional framework based on Sobolev conormal spaces.
   This allows us  to use a  strong compactness argument to justify the  inviscid limit. 
  Our approach    does not rely  on  the justification of asymptotic expansions.  
 In particular, we  get a new   existence result   for the Euler equations 
   with free surface from the one
  for Navier-Stokes. 

\end{abstract}

\maketitle

\tableofcontents

\section{Introduction}


The study of fluid motions with free interfaces has attracted a lot of attention during the last thirty years.
 When  the fluid is  viscous, that is to say for the Navier-Stokes equation,  the local well-posedness for the Cauchy problem was  shown
  for example  in the classical works by   Beale
 \cite{Beale83}   and Tani  \cite{Tani96},   we also refer to the work by Solonnikov \cite{Solonnikov} and also  to \cite{Guo-Tice} 
  for example.  These results rely heavily  on the smoothing effect  due to the presence of viscosity.
  When viscosity is neglected, it is  much more difficult to control the regularity of the free surface. The first attempts to get local well-posedness
   for the free-surface Euler equation 
   go back to  the works 
by  Nalimov \cite{Nalimov74}  and H.~Yoshihara \cite{Yosihara82} for 
small data in two dimensions. A major 
 breakthrough was achieved in the nineties  by  S.~Wu \cite{Wu97,Wu99} 
who solved the irrotational  problem in  dimension two and three
(see also the works by   Craig \cite{Craig85},  Beale, Hou, and Lowengrub
\cite{BHL93},  Ogawa and Tani \cite{OT02}, Schneider and Wayne \cite{SW02},  Lannes \cite{Lannes05}, 
 Ambrose  and  Masmoudi \cite{AM05},  \cite{AM09}).   In this 
  irrotational case,  where dispersive effects are predominant,  an almost global existence result was achieved by S. Wu  \cite{Wu10} in the two-dimensional case and global existence results  in three dimensions were   proved  recently  by Germain, Masmoudi and 
Shatah \cite{GMS09cras,GMS11prep} and by 
 S.~Wu \cite{Wu11}.  We also refer to \cite{Alazard} for a local well-posedness result below the standard regularity threshold.
For the full system (without the irrotational assumption)
  there are well posedness results starting with the works by 
  Christodoulou and  Lindblad \cite{CL00} and Lindblad \cite{Lindblad05}. 
Alternative approach were proposed   
 by 
    Coutand   and  Shkoller   \cite{CS07}, 
  Shatah and  Zeng \cite{SZ08} and 
 P.~Zhang and Z.~Zhang \cite{ZZ08}. Note that the well-posedness of the Euler equation requires a Taylor sign condition for the pressure
  on the boundary.  
 
 A classical  problem  in  fluid mechanics,  since   Reynolds numbers for real flows are  very high,  
 is to  study the  behaviour of  the solutions of the Navier-Stokes equation at high Reynolds number which corresponds
 to small viscosity. A natural conjecture is of course to expect that the limit is given by a solution of  the Euler equation. 
 Nevertheless  in the presence of boundaries the situation can be more complicated as we shall describe below. 
 The main result of this work is the justification of the inviscid limit when we start from the free surface Navier-Stokes equation. 

The study of the vanishing viscosity limit  for the Navier-Stokes equation has  a long history.  We shall distinguish three 
different approach to the problem. The first one  consists  in proving 
 that strong solutions to the Navier-Stokes equation exist on an interval of time independent of the viscosity parameter
  and then passing to the limit by strong compactness arguments. The second  approach  is based on weak compactness arguments
   starting from Leray weak solutions of the Navier-Stokes equation.  Note that  in these  two approach  the well posedness
    of the Euler equation is not used. On the contrary, the third approach relies on the a priori knowledge that sufficiently smooth
    solutions of   the Euler equation exist: it consists either in  comparing directly
     by a modulated type energy argument  a Leray weak solution of the Navier-Stokes
     equation with  an approximate solution whose leading order is given by a smooth solution  the Euler equation or
       in directly justifying asymptotic expansions  by studying a modified Navier-Stokes equation for the remainder.
     
 When there is no boundary, the vanishing viscosity limit  can be justified by using the  three types of methods:     
  we refer to Swann \cite{Swann71} and Kato \cite{Kato72} (see also \cite{Masmoudi07cmp})  for  results with the   first approach, 
  to  DiPerna and Majda \cite{DM87cmp,DM87cpam} for the second approach  in the two-dimensional case
  and to the book \cite{Majda-Bertozzi} for example for the third approach. Note that the second approach yields the existence
   of global weak solution for the Euler equation and thus  in three dimensions  only results with the first and
   third approach are known. 
  
  In a fixed domain with boundary, the situation is more complicated,  the situation depends on the type of boundary conditions that
   we impose on the boundary.  
   
   Let us first describe the situation in the case of  the homogeneous  Dirichlet boundary condition.
  For the first approach, the main difficulty is that  for data of size of  order one,  all the known  existence
   results of strong solutions for the Navier-Stokes equation yield uniform estimates that  are valid only on a time interval which shrinks to zero  when
    the viscosity goes to zero.  This has  a physical explanation:   there is  formation of boundary layers
    in  a small vicinity  of the boundary. Indeed, in  a small vicinity of the boundary, the solution $u^\eps$, $\eps$
     being the viscosity parameter,  of the Navier-Stokes
    equation is expected to behave like 
    $ u^\eps \sim U(t,y, z/\sqrt{\eps})$,  
    if the boundary is locally given by $z=0$, where $U$ is some profile which is not described by the Euler equation.
     Because of this small scale  behaviour, one can see that there is no hope to get uniform estimates in all the  spaces
      for which the 3-D Euler equation is known to be well-posed ($H^s$, $s>5/2$ for example). Most of the works
       on the topic are thus concentrated on the justification of the third approach: namely the construction of
        an high order approximate solution  (involving two spatial scales thus of matched asymptotics type)  and the control of the remainder.  Nevertheless,   the equation that 
        governs the boundary layers that appear in the formal expansion,  the Prandtl equation, can be ill-posed  in Sobolev spaces  \cite{GD10},
         thus it is not always possible to even  constuct an approximate solution. Moreover, even when this is possible,  some 
          instabilities  make impossible  the control of  the remainder on a suitable interval of time \cite{Grenier}, \cite{Guo-Nguyen}.
                     Consequently, in a domain with  fixed boundary with homogeneous  Dirichlet boundary condition, the only  full  result is the work by 
           Sammartino  and Caflisch \cite{SC98a} where the asymptotic expansion is justified  in the analytic framework.
           There is also a result by  Kato \cite{Kato86} where  some necessary condition 
for the convergence to hold is exhibited  and the work  \cite{Masmoudi98arma}  where the 
vertical viscosity is  assumed much smaller than the horizontal one.
The case of a non-homogeneous Dirichlet boundary condition 
 which  happens
 for example when there is  injection or succion on the  boundary  is  also  studied.   In this case the size of the boundary layer is $\eps$
 (against $\sqrt{\eps}$ in the previous case) which makes the situation very different. The constuction and  justification of the asymptotic expansion, even for 
 general hyperbolic-parabolic systems  have  been widely studied recently, we refer to 
  \cite{Gisclon-Serre, Grenier-Gues, Grenier-Rousset, Metivier-Zumbrun, 
 Gues-Metivier-Williams-Zumbrun, Rousset, Temam-Wang}.

Let us finally describe the situation 
for the Navier-Stokes equation with a free surface.
At first we note that  there is a minimal amount of regularity which is needed in order to define the motion of the free surface, 
therefore there is up to our knowledge  
  no suitable notion of weak solution for this equation.
   Consequently, it seems that  to justify the inviscid limit one can only use either the first approach or  the justification
    of matched asymptotic expansions. The main drawbacks of this last approach is  first that a lot of regularity is needed
     in order to construct the expansion and second that the a priori knowledge of the well-posedness  of the free surface Euler equation is needed.
   From a mathematical point of view, this is not very satisfactory since the local theory for the Euler equation is much more difficult
    to establish than the one for the Navier-Stokes equation.  Moreover,   because of the parabolic behaviour of the equation
     one do not expect to need more regularity on the data in order to solve the viscous equation than the inviscid one.  
    In this work, we shall thus  avoid to deal with the justification of asymptotic expansions as it is usually done and deal with the first approach.
     In this approach, one of the main difficulty is  
     to  prove  
     that the local  smooth solution of the Navier-Stokes equation (which comes from the classical works  
     \cite{Beale83},   \cite{Tani96}) can be continued on an interval of time independent of the viscosity.
     In the vicinity of the free surface the expected behaviour of the solution  is 
   $  u^\eps \sim  u(t,x)+ \sqrt{\eps}U(t,y, z/\sqrt{\eps})$. Again, we  observe that the $H^s$ norm $s>5/2$ still cannot be bounded
    on an interval of time independent of $\eps$. 
   Nevertheless, the situation seems more favorable since  one can expect that  the Lipschitz norm  which is the crucial quantity
    for the inviscid problem  remains bounded.
    We shall consequently  use  a functional framework based on conormal Sobolev spaces  which minimizes the needed
     amount of normal regularity  but which gives a control of the Lipschitz norm of the solution in order to get that
      the solution of the Navier-Stokes equation exists and is uniformly bounded on an interval of time independent of $\eps$.
      Such an approach to the inviscid  limit was recently used in our  work \cite{MR11prep}
       in the much simpler case of the Navier-Stokes equation with the Navier boundary condition on a fixed boundary.
      As a consequence of  our  estimates, we shall justify the inviscid limit by strong compactness arguments. 
      Note that   we shall also  get as a corollary  the existence
       of a solution  to the free surface Euler equation.
       In particular this gives an approach  by a physical regularization  to the construction of solutions of  the Euler equation which does not use the Nash-Moser
       iteration scheme.   Of course, in order to get our result, we shall need to assume the 
 Taylor sign condition for the pressure 
        on the boundary which is needed to have local well-posedness  for the limit  Euler system.


\subsection{The free surface Navier-Stokes equations}

Let us now write down the system that we shall study.
We consider the motion of an incompressible viscous fluid at high Reynolds number submitted to the influence of gravity.
 The  equation for motion  is   thus  the incompressible Navier-Stokes equation
\beq
\label{NS}
\partial_{t} u + u \cdot\nabla u + \nabla p = \eps \Delta u, \quad \nabla \cdot u = 0 ,  \quad  x \in \Omega_{t}, \, t>0
\eeq
where $u(t,x)\in \mathbb{R}^3$ is the velocity of the fluid and  $p\in \mathbb{R}$ is the pressure. 
 We shall assume that  the fluid domain $\Omega_{t}$  is given by 
\beq
\label{omegat} \Omega_{t}= \big\{ x \in \mathbb{R}^3, \quad  x_{3} < h(t, x_{1}, x_{2}) \big\}\eeq
 thus the surface is given by the  graph
 $ \Sigma_{t}=  \big\{ x \in \mathbb{R}^3, \quad  x_{3} = h(t, x_{1}, x_{2}) \big\}$
 where   $h(t, x_{1}, x_{2})$  which  defines the free surface is also an unknown of the problem.
  The parameter $\eps>0$ which is the inverse of a Reynolds number is small. 
 In this work, we shall focus on the 3-D equation set in the simplest  domain. Note that this framework is relevant
  to describe for example atmospheric or oceanic motions. All our results are also valid in the two-dimensional case
   and can be  extended to more general domains.
   
On the boundary, we impose two boundary conditions. The first one is of kinematic nature and states
 that fluid particles do not cross the  surface:
 \beq
 \label{bord1}
 \partial_{t} h = u\big(t, x_{1}, x_{2}, h(t, x_{1}, x_{2})\big) \cdot \N,  \quad (x_{1}, x_{2}) \in \mathbb{R}^2
 \eeq
 where $\N$ is the outward normal given by
 \beq
 \label{Ndef} \N= \left( \begin{array}{ccc} - \partial_{1} h  \\ -\partial_{2} h  \\ 1  \end{array} \right).
 \eeq
 The second one expresses the continuity of  the stress tensor on the surface.  Assuming
  that there is vacuum above the fluid, we get
  \beq
  \label{bord2}
   p \, {\bf n}-  2 \, \eps \, S u \,{\bf n} =  {g}\, h\, {\bf n}, \quad x \in \Sigma_{t}
   \eeq
where $S$ is the  symmetric part of the gradient
$$ Su = {1 \over 2} \big( \nabla u + \nabla u^t\big),$$
  ${\bf n}$ is the outward unit normal  (${\bf n}= \N/|\N|$) and $g$ is a positive number which measures the influence of the gravity  on the motion of the fluid
  (in the dimensional form of the equation this would be the acceleration of gravity constant).
Note that the term involving the gravity force in  \eqref{NS} has been incorporated in the pressure term,
this is why  it appears in the right-hand side of \eqref{bord2}. We neglect surface tension effects.

At $t=0$,  we  supplement the system with the initial condition
\beq
\label{initintro}
h_{/t=0} = h_{0}, \quad u_{/t=0}= u_{0}.
\eeq 
Note that this means in particular that  the initial domain $\Omega_{0}$ is given.
 
  As usual in free boundary problems, we shall need to fix the fluid domain. A convenient choice in our case
   is to consider a  family of diffeomorphism $\Phi(t, \cdot)$ under the form
   \begin{eqnarray}
   \label{diff}
   \Phi(t, \cdot) : \,   \mathcal{S}= \mathbb{R}^2 \times (-\infty, 0) & \rightarrow  &  \Omega_{t} \\ 
 \nonumber x= (y,z) &\mapsto &   (y, \varphi(t, y, z))
 \end{eqnarray}
   and we have to choose  $\varphi(t, \cdot)$. Note that we have to ensure that $\partial_{z} \varphi>0$
   in order to get that $\Phi(t, \cdot)$ is a diffeomorphism.
   
   Once the choice of $\varphi$ is made, we reduce the problem into   the fixed domain 
  $ \mathcal{S}  $ by setting
   \beq
   \label{vdef}
   v(t,x)= u(t, \Phi(t,x)), \quad q(t,x)= p(t, \Phi(t,x) ) \quad x \in \mathcal{S}.
   \eeq
   
    Many choices are possible. 
    A basic choice is to take  $\varphi(t,y,z) =  z+ \eta(t,y)$.  Such a choice would  fit in the inviscid case 
     i.e. for the Euler equation when $\eps= 0$. Indeed,  the energy estimate  and the physical condition yield  that  $h$  and $u$ 
      have the same regularity  and hence  this choice yields that $\Phi$  has  in $\mathcal{S}$
       the same regularity  as $h$. In the Navier-Stokes case,  the dissipation term in the energy estimate yields that
        $ \sqrt{\eps }\, u$ is  one derivative smoother than $u$ and $h$
        and hence it is natural to choose a diffeomorphism such that $\Phi$ has
         the same regularity as $u$ in $\mathcal{S}$. This is not given by our previous choice since
          by using the boundary condition \eqref{bord1}, we shall get that $ \sqrt{\eps }\, h$ and hence
           $ \sqrt{\eps}\, \varphi$ gains only $1/2$ derivative. 
          The easiest way  
          to fix this difficulty is to  to take  a smoothing diffeomorphism 
             as in \cite{Schweizer05}, \cite{Lannes05}.               
         We thus choose $\varphi$  such that
        \beq
        \label{eqphi}
         \varphi(t,y,z)= A z + \eta(t, y, z)
        \eeq
        where $\eta$ is given  by  the extension of $h$ defined by
       \beq
       \label{eqeta}
       \hat{\eta}(\xi, z)= \chi \big(z {\xi  } \big) \hat{h}(\xi)
       \eeq
       where $\hat{\cdot}$ stands for the Fourier transform with respect  to the $y$ variable and
        $\chi$ is a smooth, even,  compactly supported  function such that $ \chi= 1$ on $B(0, 1)$.
      The  number $A>0$  is chosen so that
      \beq
      \label{Adeb}
       \partial_{z} \varphi(0,y,z) \geq  1, \quad  \forall x \in \mathcal{S}
       \eeq  
      which ensures that $\Phi(0, \cdot)$  is a diffeomorphism. 
      
      Another way  to reduce the problem to a fixed  domain would be to use standard  Lagrangian coordinates. This  coordinate system also
      has the advantage that $\Phi$ in $\mathcal{S}$  has the same regularity as the velocity in $\mathcal{S}$. Nevertheless, here since  the velocity
       will  only  have conormal regularity in $\mathcal{S}$,  the Lagrangian map will also have only conormal regularity.
        Consequently one main advantage  of the choice \eqref{eqeta} compared to Lagrangian coordinates is that we still  have for
         $\eta$ in $\mathcal{S}$ a standard Sobolev regularity even if $v$ only has a conormal one. 
      
         To transform the equations by using \eqref{vdef}, we 
   introduce the  operators
     $\partial_{i}^\varphi$, $i=t, \, 1, \, 2, \, 3$
    such that 
   $$   \partial_{i}u \circ \Phi(t, \cdot)= \D_{i}v, \quad i= t, \, 1, \, 2, \, 3
     $$
      we shall give the precise expression of these operators later.
         We  thus   get that  by using the  change of variable \eqref{diff}, the equation
       \eqref{NS} for $u$, $p$ becomes an equation for $v$, $q$ in the fixed domain $\mathcal{S}$
        which reads:
       \beq
       \label{NSv}
       \D_{t}v + \big(v \cdot \nabla^\varphi \big)v + \nabla^\varphi q = \eps   \Delta^\varphi v =2 \eps\nabla^\varphi \cdot S^\varphi v ,  \quad
        \nabla^\varphi  \cdot  v = 0, \quad x \in \mathcal{S}\eeq
       where we have  naturally set
       $$ \nabla^\varphi q = \left( \begin{array}{ccc} \D_{1} q \\ \D_{2} q \\ \D_{3} q \end{array} \right), \quad
        \Delta^\varphi= \sum_{i=1}^3 (\D_{i})^2, \quad \nabla^\varphi\cdot  v= \sum_{i=1}^3 \D_{i} v_{i}, \quad
         v \cdot \nabla^\varphi= \sum_{i=1}^3 v_{i}\D_{i}$$
          and $S^\varphi v= {1 \over 2}\big( \nabla^\varphi v + (\nabla^\varphi v)^t)$.
      The two  boundary conditions \eqref{bord1}, \eqref{bord2} become on $z=0$
      \begin{eqnarray}
      \label{bordv1} & & \partial_{t}h = v \cdot \N= -v_{y}(t,y,0) \cdot \nabla h(t,y) + v_{3}(t,y,0), \\
      \label{bordv2} & &  q\,\n - 2 \eps S^\varphi v \cdot \n = gh \,\n
      \end{eqnarray}
      where we use the notation $v=(v_{y}, v_{3})^t$.  We shall also  use  set  $\nabla= (\nabla_{y}, \partial_{z})^t)$
     From now on, we shall thus study in $\mathcal{S}$ the system
     \eqref{NSv}, \eqref{bordv1}, \eqref{bordv2}  with $\varphi$ given by \eqref{eqphi}, \eqref{eqeta}. We add to this system
      the initial conditions 
      \beq
      \label{initv}
      h_{/t=0}= h_{0}, \quad v_{/t=0}= v_{0}.
      \eeq

      To  measure the regularity of functions 
       defined in $\mathcal{S}$, we shall use Sobolev conormal spaces.
     Let us introduce the  vector fields    
      $$Z_{i}= \partial_{i}, \, i=1, \, 2, \quad  Z_{3}= { z \over  1 - z } \partial_{z}.$$
                          We  define the Sobolev conormal space  $H^{m}_{co}$ as   
   $$ H^m_{co}(\mathcal{S)}= \big\{ f \in L^2(\mathcal{S}), \quad  Z^\alpha  f  \in L^2 (\mathcal{S}), \quad | \alpha| \leq m \big\}$$
   where $ Z^\alpha = Z_{1}^{\alpha_{1}} Z_{2}^{\alpha_{2}} Z_{3}^{\alpha_{3}}  $  
    and we set
 $$ \|  f  \|_{m}^2 =\sum_{|\alpha| \leq m} \| Z^\alpha f \|^2_{L^2}, \quad  \|  f   \| = \|  f   \|_0 = \|  f   \|_{L^2}.  
  $$
  In a similar way, we set 
   $$ W^{m, \infty}_{co}(\mathcal{S})= \big\{ f \in L^\infty(\mathcal{S}), \quad  Z^\alpha  f  \in L^\infty (\mathcal{S}), \quad | \alpha| \leq m \big\}$$
   and 
   $$ \|  f  \|_{m, \infty} =\sum_{|\alpha| \leq k} \| Z^\alpha f \|_{L^\infty}.$$
    For vector fields,  we  also take the  sums over  its components.
   Note that the use of these spaces is classical in (hyperbolic) boundary value problems, see \cite{Gues,Hormander,Rauch,Tartakoff} for example.
    
    Finally for functions defined on  $\mathbb{R}^2$ (like $h$ in our problem), we use the notation $|\cdot |_{m}$ for
     the standard Sobolev norm.
    
    \subsection{Main results}
  
  Our aim is to get a local well-posedness result for strong  solutions of   \eqref{NSv}, \eqref{bordv1}, \eqref{bordv2} 
   which is valid on an interval of time independent of $\eps$ for $\eps \in (0,1]$.  Note that such a result will also  imply  the  local
    existence  of strong solutions for the Euler equation. As it is well-known, 
     \cite{Wu99,AM05,CS07,SZ08}, when there is no surface tension, a Taylor sign condition is needed
      to get local well-posedness for the Euler  equation. For the Euler equation  in a domain of 
 the form \eqref{omegat}, 
       the Taylor sign condition reads
   \beq
   \label{Taylor}    -\partial_{N}p + g \geq c_{0}>0, \quad x \in \Sigma_{t}.
   \eeq
   Before stating our main result, we need to understand what kind of Taylor sign condition
    (necessary in order to get  a uniform with respect to $\eps$ local  existence result) we have to impose
     for the Navier-Stokes equation.
 By using the divergence free condition, we get as usual that the pressure $q$ solves in $\mathcal{S}$  the elliptic equation
 $$ \Delta^\varphi q = - \nabla^\varphi \cdot (v\cdot \nabla^\varphi v).$$
 Moreover, by using the  boundary condition \eqref{bord2}, we get that on the boundary
 $$q_{/z=0} =  2 \eps S^\varphi v  \,{\bf n} \cdot \,{\bf n} + gh.$$
 We shall thus decompose the pressure into an "Euler" part and a "Navier-Stokes" part by setting $q= q^E + q^{NS}$ with
 $$  \Delta^\varphi q^E = - \nabla^\varphi \cdot (v\cdot \nabla^\varphi v), \quad q^E_{/z=0} =   gh$$
  and
  $$  \Delta^\varphi q^{NS} =0, \quad  q^{NS}_{/z=0} =  2 \eps S^\varphi v  \,{\bf n} \cdot \,{\bf n}.$$
 The main idea is that the part $q^{NS}$ is small   when $\eps$ is small  while $q^{E}$ which is of order one is the part which should  converge to the pressure of the Euler equation
   when $\eps$ goes to zero. Consequently, the Taylor sign condition has to be imposed on $q^E$. After the change
    of coordinates, this becomes
    \beq
    \label{Taylor2}
    g- \partial_{z}^\varphi q^E_{/z=0} \geq c_{0}>0.
    \eeq
   Note that  since we shall indeed  prove that the part $q^{NS}$ of the pressure is small  when $\eps$ is small,   we shall actually obtain
    that for $\eps$ sufficiently small, the total pressure verifies the Taylor condition. 
    
    Finally, let us introduce a last notation, we denote by $\Pi= {Id} - {\bf n}\otimes {\bf n}$ the projection on the tangent space of the boundary.
   
   \bigskip 
    
    Our main result reads:

  \begin{theoreme}
    \label{main}
For $m \geq 6$, consider inital data $(h_{0}^\eps, v_{0}^\eps)$ such that
\beq
\label{hypinit} \sup_{\eps \in (0, 1]} \Big(
  |h_{0}^\eps|_{m}+ \eps^{1\over 2} |h_{0}^\eps|_{m+{1\over 2}}
   + \|v_{0}^\eps\|_{m} + \| \partial_{z} v_{0}^{\eps}\|_{m-1} + \| \partial_{z} v_{0}^\eps\|_{1, \infty} + \eps^{1 \over 2}  \|\partial_{zz}^2 v_{0}^\eps \|_{L^\infty} \Big) \leq R_{0},   \eeq
 and assume   that  the Taylor sign condition \eqref{Taylor2} is verified 
     and  that   the compatibility condition
   $ \Pi S^\varphi v_{0}^\eps {\bf n}_{/z=0}= 0$ holds. 
   Then,    
there exists $T>0$ and $C>0$  such that for every  $\eps\in (0, 1]$, there exists a unique solution $(v^\eps, h^\eps)$  of  \eqref{NSv}, \eqref{bordv1}, \eqref{bordv2},
\eqref{eqphi}, \eqref{eqeta} which is 
 defined on $[0, T]$  and satifies the estimate:
\beq
\label{mainborne1}\sup_{[0, T]} \big(\|v^\eps\|_{m}^2 + |h^\eps|_{m}^2 + \|\partial_{z}v^\eps\|_{m-2}^2  + \|\partial_{z} v^\eps\|_{1, \infty}^2 \big)  + \| \partial_{z} v^\eps\|_{L^4([0,T], H^{m-1}_{co})}^2\leq   C.\eeq
Moreover, we also have the estimate
\beq
\label{mainborne2} \sup_{[0, T]}\big(  \eps |h^\eps|_{m+{1 \over 2}}^2 + \eps \|\partial_{zz}v^\eps\|_{L^\infty}^2\big)  +  \eps \int_{0}^T\big( \| \nabla v^\eps \|_{m}^2+
 \|\nabla \partial_{z}v^\eps\|_{m-2}^2\big) \leq C. \eeq
\end{theoreme}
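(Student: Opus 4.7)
The overall plan is to establish uniform-in-$\eps$ a priori estimates for a smooth solution on some interval $[0,T_\eps]$, and then close via a standard existence scheme at fixed $\eps$ (e.g.\ the parabolic regularization of \cite{Beale83}, \cite{Tani96}, or a Picard iteration on the linearized system). The local-in-time existence at positive viscosity is classical, so the novelty lies entirely in the uniform bounds, which must incorporate the boundary-layer ansatz $v^\eps \sim v + \sqrt{\eps}\,V(t,y,z/\sqrt{\eps})$ described in the introduction. Concretely, I would set
\[
\mathcal{N}_m(t) := \|v\|_m^2 + |h|_m^2 + \|\pa_z v\|_{m-2}^2 + \|\pa_z v\|_{1,\infty}^2,
\]
and aim for a differential inequality $\dot{\mathcal{N}}_m \leq C\,\Lambda(\mathcal{N}_m)$ with $C$ independent of $\eps$, yielding a uniform existence time $T>0$. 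The $\eps$-weighted quantities in \eqref{mainborne2} should then be by-products of the viscous dissipation appearing in each energy identity.

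For the tangential estimates, I would apply $Z^\a$ with $|\a|\leq m$ to the momentum equation in \eqref{NSv}, obtaining
\[
\D_t Z^\a v + (v\cdot \nabla^\varphi) Z^\a v + \nabla^\varphi Z^\a q = 2\eps\,\nabla^\varphi \cdot S^\varphi Z^\a v + \mathcal{C}_\a,
\]
where $\mathcal{C}_\a$ collects commutators with the variable coefficients coming from the diffeomorphism $\varphi$. The smoothing choice \eqref{eqeta} guarantees that $\eta$ enjoys standard Sobolev regularity even though $v$ has only conormal regularity, which is essential to control these commutators. Integrating against $Z^\a v$, the viscous term produces the dissipation $2\eps \|S^\varphi Z^\a v\|^2$ plus boundary contributions that are combined with the stress boundary condition \eqref{bordv2}. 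The resulting boundary integral reduces, modulo $O(\eps)$ and lower order terms, to $\int_{z=0} (g h - \D_z q^E|_{z=0})\, Z^\a h\cdot Z^\a(\pa_t h) \,dy$ after using \eqref{bordv1} to relate the trace of $Z^\a v\cdot \N$ to $\pa_t Z^\a h$. Invoking the Taylor condition \eqref{Taylor2}, this contribution is turned into $\frac{d}{dt}$ of a positive quadratic form in $Z^\a h$, which in turn gives control of $|h|_m^2$; this is the single most delicate point and is the reason the Taylor condition is needed.

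For the normal-derivative estimates, I would not work with $\pa_z v$ directly: the divergence-free relation $\nabla^\varphi\cdot v = 0$ already expresses $\pa_z v_3$ in terms of tangential derivatives, and the two remaining components are recovered from the vorticity $\om = \nabla^\varphi \wedge v$, which satisfies a transport-diffusion equation. The compatibility condition $\Pi S^\varphi v_0 \n_{|z=0}=0$ propagates (thanks to the viscous parabolic smoothing in the tangential vorticity components) to a Navier-type boundary condition for $\om$ that does not lose normal derivatives when doing energy estimates. This yields $\|\pa_z v\|_{m-2}$, and a similar analysis on $\pa_z \om$ in $L^\infty$ (of maximum-principle/Moser type adapted to the anisotropic scaling) yields the Lipschitz bound $\|\pa_z v\|_{1,\infty}$ and the $\sqrt{\eps}$-weighted bound on $\|\pa_{zz}^2 v\|_{L^\infty}$. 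Pressure estimates close the loop: the Euler piece $q^E$ is recovered by an elliptic problem in $\mathcal{S}$ with Dirichlet data $gh$ and is controlled in conormal spaces by adapted elliptic regularity, while $q^{NS}$ carries an explicit prefactor $\eps$ together with a trace of $\pa_z v$ and is therefore $O(\sqrt{\eps})$ after Sobolev trace embedding, which also justifies a posteriori that \eqref{Taylor2} propagates to the full pressure for small $\eps$.

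The principal obstacle will be running the tangential energy estimate up to order $m$ simultaneously with the Taylor-sign boundary identity: one must handle top-order commutators involving $Z^\a \varphi$ without losing derivatives on $h$, which is precisely why the diffeomorphism \eqref{eqeta} is designed so that $\sqrt\eps\, h$ gains half a derivative (the $\eps^{1/2}|h_0|_{m+1/2}$ assumption in \eqref{hypinit} and the bound $\eps|h|_{m+1/2}^2$ in \eqref{mainborne2} anchor this gain). Once these tangential, normal and pressure estimates are assembled into the Gronwall inequality for $\mathcal{N}_m$, uniqueness follows from a standard energy estimate on the difference of two solutions at lower regularity, and passing from the a priori bounds to actual solutions is done by the usual mollification/fixed-point argument at fixed $\eps$ combined with the uniform estimates to extend the interval of existence down to $\eps=0$.
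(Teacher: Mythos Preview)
Your proposal has the right overall architecture but misses three essential mechanisms that the paper relies on, and without them the argument cannot close.

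\textbf{The Alinhac good unknown.} In your tangential estimate you write the equation for $Z^\alpha v$ and claim the smoothing diffeomorphism controls the commutators. It does not: commuting $Z^\alpha$ with the transport term produces $(v\cdot Z^\alpha \N)\,\partial_z v$, which requires $\|Z^\alpha \N\|_{L^2}\sim |h|_{m+1/2}$ \emph{without} an $\eps$ factor. This is a genuine half-derivative loss. The paper resolves it by working with the good unknowns $V^\alpha = Z^\alpha v - \partial_z^\varphi v\, Z^\alpha\eta$ and $Q^\alpha = Z^\alpha q - \partial_z^\varphi q\, Z^\alpha\eta$, for which the linearized operator has the same structure as the operator linearized in $v$ alone (Lemma~\ref{lemal}). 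This cancellation is indispensable; your estimate of $\|v\|_m$ would not close otherwise.

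\textbf{Boundary condition for normal derivatives.} You assert that the compatibility condition propagates to a Navier-type boundary condition for the vorticity $\omega$. This is incorrect. The stress-free condition \eqref{bordv2} yields $\Pi S^\varphi v\,\n=0$ on the boundary for all time, so the quantity $S_{\n}=\Pi S^\varphi v\,\n$ satisfies a convection-diffusion equation with \emph{homogeneous Dirichlet} data; this is what the paper uses to control $\|\partial_z v\|_{m-2}$ and the $L^\infty$ norms. The vorticity, by contrast, does \emph{not} vanish on the boundary: $(\omega\times\n)^b$ involves tangential derivatives of $v^b$ and is only controlled at the level $|v^b|_m+|h|_m$.

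\textbf{Closure and the $L^4_t$ estimate.} Your Gronwall scheme for $\mathcal{N}_m$ is missing $\|\partial_z v\|_{m-1}$, but the tangential estimate (Step~1 in the paper) produces a right-hand side $\int_0^t\|\partial_z v\|_{m-1}^2$ that must be absorbed. One cannot get $\sup_t\|\partial_z v\|_{m-1}$ uniformly in $\eps$; the paper explains (Section~\ref{sectionheat}) that with the vorticity's nonzero boundary trace controlled only through $\sqrt{\eps}\int_0^t|\omega^b|_{m-1}^2$, the best parabolic smoothing is $H^{1/4}_t$, hence $L^4_t$. Obtaining this requires passing to Lagrangian coordinates to kill the convection and then running a microlocal symmetrizer with semiclassical paradifferential calculus (Section~\ref{sectionnorm2}). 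A simple energy or maximum-principle argument on $\omega$ will not produce this bound.
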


Note that in the above result, we have separated 
 the estimates \eqref{mainborne1} that are independent of $\eps$ from  the ones in
 \eqref{mainborne2} that depend on $\eps$ and 
 are useful to get  a closed estimate for the Navier-Stokes case.
 This is why  we have stated an estimate of $\|\partial_{z} v^\eps\|_{m-1}$
  which is  not pointwise in time but only $L^4$. The reason  for which we do not expect $\sup_{[0,T]}\|\partial_{z} v^\eps\|_{m-1} $
   to be uniformly bounded  with respect to $\eps$ will be explained below.  
It is related  to the boundary  control of 
     the vorticity   for the Navier-Stokes system.
      Nevertheless we point out that there  is  no loss
    of regularity, namely  
we  have that $\partial_{z} v^\eps (t, \cdot)\in H^{m-1}_{co}$ for every  $t \in [0, T]$ with bounds that 
may blow up with $\eps$.
     We shall also provide estimates for  the pressure during  the proof.
    
 Also,  note that the uniform existence  time $T$ is a priori also limited by the validity of the Taylor sign condition \eqref{Taylor2}. 
  For the Euler equation, it was pointed out by S. Wu \cite{Wu97} that  in our infinite depth framework with 
 zero  vorticity in the bulk, 
    a maximum principle applied to the equation for the pressure yields that the Taylor sign condition always holds. 
Nevertheless, this argument breaks down when vorticity is not zero.
Finally, let us  point out  that the compatibility condition $ \Pi S^\varphi v_{0}^\eps {\bf n}_{/z=0}= 0$ that we assume at the initial time 
 is exactly the same as 
(1.8) in  \cite{Beale83}.
    
   The main part of the paper will be devoted to the proof  of Theorem \ref{main}. We shall explain the main steps and the main difficulties
    slightly below. We immediately see 
    that by  using the compactness provided by the uniform estimates of Theorem \ref{main}, we shall easily get as a corollary,
     the justification of the inviscid limit and the existence of a  solution to  the limit 
 Euler system:
     
   \begin{theoreme}
   \label{theoinviscid}
   Under the assumptions of Theorem \ref{main}, if we assume in addition 
  that there exists $(h_{0}, v_{0})$  such that
   \beq
   \label{hypinviscid} \lim_{\eps \rightarrow 0} \|v^\eps_{0} - v_{0}\|_{L^2(\mathcal{S})} + |h^\eps_{0} - h_{0} |_{L^2(\mathbb{R}^2)}= 0. \eeq
     Then there exists  $(h(t,x), v(t,x))$   with  $  Z^\alpha \nabla v \in L^{\infty}([0, T] \times \mathcal{S})$, $| \alpha | \leq 1$ and 
    $$v \in L^\infty([0, T], H^m_{co}(\mathcal{S})), \,  
      \partial_{z}v \in L^\infty([0, T], H^{m-2}_{co}(\mathcal{S})),  \,
    h \in L^\infty([0, T], H^m(\mathbb{R}^2))$$ 
    such that
 $$ \lim_{\eps \rightarrow 0}  \sup_{[0,T]}\Big( \|v^\eps -v\|_{L^2(\mathcal{S})} + \| v^\eps -v\|_{L^\infty(\mathcal{S})} +
  |h^\eps -h |_{L^2(\mathbb{R}^2)} + |h^\eps -h |_{W^{1, \infty}(\mathbb{R}^2)}\Big)= 0
$$
and   which is the unique solution to  the free surface Euler equation
     in the sense that
 \beq
 \label{eulerint} \D_{t}v + \big(v \cdot \nabla^\varphi \big)v + \nabla^\varphi q= 0, \quad \nabla^\varphi \cdot v= 0, \quad x \in \mathcal{S}
 \eeq
 with  the boundary conditions
 \beq
 \label{eulerb}\partial_{t}h= v \cdot \N \quad  \hbox{and} \quad  q= gh\eeq
 at  $z=0$, $\varphi$ being still defined by \eqref{eqphi}, \eqref{eqeta}. 
    \end{theoreme}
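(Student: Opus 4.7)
The strategy is a weak-to-strong compactness argument on the uniform bounds of Theorem \ref{main}: extract weak-$*$ limits, obtain time equicontinuity from \eqref{NSv} to upgrade to strong convergence, interpolate with the conormal Lipschitz control, and then let the viscous terms vanish using the $\sqrt{\eps}$ factor hidden in \eqref{mainborne2}. From \eqref{mainborne1}--\eqref{mainborne2}, the sequence $(v^\eps)$ is bounded in $L^\infty([0,T], H^m_{co}(\mathcal{S}))$, $(\partial_z v^\eps)$ in $L^\infty([0,T], H^{m-2}_{co}(\mathcal{S})) \cap L^\infty([0,T]\times \mathcal{S})$, and $(h^\eps)$ in $L^\infty([0,T], H^m(\mathbb{R}^2))$; passing to a subsequence yields a weak-$*$ limit $(v, h)$ that inherits these bounds, in particular $Z^\alpha \nabla v \in L^\infty$ for $|\alpha| \leq 1$ via the uniform $\|\partial_z v^\eps\|_{1, \infty}$ estimate together with the conormal Sobolev embedding (valid since $m \geq 6$).

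To promote this to strong convergence, I would bound $\partial_t v^\eps$ in $L^2([0,T], H^{-1}_{loc}(\mathcal{S}))$ directly from \eqref{NSv}: the convective and Euler-pressure contributions are controlled by \eqref{mainborne1}, while the Navier--Stokes part of the pressure and the viscous term $\eps \Delta^\varphi v^\eps$ factor as $\sqrt{\eps}$ times a sequence bounded in $L^2([0,T], H^{-1})$, thanks to $\sqrt{\eps}\, \nabla v^\eps$ being bounded in $L^2([0,T], H^m_{co})$ by \eqref{mainborne2}. An Aubin--Lions argument then gives strong $C([0,T], L^2_{loc})$ convergence of $v^\eps$, and a tail estimate from the uniform $L^\infty H^m_{co}$ bound upgrades this to strong $L^\infty([0,T], L^2(\mathcal{S}))$ convergence. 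Interpolating with the uniform $W^{1,\infty}$ control then yields the strong $L^\infty([0,T]\times \mathcal{S})$ convergence of $v^\eps$, and the kinematic equation \eqref{bordv1} transfers both $L^2$ and $W^{1,\infty}$ strong convergence to $h^\eps$.

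With the strong convergences in hand, I can pass to the limit in \eqref{NSv}--\eqref{bordv2}. The smoothing choice \eqref{eqeta} ensures $\varphi^\eps \to \varphi$ in $L^\infty([0,T], W^{1,\infty}(\mathcal{S}))$, so the operators $\D_i$ converge and the nonlinear terms $(v^\eps \cdot \nabla^\varphi) v^\eps$ and $\nabla^\varphi \cdot v^\eps$ pass to their natural limits. The viscous contributions $\eps \Delta^\varphi v^\eps$ and $2\eps S^\varphi v^\eps \n|_{z=0}$ vanish distributionally because $\eps \nabla v^\eps = \sqrt{\eps} \cdot (\sqrt{\eps}\, \nabla v^\eps)$, the second factor being bounded in $L^2([0,T], H^m_{co})$ by \eqref{mainborne2}. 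Hence $(v, h)$ satisfies \eqref{eulerint}--\eqref{eulerb}, the initial datum being recovered from \eqref{hypinviscid}. Uniqueness at this regularity follows from the known well-posedness of the free surface Euler equation, the Taylor sign condition \eqref{Taylor2} surviving in the limit since $q^{NS,\eps} \to 0$; uniqueness then implies that the whole family, not merely a subsequence, converges.

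The main obstacle is the passage to the limit in the moving-geometry differential operators $\D_i$, which depend on $\varphi^\eps$ and therefore on $h^\eps$ in a nonlinear fashion. What makes this manageable is precisely the smoothing structure \eqref{eqeta}: it guarantees that $\varphi^\eps$ inherits the full spatial regularity of $h^\eps$ without any $1/2$-derivative loss, so that the Lipschitz convergence of $h^\eps$ delivered by the interpolation step suffices to push the whole system through to its Euler limit.
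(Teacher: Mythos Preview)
Your overall scheme --- uniform bounds, Aubin--Lions compactness, passage to the limit, uniqueness to upgrade from subsequences to the full family --- matches the paper's. The gap is your sentence ``a tail estimate from the uniform $L^\infty H^m_{co}$ bound upgrades this to strong $L^\infty([0,T], L^2(\mathcal{S}))$ convergence.'' Uniform boundedness in $H^m_{co}(\mathcal{S})$ gives you nothing about decay at spatial infinity: $\mathcal{S} = \mathbb{R}^2 \times (-\infty,0)$ is unbounded in all directions, and a translating-bump counterexample shows that a sequence bounded in $H^m_{co}$ can converge to zero locally while keeping a fixed $L^2$ mass escaping to infinity. So the local-to-global step does not follow from the bounds you quote, and without it your subsequent interpolation to $L^\infty(\mathcal{S})$ and the $W^{1,\infty}$ convergence of $h^\eps$ are left hanging.

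The paper closes this gap with a different idea: it uses the physical energy identity. From Proposition~\ref{basicL2} and \eqref{mainborne2} one has
\[
\|v^\eps J^\eps(t)\|_{L^2(\mathcal{S})}^2 + g|h^\eps(t)|_{L^2}^2 \leq \|v_0^\eps J_0^\eps\|_{L^2(\mathcal{S})}^2 + g|h_0^\eps|_{L^2}^2,
\]
with the defect of order $\eps$; the right-hand side converges by \eqref{hypinviscid}. On the other hand, the limit $(v,h)$ satisfies exact energy conservation for Euler, so the two sides coincide in the limit. Hence $\|v^\eps J^\eps(t)\|_{L^2}^2 + g|h^\eps(t)|_{L^2}^2 \to \|v J(t)\|_{L^2}^2 + g|h(t)|_{L^2}^2$, and weak convergence plus convergence of norms gives strong $L^2(\mathcal{S})$ convergence of $(v^\eps J^\eps, h^\eps)$, from which that of $v^\eps$ follows using the already-established convergence of $h^\eps$ (hence $J^\eps$). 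The $L^\infty$ convergence is then obtained from the anisotropic embedding \eqref{emb}, which plays the role of your interpolation step. This energy-identity argument is the missing ingredient in your proposal.
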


We thus provide the justification of the inviscid limit in $L^2$ and  $L^\infty$ norms.  The convergence in higher  norms (conormal for $v$, 
 standard Sobolev for $h$) 
follows  by interpolation  and   the uniform estimate \eqref{mainborne1}. Note that we do not obain by compactness the convergence
 of $v^\eps$ in Lipschitz norm.  This is expected since in that norm, the boundary layer profile cannot be neglected  when we pass to  the limit.
 The above result also provides an existence and uniqueness  result  of solutions  
(with minimal normal regularity of the velocity) to  the free surface Euler system
  which is  new. 
  Note that by using the  equation for the vorticity, one can easily propagate higher normal regularity and thus recover
   the results of \cite{Lindblad05,SZ08,ZZ08}.   In particular, we  get that
   $ \partial_{z}v \in L^\infty([0, T], H^{m-1}_{co}(\mathcal{S}))$. 
   
   \subsection{Sketch of the proof and organization of the paper}
   In order to prove Theorem \ref{main}, since classical local existence results of smooth solutions are available in the litterature
    \cite{Beale81,Tani96}, the main difficulty is to get a priori estimates
     on a time interval small but independent of $\eps$  of the quantities that appear in \eqref{mainborne1}, \eqref{mainborne2} in terms of the initial data only for  a sufficiently smooth solution of the equation.  We shall suppress the subscript $\eps$ in the proof
      of the a priori estimates  for notational convenience, 
      we shall simply denote $(v^\eps, h^\eps)$  by $(v,h)$. Let us define:
       \begin{align*}
        \mathcal{N}_{m, T}(v,h) &  = \sup_{[0, T]}\big( \|v(t) \|_{m}^2 + \|\partial_{z}v\|_{m-2}^2 + |h(t)|_{m}^2+ \eps  |h(t)|_{m+{1 \over 2}}^2
        + \eps \|\partial_{zz}v(t)\|_{L^\infty}^2 +  \|\partial_{z} v(t)\|_{1, \infty}^2 \big) \\
        \nonumber & \quad  \quad \quad   + \|\partial_{z}v\|_{L^4([0,T], H^{m-1}_{co})}^2 
        + \eps \int_{0}^T \|\nabla v\|_{m}^2 + \eps \int_{0}^T \|\nabla \partial_{z}v\|_{m-2}^2<+ \infty. 
       \end{align*}
      The crucial point is thus to get a closed  control of the above quantity on a sufficiently small
 time interval which is  independent of $\eps$. 
The main part of the paper will be devoted to  these a priori estimates.
   Many steps will be needed in order to get the result.

   \subsubsection*{Step 1: Estimates of $v$ and $h$}   
   The  first step will be to estimate  $Z^\alpha v$ and $Z^\alpha h$ for
    $0 \leq |\alpha| \leq m$. When $\alpha=0$, the estimate is a consequence of the energy identity for free surface Navier-Stokes
     equation which reads 
     $$  {d \over dt} \Big(  \int_{\mathcal{S}} |v|^2 \, d\V + g \int_{z=0} |h|^2\, dy  \Big) + 4 \eps  \int_{\mathcal{S}} |S^\varphi v|^2\, d\V=0.$$
  Here $d\V$ stands for the natural volume element induced by the change of variable \eqref{eqphi}: $d\V=\partial_{z}\varphi(t,y,z)\, dydz$.
    
    The difficulty to get estimates for higher order derivatives is that the coefficients (which depend on $h$) in the equation \eqref{NSv} are not smooth enough
    (even with the use of the smoothing diffeomorphism that we have taken) to control 
 the commutators in the  usual way.
    For example, for the transport term  which reads:  
   $$ \partial_{t}^\varphi + v \cdot \nabla^\varphi = \partial_{t} + v_{y}\partial_{y} + { 1 \over \partial_{z} \varphi}\big( v\cdot N - \partial_{t} \eta)\partial_{z}, 
  \quad N= (-\partial_{1}\varphi, -\partial_{2} \varphi, 1)^t, $$
 the commutator between $Z^\alpha$ and this term  in the equation involves in particular the term 
 $ (v \cdot Z^\alpha N)\partial_{z}v$ which can be estimated only with the help of   $\|Z^\alpha N\|_{L^2} \sim |h|_{m+{1 \over 2}}$.
  This yields
  a loss of $1/2$ derivative. 
   We also get similar problems when we compute for the pressure term the commutator between $Z^\alpha$ and $\nabla^\varphi q$. 
  This difficulty was solved  by Alinhac in \cite{Alinhac}.   
The main idea is  that some cancellation occurs
   when we use  the  good unknown
  $V^\alpha= Z^\alpha v - {\partial_{z}^\varphi v} Z^\alpha \eta$.
Indeed, let us write our  equation under the abstract  form  
$$ \mathcal{N}(v,q, \varphi)= \D_{t}v + \big(v \cdot \nabla^\varphi \big)v + \nabla^\varphi q -2\eps \nabla^\varphi \cdot \big( S^\varphi v\big).$$
Then,  if  $ \mathcal{N}(v,q, \varphi )=0, $  the linearized equation can be written as
  \begin{eqnarray*}
   & &    D\mathcal{N}(v, q, \varphi) \cdot(\dot v, \dot q, \dot \varphi)  = \\ 
   &  &  \quad \quad  \quad \quad  \big( \partial_{t}^\varphi +( v \cdot \nabla^\varphi)-  2 \eps \nabla^\varphi \cdot \big( S^\varphi \cdot  \big) \big)\big(\dot v- \D_{z} v \, \dot \varphi \big)
    + \nabla^\varphi\big( \dot q -   \D_{z} q \, \dot \varphi\big) \\
    & & \quad \quad   \quad \quad  \quad \quad  \quad \quad   +  \big( \dot v \cdot \nabla^\varphi\big) v - \dot \varphi (\D_{z} v \cdot \nabla^\varphi) v. 
   \end{eqnarray*}
This means that the fully linearized equation has the same structure as the equation linearized with respect to the $v$ variable only
 thanks to the introduction of the good unknown. The justification of this identity will be recalled in
 section \ref{prelim}.

By using this crucial remark, we get that the equation for $(Z^\alpha v, Z^\alpha q, Z^\alpha \eta)$ can be written as
$$ \D_{t} V^\alpha + v \cdot \nabla^\varphi V^\alpha + \nabla^\varphi Q^\alpha- 2 \eps \nabla^\varphi \cdot S^\varphi V^\alpha
  =l.o.t.$$
  with 
$V^\alpha = Z^\alpha v - {\partial_{z}^\varphi v} Z^\alpha \eta$, $Q^\alpha=  Z^\alpha q - {\partial_{z}^\varphi q} Z^\alpha \eta$
 and the $l.o.t$ means lower order terms with respects to $v$ and $h$ that can be controlled by 
$ \mathcal{N}_{m, T}(v,h) $.
 Consequently,  we can  perform an $L^2$ type energy estimate for this equation.
  By using  energy estimates, we shall get  an identity under the form 
  $$ {d \over dt }{1 \over 2} \int_{\mathcal{S}} |V^\alpha|^2 \, d\V +  {d \over dt }{1 \over 2} \int_{z=0} (g - \partial_{z}^\varphi q^E) |Z^\alpha h|^2
  = \cdots$$
    which yields a  good control of  the regularity of the surface only if  the sign condition  $g-\partial_{z}^\varphi q^E \geq c_{0}>0$
   is matched.
   
   The main conclusion of this step will be  that 
   \begin{equation} \label{step1est}
  \big\|\big(Z^m v - \partial_{z}^\varphi v Z^m \eta\big)(t)\big\|^2 + |h(t)|_{m}^2  \leq C_{0}+ t \Lambda(R)+ \int_{0}^t \|\partial_{z}v\|_{m-1}^2
\end{equation}
  where $C_{0}$ depends only on the initial data and $\Lambda$ is some continuous  increasing function 
 in all its arguments  (independent of $\eps$)
  as soon as 
   $$  Q_{m}(t) = \|v\|_{m}^2 + |h|_{m}^2 + \|\partial_{z}v\|_{m-2}^2  +  \|v\|_{2, \infty}^2 + \|\partial_{z}v\|_{1, \infty}^2 + \eps \|\partial_{zz}v\|_{L^\infty}^2 \leq R$$
 for $t \in [0, T^\eps]$.

\subsubsection*{Step 2: Normal derivative estimates I}
In order to close the argument, we need   estimates  for 
 $\partial_{z}v$. We shall first estimate $\|\partial_{z}v \|_{L^\infty_{t}(H^{m-2}_{co})}$.
 This is not sufficient to control the right-hand side in \eqref{step1est}, 
but this will be important in order to get the $L^\infty$ estimates. 
 The main idea is to use the equivalent quantity
 $$ S_{N}= \Pi S^\varphi v\, N$$
 which vanishes on the boundary.
    This allows to perform conormal estimates on the convection-diffusion type equation with homogeneous  Dirichlet boundary condition
    satisfied by $S_{N}$.
  This yields again an estimate under the form 
   $$ \|\partial_{z}v(t)\|_{m-2}^2  \leq C_{0}+ t  \Lambda(R)+ \int_{0}^t \|\partial_{z}v\|_{m-1}^2.$$
   
 \subsubsection*{Step 3: $L^\infty$ estimates}
 We also have to estimate the $L^\infty$ norms  that occur in the definition of $Q_{m}$. Note that we 
can not use Sobolev embedding (as it is classically done) since the conormal spaces do not control 
normal derivatives. 
 The estimate of $\|v\|_{2, \infty}$ is a consequence of  an  anisotropic Sobolev estimate and
 thus,  the difficult part is to estimate $\|\partial_{z}v\|_{1, \infty}$. Again, it is more convenient to estimate
  the equivalent quantity $\| S_{N}\|_{1,\infty}$ since $S_{N}$ solves
   a convection diffusion equation with homogeneous boundary condition.  The estimate of $\|S_{N}\|_{L^\infty}$
   is a consequence of the maximum principle for this equation. The control 
 of   $\| Z_{i}S_{N}\|_{L^\infty}$
    is  more difficult. The main reason is  that a crude estimate of the commutator  between $Z_{i}$
     and the variable coefficient operator $\Delta^\varphi$ involves  terms  with more normal derivatives:   two normal derivatives of $S_{N}$
      and hence three normal derivatives of $v$. To overcome 
 this difficulty, we note that at this step, the regularity of the surface
       is not really a problem: we want to estimate a fixed    number of derivatives of $v$ in $L^\infty$ while
        $m$ can be considered as large as we need. Consequently, the idea is to change the coordinate system
         into  a normal geodesic one in order to get  the simplest possible expression for the Laplacian. 
    By neglecting all the terms that can be estimated  by the previous steps, we get a simple 
 equation
     under the form
     $$  \partial_{t}  \tilde S_{N} + z \partial_{z}w_{3}(t,y,0) \partial_{z} 
         \tilde S_{N} + w_{y}(t,y,0) \cdot \nabla_{y} \tilde S_{N} - \eps \partial_{zz} \tilde S_{N}  =
          l.o.t$$
          where $\tilde{S}_{N}$ stands for $S_{N}$ expressed in the new coordinate system and $w$ is  the vector field that we obtain from
           $v$ by the change of variable. 
           This is a one-dimensional Fokker Planck type equation  (with an additional  drift term in the tangential direction that can be eliminated
            by using lagrangian coordinates in this direction)  for which the Green function is explicit
             and hence, we can use it to estimate $\|Z_{i}\tilde S_{N}\|_{L^\infty}$. 
         
         Again the conclusion of this step is an estimate of  the form
        $$ \|\partial_{z} v\|_{1, \infty}^2+ \eps \|\partial_{zz}v\|_{L^\infty}^2 \leq C_{0}+t \Lambda(R) + \int_{0}^t \|\partial_{z}v\|_{m-1}^2.$$
             
   \subsubsection*{Step 4: Normal derivative estimate II}
   In order to close our estimate, we still need to estimate
   $ \| \partial_{z}v\|_{m-1}$. For this estimate it  does not seem a good idea to use $S_{N}$ as an equivalent quantity for $\partial_{z}v$.
   Indeed, the equation for $Z^{m-1}S_{N}$ involves $Z^{m-1} D^2 p$  as a source term and we note that since  the Euler part of the pressure
 involves a harmonic function that   verifies $p^E =gh$ on the boundary, we have that
    $$ Z^{m-1}D^2 p^E \sim Z^{m-1} D^{3\over 2} h \sim |h|_{m+{1 \over 2}}$$
     and hence we do not have enough regularity of the surface. To get a better estimate,  
      it is natural to try  to use the vorticity  $\omega= \nabla^\varphi \times v$ in order to eliminate the pressure. We shall get that
      indeed $Z^{m-1} \omega$  solves an  equation under the form
    $$\partial_{t} Z^{m-1}\omega + V \cdot  \nabla Z^{m-1}\omega - \eps \Delta^\varphi Z^{m-1} \omega= l.o.t$$
    Nevertheless, while for the Euler equation the vorticity which solves a transport equation with a 
  characteristic boundary  is  easy to estimate,  for the Navier-Stokes system
     in a  domain with boundaries it is much more difficult to estimate. 
 The difficulty  in the case of  the Navier-Stokes  system is that we need an estimate of  the value
      of the vorticity at  the boundary to estimate it in the interior.
       Since on the boundary we have roughly $Z^{m-1}\omega \sim  Z^m  v +  Z^m h$,
        we only have by using a trace estimate 
         a (uniform) control by known quantities (and in particular the energy dissipation of the Navier-Stokes equation) of 
    $$ \sqrt{\eps}\int_{0}^t |Z^{m-1} \omega_{/z=0}|_{L^2(\mathbb{R}^2)}^2.$$
    In this case, by a simple computation on the heat equation which is  given in section \ref{sectionheat}, 
   we see that  the best estimate that we can expect,  when we study the problem with zero initial datum
    $$ \partial_{t} f - \eps \Delta f= 0,  \, z<0, \quad f_{/z=0}= f^b$$
and with  the boundary data $f^b$ that satisfies an estimate as above  is 
   $$ \int_{0}^{+ \infty} e^{-2\gamma t } \big\|( \gamma + |\partial_{t}|)^{1\over 4}) f \big\|_{L^2(\mathcal{S})}^2 \, dt
     \leq \sqrt{\eps} \int_{0}^{+ \infty} e^{-2 \gamma t}|f^b|_{L^2(\mathbb{R}^2)}^2 \, dt\leq C.$$
Consequently, we see that we get a control of $f$ in $H^{1\over 4}((0, T), L^2)$ which gives by Sobolev embedding an estimate of $f$
 in  $L^4([0, T], L^2(\Omega))$ only.
 
 Motivated by this result  on the heat equation, 
  we shall  get an  estimate  of  $\|Z^{m-1}\omega\|_{L^4((0,T), L^2)}$.
     Note that the transport term in the equation  has an important effect. Indeed, in the previous example of the heat equation, 
   if we add a constant  drift $c \cdot \nabla f $ in the equation, we obtain a smoothing effect under the form
   $$ \int_{0}^{+ \infty} e^{-2\gamma t } \big\|( \gamma + |\partial_{t}+c \cdot \nabla |)^{1\over 4}) f \big\|^2 \, dt.$$
 In order to estimate $\|Z^{m-1}\omega\|_{L^4((0,T), L^2)}$, we shall consequently 
   first switch into Lagrangian coordinates in order  to eliminate the transport term  and  hence look  for an estimate of 
     $ \|(Z^{m-1} \omega ) \circ X\|_{H^{1 \over 4}([0,T],  L^2)}$ since $(Z^{m-1}\omega)\circ X$ solves a purely parabolic
      equation. Note that the price to pay is that the parabolic operator that  we get  after the change of variable has coefficients with limited
       uniform  regularity (in particular with respect to the time and normal variables).  To get this estimate, we use a microlocal symmetrizer
      based on a "partially"  semiclassical   paradifferential calculus i.e. based on the weight       $(\gamma^2 + |\tau|^2 + |\sqrt{\eps}\, \xi|^4)^{1 \over 4})$.
      The use of  paradifferential calculus in place of pseudodifferential one  is needed  because of the only  low regularity uniform estimates of the coefficients that are known.
    The main properties of this calculus can be seen as a consequence of the general quasi-homogeneous calculus studied in \cite{Metivier-Zumbrun} 
    and is  recalled at 
 the end of the paper. 
  By Sobolev embedding this yields an estimate of  $\|(Z^{m-1} \omega ) \circ X\|_{L^{4}([0,T],  L^2)}$
     and thus of    $\|Z^{m-1} \omega\|_{L^{4}([0,T],  L^2)}$ by  a change of variable.
         This finally allows to get an estimate of  $\|Z^{m-1}\partial_{z} v\|_{L^4((0,T), L^2)}$.
       
      The  estimate  of $   \mathcal{N}_{m, T}(v,h)$ 
on some uniform time   follows  by combining the estimates of the four steps. Note that at 
  the end, we also have to check that
       the Taylor sign condition and the condition that $\Phi(t, \cdot)$ is a diffeomorphism remain true.
      
  \subsubsection*{Organisation of the paper}
  The paper is organized as follows. In section \ref{prelim}, we recall the main properties of Sobolev conormal spaces, in particular,  the product laws, 
  embeddings and trace estimates that we shall use. We also state  some geometric  identities that are linked to the change of variable
   \eqref{id0} and the Korn inequality that we shall use to control the energy dissipation term in \eqref{NSv}.
  In section \ref{sectionprelimeta}, we  study the regularity properties of  $\eta$ given by  \eqref{eqeta}.
  Next, the main part of the paper is devoted to the proof of  the a priori estimates leading to the proof  of Theorem \ref{main}.
    We first  recall the energy identity for the Navier-Stokes equation \eqref{NSv} in section \ref{sectionbasic}.
     The aim of  the nexts three sections is to get the estimates of higher order conormal derivatives. 
     In section \ref{sectionconorm}, we study the equations satisfy by $(Z^\alpha v, Z^\alpha h)$  and  prove the estimates for
      lower order commutators and boundary values. In section \ref{sectionpressure}, we prove the needed estimates for the pressure using elliptic regularity in conormal spaces 
       and  in section \ref{sectionconorm2}, we get the  desired estimates for  $\|v\|_{m}$ and $|h|_{m}.$
       Next, in section \ref{sectionnorm1}, we start the study of the estimates  for  normal derivatives, we prove an estimate
        for $\|\partial_{z} v \|_{m-2}$. In section \ref{sectionLinfty}, we prove the needed $L^\infty$ estimates.
         Finally, in section \ref{sectionnorm2}, we prove the estimate for  
$\|\partial_{z} v \|_{L^4 ([0,T], H^{m-1}_{co})}$.
           The results of paradifferential calculus that are needed for this section are recalled in section \ref{sectionparadiff}. 
            The proof of the existence
            part of  Theorem \ref{main} is given in section \ref{sectionexist} and the uniqueness part is briefly explained in 
            section \ref{sectionunique}. The proof of Theorem \ref{theoinviscid} is given in section \ref{sectioninviscid}  and  section \ref{sectiontech} is devoted to the proof of some technical lemmas.
   
   \bigskip
   
   In all the paper, the notation $\Lambda(\cdot, \cdot)$ stands for a continuous increasing function  in all its arguments, independent of $\eps$ and  that may change
    from line to line.

  \section{Functional framework and geometric identities}
 \label{prelim}
 
 \subsection{Some properties of Sobolev conormal spaces}

   We have already recalled the notations that we shall use for Sobolev conormal spaces.
   It  will be convenient to also  use the following notation. We set for $m \geq 1$:
   $$ E^m= \big\{ f \in H^m_{co}, \quad \partial_{z}f \in H^{m- 1}_{co}\big\}, \quad E^{m, \infty}= \big\{ f \in W^{m, \infty}_{co}, \quad \partial_{z}f \in W^{m-1, \infty}_{co}\big\}$$
   and we define the norms:
   $$ \|f\|_{E^m}^2= \|f\|_{m}^2 + \|\partial_{z} f \|_{m-1}^2, \quad \|f\|_{E^{m,\infty}}= \|f\|_{m, \infty}+ \|\partial_{z} f \|_{m-1, \infty}.$$

   We shall also use Sobolev tangential spaces defined for $s\in \mathbb{R}$   by
  $$ H^s_{tan}(\mathcal{S})= \big\{ f \in L^2(\mathcal{S}), \quad  \Lambda^s  f  \in L^2 (\mathcal{S})\big\}$$
  where $\Lambda^s$ is the tangential Fourier multiplier by $\big(1+ |\xi|^2 \big)^{s \over 2}$ i.e.
   $\mathcal{F}_{y} \big( \Lambda^s f \big)(\xi, z)= \big(1+ |\xi|^2 \big)^{s \over 2} \mathcal{F}_{y}( f) (\xi, z)$
   where $\mathcal{F}_{y}$ is the partial Fourier transform in the $y$ variable. We set
   $$ \|f \|_{H^{s}_{tan}}= \| \Lambda^s f \|_{L^2}.$$
   Note that we have
   $$ \| f\|_{H^s_{tan}} \lesssim  \| f\|_{m}$$
    for $m \in \mathbb{N}$, $m \geq s$.
   
  In this paper, we shall use repeatedly the following  Gagliardo-Nirenberg-Moser type properties of the Sobolev conormal spaces:
  
  \begin{prop}
  \label{sob}
  We have the following products,  and commutator  estimates:
  \begin{itemize}
  \item 
       For $u, \,v\in L^ \infty \cap H^k_{co}$, $k \geq 0$:
            \beq
     \label{gues}
      \| Z^{\alpha_{1}} u\, Z^{\alpha_{2}} v \| \lesssim  \|u\|_{L^\infty} \, \|v\|_{k} + \|v\|_{L^\infty} \|u\|_{k}, \quad
       |\alpha_{1}| + |\alpha_{2}|=k.
      \eeq
  \item For  $ 1 \leq | \alpha | \leq k $, $g \in H^{k-1}_{co} \cap L^\infty$,  $f \in H^k_{co}$ such
   that $Zf  \in L^\infty$,  we have
  \beq
  \label{com}
  \| [Z^\alpha, f]g \| \lesssim  \|Zf\|_{k-1}  \| g \|_{L^\infty} +  \|Z f \|_{L^\infty} \|g \|_{k-1}.
  \eeq
  \item For $| \alpha |=k \geq 2$, we  define the symmetric commutator  $[Z^\alpha, f, g]= Z^\alpha (f g) - Z^\alpha f \, g - f  Z^\alpha \,g$.
   Then, we have the estimate
   \beq
   \label{comsym} \| [Z^\alpha, f, g] \| \lesssim  \|Zf \|_{L^\infty}\|Zg\|_{k-2} +  \|Z g \|_{L^\infty} \|Z f \|_{k-2}.
   \eeq
   \end{itemize}
   \end{prop}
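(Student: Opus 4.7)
The plan is to reduce all three bounds to a single Gagliardo--Nirenberg-type interpolation in the conormal scale, combined with the Leibniz formula for iterated derivations. Since $Z_1, Z_2, Z_3$ are smooth first-order derivations, for any ordered product $Z^\alpha = Z_{i_1}\cdots Z_{i_k}$ I can write
\begin{equation*}
 Z^\alpha(fg) \;=\; \sum_{\beta+\gamma=\alpha} c_{\beta,\gamma}\,(Z^\beta f)\,(Z^\gamma g),
\end{equation*}
the sum being over all ways of distributing the $k$ vector fields between $f$ and $g$, with $|\beta|+|\gamma|=k$. I pair this with the standard Gagliardo--Nirenberg inequality in the conormal scale (classical in this framework; compare \cite{Gues,Hormander,Rauch,Tartakoff}): for $0\le j\le k$ and $u\in L^\infty\cap H^k_{co}$,
\begin{equation*}
 \|Z^j u\|_{L^{2k/j}(\mathcal{S})} \;\lesssim\; \|u\|_{L^\infty(\mathcal{S})}^{1-j/k}\,\|u\|_k^{j/k}.
\end{equation*}
A self-contained derivation of this conormal Gagliardo--Nirenberg bound proceeds by a logarithmic rescaling flattening the weight $z/(1-z)$ of $Z_3$ near $z=0$, together with the flat Gagliardo--Nirenberg inequality in $\mathbb{R}^3$ applied to the rescaled function; the degeneracy of $Z_3$ at $z=0$ is the only delicate point and is absorbed by that rescaling.

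Granted these two ingredients, the product estimate \eqref{gues} is a short H\"older argument. Setting $j := |\alpha_1|$ (the endpoints $j\in\{0,k\}$ being immediate from the definition of $\|\cdot\|_k$), H\"older's inequality with exponents $2k/j$ and $2k/(k-j)$, which satisfy $j/(2k)+(k-j)/(2k)=1/2$, gives
\begin{equation*}
 \|(Z^{\alpha_1}u)(Z^{\alpha_2}v)\|_{L^2} \;\le\; \|Z^{\alpha_1}u\|_{L^{2k/j}}\,\|Z^{\alpha_2}v\|_{L^{2k/(k-j)}}.
\end{equation*}
Bounding each factor by the conormal Gagliardo--Nirenberg inequality and then applying Young's inequality $a^\theta b^{1-\theta} \le \theta a + (1-\theta) b$ with $\theta = j/k$ yields exactly the right-hand side of \eqref{gues}.

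The commutator bounds \eqref{com} and \eqref{comsym} are algebraic consequences of the Leibniz expansion together with \eqref{gues}. For \eqref{com}, subtracting $fZ^\alpha g$ from $Z^\alpha(fg)$ removes precisely the summand with $\beta=0$, so every remaining term has $|\beta|\ge 1$; I extract one $Z$ from $f$, writing $Z^\beta f = Z^{\tilde\beta}(Z_i f)$ with $|\tilde\beta|+|\gamma|=k-1$, and apply \eqref{gues} at order $k-1$ to the pair $(Z_if,g)$. Summing over the finitely many splittings produces the stated bound $\|Zf\|_{k-1}\|g\|_{L^\infty} + \|Zf\|_{L^\infty}\|g\|_{k-1}$. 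For the symmetric commutator \eqref{comsym}, subtracting both $(Z^\alpha f)g$ and $f(Z^\alpha g)$ kills all splittings with $\beta=0$ or $\gamma=0$, leaving only those with $|\beta|,|\gamma|\ge 1$; I then extract a $Z_i$ from $f$ \emph{and} a $Z_j$ from $g$, writing each summand as $Z^{\tilde\beta}(Z_if)\cdot Z^{\tilde\gamma}(Z_jg)$ with $|\tilde\beta|+|\tilde\gamma|=k-2$, and apply \eqref{gues} at order $k-2$ to $(Z_if,Z_jg)$. The main obstacle throughout is the conormal Gagliardo--Nirenberg inequality near the degenerate boundary; everything else is combinatorics and H\"older.
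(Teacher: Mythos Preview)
Your argument is correct and follows the same route as the paper. The paper's own proof simply cites \cite{Gues} for \eqref{gues} as ``classical'' and then derives \eqref{com} and \eqref{comsym} from \eqref{gues} via the Leibniz expansion exactly as you do (extracting one $Z$ from $f$ for \eqref{com}, and one from each factor for \eqref{comsym}); your write-up additionally spells out the Gagliardo--Nirenberg/H\"older mechanism behind \eqref{gues} that the paper leaves to the reference.
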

     \begin{proof}
  The proof of \eqref{gues} is classical and can be found for example in \cite{Gues}. The commutator estimates \eqref{com}, \eqref{comsym}
   follow from \eqref{gues} and the Leibnitz formula. Indeed, we have
   $$[Z^\alpha, f]g =  \sum_{  \tiny{ \begin{array}{ll}\beta + \gamma = \alpha, \\
    \beta \neq 0 \end{array} } } C_{\beta, \gamma}   Z^\beta f  Z^\gamma g$$
     and hence since $\beta \neq 0$, we can write $Z^\beta =  Z^{\tilde \beta} Z_{i}$, with $ |\tilde \beta |= | \beta |-1, $  to get that
     $$ \|Z^{\tilde \beta} Z_{i}f\,  Z^\gamma g \| \lesssim  \|Zf\|_{L^\infty} \|g\|_{k-1} + \|g\|_{L^\infty} \|Zf\|_{k-1}$$
      thanks to \eqref{gues}. The proof of \eqref{comsym} can be obained by a similar argument.
        \end{proof}

   We shall also need embedding and trace estimates for these spaces:
  
\begin{prop}
\label{proptrace}
\begin{itemize}
     \item For $s_{1}\geq 0 $, $s_{2} \geq 0$ such that $s_{1}+ s_{2} >2$ and   $f$ such that $f \in H^{s_{1}}_{tan}$, $\partial_{z} f \in  H^{s_{2}}_{tan}$, we have the anisotropic Sobolev embedding:
  \beq
  \label{emb}
  \|f \|_{L^\infty}^2 \lesssim   \|\partial_{z}f \|_{H^{s_{2}}_{tan}} \, \|f\|_{H^{s_{1}}_{tan}}.
  \eeq
  \item  For $ f \in H^{1 }(\mathcal{S})$, we have the trace estimates:
  \beq
  \label{trace} 
 |f(\cdot, 0)|_{H^s(\mathbb{R}^2)} \leq C \| \partial_{z} f \|^{1\over 2}_{H^{s_{2}}_{tan}}\, \|f\|^{1\over 2}_{H^{s_{1}}_{tan}},
 \eeq 
 with  $s_{1}+ s_{2}= 2 s \geq 0$.

   \end{itemize}
  \end{prop}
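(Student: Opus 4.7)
The plan is to derive both estimates from a single one-dimensional identity, applied in the partial Fourier variable. Since $f$ vanishes as $z\to -\infty$, for every $\xi\in\mathbb{R}^2$ one has
\[
|\hat f(\xi,z)|^2 \;=\; 2\,\mathrm{Re}\int_{-\infty}^{z}\hat f(\xi,z')\,\overline{\partial_z \hat f(\xi,z')}\,dz',
\]
so in particular $|\hat f(\xi,z)|^2 \leq 2\int_{-\infty}^{0}|\hat f(\xi,z')|\,|\partial_z \hat f(\xi,z')|\,dz'$ uniformly in $z\leq 0$. This is the only analytic input; the rest is choosing weights and applying Cauchy--Schwarz.

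For the trace estimate, I would write $(1+|\xi|^2)^{s}=(1+|\xi|^2)^{s_1/2}(1+|\xi|^2)^{s_2/2}$ with $s_1+s_2=2s$, multiply the identity at $z=0$ by $(1+|\xi|^2)^s$, integrate in $\xi$ and apply Cauchy--Schwarz in the product measure $d\xi\,dz'$. Using Plancherel this yields
\[
|f(\cdot,0)|_{H^s(\mathbb{R}^2)}^{2} \;\leq\; 2\,\|f\|_{H^{s_1}_{tan}}\,\|\partial_z f\|_{H^{s_2}_{tan}},
\]
which is exactly \eqref{trace}.

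For the anisotropic Sobolev embedding the idea is similar but preceded by Fourier inversion. Pointwise
\[
|f(y,z)|\;\leq\;(2\pi)^{-2}\!\int|\hat f(\xi,z)|\,d\xi,
\]
and a weighted Cauchy--Schwarz in $\xi$ with weight $(1+|\xi|^2)^{-(s_1+s_2)/4}$ gives
\[
|f(y,z)|^2 \;\lesssim\; \Bigl(\int(1+|\xi|^2)^{-(s_1+s_2)/2}\,d\xi\Bigr)\int(1+|\xi|^2)^{(s_1+s_2)/2}|\hat f(\xi,z)|^2\,d\xi.
\]
The prefactor is finite precisely under the hypothesis $s_1+s_2>2$ (this is where the two-dimensionality of $\mathbb{R}^2$ enters). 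To the remaining integrand I apply the fundamental identity above, split $(1+|\xi|^2)^{(s_1+s_2)/2}$ as the product of $(1+|\xi|^2)^{s_1/2}$ and $(1+|\xi|^2)^{s_2/2}$, and conclude by Cauchy--Schwarz in $(\xi,z')$ and Plancherel, obtaining \eqref{emb}.

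There is no real obstacle: the delicate point is only the bookkeeping of the exponents, which is why the assumption splits into the strict inequality $s_1+s_2>2$ (needed to make the $\xi$-integral of $(1+|\xi|^2)^{-(s_1+s_2)/2}$ converge in two spatial dimensions) for the embedding, and the equality $s_1+s_2=2s$ for the trace, where no integrability condition is required.
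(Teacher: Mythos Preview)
Your proof is correct and follows essentially the same approach as the paper: both arguments rest on the one-dimensional identity $|\hat f(\xi,z)|^2 \leq 2\int_{-\infty}^0 |\hat f|\,|\partial_z\hat f|\,dz'$ in the tangential Fourier variable, followed by a weighted Cauchy--Schwarz splitting of $(1+|\xi|^2)^{(s_1+s_2)/2}$ (with Fourier inversion preceding this for the $L^\infty$ bound, where the condition $s_1+s_2>2$ ensures integrability of the weight). The only difference is cosmetic ordering of the steps.
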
   
 As a consequence of  \eqref{sob}, we  shall use very often that :
  \begin{rem}
  \label{remLinfty}
   For $k \geq 5$, we have :
  $$  \| f\|_{2, \infty}^2 \lesssim  \|\partial_{z} f\|_{k-2} \, \| f\|_{k}, \quad k \geq 5.$$
    \end{rem} 
  
  \begin{proof}


 To get  \eqref{emb}, we first note that thanks to the one-dimensional Sobolev estimate that we have
 $$ |\hat f (\xi, z) | \leq  \Big( \int_{-\infty}^0 |\partial_{z} \hat f(\xi, z ) |\, | \hat f (\xi, z)| \, dz  \Big)^{1 \over 2 }$$
  and hence, we obtain  from Cauchy-Schwarz and the fact that $s_{1}+ s_{2}>2$  that
  $$  \|f\|_{L^\infty} \leq \int_{\xi}| \hat f(\xi, z)| \, d\xi \lesssim\Big( \int_{\mathbb{R}^2}\big( 1 + |\xi |)^{s_{1}+ s_{2}}
   \int_{-\infty}^0 |\partial_{z} \hat f(\xi, z ) |\, | \hat f (\xi, z)| \, dz \, d \xi \Big)^{1 \over 2 }  \lesssim \| \partial_{z} \Lambda^{s_{1}} f \|^{1\over 2} \, \| \Lambda^{s_{2}} f\|^{1 \over 2}.$$

%
  The   trace estimates  \eqref{trace}  are also  elementary in $\mathcal{S}$.  To get the estimate,  it suffices
   to write that
  $$ |f(\cdot, 0)|_{H^s(\mathbb{R}^2)}^2 = 2   \int_{\mathbb{R}^2} \int_{-\infty}^0   \partial_{z}\Lambda^s f(z,y)\, \Lambda^sf(z,y) dz dy$$
  and the result follows from Cauchy-Schwarz and the fact that
  $$ \int_{\mathbb{R}^2 }\partial_{z}\Lambda^sf(z,y)\, \Lambda^sf(z,y) \, dy= \int_{\mathbb{R}^2 }\partial_{z}\Lambda^{s_{1}}f(z,y)\, \Lambda^{2s - s_{1}}f(z,y) \, dy.$$

  \end{proof}
  
  For later use, we also recall the classical tame Sobolev-Gagliardo-Nirenberg-Moser  and commutator estimates
   in $\mathbb{R}^2$:
  \begin{prop}
  \label{sobbord}
  For $s \in \mathbb{R}$, $s\geq 0$, we have
  \begin{eqnarray}
  \label{sobr2} & & |\Lambda^s(fg) |_{L^2(\mathbb{R}^2)}
   \leq C_{s}  \big( | f|_{L^\infty(\mathbb{R}^2} |g|_{H^s(\mathbb{R}^2)} +   | g|_{L^\infty(\mathbb{R}^2)} |f|_{H^s(\mathbb{R}^2)} \big) \\
 & &   \label{comr2}
  \| [\Lambda^s, f ] \nabla g\|_{L^2(\mathbb{R}^2)}
   \leq C_{s}  \big( | \nabla f|_{L^\infty(\mathbb{R}^2)} |g|_{H^s(\mathbb{R}^2)} +   |\nabla  g|_{L^\infty(\mathbb{R}^2)} |f|_{H^s(\mathbb{R}^2)} \big)\\
   & & \label{cont2D} |u v|_{1 \over 2} \lesssim  |u|_{1, \infty} |v|_{1\over 2}
   \end{eqnarray}  
   where $\Lambda^s$ is the Fourier multiplier by $ \big(1+ |\xi|^{2}\big)^{s \over 2}$.
   \end{prop}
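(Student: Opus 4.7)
The plan is to prove all three estimates of Proposition \ref{sobbord} by standard Littlewood--Paley and paradifferential techniques on $\mathbb{R}^{2}$; none of them depends on the geometry of the free-surface problem, so the argument is essentially independent of the rest of the paper. The first two are Moser/Kato--Ponce type inequalities, for which I would use Bony's paraproduct decomposition, while the third is a fractional multiplier estimate best handled by real interpolation.

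For the tame product estimate \eqref{sobr2}, the case $s=0$ is immediate from H\"older, so assume $s>0$. I would decompose $fg = T_{f}g + T_{g}f + R(f,g)$ \`a la Bony. The paraproduct $T_{f}g=\sum_{j}S_{j-2}f\,\Delta_{j}g$ has its $j$-th block frequency-localized in the annulus $\{|\xi|\sim 2^{j}\}$, hence
$\|T_{f}g\|_{H^{s}}^{2}\sim \sum_{j}2^{2js}\|S_{j-2}f\,\Delta_{j}g\|_{L^{2}}^{2}\lesssim \|f\|_{L^{\infty}}^{2}\|g\|_{H^{s}}^{2}$,
and $T_{g}f$ is handled symmetrically. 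For the remainder $R(f,g)=\sum_{|j-k|\le 2}\Delta_{j}f\,\Delta_{k}g$ the frequency support is a ball of radius $\lesssim 2^{j}$, so at each diagonal block I peel off whichever factor is in $L^{\infty}$, use Bernstein to reconstitute an $H^{s}$-norm of the other factor, and sum.

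For the Kato--Ponce commutator \eqref{comr2}, I would split
$[\Lambda^{s},f]\nabla g \;=\; [\Lambda^{s},T_{f}]\nabla g \;+\; \bigl(\Lambda^{s}T_{\nabla g}f - T_{\nabla g}\Lambda^{s}f\bigr) \;+\; \bigl(\Lambda^{s}R(f,\nabla g) - R(f,\Lambda^{s}\nabla g)\bigr).$
The principal term is the commutator of a Fourier multiplier of order $s$ with the paraproduct $T_{f}$ of order $0$; by the standard symbolic calculus of paraproducts it gains one derivative on $f$, producing the bound $\|\nabla f\|_{L^{\infty}}\|g\|_{H^{s}}$. The two remaining groups of terms swap the roles of $f$ and $\Lambda^{s}\nabla g$ in the paraproduct/remainder decomposition and are controlled dyadically exactly as in \eqref{sobr2}, yielding the $\|\nabla g\|_{L^{\infty}}\|f\|_{H^{s}}$ contribution.

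For \eqref{cont2D} it is cleanest to argue by interpolation. The multiplication operator $M_{u}:v\mapsto uv$ satisfies $\|M_{u}\|_{L^{2}\to L^{2}}\le \|u\|_{L^{\infty}}$ trivially and, by the Leibniz rule, $\|M_{u}\|_{H^{1}\to H^{1}}\lesssim \|u\|_{W^{1,\infty}}$. Since $H^{1/2}=[L^{2},H^{1}]_{1/2,2}$, real interpolation gives $\|M_{u}\|_{H^{1/2}\to H^{1/2}}\lesssim \|u\|_{W^{1,\infty}}$, which is exactly \eqref{cont2D}. The main technical point, rather than a real obstacle, is the bookkeeping of the dyadic sums in \eqref{sobr2} and \eqref{comr2}: one must peel off the correct $L^{\infty}$ factor at each scale so that the final estimate is tame (linear in each Sobolev norm), and one must handle the low-frequency balls in the remainders using Bernstein rather than $H^{s}$-orthogonality.
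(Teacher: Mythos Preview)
Your proposal is correct. The paper does not actually prove \eqref{sobr2} and \eqref{comr2}; it simply cites them as classical (referring to \cite{Chemin95}), and your Littlewood--Paley/Bony argument is precisely the kind of proof one finds there, so on those two items you are in full agreement with the paper's intent.

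For \eqref{cont2D} there is a genuine, if minor, methodological difference. The paper remarks that this estimate follows from items (1) and (5) of Theorem~\ref{symbolic}, i.e.\ from the paradifferential calculus: one writes $uv = T_{u}v + (uv - T_{u}v)$, bounds $\|T_{u}v\|_{H^{1/2}}\lesssim \|u\|_{L^{\infty}}\|v\|_{H^{1/2}}$ by boundedness of the paraproduct, and bounds the remainder $\|uv - T_{u}v\|_{H^{1}}\lesssim \|\nabla u\|_{L^{\infty}}\|v\|_{L^{2}}$ (hence a fortiori in $H^{1/2}$). Your route via real interpolation between the trivial $L^{2}\to L^{2}$ bound and the Leibniz $H^{1}\to H^{1}$ bound is more elementary, avoids any paradifferential machinery, and gives the same conclusion; it has the small advantage of being self-contained, whereas the paper's hint points to a calculus developed for a different (parabolic, $\mathbb{R}^{3}$) setting and requires the reader to transpose it to the standard $H^{s}(\mathbb{R}^{2})$ framework.
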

     These estimates are also classical, we refer for example to \cite{Chemin95}. Note that the last estimate can be obtained  as
      a consequence of (1) and  (5) in Theorem \ref{symbolic}.

   We shall also need to use results about 
 semiclassical paradifferential calculus in section \ref{sectionnorm2}. They are  described
    in section \ref{sectionparadiff}.              
     
    \subsection{The equations in the fixed domain}
    
    By using the change of variables \eqref{vdef} and the definition \eqref{eqphi}, \eqref{eqeta},  we obtain that
    \begin{eqnarray*}
    \big( \partial_{i} u\big)(t, y, \varphi) & =   & \big( \partial_{i} v - { \partial_{i} \varphi \over \partial_{z} \varphi} 
     \partial_{z} v\big)(t,y,z), \quad i= 0,\,  1,\,  2  \\
      \big( \partial_{3} u\big)(t, y, \varphi) & = &\big( {1 \over \partial_{z} \varphi} \partial_{z} v\big)(t,y,z)
    \end{eqnarray*}
    where we set $\partial_{0}= \partial_{t}$.
    We thus introduce the  following operators
    $$ \partial^\varphi_{i} =  \partial_{i}  - { \partial_{i} \varphi \over \partial_{z} \varphi} 
     \partial_{z}, \quad i=0, \, 1,\, 2, \quad \D_{3}= \D_{z}=  {1 \over \partial_{z} \varphi} \partial_{z} $$
     in order to have
     \beq
     \label{id0}
      \partial_{i}u \circ \Phi= \D_{i}v, \quad i= 0, \, 1, \, 2, \, 3.
      \eeq
      This  yields  by using the  change of variable \eqref{diff}  that $(v,q)$ solves in the fixed domain $\mathcal{S}$  the system \eqref{NSv}, \eqref{bordv1}, \eqref{bordv2} that was introduced previously.
    Note that thanks to this definition,  since the operators $\partial_{i}$ commute, we immediately 
     get that
     \beq
     \label{comD}
     [ \D_{i}, \D_{j}]= 0, \quad \forall i, \, j.
     \eeq

     Since the Jacobian of the change of variable \eqref{diff} is $\partial_{z} \varphi$, it is natural to 
      use on $\mathcal{S}$ when performing energy estimates  the following weighted $L^2$  scalar products:
      $$ \int_{\mathcal{S}}  f\, g  \, d \V, \quad d\V = \partial_{z}\varphi(t,y,z) \, dy dz 
       $$
       and for   vector fields on $\mathcal{S}$.
       $$ \int_{\mathcal{S}}  v \cdot  w  \, d \V $$
       where $\cdot$ stands for the standard
       scalar product of $\mathbb{R}^3$.
       
     With this notation, we have the following integration by parts identities for the operators $\partial_{i}^\varphi$ and 
     the above weighted scalar products:
     \begin{lem}
     \label{lemipp}
     \begin{eqnarray}
     \label{ipp1}
     & &\int_{\mathcal{S}}\D_{i}f\,  g \, d\V = - \int_{\mathcal{S}} f\, \D_{i}g\, d\V  +  \int_{z=0} fg \, \N_{i}\, dy, \quad i= 1, \, 2, \, 3, \\
   & &  \label{ipp2} \int_{\mathcal{S}}\D_{t}f\,  g \, d\V =  \partial_{t}  \int_{\mathcal{S}}f\, g \, d\V - \int_{\mathcal{S}}f\, \D_{t} g \, d\V  - \int_{z=0} f g\, \partial_{t} h
     \end{eqnarray}
     where the outward normal $\N$ is given by \eqref{Ndef}.
     \end{lem}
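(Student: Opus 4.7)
The plan is to derive both identities from a single ``Piola-type'' algebraic relation
\[
\partial_{z}\varphi \cdot \D_{i} f \;=\; \partial_{i}\bigl( f\, \partial_{z}\varphi\bigr) - \partial_{z}\bigl( f\, \partial_{i}\varphi\bigr),\qquad i=0,1,2,
\]
together with the trivial $\partial_{z}\varphi \cdot \D_{3} f = \partial_{z} f$. Both are immediate consequences of the definitions of $\D_{i}$ combined with the Leibniz rule and the commutation $\partial_{i}\partial_{z}\varphi = \partial_{z}\partial_{i}\varphi$; they encode the Piola (divergence-free cofactor) structure of the Jacobian of $\Phi$ underlying the change of variable \eqref{diff}.

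Multiplying by $g$ and applying Leibniz once more produces the divergence form
\[
(\D_{i} f)\, g\, \partial_{z}\varphi \;=\; \partial_{i}\bigl(fg\, \partial_{z}\varphi\bigr) - \partial_{z}\bigl(fg\, \partial_{i}\varphi\bigr) - f\, (\D_{i} g)\, \partial_{z}\varphi,\qquad i=0,1,2,
\]
with the analogous simpler expression $(\D_{3}f)\, g\,\partial_{z}\varphi = \partial_{z}(fg) - f(\D_{3}g)\,\partial_{z}\varphi$ for the normal direction. I would then integrate over $\mathcal{S}=\mathbb{R}^{2}\times(-\infty,0)$: the full tangential derivatives $\partial_{1}, \partial_{2}$ integrate to zero by decay at spatial infinity, while $\int_{\mathcal{S}}\partial_{z}(\cdots)\, dy\, dz$ collapses to a trace at $z=0$ (there being no contribution from $z=-\infty$ thanks to the decay of $\eta$).

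Identifying the boundary trace then uses $\varphi(t,y,0)=\eta(t,y,0)=h(t,y)$, which holds because $\chi(0)=1$ in~\eqref{eqeta}. For $i=1,2$ this gives $\partial_{i}\varphi|_{z=0}=\partial_{i}h=-\N_{i}$ and yields~\eqref{ipp1}; the case $i=3$ follows at once since $\N_{3}=1$. For the time identity~\eqref{ipp2}, the $\partial_{t}$ of the $d\V$-integrand produces $\partial_{t}\int_{\mathcal{S}}fg\,d\V$ upon interchanging differentiation and integration, while the $\partial_{z}$-term contributes $-\int_{z=0}fg\,\partial_{t}h\,dy$ since $\partial_{t}\varphi|_{z=0}=\partial_{t}h$. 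There is no genuine obstacle here: the content of the lemma is Stokes' theorem in the moving fluid domain $\Omega_{t}$, pulled back to $\mathcal{S}$ via $\Phi$, and the Piola identity above expresses precisely why this pullback commutes with the divergence. One only needs $f,g$ regular enough (for instance in $H^{1}(\mathcal{S})$ with appropriate decay at $z=-\infty$) to justify the formal manipulations, the general case following by density.
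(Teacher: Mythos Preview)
Your proof is correct and is precisely the detailed computation that the paper alludes to when it says ``these formulas are straightforward consequences of the standard integration by parts formulas.'' You have made explicit the Piola (divergence-form) identity underlying the change of variables, but this is exactly the mechanism behind the paper's one-line justification, so the approaches coincide.
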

     Note that in the above formulas, though it is not always explicitely mentionned, in each occurence
      of $\varphi$, this function has to be taken at the time $t$.
      
  We recall that by the choice \eqref{eqphi}, we have that on $z=0$, $\varphi=h$. 
     These formulas are  straightforward consequences of the standard integration by parts formulas.
     Also, as a straightforward corollary, we get: 
     \begin{cor}
     \label{coripp}
     Assume that $v(t, \cdot)$ is a vector field on $\mathcal{S}$ such that $\nabla^\varphi \cdot v=0$,
      then for every smooth functions $f$, $g$ and smooth vector  fields $u$, $w$, we have the identities:
   \begin{eqnarray}
   \label{ippt}  & &   \int_{\mathcal{S}}\big( \D_{t} f + v \cdot \nabla^\varphi f\big) f \, d\V= {1 \over 2} \partial_{t}  \int_{\mathcal{S}}
       |f|^2\, d\V  - {1 \over 2} \int_{z= 0} |f|^2 \big( \partial_{t}h - v \cdot \N \big) dy, \\
 \label{ippD}       & & \int_{\mathcal{S}} \Delta^\varphi f\, g \, d\V= - \int_{\mathcal{S}} \nabla^\varphi f \cdot \nabla^\varphi g\, 
        d\V + \int_{z=0} \nabla^\varphi f \cdot \N\, f\, dy, \\
     & &    \label{ippS}
         \int_{\mathcal{S}} \nabla^\varphi \cdot (S^\varphi u)  \cdot w  \, d\V= - \int_{\mathcal{S}} S^\varphi u    \cdot S^\varphi w\, 
        d\V + \int_{z=0} (S^\varphi u\, \N) \cdot  w\, dy. 
        \end{eqnarray}
      \end{cor}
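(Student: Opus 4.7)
The plan is to derive each of the three identities as a direct consequence of Lemma~\ref{lemipp}, combined in each case with one additional algebraic ingredient: the divergence-free condition for \eqref{ippt}, a repeated application of the spatial integration by parts for \eqref{ippD}, and the symmetry of $S^\varphi u$ for \eqref{ippS}. None of these steps should involve any analytical difficulty beyond the identities already established in Lemma~\ref{lemipp}; the whole corollary is essentially a bookkeeping exercise adapted to the weighted volume element $d\V$ and the fixed boundary $\{z=0\}$ on which $\varphi = h$.

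For \eqref{ippt}, I would first apply \eqref{ipp2} with $g = f$ to get
\[
2\int_{\mathcal{S}}\D_t f\, f\, d\V = \partial_t\!\int_{\mathcal{S}} |f|^2\, d\V - \int_{z=0}|f|^2\,\partial_t h\, dy.
\]
For the convective term, I would write $(v\cdot\nabla^\varphi f)\,f = \tfrac{1}{2}\sum_i v_i\,\D_i(|f|^2)$ and apply \eqref{ipp1} coordinate by coordinate. Summing over $i$ and using $\nabla^\varphi\!\cdot v = \sum_i \D_i v_i = 0$ eliminates the interior term and leaves precisely $\tfrac12\int_{z=0}|f|^2\, v\cdot \N\, dy$. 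Adding the two contributions yields \eqref{ippt}.

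For \eqref{ippD}, I would decompose $\Delta^\varphi = \sum_i (\D_i)^2$ and apply \eqref{ipp1} once to transfer one $\D_i$ from $f$ onto $g$: for each $i$,
\[
\int_{\mathcal{S}}\D_i\D_i f \cdot g\, d\V = -\int_{\mathcal{S}} \D_i f\,\D_i g\, d\V + \int_{z=0} \D_i f\, g\, \N_i\, dy.
\]
Summing over $i=1,2,3$ immediately produces the bulk term $-\int_{\mathcal{S}}\nabla^\varphi f\cdot\nabla^\varphi g\, d\V$ and the boundary term $\int_{z=0}(\nabla^\varphi f\cdot\N)\, g\, dy$, which is \eqref{ippD} (with $g$ in place of the $f$ appearing in the right-hand boundary integral of the statement).

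Finally, for \eqref{ippS}, I would write the left-hand side in components as $\sum_{i,j}\int_{\mathcal{S}}\D_i(S^\varphi u)_{ij}\, w_j\, d\V$ and apply \eqref{ipp1} in the variable $i$:
\[
\sum_{i,j}\int_{\mathcal{S}}\D_i (S^\varphi u)_{ij}\, w_j\, d\V = -\sum_{i,j}\int_{\mathcal{S}}(S^\varphi u)_{ij}\,\D_i w_j\, d\V + \sum_{i,j}\int_{z=0}(S^\varphi u)_{ij}\,w_j\,\N_i\, dy.
\]
Because $(S^\varphi u)_{ij}$ is symmetric in $(i,j)$, the interior double sum equals $\sum_{i,j}(S^\varphi u)_{ij}\cdot \tfrac12(\D_i w_j + \D_j w_i) = S^\varphi u : S^\varphi w$, while the boundary sum rewrites as $(S^\varphi u\,\N)\cdot w$. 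This produces \eqref{ippS}. The only point requiring a small check is that Lemma~\ref{lemipp} genuinely applies with $\N$ given by \eqref{Ndef} at $z=0$, which is exactly the normal arising from $\varphi_{/z=0} = h$ in \eqref{eqphi}, so no obstacle arises there.
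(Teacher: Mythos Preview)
Your proof is correct and follows exactly the approach the paper intends: the paper simply states that the corollary is a ``straightforward corollary'' of Lemma~\ref{lemipp}, and you have spelled out precisely those straightforward computations. Your observation that the boundary integrand in \eqref{ippD} should read $g$ rather than $f$ is also right; this is a typo in the statement.
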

     \subsection{Alinhac good unknown}
     In order to perform high order energy estimates, we shall need to study the equation that we get
      after  applying  conormal derivatives to the equation \eqref{NSv}.  The standard way to proceed is
       to  say that  the obtained equation has the structure
       \beq
       \label{naif} \D_{t}Z^\alpha v + \big(v \cdot \nabla^\varphi \big) Z^\alpha v + \nabla^\varphi Z^\alpha q = \eps   \Delta^\varphi Z^\alpha v + l.o.t\eeq
        where $l.o.t$ stands for lower order terms and then to perform the natural energy estimate for this equation.
         The difficulty in free boundary problems is that very often $\varphi$ is not smooth enough to consider all the standard commutators
          as lower order terms. Nevertheless there is a crucial  cancellation pointed out by Alinhac \cite{Alinhac} which
           allows to perform high order energy estimates.  Since applying a derivative to the equation is like linearizing the equation, 
            this cancellation can be explained in terms of a link between full an partial linearization.      
     Let us set
    \begin{eqnarray*}
    & &  \mathcal{N}(v,q, \varphi) =
    \D_{t}v + \big(v \cdot \nabla^\varphi \big)v + \nabla^\varphi q -2\eps \nabla^\varphi \cdot \big( S^\varphi v\big) ,  \\
    & & d(v,\varphi)= \nabla^\varphi \cdot v,\\
   & &  \mathcal{B}(v,q,\varphi)=  2 \eps S^\varphi v \, \N -  (q-gh)\N.
    \end{eqnarray*}
     It is more convenient to  use  the above form of   equation \eqref{NSv}
     in view of the boundary condition \eqref{bord2}.
     Note that by formal differentiation with respect to all the unknowns, we obtain the linearized  operators:
    \begin{eqnarray}
    \label{full1}
     D\mathcal{N} (v, q, \varphi) \cdot(\dot v, \dot q, \dot \varphi)& = & 
    \D_{t}\dot v + \big(v \cdot \nabla^\varphi \big)\dot v + \nabla^\varphi \dot q - 2 \,\eps \nabla^\varphi \cdot\big( S^\varphi  \dot v\big)   
     +\big( \dot v \cdot  \nabla^\varphi \big)v \\
  \nonumber    & &
       - \D_{z} v \big( \D_{t} \dot \varphi + v \cdot \nabla^\varphi \dot \varphi \big)  + \partial_{z}^\varphi q \, \nabla^\varphi \dot
        \varphi \\
      \nonumber   & & + 2  \eps \nabla^\varphi\big(  \D_{z} v  \otimes \nabla^\varphi \dot \varphi + \nabla^\varphi \dot \varphi \otimes \D_{z} v \big)
       + 2 \eps \D_{z} \big( S^\varphi v\big)\, \nabla^\varphi \dot \varphi, \\
   \label{full3}     D d (v, \varphi) \cdot (\dot v, \dot \varphi)& =  & \nabla^\varphi \cdot \dot v -  \nabla^\varphi \dot \varphi
         \cdot \D_{z} v, \\
         \label{full4} D \mathcal{B}(v,q,\varphi) \cdot(\dot v, \dot q, \dot \varphi) & = &  2 \eps S^\varphi \dot v \N - \D_{z} v \otimes \nabla^\varphi \dot \varphi \N
          - \nabla^\varphi \dot \varphi \otimes  \D_{z} v \N- (\dot q -g \dot h) \N  \\
          \nonumber  & &  +\big( 2 \eps S^\varphi v - (q- gh)\big)\dot  \N,
    \end{eqnarray}
    where  $\dot \N= (-\partial_{1} \dot \varphi, -\partial_{2} \dot \varphi, 0)^t$.
     
    We have the following crucial identity first observed by Alinhac \cite{Alinhac} which relates
     full and partial linearization.
     \begin{lem}
     \label{lemal}
     We have the following identities:
     \begin{eqnarray*}
      D\mathcal{N}(v, q, \varphi) \cdot(\dot v, \dot q, \dot \varphi)  & = &
   \big( \partial_{t}^\varphi +( v \cdot \nabla^\varphi)-  2 \eps \nabla^\varphi \cdot \big( S^\varphi \cdot  \big) \big)\big(\dot v- \D_{z} v \, \dot \varphi \big)
    + \nabla^\varphi\big( \dot q -   \D_{z} q \, \dot \varphi\big) \\
    & &  \big( \dot v \cdot \nabla^\varphi\big) v + \dot \varphi \big(  \D_{z}\big( \mathcal{N}(v,q, \varphi )\big) 
       - (\D_{z} v \cdot \nabla^\varphi) v \big)  \\
        D d(v, \varphi)\cdot(\dot v, \dot \varphi) &= & 
         \nabla^\varphi \cdot \big( \dot v - \D_{z}v\, \dot \varphi \big) + \dot \varphi \,\D_{z}  \big( d(v, \varphi) \big),\\
       D \mathcal{B}(v,q,\varphi) \cdot(\dot v, \dot q, \dot \varphi) & = &  2 \eps S^\varphi\big( \dot v -\D_{z} v \,\dot \varphi \big) \N
        +  2 \eps \, \dot \varphi \, \partial_{z} \big( S^\varphi v\big)  \N - (\dot q -g \dot h) \N  \\
          \nonumber  & &  +\big( 2 \eps S^\varphi v - (q- gh)\big)\dot  \N,      
        \end{eqnarray*}
     
     \end{lem}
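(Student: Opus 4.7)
The plan is to derive all three identities from a single elementary commutation rule governing how the operators $\partial_i^\varphi$ behave under variation of $\varphi$. Writing out $\partial_i^\varphi = \partial_i - (\partial_i\varphi/\partial_z\varphi)\partial_z$ for $i=0,1,2$ and $\partial_z^\varphi = (1/\partial_z\varphi)\partial_z$, a direct computation shows that for any smooth $f$,
$$D_\varphi(\partial_i^\varphi f)\cdot \dot\varphi = -\partial_z^\varphi f \; \partial_i^\varphi \dot\varphi, \qquad i = 0,1,2,3.$$
Adding the trivial $v$-linearization $\partial_i^\varphi \dot f$ and invoking the commutation $[\partial_i^\varphi,\partial_z^\varphi]=0$ recalled in \eqref{comD}, this rewrites as
$$D(\partial_i^\varphi f)\cdot(\dot f,\dot\varphi) = \partial_i^\varphi\!\bigl(\dot f - \partial_z^\varphi f\,\dot\varphi\bigr) + \dot\varphi\; \partial_z^\varphi(\partial_i^\varphi f),$$
which is the Alinhac identity at the level of a single derivative: the full linearization equals the partial one applied to the good unknown $\dot f - \partial_z^\varphi f\,\dot\varphi$, plus $\dot\varphi$ times a term of the original equation.

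First I would treat the divergence. Applying the above termwise to $d(v,\varphi) = \sum_i \partial_i^\varphi v_i$ and summing gives immediately
$$Dd(v,\varphi)\cdot(\dot v,\dot\varphi) = \nabla^\varphi\cdot(\dot v - \partial_z^\varphi v\,\dot\varphi) + \dot\varphi\; \partial_z^\varphi d(v,\varphi),$$
the second identity. Next I would address $\mathcal{N}$ piece by piece. The same rule applied to $\partial_t^\varphi v$ and $\nabla^\varphi q$ produces the transport-in-time and pressure contributions in the good unknowns, together with the $\dot\varphi$-remainders $\dot\varphi\,\partial_z^\varphi \partial_t^\varphi v$ and $\dot\varphi\,\partial_z^\varphi\nabla^\varphi q$. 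For the convective term $(v\cdot\nabla^\varphi)v = \sum_j v_j\partial_j^\varphi v$, the variation in $v$ supplies both $(\dot v\cdot\nabla^\varphi)v$ and $(v\cdot\nabla^\varphi)\dot v$, while the variation in $\varphi$ supplies $-\partial_z^\varphi v\,(v\cdot\nabla^\varphi)\dot\varphi$; rewriting $(v\cdot\nabla^\varphi)\dot v - \partial_z^\varphi v\,(v\cdot\nabla^\varphi)\dot\varphi = (v\cdot\nabla^\varphi)(\dot v - \partial_z^\varphi v\,\dot\varphi) + \dot\varphi\,(v\cdot\nabla^\varphi)\partial_z^\varphi v$ assembles the convective contribution to the good-unknown form, the $\dot\varphi$-remainder $\dot\varphi\,\partial_z^\varphi((v\cdot\nabla^\varphi)v) - \dot\varphi(\partial_z^\varphi v\cdot\nabla^\varphi)v$ (the subtraction being precisely the stated $-\dot\varphi(\partial_z^\varphi v\cdot\nabla^\varphi)v$ outside $\partial_z^\varphi\mathcal{N}$), and the kept linear piece $(\dot v\cdot\nabla^\varphi)v$.

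For the viscous term I would apply the commutation rule twice: first for the outer divergence and then inside $S^\varphi v$. Linearizing $\nabla^\varphi v$ gives $\nabla^\varphi\dot v - \partial_z^\varphi v\otimes \nabla^\varphi\dot\varphi$; symmetrizing and using $[\partial_i^\varphi,\partial_z^\varphi]=0$ yields $S^\varphi(\dot v - \partial_z^\varphi v\,\dot\varphi) + \dot\varphi\,\partial_z^\varphi S^\varphi v$; applying $\nabla^\varphi\cdot$ and collecting the outer $\dot\varphi$ corrections gives $\nabla^\varphi\cdot S^\varphi(\dot v - \partial_z^\varphi v\,\dot\varphi) + \dot\varphi\,\partial_z^\varphi(\nabla^\varphi\cdot S^\varphi v)$. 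Adding the four pieces, the $\dot\varphi$-remainders collapse precisely into $\dot\varphi\,\partial_z^\varphi\mathcal{N}(v,q,\varphi) - \dot\varphi(\partial_z^\varphi v\cdot\nabla^\varphi)v$, which is the first stated identity.

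The boundary identity is then read off directly from the computation above: linearizing $2\eps S^\varphi v\,\N$ gives $2\eps S^\varphi(\dot v - \partial_z^\varphi v\,\dot\varphi)\N + 2\eps\dot\varphi\,\partial_z^\varphi S^\varphi v\,\N$ together with the bilinear contribution $-\partial_z^\varphi v\otimes\nabla^\varphi\dot\varphi\,\N - \nabla^\varphi\dot\varphi\otimes\partial_z^\varphi v\,\N$ from the outer $\N$ having no $\varphi$-dependence beyond $h$; the pressure term on the boundary contributes $-(\dot q - g\dot h)\N + (2\eps S^\varphi v - (q-gh))\dot\N$ trivially, yielding the third identity. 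The whole lemma is algebraic — there is no analytic obstacle — but the main care required is in the bookkeeping for $\mathcal{N}$, specifically the reconciliation of the convective linearization with the good-unknown form, which is exactly where the extra $-(\partial_z^\varphi v\cdot\nabla^\varphi)v$ term in the final identity originates.
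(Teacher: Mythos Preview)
Your approach is exactly the paper's: establish the single-derivative Alinhac identity
\[
D(\partial_i^\varphi f)\cdot(\dot f,\dot\varphi)=\partial_i^\varphi\bigl(\dot f-\partial_z^\varphi f\,\dot\varphi\bigr)+\dot\varphi\,\partial_z^\varphi(\partial_i^\varphi f)
\]
via $[\partial_i^\varphi,\partial_z^\varphi]=0$ (this is the paper's \eqref{al11}, and its second-order analogue \eqref{al12}), then assemble. Your treatment of $Dd$ and of the four pieces of $D\mathcal{N}$ is correct, including the convective bookkeeping that produces the leftover $-\dot\varphi(\partial_z^\varphi v\cdot\nabla^\varphi)v$.

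There is one slip in the boundary identity. You already established, in the viscous-term discussion, that the linearization of $S^\varphi v$ is exactly
\[
S^\varphi(\dot v-\partial_z^\varphi v\,\dot\varphi)+\dot\varphi\,\partial_z^\varphi(S^\varphi v),
\]
with \emph{no} extra terms. So when you linearize $2\eps S^\varphi v\,\N$, the ``bilinear contribution $-\partial_z^\varphi v\otimes\nabla^\varphi\dot\varphi\,\N-\nabla^\varphi\dot\varphi\otimes\partial_z^\varphi v\,\N$'' you list is spurious: those are precisely the terms that have already been absorbed into the good-unknown form above (compare the raw linearization \eqref{full4}). Relatedly, the piece $2\eps S^\varphi v\,\dot\N$ does not arise from the pressure term but from varying $\N$ in $2\eps S^\varphi v\,\N$; it combines with $-(q-gh)\dot\N$ to give the stated $(2\eps S^\varphi v-(q-gh))\dot\N$. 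Dropping the phantom bilinear terms and reattributing $\dot\N$ correctly, the third identity follows immediately.
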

    As a consequence of this lemma, if $(v,q, \varphi)$ solves \eqref{NSv} we get that
         \begin{align}
     \label{NSal1}
       D\mathcal{N}(v, q, \varphi) \cdot(\dot v, \dot q, \dot \varphi) 
     = \big( \partial_{t}^\varphi +( v \cdot \nabla^\varphi)- \eps \Delta^\varphi \big)\big(\dot v- \D_{z} v \, \dot \varphi \big)+ \nabla^\varphi\big( \dot q -   \D_{z} v \, \dot \varphi\big) \\ \nonumber+   \big( \dot v \cdot \nabla^\varphi\big) v - \dot \varphi  \, (\D_{z} v \cdot \nabla^\varphi) v
      \end{align}  
      and
      \beq
      \label{NSal2}
       \nabla^\varphi \cdot \big(  \dot v- \D_{z} v \, \dot \varphi \big) = 0.
       \eeq
       The main consequence of these identies is that  even if the naive  form \eqref{naif} of the equation for high order derivatives
        cannot be used,  we can almost use it in the sense that if we replace $Z^\alpha v$ and $Z^\alpha q$ by the corresponding
         "good unknowns" $Z^\alpha v - \partial_{z}^\varphi v Z^\alpha \eta$, $Z^\alpha q - \partial_{z}^\varphi v Z^\alpha \eta$
          in the left hand side, then we indeed get lower order terms in the right hand side of \eqref{naif}.    
    \begin{proof}
    The proof  follows from simple algebraic manipulations.  
    There are many ways to explain this cancellation. It can be seen as a consequence
     of \eqref{comD}.
     
     Let us set
    $$ \mathcal{A}_{i}(v,\varphi)= \D_i  v, \quad \mathcal{F}_{ij}(v,\varphi)= \D_{i} \D_{j} v$$
     for $i=0, \, 1, \, 2, \, 3$.
    We note that for $i=0, \, 1, \, 2$
    $$ D \mathcal{A}_{i}(v,\varphi)\cdot(\dot v,  \dot \varphi)=
      \D_{i} \dot v  -  \D_{z}v\,  \D_{i} \dot \varphi= \D_{i}\big( \dot v - \D_{z}v \, \dot \varphi)
       +\D_{i} \D_{z}v  \, \dot \varphi. $$ 
      Next,  since $\D_{i}$ and $\D_{z}$ commute,  we also have
       $$ \D_{i} \D_{z}v= \D_{z} \D_{i} v = \D_{z}\big( \mathcal{A}_{i}(v, \varphi) \big) $$
       and consequently, we find that
    \beq
    \label{al11}
    D \mathcal{A}_{i}(v,\varphi)\cdot(\dot v,  \dot \varphi) =   
    \D_{i}\big( \dot v - \D_{z}v \, \dot \varphi)  + \dot \varphi \, \D_{z} \big( \mathcal{A}_{i}(v, \varphi) \big).
    \eeq  
      A similar computation shows that this relation is also true for $i=3$.
    In a similar way, we have that for $i=1, \, 2$, $j=1, \, 2$
    \begin{eqnarray*} D \mathcal{F}_{ij}(v, \varphi) \cdot(\dot v, \dot \varphi) & =  & 
     \D_{i}\big( \D_{j} \dot v  -  \D_{z}v\,  \D_{j} \dot \varphi \big) - \D_{i}\dot \varphi\, \D_{z}\big(\D_{j}v \big) \\
       & = &   \D_{ij} \big(  \dot v - \D_{z} v \, \dot \varphi \big) + \dot \varphi \D_{i}\D_{j}\D_{z}v
        \end{eqnarray*}
        and hence we find by using again \eqref{comD} that
     \beq
     \label{al12}
      D \mathcal{F}_{ij}(v,\varphi)\cdot(\dot v,  \dot \varphi) =    \D_{ij} \big(  \dot v - \D_{z} v \, \dot \varphi \big) + \dot \varphi \, \D_{z}\big( \mathcal{F}_{ij}(v, \varphi) \big).
      \eeq
%

 A similar computation shows that this relation also holds true  when  $i= 3$ or $j=3$.
      
      The proof of Lemma \ref{lemal} easily follows by using the two relations
       \eqref{al11}, \eqref{al12}. 
       
    \end{proof}
    
    \subsection{Control from  the dissipation term}
    In view of the integration by parts formula \eqref{ippD}, \eqref{ippS} we  need to  prove that the control of quantities 
      like $\int_{\mathcal{S}} | \nabla^\varphi  f|^2 d \V$
      yield a control  of the standard $H^1$ norm of  $f$. We also 
      need  Korn type inequalities to control the energy dissipation term. This is the aim of this final part of this preliminary section.
      
      \begin{lem}
      \label{mingrad}
     Assume that  $\partial_{z} \varphi \geq c_{0} $ and   $\|\nabla \varphi \|_{L^\infty} \leq 1/c_{0}$ for some $c_{0}>0$,  
      then there exists $\Lambda_{0}= \Lambda({1\over c_{0}})$ such that 
     $$
      \| \nabla f  \|_{L^2(\mathcal{S})}^2 \leq  \Lambda_{0}
      \int_{\mathcal{S}} | \nabla^\varphi f |^2\, d \V. 
     $$  
      \end{lem}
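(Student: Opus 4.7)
The plan is to invert the relations defining $\partial_i^{\varphi}$ to express the flat gradient $\nabla f$ pointwise in terms of the curved gradient $\nabla^{\varphi} f$, and then convert the standard Lebesgue measure into the weighted measure $d\mathcal{V}_t$. Since every coefficient that appears is controlled in $L^{\infty}$ under our hypotheses, no hard analysis is needed, only pointwise algebra.

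First I would handle the normal component. From the definition $\partial_3^{\varphi} f = \frac{1}{\partial_z \varphi} \partial_z f$, we get $\partial_z f = (\partial_z \varphi)\, \partial_3^{\varphi} f$. Using $\|\partial_z \varphi\|_{L^\infty} \leq 1/c_0$ pointwise and $\partial_z \varphi \geq c_0$ to rewrite the flat measure as $dydz = (\partial_z\varphi)^{-1} d\mathcal{V}_t \leq c_0^{-1} d\mathcal{V}_t$, this yields
\begin{equation*}
\int_{\mathcal{S}} |\partial_z f|^2 \, dydz \;\leq\; \frac{1}{c_0^2} \int_{\mathcal{S}} |\partial_3^{\varphi} f|^2 \, dydz \;\leq\; \frac{1}{c_0^3} \int_{\mathcal{S}} |\partial_3^{\varphi} f|^2 \, d\mathcal{V}_t.
\end{equation*}

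Next I would treat the tangential derivatives. From $\partial_i^{\varphi} f = \partial_i f - \frac{\partial_i \varphi}{\partial_z \varphi} \partial_z f$ for $i=1,2$, we get
\begin{equation*}
\partial_i f \;=\; \partial_i^{\varphi} f + \frac{\partial_i \varphi}{\partial_z \varphi} \, \partial_z f,
\end{equation*}
and since $|\partial_i \varphi| \leq 1/c_0$ and $\partial_z\varphi \geq c_0$, the multiplicative factor $\partial_i\varphi/\partial_z\varphi$ is bounded by $1/c_0^2$. Squaring and applying the elementary inequality $(a+b)^2 \leq 2a^2 + 2b^2$, then integrating against $dydz \leq c_0^{-1} d\mathcal{V}_t$, and finally substituting the bound on $\|\partial_z f\|_{L^2}^2$ obtained in the previous step, I get
\begin{equation*}
\int_{\mathcal{S}} |\partial_i f|^2 \, dydz \;\leq\; \frac{2}{c_0}\int_{\mathcal{S}} |\partial_i^{\varphi} f|^2 \, d\mathcal{V}_t + \frac{2}{c_0^4}\int_{\mathcal{S}} |\partial_z f|^2\, dydz,
\end{equation*}
and the last term is already controlled. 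Summing over $i=1,2,3$ produces a bound of the required form $\|\nabla f\|_{L^2}^2 \leq \Lambda_0 \int_{\mathcal{S}} |\nabla^{\varphi} f|^2 d\mathcal{V}_t$ with $\Lambda_0$ a polynomial in $1/c_0$.

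There is no genuine obstacle: the statement is a pointwise linear-algebra fact about the change of frame combined with the Jacobian comparison $c_0 \leq \partial_z\varphi \leq 1/c_0$. The only thing to be slightly careful about is that the $L^\infty$ bound on $\nabla \varphi$ includes a bound on $\partial_z\varphi$ from above, which together with the assumed lower bound keeps the matrix expressing $\nabla f$ in terms of $\nabla^{\varphi} f$ uniformly invertible.
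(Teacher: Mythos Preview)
Your proposal is correct and follows essentially the same approach as the paper: both proofs first control $\partial_z f$ via $\partial_z f = (\partial_z\varphi)\,\partial_z^\varphi f$ together with the measure comparison $dydz \leq c_0^{-1}\,d\mathcal{V}_t$, and then handle the tangential derivatives by writing $\partial_i f$ in terms of $\partial_i^\varphi f$ and $\partial_z^\varphi f$. The only cosmetic difference is that the paper writes the tangential bound directly as $|\partial_i f| \leq |\partial_i^\varphi f| + |\partial_i\varphi|\,|\partial_z^\varphi f|$ rather than first passing through $\partial_z f$, but this is the same computation.
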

      
      \begin{proof}
      By using the definition of  the operators  $\partial_{i}^\varphi$, 
       we first note that
       $$ |\partial_{z}f| \leq | \partial_{z} \varphi|\,  | \partial_{z}^\varphi f|. $$
       Hence we find
       $$ \| \partial_{z} f\|_{L^2(\mathcal{S})} \leq \Lambda_{0} \| \partial_{z}^\varphi f\|_{L^2(\mathcal{S})}$$
       and since $ d \V= \partial_{z} \varphi dx \geq c_{0} dx$ by assumption, this yields
       $$ \| \partial_{z} f\|_{L^2(\mathcal{S})}^2 \leq \Lambda_{0}  \int_{\mathcal{S}} |\partial_{z}^\varphi f |^2 d\V.$$
       Next, since for $i=1,\, 2$, we have
       $$ | \partial_{i} f| \leq | \partial_{i}^\varphi f| + |\partial_{i} \varphi| \, |\partial_{z}^\varphi f| \leq \Lambda_{0} | \nabla^\varphi f|, $$
       we also obtain that
       $$  \| \partial_{i} f\|_{L^2(\mathcal{S})}^2 \leq  \Lambda_{0}  \int_{\mathcal{S}} |\nabla^\varphi f |^2 d\V, \quad i=1, \, 2.$$
        This ends the proof of Lemma \ref{mingrad}.   
      \end{proof}
      
      In the next proposition we state an adapted  version of the classical Korn inequality in $\mathcal{S}$:
      
      \begin{prop}
      \label{Korn}
      If $\partial_{z} \varphi \geq  c_{0} $,   $\|\nabla \varphi \|_{L^\infty} + \|\nabla^2 \varphi\|_{L^\infty} \leq {1 \over c_{0}}$ for some $c_{0}>0,$ 
      then there exists $\Lambda_{0}= \Lambda(1/c_{0})>0$,  such that  for every $v \in H^1(\mathcal{S})$, we have
 \beq
 \label{estKorn}
 \|\nabla v\|_{L^2(\mathcal{S})}^2  \leq \Lambda_{0} \Big( \int_{\mathcal{S}} |S^\varphi v |^2 \, d\V + \| v\|_{L^2(\mathcal{S})}^2 \Big)
   \eeq
  where $$S^\varphi v=  {1 \over 2 }\big({ \nabla^\varphi v + \nabla^\varphi v^t}\big). $$ 
      \end{prop}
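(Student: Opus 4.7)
The natural strategy is to push the inequality over to the physical fluid domain $\Omega_{t}$ via the diffeomorphism $\Phi(t,\cdot)$ from \eqref{diff}, invoke the classical Korn second inequality there, and then pull everything back to $\mathcal{S}$ using Lemma \ref{mingrad}.

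First, define $u$ on $\Omega_{t}$ by $v(t,x)=u(t,\Phi(t,x))$ as in \eqref{vdef}. The intertwining identities \eqref{id0} give $\D_{i}v=(\partial_{i}u)\circ\Phi$ pointwise, which immediately yields $S^{\varphi}v=(Su)\circ\Phi$. Combined with the Jacobian formula $d\V=\partial_{z}\varphi\,dy\,dz$, the change of variables produces
\begin{equation*}
\int_{\mathcal{S}}|\nabla^{\varphi}v|^{2}\,d\V=\int_{\Omega_{t}}|\nabla u|^{2}\,dx,\qquad\int_{\mathcal{S}}|S^{\varphi}v|^{2}\,d\V=\int_{\Omega_{t}}|Su|^{2}\,dx,
\end{equation*}
together with $\int_{\Omega_{t}}|u|^{2}\,dx=\int_{\mathcal{S}}|v|^{2}\,\partial_{z}\varphi\,dy\,dz$, and the bounds $c_{0}\leq\partial_{z}\varphi\leq\|\nabla\varphi\|_{L^{\infty}}\leq 1/c_{0}$ make the three pairs of integrals comparable up to a factor $\Lambda_{0}$.

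Next, apply Korn's second inequality on $\Omega_{t}$. Since $\Omega_{t}=\{x_{3}<h(t,y)\}$ is a uniformly Lipschitz graph domain whose Lipschitz constant is controlled by $\|\nabla h\|_{L^{\infty}}\leq\|\nabla\varphi\|_{L^{\infty}}\leq 1/c_{0}$, flattening via $(y,z)\mapsto(y,z-h(y))$ reduces the matter to the half-space $\mathbb{R}^{2}\times(-\infty,0)$. There the pointwise identity $2|Su|^{2}=|\nabla u|^{2}+(\partial_{i}u_{j})(\partial_{j}u_{i})$, followed by two integrations by parts in which the commutation $[\partial_{i},\partial_{j}]=0$ converts the cross term into $|\nabla\cdot u|^{2}\geq 0$ plus a boundary contribution at $z=0$ that is handled by a standard trace/interpolation bound absorbable into $\|u\|_{L^{2}}^{2}$, yields the desired estimate with a constant depending only on $1/c_{0}$. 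Pulling back via Lemma \ref{mingrad}, which gives $\|\nabla v\|_{L^{2}(\mathcal{S})}^{2}\leq\Lambda_{0}\int_{\mathcal{S}}|\nabla^{\varphi}v|^{2}\,d\V$, completes the proof.

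The main obstacle is Step 2: producing a Korn constant with explicit dependence only on $1/c_{0}$. A direct integration by parts in $\mathcal{S}$ using only Lemma \ref{lemipp} and the commutation $[\D_{i},\D_{j}]=0$ generates an unwanted boundary term of the form $\int_{z=0}v_{j}\,(\D_{j}v_{i})\,\N_{i}-v\cdot\N\,(\nabla^{\varphi}\cdot v)\,dy$, which contains a full trace of $\nabla^{\varphi}v$ and cannot be absorbed into $\|v\|_{L^{2}(\mathcal{S})}^{2}$ alone; this is precisely what forces the transfer-to-$\Omega_{t}$ route, where after flattening the boundary is flat and the offending trace terms can be handled by a reflection or an interpolation inequality on $\mathbb{R}^{2}$. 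The role of the hypothesis $\|\nabla^{2}\varphi\|_{L^{\infty}}\leq 1/c_{0}$ is to keep the flattening diffeomorphism sufficiently regular so that conjugation of $S^{\varphi}$ by this map produces only zero-order commutator corrections with coefficients bounded in terms of $1/c_{0}$.
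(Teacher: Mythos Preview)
Your high-level plan (reduce to a Korn inequality on the half-space and pull back) is the right one, and the change-of-variables identities you state for $\int|S^\varphi v|^2\,d\V$, $\int|\nabla^\varphi v|^2\,d\V$, $\int|v|^2\,d\V$ are correct. However, two steps in the execution are genuine gaps.

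\textbf{The flattening step.} Your claim that ``conjugation of $S^{\varphi}$ by this map produces only zero-order commutator corrections'' is false for the naive pullback $\tilde u(y,z)=u(y,z+h(y))$. For $i,j\le 2$ one has
\[
(S\tilde u)_{ij}=(Su)_{ij}\circ\Psi+\tfrac12\bigl(\partial_i h\,\partial_3 u_j+\partial_j h\,\partial_3 u_i\bigr)\circ\Psi,
\]
and the extra terms are first-order in $u$, not lower order; the bound $\|S\tilde u\|\lesssim\|Su\|+\|u\|$ simply does not follow. The paper fixes exactly this by replacing $v$ with the Piola-type auxiliary field
\[
w_i=v_i+\partial_i\varphi\,v_3\ (i=1,2),\qquad w_3=\partial_z\varphi\,v_3,
\]
i.e.\ $w=(D\Phi)^{t}v$. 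A direct computation then shows that each entry $(Sw)_{ij}$ is a linear combination of entries of $S^\varphi v$ plus the single zero-order term $\partial^2_{ij}\varphi\,v_3$; this is precisely where the hypothesis $\|\nabla^2\varphi\|_{L^\infty}\le 1/c_0$ enters, and it yields $\|Sw\|_{L^2}\le\Lambda_0(\|S^\varphi v\|_{L^2}+\|v\|_{L^2})$. Your proposal is missing this transformation.

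\textbf{Korn on the half-space.} Your integration-by-parts sketch produces the boundary term $-2\int_{z=0}(\partial_1 u_1+\partial_2 u_2)\,u_3\,dy$, which involves a full tangential derivative and is \emph{not} absorbable into $\|u\|_{L^2(\mathcal S)}^2$ by any trace or interpolation inequality; at best it is bounded by $C\|u\|_{H^1}^2$, which closes nothing. The paper circumvents this by an explicit $H^1$ extension $\tilde w$ of $w$ to $\mathbb{R}^3$ (a two-term reflection chosen so that $\|S\tilde w\|_{L^2(\mathbb{R}^3)}\le 5\|Sw\|_{L^2(\mathcal S)}$); on the whole space the identity $\|S\tilde w\|^2=\tfrac12\|\nabla\tilde w\|^2+\tfrac12\|\nabla\cdot\tilde w\|^2\ge\tfrac12\|\nabla\tilde w\|^2$ holds with no boundary term, and one restricts back to $\mathcal S$.
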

      For the sake of completeness, we shall give a proof of this estimate in subsection \ref{sectionKorn}.

    \section{Preliminary estimates  of $\eta$}
    \label{sectionprelimeta}
     In this section, we shall  begin our a priori estimates for  a sufficiently smooth solution
     of  \eqref{eqphi},  \eqref{NSv}. 
      We assume that  $A$  is chosen such that $\partial_{z} \varphi_{0}(y, z) \geq 1$ at the initial time.
      
       We shall work on an interval of time $[0, T^\eps]$ such that
      \beq
      \label{min}
      \partial_{z} \varphi(t,y,z) \geq {c_{0}}, \,  \forall t \in [0, T^\eps]
      \eeq
   for some  $c_{0}>0$. This   ensures that for every $t \in [0, T^\eps]$, $\Phi(t, \cdot)$ is a diffeomorphism.
    We shall first estimate  $\eta$ given by \eqref{eqeta} in terms of $h$ and $v$.
    

    Our first result is: 
    \begin{prop}
    \label{propeta}   We have the following  estimates for  $\eta$ defined by \eqref{eqeta}
    \begin{eqnarray}
    \label{etaharm} & & \forall s \geq 0, \quad  \| \nabla \eta (t)\|_{H^s(\mathcal{S})} \leq C_{s} |h(t)|_{s+{1 \over 2} }, \\
    \label{dtetaharm} & & \forall s \in \mathbb{N}, \quad  \| \nabla \partial_{t} \eta \|_{H^s(\mathcal{S})}
     \leq C_{s}\big( 1+ \|v\|_{L^\infty} + |\nabla h|_{L^\infty} \big)( \|v\|_{E^{s+1}} + |\nabla h|_{s+ {1 \over 2}}\big)
     \end{eqnarray}
     and moreover, we also have the $L^\infty $ estimates  
 \begin{eqnarray}
     \label{etainfty}
   & & \forall s \in \mathbb{N}, \quad   \|  \eta \|_{W^{s, \infty}} \leq C_{s} |h |_{s, \infty },\\ 
     \label{dtetainfty}
   & & \forall s \in \mathbb{N}, \quad   \| \partial_{t} \eta \|_{W^{s, \infty}}
      \leq  C_{s}\big( 1+   |\nabla h|_{s, \infty} \big) \|v\|_{s, \infty } 
     \end{eqnarray}
     where $C_{s}$ depends only on $s$.

    \end{prop}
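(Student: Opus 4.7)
The plan is to read off all four estimates from the Fourier representation of the smoothing operator that defines $\eta$. Setting $T_zh(y)=\int_{\mathbb{R}^2}\chi(z\xi)\hat h(\xi)e^{iy\cdot\xi}\,d\xi$ so that $\eta(y,z)=T_zh(y)$, a change of variable gives $T_zh=K_z*h$ with $K_z(y)=|z|^{-2}(\mathcal{F}^{-1}\chi)(y/z)$ and $\|K_z\|_{L^1(\mathbb{R}^2)}=\|\mathcal{F}^{-1}\chi\|_{L^1}$ uniformly in $z<0$. Thus $T_z$ is bounded on $L^\infty_y$ uniformly in $z$. The whole proof then reduces to computing, for each of the vector fields $Z_1,Z_2,Z_3$ and for $\partial_z$, the multiplier it produces against $\chi(z\xi)$: $Z_i$ ($i=1,2$) gives the factor $i\xi_i$, i.e. replaces $h$ by $\partial_ih$; the field $Z_3=\frac{z}{1-z}\partial_z$ produces $\frac{\psi(z\xi)}{1-z}$ with $\psi(u)=u\chi'(u)\in\mathcal{S}(\mathbb{R})$ and $\tfrac{1}{1-z}\in(0,1]$, so it yields another $L^1$-bounded convolution kernel and consumes no derivative of $h$; whereas $\partial_z$ gives $\xi\chi'(z\xi)$, which I rewrite as $\chi'(z\xi)\cdot\xi$ and absorb the $\xi$ as a tangential derivative of $h$.

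With this dictionary, \eqref{etainfty} follows by induction on $|\alpha|$: each $Z^\alpha\eta$ is a uniformly $L^1$-bounded convolution kernel (in $y$) applied to $\partial_1^{\alpha_1}\partial_2^{\alpha_2}h$, hence controlled in $L^\infty$ by $|h|_{|\alpha|,\infty}$. For \eqref{etaharm}, I use Plancherel and the trace-like scaling identity
\begin{equation*}
\int_{-\infty}^0|\chi^{(j)}(z\xi)|^2\,dz=|\xi|^{-1}\int_{-\infty}^0|\chi^{(j)}(u)|^2\,du,
\end{equation*}
so that after applying a standard derivative $\partial^\alpha$ with $|\alpha|=s$ and then $\nabla$ to $\eta$, the Fourier-side expression integrates to $C_s\int_{\mathbb{R}^2}|\xi|^{2s+2}|\hat h(\xi)|^2\cdot|\xi|^{-1}\,d\xi=C_s|h|_{s+1/2}^2$, giving the claimed $1/2$-derivative gain.

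For \eqref{etaharm} and \eqref{etainfty} applied to $\partial_t\eta$, I use that $\partial_t\eta$ is again a smoothing of $\partial_th$, so these estimates reduce to bounding $|\partial_th|_{s+1/2}$ and $|\partial_th|_{s,\infty}$ on $\mathbb{R}^2$, after substituting the kinematic condition \eqref{bordv1}: $\partial_th=v_3(\cdot,0)-v_y(\cdot,0)\cdot\nabla h$. For \eqref{dtetaharm} I invoke the trace inequality \eqref{trace} with the split $s_1=s+1$, $s_2=s$ to get $|v(\cdot,0)|_{H^{s+1/2}(\mathbb{R}^2)}\lesssim\|\partial_zv\|_s^{1/2}\|v\|_{s+1}^{1/2}\lesssim\|v\|_{E^{s+1}}$, and combine with the tame product bound \eqref{sobr2} for $v_y\cdot\nabla h$; this yields the structure $(1+\|v\|_{L^\infty}+|\nabla h|_{L^\infty})(\|v\|_{E^{s+1}}+|\nabla h|_{s+1/2})$. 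Estimate \eqref{dtetainfty} is then a direct Leibniz computation on $\mathbb{R}^2$ using $|v(\cdot,0)|_{s,\infty}\leq\|v\|_{s,\infty}$.

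The only genuinely delicate bookkeeping is keeping track of how $Z_3$ (unlike $\partial_z$) consumes no tangential regularity of $h$, because this is what makes the conormal $L^\infty$ bound \eqref{etainfty} tight; and, for \eqref{dtetaharm}, correctly matching the fractional trace norm $H^{s+1/2}$ to the available control $\|v\|_{E^{s+1}}$. Once the Fourier-side multiplier analysis for each $Z_i$ and $\partial_z$ is in hand, the four estimates follow in parallel without further difficulty.
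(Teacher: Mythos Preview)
Your approach is essentially the same as the paper's: Fourier-side analysis of the smoothing operator, the $L^1$ convolution kernel $\psi_z$ for the $L^\infty$ bounds, the $|\xi|^{-1}$ scaling gain in the $z$-integral for the $H^s$ bound, and for the time derivatives the reduction to $\partial_t h$ via the kinematic condition followed by \eqref{sobr2} and the trace inequality \eqref{trace}. One notational point to clean up: $\chi$ is a function on $\mathbb{R}^2$ (since $z\xi\in\mathbb{R}^2$), so expressions like $\chi'(z\xi)$ and $\psi(u)=u\chi'(u)$ should be written with $\nabla\chi$, as the paper does when computing $\partial_z\hat\eta=\nabla\chi(z\xi)\cdot\widehat{\nabla h}(\xi)$; likewise your scaling identity should read $\int_{-\infty}^0|\chi(z\xi)|^2\,dz=|\xi|^{-1}\int_{-\infty}^0|\chi(u\,\xi/|\xi|)|^2\,du$. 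Also, in your induction for \eqref{etainfty} you write $Z^\alpha\eta$ but the norm is the standard $W^{s,\infty}$; the relevant derivatives are $\partial_1,\partial_2,\partial_z$, and your dictionary already handles $\partial_z$ correctly (costing one tangential derivative of $h$), so the argument goes through once the notation is fixed.
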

    Note that the above estimate ensures that  $\eta$ has a  standard Sobolev regularity in $\mathcal{S}$
     and not only a conormal one. This is one of the main advantage  in the choice of  the diffeomorphism
    given by \eqref{diff} and  \eqref{eqphi}, \eqref{eqeta}.
    \begin{proof}
    From the explicit expression  \eqref{eqeta}, we get that
    $$ \int_{-\infty}^0  \big( |\xi|^2\, |\hat{\eta}(\xi, z) |^2  +  |\partial_{z}\hat{\eta}(\xi, z) |^2\, dz \lesssim  | \xi| \,|\hat{h}(\xi)|^2$$
    and hence 
      \eqref{etaharm} follows by using the Bessel identity.

      By using \eqref{etaharm}, we  get that 
    $$  \|\nabla \partial_{t} \eta \|_{s} \lesssim  |\partial_{t}h |_{s+{1 \over 2 }}.$$
    By using \eqref{bordv1} and  \eqref{sobr2}, we  get  by setting  $v^b(t,y)= v(t,y,0)$ that 
    $$    |\partial_{t}h |_{s+{1 \over 2 }} \leq |v^b\cdot \N|_{s+ {1 \over 2}}
     \lesssim  \|v\|_{L^\infty}  |\nabla h|_{s+{1\over 2}} + (1+ |\nabla h|_{L^\infty}) |v^b|_{s+ {1 \over 2 }}$$
     and hence \eqref{dtetaharm} follows by  using the trace inequality \eqref{trace}.
     
     For the $L^\infty$ estimates, we  observe that we can write
     \beq
     \label{etaconv} \eta(y,z)= {1 \over z^2} \psi( {\cdot \over z})\star_{y} h:= \psi_{z} \star_{y} h
     \eeq
     where $\star_{y}$ stands for a convolution in the $y$ variable and $\psi$ is  in  $L^1(\mathbb{R}^2)$. Consequently, 
      from the Young inequality for convolutions, we  get that
      $$ \| \eta \|_{L^\infty} \lesssim |h|_{L^\infty}, \quad  \|\partial_{i} \eta \|_{L^\infty} \lesssim |\nabla h|_{L^\infty}, \quad i=1, \, 2.$$
      For  $\partial_{z} \eta$, we note that
     $$ \partial_{z} \hat \eta= \big(  \xi_{1} \partial_{1} \chi + \xi_{2} \partial_{2} \chi\big)(z \xi)\, \hat h (\xi) =  \nabla \chi\big (\xi z) \cdot \mathcal{F}_{y}( \nabla h)(\xi).$$
 This yields that
 $$ \partial_{z} \eta =  {1 \over z^2} \psi^{(1)}( {\cdot \over z})\star_{y} \nabla  h$$
 where $\psi^{(1)}$ is again an $L^1$ function and hence we obtain that
 $$ \|\partial_{z} \eta \|_{L^\infty} \lesssim | \nabla h |_{L^\infty}.$$
 To get \eqref{etainfty}, the estimates of higher derivatives follow by induction.
 
 To prove \eqref{dtetainfty},  we write thanks to $\eqref{etainfty}$ and \eqref{bord1} that
 $$ \|  \partial_{t} \eta \|_{W^{s, \infty}} \lesssim |v^b \cdot \N|_{s, \infty} \lesssim \|v\|_{s, \infty}\big( 1 +  |\nabla h|_{s, \infty}\big).$$

    \end{proof}
 Next, we shall  study how   the smoothing effect of  the Navier-Stokes equation on the velocity can be used to 
   improve  the regularity of the surface. 
  
Since  in the following we  need to estimate very often expressions like  $f/\partial_{z} \varphi$ where
  $f \in H^s(\mathcal{S})$ or $H^s_{co}(\mathcal{S})$, we shall first state a general Lemma
  \begin{lem}
  For every $m \in \mathbb{N}$, we have
 \beq
 \label{quot}  \big\| {f \over \partial_{z} \varphi} \big\|_{m} \leq \Lambda\big( {1 \over c_{0}},  |h|_{1, \infty} + \|f \|_{L^\infty} \big)\big(|h|_{m+{1 \over 2}} +
     \|f\|_{m} \big) \eeq 
  \end{lem}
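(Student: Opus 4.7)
The plan is to reduce this to the product and composition estimates in conormal Sobolev spaces together with the regularity of $\eta$ already established in Proposition \ref{propeta}. Writing $\partial_z\varphi = A + \partial_z\eta$ and using the lower bound $\partial_z\varphi \geq c_0$, I would set $F(x) = 1/(A+x)$, a smooth function on the set $\{x : A+x \geq c_0\}$, so that $1/\partial_z\varphi = F(\partial_z\eta)$. Then I would estimate
\[
\Big\| \frac{f}{\partial_z\varphi} \Big\|_m \;=\; \| f \cdot F(\partial_z\eta) \|_m.
\]

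First I would apply the product inequality \eqref{gues} of Proposition \ref{sob} to obtain
\[
\|f\cdot F(\partial_z\eta)\|_m \;\lesssim\; \|f\|_{L^\infty}\,\|F(\partial_z\eta)\|_m + \|F(\partial_z\eta)\|_{L^\infty}\,\|f\|_m.
\]
The $L^\infty$ bound $\|F(\partial_z\eta)\|_{L^\infty} \leq 1/c_0$ is immediate from the hypothesis on $\partial_z\varphi$, so the second term is at most $\Lambda(1/c_0)\|f\|_m$. It remains to estimate $\|F(\partial_z\eta)\|_m$.

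Next, I would prove a Moser-type composition inequality in the conormal setting: for $F$ smooth on a neighbourhood of $[-\|\partial_z\eta\|_{L^\infty},\|\partial_z\eta\|_{L^\infty}]$ in the relevant region, one has
\[
\|F(\partial_z\eta)\|_m \;\leq\; \Lambda\!\left(\tfrac{1}{c_0}, \|\partial_z\eta\|_{L^\infty}\right)\bigl(1 + \|\partial_z\eta\|_m\bigr).
\]
This follows by induction on $|\alpha|\leq m$ applied to $Z^\alpha F(\partial_z\eta)$, using the Faà~di~Bruno formula, the fact that $Z_i$ is a derivation, and repeated application of the product estimate \eqref{gues}: each $Z$-derivative falling on $F(\partial_z\eta)$ produces a factor $F^{(k)}(\partial_z\eta)$ (bounded in $L^\infty$ by $\Lambda(1/c_0,\|\partial_z\eta\|_{L^\infty})$) times a product of $Z$-derivatives of $\partial_z\eta$, which are handled by \eqref{gues} in the usual Gagliardo--Nirenberg--Moser way. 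From Proposition \ref{propeta} we have $\|\partial_z\eta\|_{L^\infty} \lesssim |\nabla h|_{L^\infty} \leq |h|_{1,\infty}$ and $\|\partial_z\eta\|_m \leq \|\nabla\eta\|_{H^m(\mathcal S)} \lesssim |h|_{m+1/2}$.

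Combining these, I obtain
\[
\|F(\partial_z\eta)\|_m \;\leq\; \Lambda\!\left(\tfrac{1}{c_0}, |h|_{1,\infty}\right)\bigl(1 + |h|_{m+1/2}\bigr),
\]
and plugging into the product inequality yields the claimed bound
\[
\Big\| \tfrac{f}{\partial_z\varphi} \Big\|_m \;\leq\; \Lambda\!\left(\tfrac{1}{c_0},\, |h|_{1,\infty} + \|f\|_{L^\infty}\right)\bigl(|h|_{m+1/2} + \|f\|_m\bigr).
\]
The only nonroutine point is the conormal Moser composition estimate; the main thing to check carefully there is that each inductive step loses only a tame factor depending on $\|\partial_z\eta\|_{L^\infty}$ (through the bounds on $F^{(k)}$) and not on higher Sobolev norms, which is ensured by always applying \eqref{gues} in the form that puts the highest-order conormal derivative on a single factor while the others are taken in $L^\infty$.
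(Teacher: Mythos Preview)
Your argument has a genuine gap coming from the fact that $\mathcal{S}=\mathbb{R}^2\times(-\infty,0)$ is unbounded. With your choice $F(x)=1/(A+x)$ one has $F(\partial_z\eta)\to 1/A\neq 0$ at infinity, so $F(\partial_z\eta)\notin L^2(\mathcal{S})$ and hence $\|F(\partial_z\eta)\|_m=\infty$. This breaks both your application of the product estimate \eqref{gues} (which requires $u,v\in L^\infty\cap H^k_{co}$) and your Moser-type bound $\|F(\partial_z\eta)\|_m\le \Lambda\,(1+\|\partial_z\eta\|_m)$; the ``$1$'' on the right cannot control an infinite $L^2$ norm on the left.

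The paper's proof fixes exactly this point by subtracting off the constant first: it writes
\[
\frac{f}{\partial_z\varphi}=\frac{f}{A}-\frac{f}{A}\,F(\partial_z\eta),\qquad F(x)=\frac{x}{A+x},
\]
so that $F(0)=0$. Then $F(\partial_z\eta)$ is a smooth function of $\partial_z\eta$ vanishing at $0$, hence the Moser estimate gives $\|F(\partial_z\eta)\|_m\le \Lambda(1/c_0,\|\nabla\eta\|_{L^\infty})\,\|\partial_z\eta\|_m$ with no additive constant, and $\|f/A\|_m$ is trivially $\lesssim\|f\|_m$. Your argument becomes correct once you make this subtraction (equivalently, replace your $F$ by $F-F(0)$ and treat $f\cdot F(0)=f/A$ separately).
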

  \begin{rem}
 Of course, the above estimate is also valid in standard Sobolev spaces:
 $$ \big\| {f \over \partial_{z} \varphi} \big\|_{H^s} \leq \Lambda\big( {1 \over c_{0}},  |h|_{1, \infty} + \|f \|_{L^\infty} \big)\big( |h|_{
 s+{1 \over 2}} +
     \|f\|_{H^s} \big)$$
     for $s \in \mathbb{R}$, $s \geq {1 \over 2}.$
  \end{rem}
  \begin{proof}
  Since $\partial_{z} \varphi= A + \partial_{z} \eta$, we note that
  $$ {f \over \partial_{z} \varphi}= { f\over A}  -  {f\over A }{  \partial_{z} \eta \over  A+  \partial_{z} \eta } = {f \over A} - {f\over A}\, F(\partial_{z} \eta)$$
  where $F(x)= x/(A+ x)$ is a smooth function which is  bounded together with all  its derivatives on
    $A+x \geq c_{0}>0$ and such that $F(0)=0$.
   Consequently, by using \eqref{gues}, we get that 
   $$ \|F(\partial_{z} \eta) \|_{m} \lesssim \Lambda( {1 \over c_{0}}, \|\nabla \eta \|_{L^\infty}\big)  \| \partial_{z} \eta\|_{m}$$
   and further that
   $$ \|{f \over \partial_{z} \varphi } \|_{m} \lesssim  \|f /A \|_{m}+ \Lambda\big({1 \over c_{0}},  \| \nabla \eta \|_{L^\infty}  +  \|f/A \|_{L^\infty} \big)\big(\| \partial_{z} \eta \|_{m}
    + \|f/A \|_{m} \big).$$
    The result follows by using \eqref{etainfty} and \eqref{etaharm}. 
  \end{proof}

  Next, we study the gain in   the regularity of the surface which is induced  by the gain of regularity on the velocity for
   the Navier-Stokes equation.
    \begin{prop}
    \label{heps}
     For every $m\in \mathbb{N}$,  $\eps \in(0,1)$, we have the estimate 
     $$  \eps\, |h(t)|_{m+{1 \over 2}}^2
      \leq   \eps\, |h_{0}|_{m+{1 \over 2}}^2
       +  \eps \int_{0}^t |  v^b |_{m+ {1\over 2 }}^2 +  \int_{0}^t \Lambda_{1, \infty}
        \big( \|v\|_{m}^2 + \eps\,|h|_{m+{1 \over 2}}^2  \big)\, d\tau$$
    where 
  \begin{equation} \label{Lam1inf}
\Lambda_{1, \infty}=  \Lambda(
      |\nabla  h |_{L^\infty(\mathbb{R}^2)} + \|  v \|_{1,\infty}  \big)
\end{equation}
       and $v^b= v_{/z=0}$.
    \end{prop}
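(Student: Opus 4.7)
The plan is to estimate $|h|_{m+{1\over 2}}$ by applying the tangential Fourier multiplier $\Lambda^{m+{1\over 2}}$ directly to the kinematic boundary condition \eqref{bordv1}, written as $\partial_{t}h = v_{3}^{b} - v_{y}^{b}\cdot\nabla h$, and then performing an $L^{2}(\R^{2}_{y})$ energy estimate weighted by $\eps$. The advantage of this direct approach is that no derivative is lost in an obvious way: the time derivative of $h$ is already given explicitly in terms of the trace $v^{b}$, and only the quasilinear transport term $v_{y}^{b}\cdot\nabla h$ requires a genuine commutator argument.

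First I would apply $\Lambda^{m+{1\over 2}}$ to \eqref{bordv1}, take the $L^{2}(\R^{2})$ scalar product with $\Lambda^{m+{1\over 2}}h$, and multiply by $\eps$ to obtain
$$ {1\over 2}{d\over dt}\big(\eps|h|_{m+{1\over 2}}^{2}\big) = \eps\int_{\R^{2}}\Lambda^{m+{1\over 2}}v_{3}^{b}\,\Lambda^{m+{1\over 2}}h\, dy -\eps\int_{\R^{2}}\Lambda^{m+{1\over 2}}\big(v_{y}^{b}\cdot\nabla h\big)\Lambda^{m+{1\over 2}}h\, dy.$$
The linear term in $v_{3}^{b}$ is immediately handled by Cauchy--Schwarz and Young's inequality, producing a contribution bounded by ${\eps\over 2}|v^{b}|_{m+{1\over 2}}^{2} + {\eps\over 2}|h|_{m+{1\over 2}}^{2}$. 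Both pieces fit the target bound: the first is kept as part of $\eps\int_{0}^{t}|v^{b}|_{m+{1\over 2}}^{2}$, and the second is absorbed into $\Lambda_{1,\infty}\cdot\eps|h|_{m+{1\over 2}}^{2}$ since $\Lambda_{1,\infty}$ can be arranged to be $\geq 1$.

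For the quasilinear piece, which is the main point, I would decompose
$$\Lambda^{m+{1\over 2}}\big(v_{y}^{b}\cdot\nabla h\big) = v_{y}^{b}\cdot\nabla\Lambda^{m+{1\over 2}}h + \big[\Lambda^{m+{1\over 2}},v_{y}^{b}\big]\nabla h.$$
The first summand, paired with $\Lambda^{m+{1\over 2}}h$, integrates by parts in $y$ to produce $-{1\over 2}\int_{\R^{2}}(\nabla_{y}\cdot v_{y}^{b})|\Lambda^{m+{1\over 2}}h|^{2}\, dy$, which is controlled by $|\nabla_{y}v^{b}|_{L^{\infty}}\cdot\eps|h|_{m+{1\over 2}}^{2}$; since $(Z_{i}v)_{/z=0}=(\partial_{i}v)_{/z=0}$ for $i=1,2$, this is dominated by $\Lambda_{1,\infty}\cdot\eps|h|_{m+{1\over 2}}^{2}$. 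For the commutator I would invoke the Kato--Ponce estimate \eqref{comr2} with $s=m+{1\over 2}$, which gives
$$\big\|\big[\Lambda^{m+{1\over 2}},v_{y}^{b}\big]\nabla h\big\|_{L^{2}(\R^{2})} \lesssim |\nabla v_{y}^{b}|_{L^{\infty}}|h|_{m+{1\over 2}} + |\nabla h|_{L^{\infty}}|v_{y}^{b}|_{m+{1\over 2}}.$$
Pairing with $\Lambda^{m+{1\over 2}}h$ and applying Young's inequality with parameters chosen so that the total prefactor of $\eps|v^{b}|_{m+{1\over 2}}^{2}$ is exactly $1$ yields another ${\eps\over 2}|v^{b}|_{m+{1\over 2}}^{2}$ contribution together with a $\Lambda_{1,\infty}\cdot\eps|h|_{m+{1\over 2}}^{2}$ term (with $|\nabla h|_{L^{\infty}}^{2}$ absorbed into $\Lambda_{1,\infty}$). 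Summing all pieces and integrating from $0$ to $t$ yields the advertised estimate, since $\Lambda_{1,\infty}\cdot\eps|h|_{m+{1\over 2}}^{2}\leq \Lambda_{1,\infty}(\|v\|_{m}^{2}+\eps|h|_{m+{1\over 2}}^{2})$.

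The step that requires genuine care is the careful bookkeeping of Young's inequalities so that the two occurrences of $\eps|v^{b}|_{m+{1\over 2}}^{2}$ (one from the linear term, one from the commutator) combine into the coefficient $1$ that the statement prescribes; beyond this, the main tool is simply the commutator estimate \eqref{comr2} at fractional order, together with the observation that all $L^{\infty}$ factors produced along the way, namely $|\nabla h|_{L^{\infty}}$ and $|\nabla_{y}v^{b}|_{L^{\infty}}$, are already controlled by the defining quantities of $\Lambda_{1,\infty}$. No derivative is truly lost here; the apparent half-derivative gap between $|v^{b}|_{m+{1\over 2}}$ and $\|v\|_{m}$ is precisely the one that the $\eps$ weight is designed to tolerate, with $\sqrt{\eps}\,|v^{b}|_{m+{1\over 2}}$ to be recovered later from trace estimates on the Navier--Stokes dissipation.
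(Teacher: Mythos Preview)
Your proposal is correct and follows essentially the same approach as the paper: apply $\Lambda^{m+{1\over 2}}$ to the kinematic boundary condition \eqref{bordv1}, perform the $L^{2}(\R^{2})$ energy estimate, handle the transport term $v_{y}^{b}\cdot\nabla\Lambda^{m+{1\over 2}}h$ by integration by parts, control the commutator $[\Lambda^{m+{1\over 2}},v_{y}^{b}]\nabla h$ via \eqref{comr2}, and finish with Young's inequality. Your concern about arranging the coefficient of $\eps\int_{0}^{t}|v^{b}|_{m+{1\over 2}}^{2}$ to be exactly~$1$ is unnecessary: the Young inequality \eqref{young} with $p=q=2$ and an arbitrary $\delta>0$ lets you make that coefficient as small as you like, pushing the compensating constant into $\Lambda_{1,\infty}$, which is allowed to change from line to line.
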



 Note that by  using the trace inequality \eqref{trace} (with $s_{1}=0, \,  s_{2}= 1,\, s= {1/2}$),  we can write that
    $$ \eps \int_{0}^t   |  v^b |_{m+ {1\over 2 }}^2 \leq \eps  \int_{0}^t \| \nabla v \|_{m}^2 + \eps \int_{0}^t \|v\|_{m}^2$$
    hence the right hand side  in the estimate of Proposition \ref{heps} can indeed be absorbed by  an energy dissipation term.
     Nevertheless, since the exact form of the energy dissipation term for our high order estimates will be more complicated, 
      we shall give the exact way to  control this term   later.
    
    \begin{proof}
    By using \eqref{bord1}, we get that
    $$ \partial_{t} \Lambda^{m+{1 \over 2}} h + v_y(t,y,0) \cdot \nabla  \Lambda^{m+{1 \over 2}} h -
     \Lambda^{m+ {1 \over 2 }} v_{3}(t,y,0) + [\Lambda^{m+{1 \over 2}}, v_{y}(t,y,0)]\cdot \nabla h=0.$$
From a standard energy estimate for this transport equation,  we thus obtain
     \begin{align*} {d \over dt } {1 \over 2} \eps \, |h|_{m+{1 \over 2}}^2
     &  \lesssim \eps\,  |\nabla_{y} v^b |_{L^\infty(\mathbb{R}^2)}   |h|_{m+{1 \over 2}}^2
        \\  &  +\eps \big(   |v ^b|_{m+ {1 \over 2}} + | [\Lambda^{m+{1 \over 2}}, v_{y}(t,y,0)]\cdot \nabla h |_{L^2(\mathbb{R}^2)}\big) |h|_{m+{1 \over 2}}
       \end{align*}  
       where $v^b= v(t,y,0)$. 
       By using the commutator estimate \eqref{comr2}, 
           we  have
     $$  | [\Lambda^{m+{1 \over 2}}, v_{y}(t,y,0)]\cdot \nabla h |_{L^2(\mathbb{R}^2)} 
        \lesssim  |\nabla_{y} v^b |_{L^\infty(\mathbb{R}^2)} |h|_{m+{1 \over 2}}
         +   |\nabla   h |_{L^\infty(\mathbb{R}^2)} |v^b|_{m+{1 \over 2}}$$
and hence, we obtain
   \begin{align}
   \label{h1} {d \over dt } {1 \over 2} \eps \, |h|_{m+{1 \over 2}}^2
       &  \lesssim   \eps\,  |\nabla_{y} v^b |_{L^\infty(\mathbb{R}^2)}   |h|_{m+{1 \over 2}}^2  \\    \nonumber & +
       \big( 1  +   |\nabla  h |_{L^\infty(\mathbb{R}^2)}\big) \eps \,  |v ^b|_{m+ {1 \over 2}}
       |h|_{m+{1 \over 2}}.
       \end{align}
       The result follows from the Young inequality
       \beq
       \label{young}
        ab \leq \delta a^p + C_{\delta} b^q, \quad {1\over p } + {1 \over q}= 1, \, a\, b\geq 0,
        \eeq
        with $p=q=2$ and an integration in time.
%
     This ends  the proof of Proposition \ref{heps}.
  \end{proof}
  By combining  Proposition \ref{propeta} and Proposition \ref{heps}, we also  obtain
   that:
 \begin{cor}
 \label{coreta}
  For every $m\in \mathbb{N}$,  $\eps \in(0,1)$, we have
     $$  \eps\, \|\nabla \eta(t)\|_{H^{m}(\mathcal{S})}^2
      \leq   \eps\, C_{m}\, \|h_{0}\|_{m+{1 \over 2}}^2
       + \eps \int_{0}^t |  v |_{m+{1\over 2  }}^2 +  \int_{0}^t \Lambda_{1,\infty}
        \big( \|v\|_{m}^2 + \eps\,\|h\|_{m+{1 \over 2}}^2  \big)\, d\tau$$
    where $\Lambda_{1,\infty} $  is defined in     \eqref{Lam1inf}. 
 \end{cor}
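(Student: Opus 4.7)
The corollary is essentially the composition of the two preceding results, so my plan is to apply them in sequence.

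First, I would invoke the estimate \eqref{etaharm} from Proposition \ref{propeta} at regularity $s = m$, which yields a pointwise-in-time bound
\[
 \|\nabla \eta(t)\|_{H^{m}(\mathcal{S})}^2 \leq C_{m}\, |h(t)|_{m+\frac12}^2.
\]
Multiplying by $\eps$ reduces the problem to controlling the right-hand side in terms of the quantities appearing in the corollary.

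Next, I would apply Proposition \ref{heps} to estimate $\eps\, |h(t)|_{m+\frac12}^2$. That proposition directly provides
\[
 \eps\, |h(t)|_{m+\frac12}^2 \leq \eps\, |h_{0}|_{m+\frac12}^2 + \eps \int_{0}^t |v^b|_{m+\frac12}^2\, d\tau + \int_0^t \Lambda_{1,\infty}\bigl( \|v\|_m^2 + \eps\, |h|_{m+\frac12}^2 \bigr)\, d\tau,
\]
where $v^b$ is the trace of $v$ at $z=0$. Substituting this into the previous display and absorbing the constant $C_m$ into the first term (as stated) gives the desired inequality, with the understanding that $|v|_{m+\frac12}$ in the statement denotes the boundary trace norm $|v^b|_{m+\frac12}$, consistent with the convention used in Proposition \ref{heps}.

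There is no real obstacle here: the corollary is an immediate consequence of the two previous propositions, and the only subtlety is matching the notation (in particular, the $|v|_{m+\frac12}$ factor in the statement is really a trace norm, which will later be handled by the trace inequality \eqref{trace} to convert it into an interior dissipation term as indicated in the remark following Proposition \ref{heps}).
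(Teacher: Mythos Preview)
Your proposal is correct and follows exactly the approach indicated in the paper: the corollary is obtained by combining the pointwise estimate \eqref{etaharm} from Proposition~\ref{propeta} with the time-integrated estimate of Proposition~\ref{heps}. Your observation about the notation $|v|_{m+\frac12}$ denoting the trace norm $|v^b|_{m+\frac12}$ is also on point.
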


  \section{Basic $L^2$ estimate}
  \label{sectionbasic}
  We  now start the first main part of  our a priori estimates, namely    estimates  for  $Z^m v$ and $Z^m h$.
  The  easiest case is when  $m=0$ as corresponds to the physical energy. 
  \begin{prop}
  \label{basicL2}
  For any smooth solution of \eqref{NSv}, we have the energy identity: 
$$  {d \over dt} \Big(  \int_{\mathcal{S}} |v|^2 \, d\V + g \int_{z=0} |h|^2\, dy  \Big) + 4 \eps  \int_{\mathcal{S}} |S^\varphi v|^2\, d\V=0.$$
  \end{prop}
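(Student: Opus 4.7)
The plan is a direct $L^2$ energy estimate, testing the equation \eqref{NSv} against $v$ in the weighted inner product with volume element $d\V = \partial_z \varphi \, dy dz$, and carefully tracking the four boundary contributions that arise. Writing $\eps \Delta^\varphi v = 2\eps\, \nabla^\varphi \cdot S^\varphi v$ (the two are equal thanks to $\nabla^\varphi \cdot v=0$), the identity to prove is obtained by multiplying $\D_t v + (v\cdot \nabla^\varphi)v + \nabla^\varphi q = 2\eps \nabla^\varphi \cdot S^\varphi v$ by $2v$ and integrating over $\mathcal{S}$ against $d\V$.

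First I would handle the transport part. By identity \eqref{ippt} of Corollary \ref{coripp} (which uses $\nabla^\varphi \cdot v=0$),
\[
2\int_{\mathcal{S}} \bigl(\D_t v + v\cdot \nabla^\varphi v\bigr)\cdot v \, d\V \;=\; \partial_t \int_{\mathcal{S}}|v|^2 \, d\V \;-\; \int_{z=0} |v|^2 \bigl(\partial_t h - v\cdot \N\bigr)\, dy,
\]
and the boundary term vanishes by the kinematic boundary condition \eqref{bordv1}. Next, for the pressure, apply \eqref{ipp1} componentwise and invoke $\nabla^\varphi \cdot v = 0$ to get
\[
2\int_{\mathcal{S}} \nabla^\varphi q \cdot v \, d\V \;=\; 2\int_{z=0} q\, (v\cdot \N)\, dy.
\]
For the viscous term, \eqref{ippS} gives
\[
4\eps \int_{\mathcal{S}} \nabla^\varphi \cdot (S^\varphi v)\cdot v \, d\V \;=\; -4\eps \int_{\mathcal{S}} |S^\varphi v|^2 \, d\V + 4\eps \int_{z=0} (S^\varphi v\, \N)\cdot v\, dy.
\]

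Combining these three identities yields
\[
\partial_t \int_{\mathcal{S}} |v|^2 \, d\V + 4\eps \int_{\mathcal{S}} |S^\varphi v|^2 \, d\V \;=\; 2\int_{z=0} \bigl(2\eps S^\varphi v\, \N - q\N\bigr)\cdot v \, dy.
\]
The remaining work is just to recognize the combination of boundary terms. The dynamic boundary condition \eqref{bordv2}, after multiplying through by $|\N|$, reads $q\N - 2\eps S^\varphi v\, \N = g h \N$, so the right-hand side equals $-2g\int_{z=0} h\, (v\cdot \N)\, dy$. Using \eqref{bordv1} once more to replace $v\cdot \N$ by $\partial_t h$ gives $-2g \int_{z=0} h\, \partial_t h \, dy = -g\, \partial_t \int_{z=0}|h|^2\, dy$, which when moved to the left-hand side produces exactly the stated identity.

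There is no real obstacle here; the only thing to be careful about is the correct numerical constants (the factor $4\eps$ comes from pairing the viscous term $2\eps \nabla^\varphi \cdot S^\varphi v$ with $2v$) and the fact that the two boundary conditions \eqref{bordv1} and \eqref{bordv2} conspire to produce a perfect derivative on the surface energy $g\int |h|^2 \, dy$, with all other boundary integrals cancelling.
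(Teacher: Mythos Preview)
Your proof is correct and follows essentially the same approach as the paper: both use the integration by parts identities of Corollary \ref{coripp} together with $\nabla^\varphi\cdot v=0$ to reduce everything to the single boundary integral $2\int_{z=0}(2\eps S^\varphi v\,\N - q\N)\cdot v\,dy$, and then invoke \eqref{bordv2} followed by \eqref{bordv1} to rewrite it as $-g\,\partial_t\int_{z=0}|h|^2\,dy$. The only cosmetic difference is that the paper groups the pressure and viscous terms together from the start as $\nabla^\varphi\cdot(2\eps S^\varphi v - q\,\mathrm{Id})$, whereas you treat them separately before recombining.
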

  
  \begin{proof}
 By using  \eqref{NSv} and the boundary condition \eqref{bord1}, we  get that
 $$ {d \over dt} \int_{\mathcal{S}} |v|^2 \, d\V=  2 \int_{\mathcal{S}}  \nabla^\varphi   \cdot(  2 \eps S^\varphi v - q \,  \mbox{Id}\big)
  \cdot v\, d \V$$
  and hence by using   the integration by parts formula \eqref{ipp2}, we find that
  $$  {d \over dt} \int_{\mathcal{S}} |v|^2 \, d\V + 4 \eps  \int_{\mathcal{S}} |S^\varphi v|^2\, d\V= 2 \int_{\mathcal{S}} q\, \nabla^\varphi \cdot v \, d\V +
  2 \int_{z=0} \big(2 \eps S^\varphi v - q \mbox{Id} \big) \N \cdot v \, dy.$$
  Next, by using successively   \eqref{bord2} and \eqref{bord1}, we observe that
  $$ 2 \int_{z=0} \big(2 \eps S^\varphi v - q \mbox{Id} \big) \N \cdot v \, dy =  - 2 \int_{z=0} g h \, v \cdot N \, dy
   = - \int_{z=0} g {d \over dt } |h|^2 \,dy$$
   and the result follows.  
  \end{proof}
  
  \begin{cor}
  \label{corL2}
   If $\partial_{z} \varphi\geq c_{0}>0$, $|h|_{2, \infty} \leq {1 \over c_{0}}$ for  $t \in [0, T^\eps]$, then we have
   $$ \|v(t)\|^2 +  \eps \int_{0}^t \| \nabla v \|^2 \leq \Lambda({1 \over c_{0}}) \Big(\|v_{0}\|^2+ \int_{0}^t \| v\|^2 \Big), \quad \forall t \in [0, T^\eps].$$
   \end{cor}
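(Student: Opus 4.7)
The proof is a direct consequence of the energy identity of Proposition \ref{basicL2} combined with the Korn inequality of Proposition \ref{Korn}. The plan is to integrate the identity in time, drop the nonnegative boundary contribution, and then convert the symmetric-gradient dissipation into a full gradient dissipation.

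First I would integrate the identity of Proposition \ref{basicL2} over $[0,t]$ and discard the nonnegative boundary term $g\int_{z=0}|h|^2\,dy$, obtaining
\[
 \int_{\mathcal{S}} |v(t)|^2\, d\V + 4\eps \int_{0}^{t}\!\!\int_{\mathcal{S}} |S^\varphi v|^2 \, d\V\, d\tau \;\le\; \int_{\mathcal{S}} |v_{0}|^2\, d\V_{0} \;+\; g\int_{z=0}|h_{0}|^2\, dy.
\]
Next, using the assumption $\partial_{z}\varphi \ge c_{0}$ together with the bound $\partial_{z}\varphi = A+\partial_{z}\eta \le \Lambda(1/c_{0})$ (which follows from \eqref{etainfty} and the hypothesis $|h|_{2,\infty}\le 1/c_{0}$), the weighted $L^2$ norm $\int_{\mathcal{S}}|v|^2\, d\V$ and the standard $L^2$ norm $\|v\|^2$ are equivalent up to constants depending only on $1/c_{0}$. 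This yields an estimate of the form
\[
 \|v(t)\|^2 + \eps \int_{0}^{t}\!\!\int_{\mathcal{S}} |S^\varphi v|^2\, d\V\, d\tau \;\le\; \Lambda(1/c_{0})\,\|v_{0}\|^2.
\]

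The only remaining task is to convert the dissipation from $S^\varphi v$ to $\nabla v$. For this I would invoke Proposition \ref{Korn}: the hypotheses $\partial_{z}\varphi\ge c_{0}$ and $\|\nabla\varphi\|_{L^\infty}+\|\nabla^2\varphi\|_{L^\infty}\le \Lambda(1/c_{0})$ (again provided by \eqref{etainfty} together with $|h|_{2,\infty}\le 1/c_{0}$) give
\[
 \|\nabla v(\tau)\|^2 \;\le\; \Lambda(1/c_{0})\Big(\int_{\mathcal{S}} |S^\varphi v|^2\, d\V + \|v(\tau)\|^2\Big).
\]
Multiplying by $\eps$, integrating over $[0,t]$, and using $\eps\le 1$ to absorb the $\eps\int_{0}^{t}\|v\|^2$ term into $\int_{0}^{t}\|v\|^2$, I obtain
\[
 \eps\int_{0}^{t}\|\nabla v\|^2\, d\tau \;\le\; \Lambda(1/c_{0})\Big(\|v_{0}\|^2 + \int_{0}^{t}\|v\|^2\, d\tau\Big).
\]
Adding this to the previous estimate for $\|v(t)\|^2$ yields the desired inequality.

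There is no real obstacle here; the only thing one must be careful about is checking that the hypothesis $|h|_{2,\infty}\le 1/c_{0}$ is enough to guarantee the regularity bounds on $\varphi$ required by both the change-of-variables equivalence and by Korn's inequality, which is done via the smoothing estimates \eqref{etainfty} on $\eta$.
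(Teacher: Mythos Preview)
Your overall approach matches the paper's: integrate the energy identity of Proposition~\ref{basicL2} in time and then apply the Korn inequality of Proposition~\ref{Korn} to convert $S^\varphi v$ into $\nabla v$ at the cost of the zeroth-order term $\|v\|^2$. (The paper also cites Lemma~\ref{mingrad}, but Korn already produces the full gradient, so your route is equivalent.)

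There is, however, one genuine gap. In your first display you correctly retain the initial boundary contribution $g\int_{z=0}|h_0|^2\,dy$ on the right, but when you pass to the second display $\|v(t)\|^2+\ldots\le \Lambda(1/c_0)\|v_0\|^2$ you silently drop it. Nothing in the hypotheses $\partial_z\varphi\ge c_0$, $|h|_{2,\infty}\le 1/c_0$ bounds $|h_0|_{L^2}$ by a function of $1/c_0$: the $W^{2,\infty}$ control on $h$ says nothing about its $L^2$ mass. In fact the inequality as stated cannot hold without that term---take $v_0=0$ and $h_0\neq 0$; the pressure gradient immediately sets the fluid in motion, so $\|v(t)\|^2$ is of order $t^2$ while the right-hand side $\Lambda(1/c_0)\int_0^t\|v\|^2$ is of order $t^3$ for small $t$. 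The corollary as written is therefore slightly misstated and should carry $|h_0|_{L^2}^2$ on the right as well. This is harmless for the paper, since the only place the estimate is invoked (the case $\alpha=0$ inside the proof of Proposition~\ref{conormv}) already has $|h|_m^2$ on both sides; but you should not claim to have absorbed that term into $\Lambda(1/c_0)\|v_0\|^2$.
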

   \begin{proof}
   It suffices to  combine Proposition \ref{basicL2} and Propositions  \ref{Korn}, \ref{mingrad}.
   \end{proof}
  
\section{Equations satisfied by $(Z^\alpha v, Z^\alpha h, Z^\alpha q)$}
\label{sectionconorm}
  
  \subsection{A commutator estimate}

  The next step in order to perform higher order conormal  estimates  is to compute  the equation satisfied by $Z^\alpha v$. We thus need
   to commute $Z^\alpha$ with each term in the equation \eqref{NSv}.
   It is thus usefull to establish  the following general   expressions and estimates  for commutators that we shall use many times.
  
  We first notice that  for $i=1, \, 2,\, 3$,  we have for any smooth function $f$
  \beq
  \label{com1}
   Z^\alpha \D_{i} f=  \D_{i} Z^\alpha f  -  \D_{z}f \D_{i} Z^\alpha \eta+
       \mathcal{C}^\alpha_{i}(f)   \eeq
   where the commutator $\mathcal{C}^\alpha_{i}(f)$ is given for $\alpha \neq 0$ and $i \neq 3$ by 
 \beq
 \label{Cialpha}
  \mathcal{C}^\alpha_{i}(f)= \mathcal{C}^\alpha_{i, 1}(f)+ \mathcal{C}^\alpha_{i,2}(f)+ 
   \mathcal{C}^\alpha_{i, 3}(f)
   \eeq
   with
  \begin{align*}
  \mathcal{C}^{\alpha}_{i,1}& = -\big[ Z^\alpha, {\partial_{i} \varphi \over \partial_{z} \varphi }, \partial_{z} f  \big], \\
     \mathcal{C}^{\alpha}_{i,2}& =   - \partial_{z} f \big[ Z^\alpha, \partial_{i} \varphi, {1 \over \partial_{z} \varphi}\big]     - \partial_{i} \varphi \Big( Z^\alpha\big( {1 \over \partial_{z} \varphi}\big) + {Z^\alpha \partial_{z}
       \eta \over (\partial_{z} \varphi)^2 } \Big) \partial_{z} f, \\
        \mathcal{C}^\alpha_{i, 3}& =  - {\partial_{i} \varphi \over \partial_{z} \varphi} [Z^\alpha, \partial_{z}]f
         + {\partial_{i} \varphi \over (\partial_{z} \varphi)^2}\, \partial_{z} f\,  [Z^\alpha, \partial_{z}] \eta.
     \end{align*}
     Note that for $i=1, \, 2$ we have $\partial_{i}\varphi= \partial_{i}\eta$ and that for $\alpha \neq 0$, 
     $Z^\alpha \partial_{z}\varphi= Z^\alpha \partial_{z}\eta$. This is why we  have replaced  $\varphi$ by $\eta$ in 
       some terms of the above expressions.         
      
        For $i=3$,  we have the same kind of  decomposition  for
      the commutator  (basically, it suffices to replace $\partial_{i}\varphi$ by  $1$ in the above expressions).

  The commutators $\mathcal{C}^\alpha_{i}(f)$ enjoy the following estimate
  \begin{lem}
  \label{comi}
    For $1 \leq | \alpha |\leq m$, $i=  1, \, 2, \, 3$,  we have 
  \begin{align}
  \label{comiest}  \| \mathcal{C}_{i}^\alpha(f) \|   &  \leq \Lambda\big( {1 \over c_{0}}, |h|_{2, \infty } + \|\nabla f \|_{1, \infty}  \big) \big( \|\nabla  f \|_{m-1} +  |h
   |_{m- {1 \over 2 }}
   \big).
   \end{align}
  \end{lem}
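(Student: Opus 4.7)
The plan is to bound each of the three pieces $\mathcal{C}^\alpha_{i,1}$, $\mathcal{C}^\alpha_{i,2}$, $\mathcal{C}^\alpha_{i,3}$ separately in $L^2$, combining the Moser-type product and commutator estimates of Proposition~\ref{sob} with the quotient estimate \eqref{quot} and the regularity of $\eta$ from Proposition~\ref{propeta}. Writing $c = \partial_i \varphi / \partial_z \varphi$ and $u = \partial_z \eta$, I will use repeatedly that \eqref{etainfty} controls $\|\eta\|_{2,\infty}$ by $|h|_{2,\infty}$ and that \eqref{etaharm} controls $\|\nabla \eta\|_{H^{m-1}(\mathcal{S})}$ by $|h|_{m-1/2}$, so in particular $\|u\|_{m-1} \lesssim |h|_{m-1/2}$. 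I treat $i \in \{1,2\}$; the case $i=3$ is strictly simpler, since $\partial_i \varphi$ is replaced by $1$.

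For $\mathcal{C}^\alpha_{i,1}$ the symmetric commutator estimate \eqref{comsym} applied with $k = |\alpha| \leq m$ gives
\begin{equation*}
\|\mathcal{C}^\alpha_{i,1}\| \lesssim \|Zc\|_{L^\infty}\,\|Z\partial_z f\|_{m-2} + \|Z\partial_z f\|_{L^\infty}\,\|Zc\|_{m-2}.
\end{equation*}
The quotient estimate \eqref{quot} combined with \eqref{etaharm} and \eqref{etainfty} yields $\|Zc\|_{m-2} \leq \Lambda(1/c_0, |h|_{2,\infty})\,|h|_{m-1/2}$ and $\|Zc\|_{L^\infty} \leq \Lambda(1/c_0)\,|h|_{2,\infty}$. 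Since $\|Z\partial_z f\|_{L^\infty} \leq \|\nabla f\|_{1,\infty}$ and $\|Z\partial_z f\|_{m-2} \leq \|\nabla f\|_{m-1}$, the announced bound follows. The first summand of $\mathcal{C}^\alpha_{i,2}$ is handled identically: bound it in $L^2$ by $\|\partial_z f\|_{L^\infty}\,\|[Z^\alpha, \partial_i\varphi, 1/\partial_z\varphi]\|_{L^2}$, and apply \eqref{comsym} to the symmetric commutator together with \eqref{quot} on $1/\partial_z \varphi$.

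The delicate point is the second summand of $\mathcal{C}^\alpha_{i,2}$: a naive Moser bound on $Z^\alpha(1/\partial_z\varphi)$ would cost $|h|_{m+1/2}$, which is not allowed by the target estimate. Here I exploit the algebraic cancellation built into the expression. Writing $1/\partial_z \varphi = g(u)$ with $g(x) = 1/(A+x)$ and iterating the chain rule, one obtains
\begin{equation*}
Z^\alpha g(u) = g'(u)\, Z^\alpha u + R^\alpha,
\end{equation*}
where $R^\alpha$ is a polynomial in $u$ and in $\{Z^\beta u\}_{1 \leq |\beta| \leq |\alpha|-1}$ whose monomials each involve at least two factors $Z^{\beta_j}u$ with $|\beta_j| \geq 1$, $\sum_j |\beta_j| = |\alpha|$, so that every $|\beta_j| \leq |\alpha|-1$. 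Since $g'(u) = -1/(\partial_z \varphi)^2$ and $Z^\alpha u = Z^\alpha \partial_z \eta$, the linear top-order parts cancel exactly:
\begin{equation*}
Z^\alpha\!\left(\tfrac{1}{\partial_z \varphi}\right) + \tfrac{Z^\alpha \partial_z \eta}{(\partial_z \varphi)^2} = R^\alpha.
\end{equation*}
Iterated use of \eqref{gues} bounds each monomial of $R^\alpha$ in $L^2$ by $\Lambda(\|u\|_{L^\infty})\,\|u\|_{m-1}$, hence by $\Lambda(1/c_0, |h|_{2,\infty})\,|h|_{m-1/2}$. Multiplying by $\partial_i \varphi$ and $\partial_z f$, both taken in $L^\infty$, gives the desired bound.

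For $\mathcal{C}^\alpha_{i,3}$ I use that $Z_1, Z_2$ commute with $\partial_z$, while a direct computation gives $[Z_3, \partial_z] = -(1-z)^{-2}\partial_z$. An induction on $|\alpha|$ then shows that $[Z^\alpha, \partial_z]$ is a sum of operators $c_\beta(z)\, Z^\beta \partial_z$ with $|\beta| \leq |\alpha|-1$ and $c_\beta \in L^\infty(\mathcal{S})$, so that $\|[Z^\alpha, \partial_z] f\| \lesssim \|\partial_z f\|_{m-1}$ and $\|[Z^\alpha, \partial_z]\eta\| \lesssim \|\partial_z \eta\|_{m-1} \lesssim |h|_{m-1/2}$. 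Combined with the $L^\infty$ bounds on $\partial_i\varphi/\partial_z\varphi$, $\partial_i\varphi/(\partial_z\varphi)^2$ and $\partial_z f$, this yields the required estimate. The main obstacle in the whole argument is the identification and justification of the cancellation in $\mathcal{C}^\alpha_{i,2}$; once that is isolated, the rest reduces to routine Moser--Leibniz bookkeeping.
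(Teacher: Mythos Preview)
Your proof is correct and follows essentially the same route as the paper: the same three-part decomposition, the same use of Proposition~\ref{sob}, \eqref{quot}, and Proposition~\ref{propeta}, and the same induction on $[Z^\alpha,\partial_z]$ for $\mathcal{C}^\alpha_{i,3}$. The only substantive variation is in $\mathcal{C}^\alpha_{i,2}$: the paper writes $Z^\alpha(1/\partial_z\varphi)=-Z^{\tilde\alpha}\bigl(Z_j\partial_z\eta/(\partial_z\varphi)^2\bigr)$ with $|\tilde\alpha|=|\alpha|-1$ and then applies the commutator estimate \eqref{com}, whereas you use a Fa\`a di Bruno expansion of $g(u)$ to isolate and cancel the linear piece $g'(u)Z^\alpha u$. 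Both arguments express the same gain of one order. One minor imprecision: bounding the multilinear remainder $R^\alpha$ via iterated \eqref{gues} actually requires $\Lambda(\|u\|_{1,\infty})\,\|u\|_{m-1}$ rather than $\Lambda(\|u\|_{L^\infty})\,\|u\|_{m-1}$ (factor one $Z$ out of each $Z^{\beta_j}u$ so the total order drops to $|\alpha|-k\le m-2$); since $\|\partial_z\eta\|_{1,\infty}\lesssim|h|_{2,\infty}$ this is harmless for your final bound.
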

  The meaning  of this lemma combined with \eqref{com1} is that by using the notations of the proof of Lemma
  \ref{lemal}, we have that
  \beq
  \label{com2}
   Z^\alpha \D_{i}f = D \mathcal{A}_{i}(f, \varphi) \cdot (Z^\alpha f, Z^\alpha \eta) + \mathcal{C}_{i}^\alpha
   \eeq
   where the commutator $\mathcal{C}_{i}^\alpha$ is of lower order in both $f$ and $\eta$.
  \subsubsection*{Proof of Lemma \ref{comi}}
  We give the proof for $i=1, \, 2$, the last case being similar and slightly easier.
  
  We shall first estimate $\mathcal{C}_{i,1}^\alpha$.
  Thanks to \eqref{comsym}, we have
    $$  \|  \mathcal{C}^\alpha_{i,1} \|_{L^2} \leq  \| Z\big( {\partial_{i }  \varphi \over \partial_{z} \varphi } \big) \|_{L^\infty}
     \, \|\partial_{z} f \|_{m-1} +   \| Z \partial_{z} f \|_{L^\infty} \, \|
  {\partial_{i }  \varphi \over \partial_{z} \varphi }  \|_{m-1}.$$
    Consequently, by using \eqref{quot}, we first  get that
  \begin{align*}   \| \mathcal{C}^\alpha_{i,1} \|_{L^2} \leq  \Lambda({1\over c_{0}}, 
     \|\nabla \varphi \|_{1, \infty} + |h|_{1, \infty} + \|Z \partial_{z} f \|_{L^\infty}\big)
     \big( \|\partial_{i}\eta \|_{m-1} +|h|_{m-{1 \over 2 }} +  \|\partial_{z} f \|_{m-1}\big),
     \end{align*}
     next, by using  \eqref{eqphi}, we obtain  $$
    \| \mathcal{C}_{i, 1}^\alpha(f) \| \leq \Lambda\big( {1 \over c_{0}}, \|\nabla  \eta \|_{1, \infty}+ |h|_{1, \infty} + \|\nabla f \|_{1, \infty}  \big) \big( \|\nabla  f \|_{m-1} +  |h|_{m-{1\over 2 }} + \|\nabla \eta \|_{m-1}\big)
 $$
  and finally, by using \eqref{etaharm}, \eqref{etainfty}, we get
  \beq
  \label{Calpha1}
    \| \mathcal{C}_{i, 1}^\alpha(f) \| \leq \Lambda\big( {1 \over c_{0}}, |h|_{2, \infty} + \|\nabla f \|_{1, \infty}  \big) \big( \|\nabla  f \|_{m-1} +  |h|_{m-{1\over 2 }} \big).
    \eeq
  To estimate the first term in  $\mathcal{C}_{i,2}^\alpha$,  we use the same  kind of arguments: by using \eqref{comsym}
   and \eqref{eqphi}, we first get that
  $$
 \| \partial_{z} f \big[ Z^\alpha, \partial_{i} \varphi, {1 \over \partial_{z} \varphi}\big] \|
  \leq \Lambda ({1 \over c_{0}}, \|\partial_{z} f \|_{L^\infty} +  \| \nabla \eta \|_{1, \infty} \big)\big( \| \nabla \eta\|_{m-1} + \|{ Z \partial_{z}\eta \over (\partial_{z} \varphi)^2}\|_{m-2} \big)$$
   and hence by using \eqref{quot} and \eqref{etaharm}, \eqref{etainfty}, we find
  $$ \| \partial_{z} f \big[ Z^\alpha, \partial_{i} \varphi, {1 \over \partial_{z} \varphi}\big] \|
  \leq \Lambda ({1 \over c_{0}}, \| \nabla  f \|_{L^\infty} +  | h|_{2, \infty} \big) | h|_{m-{1\over 2}}.$$
  To estimate the second  type of terms in   $\mathcal{C}_{i, 2}^\alpha$, we  note that
   for $|\alpha| \geq 1$, we can write
   \beq
   \label{Z1/dzphi} Z^\alpha \big({ 1 \over \partial_{z} \varphi} \big) = - Z^{\tilde \alpha} \big({ Z_{j} \partial_{z} \eta \over (\partial
   _{z } \varphi)^2 } \big), \quad |\tilde \alpha | = |\alpha |-1,\eeq
  hence, we obtain for $|\alpha | \geq 2 $  that 
 $$  \partial_{i} \varphi \Big( Z^\alpha\big( {1 \over \partial_{z} \varphi}\big) + {Z^\alpha \partial_{z}
       \eta \over (\partial_{z} \varphi)^2 } \Big) \partial_{z} f
       = - \partial_{i}\varphi\, \partial_{z} f\,  [  Z^{\tilde \alpha }, {1 \over (\partial_{z} \varphi)^2} ] Z_{j}\partial_{z}\eta $$
   and by using again \eqref{com}, \eqref{quot} and \eqref{etaharm}, \eqref{etainfty}, we also  obtain that
             \beq
   \label{Calpha2}
    \| \mathcal{C}_{i, 2}^\alpha(f) \| \leq \Lambda\big( {1 \over c_{0}}, | h |_{2, \infty} + \|\nabla f \|_{L^\infty}  \big)  | h  |_{m-{1 \over 2 } }.
  \eeq
  
  It remains to estimate $\mathcal{C}_{i, 3}^\alpha$.
    Since  
    $$[Z_{3}, \partial_{z}]= - \partial_{z} \big( { z \over  1+ z } \big) \partial_{z}, $$
    we can prove by induction that
    \beq
    \label{idcom} [Z^\alpha, \partial_{z}] h= \sum_{| \beta | \leq m-1} c_{\beta } \partial_{z} ( Z^\beta h)\eeq
     for some harmless smooth bounded functions $c_{\beta}.$
     This yields  
        \begin{align*} \big\| {\partial_{i}\varphi \over \partial_{z} \varphi} [Z^\alpha, \partial_{z}] f \big\|_{L^2} 
    + \big\|   {\partial_{i} \varphi \over (\partial_{z}
       \varphi)^2}  [Z^\alpha, \partial_{z} ] \varphi \, \partial_{z} f  \big\|_{L^2}
        \leq \Lambda( {1 \over  c_{0}}, \|\partial_{i} \varphi \|_{L^\infty} + \|\partial_{z} f \|_{L^\infty}
         )\big( \|\partial_{z} f \|_{m-1}  \\+ \| \partial_{i} \varphi \| +  \|\partial_{z} \eta \|_{m-1} \big)
         \end{align*}
       Indeed, the last term comes from the fact that  thanks to \eqref{eqphi},
        the commutator $ [Z^\alpha,  \partial_{z}]\varphi$ can be decomposed into an harmless bounded
         term and  the commutator $ [Z^\alpha,  \partial_{z}]\eta $ which is in $L^2$.
       This yields  by using again \eqref{etaharm}
    \begin{align}
    \nonumber
    \| \mathcal{C}^\alpha_{i, 3} \| &  \leq 
    \Lambda( {1 \over  c_{0}}, \|\partial_{i} \varphi \|_{L^\infty} + \|\partial_{z} f \|_{L^\infty}
         )\big( \|\partial_{z} f \|_{m-1}  + \| \nabla  \eta \|_{m-1} \big) \\ 
    \label{Calpha3}     &   \leq   \Lambda( {1 \over  c_{0}}, | h |_{1, \infty} + \|\nabla  f \|_{L^\infty}
         )\big( \|\partial_{z} f \|_{m-1}  + |h|_{m-{1 \over 2 }} \big).
   \end{align}
  
  To end the proof of Lemma \ref{comi}, it suffices to collect \eqref{Calpha1}, \eqref{Calpha2}, 
   \eqref{Calpha3}.
  
  The estimate for $\mathcal{C}_{3}^\alpha$ can be obtained through very similar arguments. 
  This ends the proof of Lemma \ref{comi}.

  \subsection{Interior equation satisfied by $(Z^\alpha v, Z^\alpha q, Z^\alpha \varphi)$} 
  We shall prove the following:
  \begin{lem}
  \label{lemValpha}
  For $1 \leq | \alpha | \leq m$, let us set $V^\alpha = Z^\alpha v - \D_{z}v\, Z^\alpha \eta, $ $Q^\alpha = Z^\alpha q - \D_{z}q\, Z^\alpha \eta,$
   then we get the system
   \begin{eqnarray}
  \label{eqValpha} & &  \D_{t} V^\alpha + v \cdot \nabla^\varphi V^\alpha + \nabla^\varphi Q^\alpha- 2 \eps \nabla^\varphi \cdot S^\varphi V^\alpha
    + \mathcal{C}^\alpha(q)
       + \mathcal{C}^\alpha(\mathcal{T})  \\
    \nonumber   & & \quad \quad \quad \quad \quad  \quad \quad  =  \eps  \mathcal{D^\alpha}\big( S^\varphi v \big) + \eps  \nabla^\varphi \cdot \big( \mathcal{E}^\alpha (v)\big) +(\D_{z}v  \cdot \nabla^\varphi v)
        Z^\alpha \eta
       , \\
   \label{divValpha}
     & &    \nabla^\varphi \cdot V^\alpha + \mathcal{C}^\alpha (d)= 0.
       \end{eqnarray}
       where the commutators $\mathcal{C}^\alpha(q)$, $\mathcal{C}^\alpha(d)$ and $\mathcal{E}^\alpha(v)$ satisfy the estimates:
       \begin{eqnarray}
       \label{Cq} & & \|\mathcal{C}^\alpha (q) \| \leq \Lambda\big( { 1\over c_{0}},  |h|_{2, \infty} + \| \nabla q \|_{1, \infty} \big) \big(
        \|\nabla q \|_{m-1}+ |h |_{m-{1\over 2}} \big), \\
        \label{Cd}& &  \|\mathcal{C}^\alpha (d) \| \leq \Lambda\big( { 1\over c_{0}},  |h|_{2, \infty} + \| \nabla v \|_{1, \infty} \big) \big(
        \|\nabla v \|_{m-1}+ |h |_{m-{1\over 2}} \big), \\
  \label{CT} & &\| \mathcal{C}^\alpha (\mathcal{T}) \| + \|\mathcal{E}^\alpha(v) \| \leq \Lambda \big( {1 \over c_{0}}, |h|_{2, \infty} +  \|v\|_{E^{2, \infty}}
   \big)  \big( \|v\|_{E^m}+ |h|_{m-{1\over 2} }\big)
 \end{eqnarray}
and $\mathcal{D}^\alpha (S^\varphi v ) $ is given by
$$\mathcal{D}^\alpha\big( S^\varphi v \big)_{ij}= 2 \,  \mathcal{C}_{j}^\alpha \big( S^\varphi v)_{ij}. $$

  \end{lem}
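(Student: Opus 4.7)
The plan is to apply $Z^\alpha$ directly to each piece of the system \eqref{NSv}, push all the $\D_i$ and $\D_t$ operators outside using the commutator identity \eqref{com1}, and then invoke the algebraic cancellation of Lemma \ref{lemal} to recognize that what remains on the principal level is exactly the linearization $D\mathcal N$, $Dd$ evaluated at the good unknowns $(V^\alpha, Q^\alpha, Z^\alpha\eta)$. All the terms that fall off the principal part will be collected into $\mathcal{C}^\alpha(q)$, $\mathcal{C}^\alpha(d)$, $\mathcal{C}^\alpha(\mathcal{T})$, $\mathcal{E}^\alpha(v)$, $\mathcal{D}^\alpha(S^\varphi v)$, and estimated by the Moser-type inequalities of Proposition \ref{sob} together with the pointwise commutator estimate of Lemma \ref{comi}.

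More concretely, my first step would be to write, for the transport and the pressure gradient,
$$Z^\alpha(\D_i f) = D\mathcal{A}_i(f,\varphi)\cdot(Z^\alpha f, Z^\alpha \eta) + \mathcal{C}_i^\alpha(f),$$
as in \eqref{com2}, applied to $f=q$ (giving $\mathcal{C}^\alpha(q)$) and to $f=v_j$ in the divergence (giving $\mathcal{C}^\alpha(d)$). Summing the $i$ contributions inside a divergence produces the bulk expressions $\nabla^\varphi Q^\alpha$ and $\nabla^\varphi\cdot V^\alpha$ once one invokes Lemma \ref{lemal}: the principal parts $\D_i Z^\alpha f - \D_z f\, \D_i Z^\alpha\eta$ reassemble into $\D_i(Z^\alpha f - \D_z f\, Z^\alpha\eta) + Z^\alpha\eta\,\D_z\D_i f$, the latter piece providing harmless lower order contributions since $\D_z\D_i f = \D_i\D_z f$ by \eqref{comD}. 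The same mechanism applied to the material derivative $\D_t + v\cdot\nabla^\varphi$ produces the convective operator acting on $V^\alpha$ plus the commutator $\mathcal{C}^\alpha(\mathcal{T})$ and the remainder $(\D_z v\cdot\nabla^\varphi v)Z^\alpha\eta$, which comes from the fact that $v$ is not a passive coefficient but also carries the $Z^\alpha$.

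For the viscous term I would apply the same identity twice, once to expand $\D_j$ outside $S^\varphi v$ and once for the outer $\D_i$. The principal contribution is $2\eps\nabla^\varphi\cdot S^\varphi V^\alpha$; the first layer of commutators gives $\mathcal{D}^\alpha(S^\varphi v)_{ij}=2\mathcal{C}_j^\alpha((S^\varphi v)_{ij})$ as defined in the statement, while the second layer feeds into $\eps\nabla^\varphi\cdot\mathcal{E}^\alpha(v)$ where $\mathcal{E}^\alpha(v)$ essentially contains $\mathcal{C}_j^\alpha(v_i)$-type expressions coming from $\D_j v$ inside $S^\varphi v$. I expect this double application of \eqref{com1} to be the main algebraic bookkeeping obstacle: one has to check that the two layers of Alinhac-type good unknowns indeed combine correctly, so that the residual terms depend on $\eta$ only through $\|\nabla\eta\|_{m-1}$ (which is controlled by $|h|_{m-1/2}$ via Proposition \ref{propeta}), and not through the borderline $|h|_{m+1/2}$.

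Finally, to deduce \eqref{Cq}--\eqref{CT}, I would feed the explicit expressions for $\mathcal{C}_i^\alpha$ into Lemma \ref{comi}, using the product and symmetric commutator laws \eqref{gues}, \eqref{comsym} to absorb products of a low-regularity factor (controlled in $L^\infty$ by $|h|_{2,\infty} + \|v\|_{E^{2,\infty}} + \|\nabla q\|_{1,\infty}$) and a high-regularity factor (controlled in $L^2$ by $\|\nabla v\|_{m-1}$, $\|\nabla q\|_{m-1}$, $\|v\|_{E^m}$, or $|h|_{m-1/2}$). The estimate \eqref{CT} requires in addition a control of $\D_z v$ at one order higher in normal regularity, which is exactly the information encoded in the space $E^m$; this is why the bound for $\mathcal{C}^\alpha(\mathcal{T})$ and $\mathcal{E}^\alpha(v)$ features $\|v\|_{E^m}$ rather than $\|v\|_m$.
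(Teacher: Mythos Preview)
Your sketch is correct and follows essentially the same route as the paper: apply \eqref{com1}/\eqref{com2} term by term, invoke Lemma~\ref{lemal} to pass to the good unknowns, and close the commutator estimates with Lemma~\ref{comi} and Proposition~\ref{propeta}. The only place where the paper is more concrete than your outline is the transport commutator $\mathcal{C}^\alpha(\mathcal{T})$: rather than treating $\D_t + v\cdot\nabla^\varphi$ abstractly, the paper rewrites it as $\partial_t + v_y\cdot\nabla_y + V_z\partial_z$ with $V_z = (v\cdot\N - \partial_t\eta)/\partial_z\varphi$, expands $[Z^\alpha, V_z\partial_z]$ into six explicit pieces $\mathcal{T}_1^\alpha,\dots,\mathcal{T}_6^\alpha$, and proves a dedicated lemma (Lemma~\ref{lemestW}) giving $\|V_z\|_{1,\infty}$ and $\|ZV_z\|_{m-2}$ bounds in terms of $\|v\|_{E^{m-1}}$ and $|h|_{m-1/2}$; this is where the $E^m$ dependence you anticipated actually enters.
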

  Note that we will control the commutator $\mathcal{D}^\alpha(S^\varphi v)$ later by using integration by parts.
  \begin{proof}
   We need to compute 
   the equation solved by $Z^\alpha v$.
  
  By using the previous notations, we   first note  that
  \beq
  \label{comp1}
   Z^\alpha \nabla^\varphi q= \nabla^\varphi Z^\alpha q  -\partial_{z}^\varphi q\,  \nabla^\varphi Z^\alpha \varphi
   + \mathcal{C}^\alpha(q)
   \eeq
   where
   $$ \mathcal{C}^\alpha(q)=  \left(\begin{array}{lll} \mathcal{C}_{1}^\alpha(q) \\ \mathcal{C}_{2}^\alpha (q) \\
   \mathcal{C}_{3}^\alpha(q) \end{array}\right)$$
    and thus thanks to lemma \ref{comi}, we have the estimate
   \beq
   \label{comp2}
   \| \mathcal{C}^\alpha(q) \| \leq \Lambda\big( {1 \over c_{0}}, |h|_{2,\infty } +  \|\nabla  q \|_{1, \infty} \big) \big( \|\nabla  q \|_{m-1} 
   +  | h |_{m-{1 \over 2 } }\big).
   \eeq  
  
  In  a similar way,  we get that
\beq
\label{div1}
Z^\alpha \big( \nabla^\varphi \cdot v \big)= \nabla^\varphi \cdot Z^\alpha v - \nabla^\varphi Z^\alpha \varphi \cdot
 \partial_{z}^\varphi v + \mathcal{C}^\alpha(d)
\eeq 
where
$$
\mathcal{C}^\alpha(d) =  \sum_{i=1}^3 \mathcal{C}^\alpha_{i}(v)
$$
   and hence, thanks to Lemma \ref{comi}, we also have that
 \beq
   \label{comd2}
   \| \mathcal{C}^\alpha(d) \| \leq \Lambda\big( {1 \over c_{0}}, |h|_{2,\infty } +  \|\nabla  v \|_{1, \infty} \big) \big( \|\nabla  v \|_{m-1} 
   +  |  h  |_{m-{1 \over 2 } }\big).
   \eeq
   
   Next, we shall expand the transport part of \eqref{NSv}.
   $$ Z^\alpha \Big( \D_{t} + v \cdot \nabla^\varphi   \Big)v.$$
   We first  note that 
   \beq
   \label{transportW} \D_{t} + v \cdot \nabla^\varphi
   =  \partial_{t}+ v_{y} \cdot \nabla_{y}v +  V_{z} \partial_{z}\eeq


 where $V_{z}$ is defined by 
   \beq
   \label{Wdef}
    V_{z}= {1\over \partial_{z} \varphi} v_{z}=  {1 \over \partial_{z} \varphi} \big(  v\cdot \N- \partial_{t} \varphi\big)= {1\over \partial_{z} \varphi}\big(v\cdot\N- \partial_{t} \eta\big).
   \eeq
and    where $\N(t,y,z)$ is defined as
   $$ \N(t,y,z)=\big(-\partial_{1} \varphi(t,y,z), -\partial_{2}\varphi(t,y,z), 1\big)^t
   =\big(-\partial_{1} \eta(t,y,z), -\partial_{2}\eta(t,y,z), 1\big)^t.$$
   Note that  $\N$ is defined in the whole $\mathcal{S}$ and that $\N(t,y,0)$ is indeed the outward normal to the boundary
    that we have used before.
   For the vector fields $v_{z}$ and  $V_{z}$, we have  the estimates:
   \begin{lem}
   \label{lemestW}
   We have  for $m \geq 2$, the estimates 
   \begin{eqnarray}
    & & \label{vzinfty} \|v_{z}\|_{1, \infty} +   \|V_{z}\|_{1, \infty}\leq \Lambda\big({1\over c_{0}}, \|v\|_{1, \infty} + |h|_{2, \infty}\big), \\
      & & \label{Zvzm-2} \|Zv_{z}\|_{m-2} +  \|ZV_{z}\|_{m-2} \leq  \Lambda\big(  {1 \over c_{0}}, \|v\|_{1, \infty} +  |h|_{2, \infty}   \big) \big(\|v\|_{E^{m-1}}  + |h|_{m-{1\over 2 } }\big).
     \end{eqnarray} 
    \end{lem}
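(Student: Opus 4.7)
My plan is to unfold the definitions
\[
v_{z} \;=\; v\cdot \N - \partial_{t}\eta \;=\; v_{3} - v_{1}\,\partial_{1}\eta - v_{2}\,\partial_{2}\eta - \partial_{t}\eta,
\qquad V_{z} \;=\; v_{z}/\partial_{z}\varphi,
\]
(using $\partial_{t}\varphi=\partial_{t}\eta$ and $\N=(-\partial_{1}\eta,-\partial_{2}\eta,1)^{t}$) and then to estimate each piece by combining the Moser products of Proposition \ref{sob}, the $\eta$ and $\partial_{t}\eta$ bounds of Proposition \ref{propeta}, and the quotient estimate \eqref{quot} to pass from $v_{z}$ to $V_{z}$.

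For the pointwise inequality \eqref{vzinfty}, I first bound $\|v\cdot \N\|_{1,\infty}\leq\bigl(1+\|\nabla\eta\|_{1,\infty}\bigr)\|v\|_{1,\infty}$ and use \eqref{etainfty} to replace $\|\nabla\eta\|_{1,\infty}$ by $|h|_{2,\infty}$, while \eqref{dtetainfty} yields $\|\partial_{t}\eta\|_{1,\infty}\leq \Lambda(|h|_{2,\infty})\,\|v\|_{1,\infty}$. The bound for $V_{z}$ then follows from $\partial_{z}\varphi\geq c_{0}$ and the product rule applied to the factor $1/\partial_{z}\varphi$, since $\|\partial_{z}\eta\|_{1,\infty}$ is again controlled by $|h|_{2,\infty}$.

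For the conormal estimate \eqref{Zvzm-2}, I apply $Z^{\alpha}$ with $|\alpha|\leq m-2$ to $Zv_{z}$ and treat each summand in turn. On the products $Zv\cdot \N$ and $v\cdot Z\N$, the tame inequality \eqref{gues} (with $k=m-2$) puts the $L^{\infty}$ norm on the factor not carrying the top weight, so the contribution is dominated by $\|v\|_{1,\infty}\|\N\|_{m-1}+\|\N\|_{L^{\infty}}\|v\|_{m-1}$; by \eqref{etaharm}--\eqref{etainfty} this is bounded by $\Lambda(|h|_{2,\infty})\bigl(\|v\|_{m-1}+|h|_{m-1/2}\bigr)$. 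For the remaining term $Z\partial_{t}\eta$, the boundedness of the coefficient $z/(1-z)$ in $Z_{3}$ gives $\|Z\partial_{t}\eta\|_{m-2}\lesssim \|\nabla\partial_{t}\eta\|_{H^{m-2}}$, after which \eqref{dtetaharm} with $s=m-2$ produces $\Lambda(\|v\|_{L^{\infty}}+|h|_{1,\infty})\bigl(\|v\|_{E^{m-1}}+|h|_{m-1/2}\bigr)$; this is precisely where $\|v\|_{E^{m-1}}$ enters the right-hand side, since $\partial_{t}h=v\cdot \N$ on the boundary brings in the trace estimate \eqref{trace}. To pass from $v_{z}$ to $V_{z}$, I write $ZV_{z}=(Zv_{z})/\partial_{z}\varphi+v_{z}\,Z\bigl(1/\partial_{z}\varphi\bigr)$, bound $\|Z(1/\partial_{z}\varphi)\|_{m-2}$ by $\Lambda(|h|_{2,\infty})\,|h|_{m-1/2}$ via \eqref{quot}, and combine with \eqref{gues}.

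The only real subtlety is the bookkeeping: at each product one must place the $L^{\infty}$ norm on the factor of lower regularity so as never to demand more than $\|v\|_{E^{m-1}}+|h|_{m-1/2}$ on the right. In particular the $E^{m-1}$ norm (rather than the plain conormal norm $H^{m-1}_{co}$) is forced by the $\partial_{t}\eta$ contribution, where the trace of $v$ on $z=0$ must be estimated in $H^{m-1/2}$.
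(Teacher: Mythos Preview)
Your approach is the same as the paper's and is correct in outline, but you gloss over a point that the paper singles out explicitly.

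When you pass from $v_{z}$ to $V_{z}$ and write $ZV_{z}=(Zv_{z})/\partial_{z}\varphi+v_{z}\,Z(1/\partial_{z}\varphi)$, a direct application of \eqref{gues} to the second product would produce a term of the form $\|Z(1/\partial_{z}\varphi)\|_{L^{\infty}}\|v_{z}\|_{m-2}$. But $v_{z}=v\cdot\N-\partial_{t}\eta$, and, as the paper emphasizes, $\partial_{t}\eta$ is in general \emph{not} in $L^{2}(\mathcal{S})$; only $\nabla\partial_{t}\eta$ is, via \eqref{dtetaharm}. Hence $\|v_{z}\|_{m-2}$ is not available and \eqref{gues} cannot be invoked as stated. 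The remedy (which the paper uses) is to keep at least one conormal derivative on $v_{z}$ throughout: in the Leibniz expansion of $Z^{\alpha}\bigl(v_{z}\,Z(1/\partial_{z}\varphi)\bigr)$ with $|\alpha|\le m-2$, the term with all derivatives on $Z(1/\partial_{z}\varphi)$ is bounded using $\|v_{z}\|_{L^{\infty}}$, while every other term carries a factor $Z^{\gamma}v_{z}$ with $|\gamma|\ge1$, which one rewrites as $Z^{\gamma'}(Zv_{z})$ and controls through $\|Zv_{z}\|_{m-2}$ and $\|Zv_{z}\|_{L^\infty}$. This is precisely why the paper estimates $\|Zv_{z}\|_{m-2}$ rather than $\|v_{z}\|_{m-1}$.

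A smaller imprecision of the same flavour: your bound ``$\|v\|_{1,\infty}\|\N\|_{m-1}$'' is not literally finite since $\N_{3}\equiv1\notin L^{2}$. You should split $v\cdot\N=v_{3}-v_{1}\partial_{1}\eta-v_{2}\partial_{2}\eta$ and only apply \eqref{gues} to the last two terms, so that $\|\N\|_{m-1}$ is replaced by $\|\nabla\eta\|_{m-1}\lesssim|h|_{m-1/2}$.
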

   {\bf Proof of Lemma \ref{lemestW}}
   
   For the first estimate, we use that
   $$ \|v_{z}\|_{L^\infty} + \|V_{z}\|_{L^\infty} \leq \Lambda\big({1\over c_{0}} , \|v\|_{L^\infty}+ \| \nabla \eta \|_{L^\infty} +\| \partial_{t} \eta \|_{L^\infty} \big)
    \leq \ \Lambda\big({1\over c_{0}} , \|v\|_{L^\infty} +|h|_{1, \infty}\big)$$
    thanks to \eqref{etainfty}, \eqref{dtetainfty}.
    In the same way, we get that
    $$ \|Zv_{z}\|_{L^\infty} +   \|Z V_{z} \|_{L^\infty} \leq \Lambda({1 \over c_{0}},  \|v \|_{1, \infty} +  \|\nabla \eta \|_{W^{1, \infty}} + \|  \partial_{t} \eta \|_{W^{1, \infty}} \big)
       \leq \Lambda({1 \over c_{0}},  \|v \|_{1, \infty} +|h|_{2, \infty}  \big)$$
     by using also \eqref{dtetainfty}.
     
     By using \eqref{gues} and  Proposition \ref{propeta},  we also obtain that
     \begin{eqnarray}
    \nonumber
      \|Z v_{z}\|_{m-2} &  \lesssim &  \|\nabla \partial_{t} \eta \|_{m-2}+ \|v\|_{L^\infty}( 1+ \|\nabla \eta \|_{m-1}) + \|\nabla \eta \|_{L^\infty} \|v\|_{m-1}\\ 
    \label{Zpetitvz}  & \lesssim & 
       \Lambda\big(  \|v\|_{L^\infty} +  |h|_{1, \infty}   \big) \big(\|v\|_{E^{m-1}} + |h|_{m-{1\over 2 } }\big).\end{eqnarray}
       Note that $\partial_{t} \eta$ is not in $L^2$, this why we are obliged to apply one  derivative to $v_{z}$ before estimating
        it in $L^2$ in order to use Proposition \ref{propeta}.
      Finally, we note that we have 
      $$\| Z V_{z}\|_{m-2} \lesssim \|Z\big( {1\over \partial_{z} \varphi} \big)  v_{z}\|_{m-2} +  \|{1\over \partial_{z} \varphi} Zv_{z} \|_{m-2}$$
      and hence, by using \eqref{gues}, \eqref{quot}, we obtain that
      \beq
      \label{Zvzm-2prov}
       \| Z V_{z} \|_{m-2} \leq \Lambda\big( {1 \over c_{0}},  \|\nabla \eta \|_{1, \infty} +  \|v_{z}\|_{1, \infty} \big) \big(
       \| Zv_{z}\|_{m-2}+ \|\partial_{z} \eta \|_{m-2}\big).\eeq
%
        Finally, \eqref{Zvzm-2} follows  from \eqref{Zvzm-2prov} by using \eqref{Zpetitvz}, \eqref{vzinfty}  and  Proposition \ref{propeta}.
       
    This ends the proof of Lemma \ref{lemestW}.  
  \begin{rem}
  \label{remdzVz}
  By using similar arguments, we also get that for $m \geq 3$, we have
  $$ \|\partial_{z} Z V_{z}\|_{m-3}  \leq   \Lambda\big(  \|v\|_{E^{1, \infty}} +  |h|_{2, \infty}   \big) \big(\|v\|_{E^{m-1}}  + |h|_{m-{1\over 2 } }\big)$$
   and
   $$ \|\partial_{z} V_{z}\|_{m-3} \leq \Lambda\big(  \|v\|_{E^{1, \infty}} +  |h|_{2, \infty}   \big) \big(\|v\|_{E^{m-2}}  + |h|_{m-{3\over 2 } }\big).$$ 
  \end{rem}  
   \bigskip
 
  By using the  identity \eqref{transportW}, we thus get that
\begin{eqnarray}\nonumber  & & \!\!\!\! Z^\alpha \big( \D_{t} + v \cdot \nabla^\varphi   \big)v \\
\nonumber& &    =   \big(\partial_{t}+ v_{y} \cdot \nabla_{y} +  V_{z} \partial_{z} \big) Z^\alpha v + \big(v\cdot Z^\alpha \N- 
\partial_{t} Z^\alpha \eta \big) \D_{z} v - \D_{z} Z^\alpha \eta \big( v \cdot \N - \partial_{t} \eta\big)
 \D_{z}v  
+ \mathcal{C}^\alpha(\mathcal{T}) \\
\label{T1}& & =  \big( \D_{t}+ v \cdot \nabla^\varphi\big)Z^\alpha v - \D_{z}v \big( \D_{t}+ v \cdot \nabla^\varphi) Z^\alpha \eta
 +  \mathcal{C}^\alpha(\mathcal{T}) 
 \end{eqnarray}
 where the commutator $\mathcal{C}^\alpha(\mathcal{T})$ is defined by 
 $$ \mathcal{C}^\alpha(\mathcal{T})= \sum_{i=1}^5 \mathcal{T}_{i}^\alpha,$$
 \begin{eqnarray*}
 & & \mathcal{T}_{1}^\alpha=  [Z^\alpha, v_{y}]\partial_{y} v, \quad  \mathcal{T}_{2}^\alpha = [Z^\alpha, V_{z},\partial_{z}v], 
 \quad  \mathcal{T}_{3}^\alpha= {1 \over \partial_{z}\varphi} [Z^\alpha, v_{z}]\partial_{z}v,\\
     & & \mathcal{T}_{4}^\alpha =\Big( Z^{\alpha}\big( {1 \over \partial_{z} \varphi} \big) + { \partial_{z}\,  Z^{\alpha} \eta \over
       (\partial_{z} \varphi)^2} \Big) v_{z} \partial_{z} v, 
      \quad  \mathcal{T}_{5}^\alpha= v_{z}\partial_{z}v { [Z^\alpha, \partial_{z}]\eta \over (\partial_{z} \varphi)^2 }
       + V_{z} [Z^\alpha, \partial_{z}]v,\\
      & & \mathcal{T}_{6}^\alpha =
   [Z^\alpha, v_{z}, {1 \over \partial_{z} \varphi}] \partial_{z} v.
 \end{eqnarray*}
To estimate these commutators, we use similar arguments as in the proof of Lemma \ref{comi}. 
In particular,  we use the estimates \eqref{com}, \eqref{comsym} and \eqref{quot} combined with Lemma \ref{lemestW}
and also again the estimates \eqref{etaharm}, \eqref{etainfty}. This yields
    \beq 
    \label{CalphaTproof}
     \| \mathcal{C}^\alpha(\mathcal{T}) \| \leq 
       \Lambda\big( {1 \over c_{0}}, |h|_{2, \infty}+ \|v\|_{E^{2, \infty}}\big)\big( \|v\|_{E^{m}} + |h|_{m-{1\over 2}}\big)
     \eeq
      and hence the estimate \eqref{CT} for $\mathcal{C}^\alpha(\mathcal{T})$ is proven.
    \bigskip

  It remains to  compute $ \eps Z^\alpha \Delta^\varphi v$.  Since $\nabla^\varphi \cdot v = 0$, it is more  convenient to use that
   $Z^\alpha \Delta^\varphi v=  2 Z^\alpha \nabla^\varphi \cdot  (S^\varphi v).$  By using \eqref{com1}, we thus use the following expansion
   $$ 2  Z^\alpha \nabla^\varphi \cdot  (S^\varphi v) =  2 \nabla^\varphi \cdot \big( Z^\alpha \, S^\varphi v \big) - 2 \big( \D_{z}\, S^\varphi v \big) \nabla^\varphi
     ( Z^\alpha \varphi) + \mathcal{D^\alpha}\big( S^\varphi v \big)$$
    where 
    \beq
    \label{Ddef} \mathcal{D}^\alpha\big( S^\varphi v \big)_{i}= 2 \,  \mathcal{C}_{j}^\alpha \big( S^\varphi v)_{ij}\eeq
    by using the summation convention over repeated indices.
    Next, since we can again expand
    \beq
    \label{comS} 2 Z^\alpha  \big( S^\varphi v \big) =  2 S^\varphi \big( Z^\alpha v \big) - \D_{z} v \otimes \nabla^\varphi Z^\alpha \varphi 
     - \nabla^\varphi Z^\alpha \phi 
  \otimes \D_{z} v  + \mathcal{E}^\alpha(v)\eeq
     with
     $$\big( \mathcal{E}^\alpha v\big)_{ij}= \mathcal{C}^\alpha_{i}(v_{j})+ \mathcal{C}^\alpha_{j}(v_{i}), $$  
     we obtain that
     \begin{eqnarray}
     \label{deltaexp}
      & &\eps Z^\alpha \Delta^\varphi v   =  \\ 
    \nonumber    & & 2 \, \eps\, \nabla^\varphi \cdot  S^\varphi(Z^\alpha v )  - 2 \eps \nabla^\varphi \cdot \Big(  \D_{z} v \otimes \nabla^\varphi Z^\alpha \varphi 
     - \nabla^\varphi Z^\alpha { \mathbf \varphi }  \otimes \D_{z} v \Big) - 
      2  \eps \big( \D_{z}\, S^\varphi v \big) \nabla^\varphi
     ( Z^\alpha \varphi) \\
  \nonumber     
       & & +   \eps  \mathcal{D^\alpha}\big( S^\varphi v \big) + \eps  \nabla^\varphi \cdot \big( \mathcal{E}^\alpha v\big).\end{eqnarray}

       Thanks to Lemma \ref{comi}, we also have the estimate
       \beq
       \label{Ealphap}\|\mathcal{E}^\alpha(v) \| \leq \Lambda \big( {1 \over c_{0}}, |h|_{2, \infty} +  \|\nabla v\|_{1, \infty}
   \big)\big( \|v\|_{m}+ \|\partial_{z} v \|_{m-1}+ |h|_{m-{1\over 2} }\big).\eeq
       Consequently, by collecting
       \eqref{comp1}, \eqref{div1}, \eqref{T1} and \eqref{deltaexp}, we get  in view of \eqref{full1}, \eqref{full3} that $(Z^\alpha v, Z^\alpha q, Z^\alpha \varphi)$ solves
        the equation
      $$ D\mathcal{N}(v,q,\varphi) \cdot\big( Z^\alpha v, Z^\alpha q, Z^\alpha \eta\big) - \big(Z^\alpha v \cdot \nabla^\varphi \big)v + \mathcal{C}^\alpha(q)
       + \mathcal{C}^\alpha(\mathcal{T})=  \eps  \mathcal{D^\alpha}\big( S^\varphi v \big) + \eps  \nabla^\varphi \cdot \big( \mathcal{E}^\alpha v\big)$$
       and the constraint
       $$ D d( v,  \varphi) \cdot \big(Z^\alpha v, Z^\alpha \eta\big) + \mathcal{C}^\alpha(d)= 0.$$
  Consequently, by using Lemma \ref{lemal} and \eqref{NSal1}, \eqref{NSal2},  we get  that $(V^\alpha, Q^\alpha)$ with $V^\alpha = Z^\alpha v - \D_{z} v\, Z^\alpha \eta$, $Q^\alpha= Z^\alpha q- \D_{z}q Z^\alpha \eta$
   solves
  \begin{align*} \D_{t} V^\alpha + v \cdot \nabla^\varphi V^\alpha + \nabla^\varphi Q^\alpha- 2 \eps \nabla^\varphi \cdot S^\varphi V^\alpha
    + \mathcal{C}^\alpha(q)
       + \mathcal{C}^\alpha(\mathcal{T}) \quad \quad \quad \quad  \\
        =  \eps  \mathcal{D^\alpha}\big( S^\varphi v \big) + \eps  \nabla^\varphi \cdot \big( \mathcal{E}^\alpha v\big) + \big(\partial_{z}^\varphi v  \cdot
         \nabla^\varphi v\big)Z^\alpha \eta
       \end{align*}
       with
       $$ \nabla^\varphi \cdot V^\alpha + \mathcal{C}^\alpha (d)= 0.$$ 
       Since we have proven the estimates \eqref{Ealphap}, \eqref{comp2}, \eqref{comd2}, this ends the proof of Lemma
        \ref{lemValpha}.
     
  \end{proof}   
  
 \subsection{Estimates of the boundary values}
 
 We shall also need to compute the boundary condition satisfied   by $Z^\alpha v$ when $\alpha_{3} = 0$
 (for $\alpha_{3}\neq 0$, we have $Z^\alpha v= 0$ on the boundary). As a preliminary, in order to control
  the commutators, we first establish the following
 
 \begin{lem}
 \label{lembord}
 For every $s \geq 0$, $s \in \mathbb{R}$,  we have the following estimates at  $z=0$:
 \begin{eqnarray}
 \label{dzvb}
  | \nabla v(\cdot, 0) |_{s} &  \leq  &  \Lambda\big( \|v\|_{{1, \infty}}  +  |h|_{2, \infty}  \big) \big(  |v(\cdot, 0)|_{s+1}
   +|h|_{s+1}\big). 
   \end{eqnarray} 
 \end{lem}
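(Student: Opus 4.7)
The plan is to split $\nabla v(\cdot,0)$ into its tangential and normal components. The tangential derivatives $(\partial_1 v,\partial_2 v)(\cdot,0)$ are bounded in $H^s(\mathbb{R}^2)$ directly by $|v(\cdot,0)|_{s+1}$, so everything reduces to controlling $|\partial_z v(\cdot,0)|_{H^s(\mathbb{R}^2)}$ in the same way.

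For this, the key is to exploit the two pieces of boundary information available on the free surface: the bulk constraint $\nabla^\varphi\cdot v=0$ evaluated at $z=0$ (one scalar equation), and, since $\eps>0$, the tangential part of the stress condition \eqref{bordv2}, namely $\Pi(S^\varphi v\,\N)_{|z=0}=0$ (two scalar equations). Expanding $\partial_i^\varphi$ and $\partial_z^\varphi$ and using $\varphi(t,y,0)=h(t,y)$, each of these three scalar relations is linear in $(\partial_z v_1,\partial_z v_2,\partial_z v_3)_{|z=0}$ with coefficients that are smooth functions of $\nabla h$ and of $\partial_z \varphi_{|z=0}$, and with source a product of a smooth function of $\nabla h$ with tangential derivatives of $v(\cdot,0)$ or with $\nabla h$. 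Clearing denominators gives a linear system of the schematic form
$$M(\nabla h)\,\partial_z v(\cdot,0) \;=\; F_1(\nabla h)\,\nabla_y v(\cdot,0) \;+\; F_2(\nabla h)\,\nabla h.$$

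I would then check that $M(\nabla h)$ is uniformly invertible with $M^{-1}$ controlled by $\Lambda(1/c_0,|\nabla h|_{L^\infty})$: at $\nabla h=0$ the three equations decouple (the divergence fixes $\partial_z v_3$, and the two Navier conditions fix $\partial_z v_1, \partial_z v_2$ separately), so $\det M\neq 0$ there; being a polynomial in $\partial_1 h,\partial_2 h$ it stays uniformly bounded away from zero on the regime of interest. Solving for $\partial_z v(\cdot,0)$ and then applying $\Lambda^s$, the desired $H^s$ bound reduces to controlling in $H^s(\mathbb{R}^2)$ a product of a smooth bounded function of $\nabla h$ with $\nabla_y v(\cdot,0)$ or $\nabla h$, which is exactly the content of the tame product estimate \eqref{sobr2} combined with the classical Moser estimate for the nonlinear compositions $M^{-1}(\nabla h),\,F_i(\nabla h)$. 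The prefactor $\Lambda(\|v\|_{1,\infty}+|h|_{2,\infty})$ then absorbs the $L^\infty$ norms of $\nabla_y v(\cdot,0)$ and $\nabla h$, together with the one extra $L^\infty$ derivative on $\nabla h$ needed to close the Moser bound.

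The main obstacle I expect is twofold: first, verifying that $\det M(\nabla h)$ does not degenerate in the allowed range $\partial_z\varphi\geq c_0$ and $|\nabla h|_{L^\infty}$ bounded, which is essentially a finite-dimensional linear-algebra check; second, the careful Moser bookkeeping so that when $\Lambda^s$ hits a composition $F_i(\nabla h)$ no more than $|h|_{s+1}$ of regularity on $h$ is consumed. Auxiliary contributions coming from $\partial_z\varphi_{|z=0}$ are harmless thanks to Proposition \ref{propeta}, which controls $\eta$ (and hence $\partial_z\varphi$) in terms of $h$.
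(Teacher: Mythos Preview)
Your approach uses the same two ingredients as the paper---the divergence constraint and the tangential stress condition $\Pi(S^\varphi v\,\n)=0$---and the product/Moser bookkeeping you outline is exactly what is done there. The one soft spot is your invertibility argument for $M(\nabla h)$: saying $\det M$ is a polynomial nonzero at $\nabla h=0$ does not show it stays bounded away from zero for all $\nabla h$ with $|h|_{2,\infty}\leq 1/c_0$. The paper sidesteps this entirely by organizing the same three scalar relations as an orthogonal decomposition rather than a linear system: one writes $\partial_z v = (\partial_z v\cdot\n)\,\n + \Pi\,\partial_z v$, reads off $\partial_z v\cdot\N = \partial_z\varphi\,(\partial_1 v_1+\partial_2 v_2)$ from $\nabla^\varphi\cdot v=0$, and then the identity $2S^\varphi v\,\n=\partial_{\n}u+g^{ij}(\partial_j v\cdot\n)\partial_{y^i}$ combined with $\Pi(S^\varphi v\,\n)=0$ gives $\Pi\,\partial_z v$ explicitly in terms of $\partial_1 v,\partial_2 v$ and the already-controlled $\partial_z v\cdot\n$. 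Because normal plus tangential always spans $\mathbb{R}^3$, this is automatically ``invertible'' for every $\nabla h$, with constants depending only on $|\nabla h|_{L^\infty}$; in effect the paper computes your $M^{-1}$ geometrically. The trace of $\partial_z\eta$ that appears is handled via Proposition~\ref{propeta} and~\eqref{trace}, as you anticipated.
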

 \begin{proof}

 Note  that the estimate is obvious for $|\partial_{i} v(\cdot, 0)|_{s}$, $i=1, \,2$. Consequently, the only difficulty
  is to estimate $|\partial_{z} v(\cdot, 0)|_{s}$.
  Since $\nabla^\varphi \cdot  v=0$, we get that
 $$ \partial_{z} v \cdot  \N= \partial_{z} \varphi \big(\partial_{1} v_{1}+ \partial_{2} v_{2})= \big( A + \partial_{z} \eta \big) \big( \partial_{1} v_{1}
 + \partial_{2} v_{2} \big)$$
 and hence  that
 \beq
 \label{eqdzvn} \partial_{z}v \cdot \n={1 \over   |\N| }  \big( A + \partial_{z} \eta \big) \big( \partial_{1} v_{1} + \partial_{2} v_{2}\big), \quad |\N|= \big( 1 + (\partial_{1} \eta^2) + (\partial_{2} \eta)^2 \big)^{1 \over 2} .\eeq
 Consequently, by using \eqref{sobr2} and \eqref{quot} (or actually its version on the boundary), we get that
 $$ | \partial_{z}v \cdot \n |_{s} \leq \Lambda\big( \|v\|_{1, \infty} +  |h|_{{2, \infty}(\mathbb{R}^2)} + \|\partial_{z} \eta \|_{L^\infty} \big)
 \big( |v(\cdot, 0)|_{s+1}+  |Z\N|_{s-1}  + |\partial_{z} \eta(\cdot, 0) |_{s}\big) $$
and hence, by using Proposition \ref{propeta} and  the  trace estimate \eqref{trace} which yields
$$ |\partial_{z} \eta(\cdot, 0) |_{s}\lesssim \| \partial_{z} \eta  \|_{H^{ s +{1\over 2}}(\mathcal{S})} \lesssim |h|_{s+1},$$ 
we  get 
\beq
\label{dzvnb1}
 | \partial_{z}v \cdot \n |_{s} \leq \Lambda\big( \|v\|_{1, \infty} +  |h|_{2, \infty}  \big)
 \big( |v(\cdot, 0)|_{s+1}+  |h|_{s+1}\big). 
\eeq
It remains to control $\Pi\big( \partial_{z}v\big)$ where $\Pi= Id - \n\otimes \n$ is the orthogonal projection  on $(\n)^\perp$
 which is the tangent space to the boundary.
 We shall use the boundary condition \eqref{bord2} which  yields
 \beq
 \label{bordpi} \Pi\big( S^\varphi v \, \n) =0.\eeq
 To expand $\Pi (S^\varphi v\, \n\big)$, we shall use  the local basis in $\Omega_{t}$ induced by  \eqref{diff} that we denote
  by $(\partial_{y^1}, \partial_{y^2}, \partial_{y^3})$.
 Note that by definition, we have by using \eqref{vdef}
  that $(\partial_{y^i}u)(t,  \Phi(t, \cdot))= \partial_{i}v.$
   We also consider the induced riemannian metric defined by $g_{ij}= \partial_{y^i}\cdot \partial_{y^j}$
   and we define as usual the metric $g^{ij}$ as the inverse of $g_{ij}$.
    Then we obtain that
   \beq
   \label{Su}  2 Su \, \n= \partial_{\n}u + g^{ij} \big(\partial_{y^j} u \cdot \n\big) \partial_{y^i}.\eeq
   Consequently, since
    \begin{align*}
    \partial_{\N}u  & =   - \partial_{1}  \varphi \partial_{1} u -\partial_{2} \varphi \partial_{2}u + \partial_{3}u \\
    &  =  - \partial_{1}  \varphi \D_{1} v -\partial_{2} \varphi \D_{2}v + \D_{3}v \\    & = {1\over \partial_{z} \varphi} \big( 1+ | \nabla h|^2 \big)\partial_{z}v
     -\partial_{1}\varphi \partial_{1}v -\partial_{2}\varphi \partial_{2}v,
     \end{align*}
    we get from \eqref{bordpi} that
   \beq
   \label{eqdzvPi} \Pi \partial_{z}v=  {\partial_{z} \varphi \over 1 + | \nabla h |^2}\Big( \Pi \big(  \partial_{1} h \partial_{1} v + \partial_{2} h \partial_{2} v \big)  - \big(
     g^{ij} \big( \partial_{j}v \cdot \n \big) \Pi \partial_{y^i}\Big).
     \eeq
     Consequently, by using the same product estimates as above, we find that
   \begin{align*}
    | \Pi \partial_{z} v|_{s}  & \leq  \Lambda\big( \|v\|_{1, \infty} +  \|\partial_{z}v\cdot \n \|_{L^\infty} +  |h|_{2, \infty}  \big) \big(  |v(\cdot, 0)|_{s+1}
     + |h|_{s+1}+ | \partial_{z}v \cdot \n |_{s} \big) \\
      & \leq  \Lambda\big( \|v\|_{1, \infty} +   |h|_{2, \infty}  \big) \big(  |v(\cdot, 0)|_{s+1} 
     + |h|_{k+1}\big)
     \end{align*}
     where the last estimate comes from \eqref{dzvnb1} and the fact that from \eqref{eqdzvn} we obviously have
     $$ \|\partial_{z}v \cdot \n \|_{L^\infty} \leq \Lambda(  \| \nabla \eta \|_{L^\infty}  + \|v\|_{1, \infty}\big).$$
     
      To conclude, it suffices to combine the last estimate and  \eqref{dzvnb1} since
      $$ | (\partial_{z}v)(\cdot, 0) |_{s} \leq | \Pi \partial_{z}v |_{s} + |\partial_{z}v \cdot \n|_{s}.$$
   \end{proof}  
 
 \bigskip
 
 Next, we shall  obtain the  boundary condition for $Z^\alpha v$. Again, note that the only interesting  case occurs when $\alpha_{3}=0$. Indeed, 
  otherwise $Z^\alpha v=0$, $Z^\alpha \eta= 0$  on the boundary. We start with the study of the boundary condition \eqref{bordv2}.
  \begin{lem}
  \label{lembordV}
  For every $\alpha$, $1 \leq |\alpha | \leq m$ such that $\alpha_{3}= 0$ we have that on $\{z=0\}$
  \beq
  \label{bordV}
  2 \eps S^\varphi V^\alpha\, \N  -\big( Z^\alpha q   - g Z^\alpha h\big)\N  +  \big(2 \eps  S^\varphi v - \big( q - gh) \big) Z^\alpha \N= \mathcal{C}^\alpha(\mathcal{B}) - 2 \eps Z^\alpha h \D_{z}\big( S^\varphi v \big)\N
  \eeq
  where $V^\alpha = Z^\alpha v - \D_{z} v\, Z^\alpha \eta$, $Q^\alpha = Z^\alpha Q- \D_{z}q \, Z^\alpha \eta$
  and the commutator $\mathcal{C}^\alpha(\mathcal{B})$  enjoys the estimate:
  \beq
  \label{CalphaB}
  | \mathcal{C}^\alpha(\mathcal{B}) |_{L^2(\mathbb{R}^2)} \leq    \Lambda\big( {1 \over c_{0}} , | h|_{2, \infty} + 
   \|v\|_{E^{2, \infty}} \big) \big( \eps  |v^b|_{m}+ \eps  | h |_{m} \big)
     \big).
  \eeq

  \end{lem}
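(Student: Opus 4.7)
\textbf{Proof plan for Lemma \ref{lembordV}.}

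The identity is the boundary analogue of Lemma \ref{lemValpha}: starting from $\mathcal{B}(v,q,\varphi)=0$ on $\{z=0\}$ (which is exactly \eqref{bordv2}), I will apply $Z^\alpha$ (which is tangential since $\alpha_3=0$, so it preserves the boundary and commutes with evaluation at $z=0$), and then match the result with $D\mathcal{B}(v,q,\varphi)\cdot(V^\alpha,Q^\alpha,Z^\alpha \eta)$ using the Alinhac-type cancellation from Lemma \ref{lemal}. Concretely, I apply Leibniz to the two factors,
\begin{align*}
Z^\alpha(S^\varphi v\,\N)&=(Z^\alpha S^\varphi v)\N+S^\varphi v\,Z^\alpha\N+R_1^\alpha,\\
Z^\alpha((q-gh)\N)&=Z^\alpha(q-gh)\,\N+(q-gh)\,Z^\alpha\N+R_2^\alpha,
\end{align*}
where $R_1^\alpha,R_2^\alpha$ collect the mixed terms $\sum_{\beta+\gamma=\alpha,\,1\le|\beta|,|\gamma|\le|\alpha|-1}c_{\beta\gamma}(\cdots)(Z^\gamma\N)$. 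Then I use \eqref{comS} together with the elementary expansion $S^\varphi(\D_z v\,Z^\alpha \eta)=Z^\alpha\eta\,\D_z(S^\varphi v)+\tfrac12\bigl(\D_zv\otimes\nabla^\varphi Z^\alpha \eta+\nabla^\varphi Z^\alpha\eta\otimes \D_z v\bigr)$ to get, after the cancellation $\nabla^\varphi Z^\alpha \eta=\nabla^\varphi Z^\alpha\varphi$ (valid for $|\alpha|\ge 1$, $\alpha_3=0$),
\[
2Z^\alpha S^\varphi v=2S^\varphi V^\alpha+2Z^\alpha\eta\,\D_z(S^\varphi v)+\mathcal{E}^\alpha(v).
\]
Taking the trace at $\{z=0\}$, where $\eta=h$ and thus $Z^\alpha\eta=Z^\alpha h$, and subtracting the pressure part, the identity \eqref{bordV} follows with
\[
\mathcal{C}^\alpha(\mathcal{B})=\eps\,\mathcal{E}^\alpha(v)\N+2\eps R_1^\alpha-R_2^\alpha.
\]

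For the estimate \eqref{CalphaB}, each piece must carry a factor of $\eps$. The term $\eps\,\mathcal{E}^\alpha(v)\N$ is controlled on the boundary via the trace inequality \eqref{trace} applied to the bulk bound \eqref{CT} for $\mathcal{E}^\alpha(v)$, combined with Lemma \ref{lembord} (to rewrite $\partial_z v$ traces in terms of $v^b$ and $h$). For $2\eps R_1^\alpha$, the terms are products $(Z^\beta S^\varphi v)(Z^\gamma\N)$ with both factors of intermediate order; at the boundary I use the 2D product rule \eqref{sobr2}, the $L^\infty$ bounds on $v$ and $\nabla\eta$ (Proposition \ref{propeta}), the estimate \eqref{dzvb} on $\partial_z v|_{z=0}$, and the trace control $|\nabla \eta(\cdot,0)|_{s}\lesssim |h|_{s+1/2}$ coming from \eqref{etaharm} and \eqref{trace}. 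The pressure commutator $R_2^\alpha$ does not obviously contain $\eps$, and this is where the subtlety lies: I will recover the missing $\eps$ by invoking the \emph{original} boundary condition \eqref{bordv2}, which on $\{z=0\}$ gives $q-gh=2\eps\,(S^\varphi v\,\n)\cdot\n$; substituting this expression into each $Z^\beta(q-gh)$ appearing in $R_2^\alpha$ turns that term into an additional viscous commutator of the same type as $R_1^\alpha$, to which the previous bounds apply.

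The main obstacle is precisely this last step, that is, making sure that every pressure commutator inside $R_2^\alpha$ carries the factor $\eps$: the Taylor--sign-type bookkeeping has to be done carefully so that after substituting $q-gh=2\eps(S^\varphi v)_{\n\n}$, the resulting quantity is estimated in $H^s(\mathbb{R}^2)$ with the right distribution of derivatives between the $L^\infty$ factor (controlled by $|h|_{2,\infty}+\|v\|_{E^{2,\infty}}$, which in turn bounds $\|S^\varphi v\|_{1,\infty}$ at the boundary) and the high-order factor (bounded by $|v^b|_m+|h|_m$). Collecting the three contributions yields the stated bound
\[
|\mathcal{C}^\alpha(\mathcal{B})|_{L^2(\mathbb{R}^2)}\le \Lambda\!\bigl(\tfrac{1}{c_0},\,|h|_{2,\infty}+\|v\|_{E^{2,\infty}}\bigr)\bigl(\eps\,|v^b|_m+\eps\,|h|_m\bigr),
\]
concluding the proof.
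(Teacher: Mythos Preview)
Your proposal follows essentially the same route as the paper: apply $Z^\alpha$ to the boundary condition, split via Leibniz into the three pieces $\mathcal{E}^\alpha(v)$, the mixed $S^\varphi v$--$\N$ terms, and the mixed $(q-gh)$--$\N$ terms, and then recover the factor $\eps$ in the pressure piece by substituting $q-gh=2\eps\,S^\varphi v\,\n\cdot\n$ from \eqref{bordv2}. The passage from the expanded identity to \eqref{bordV} via Lemma \ref{lemal} is also the paper's argument.

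One point needs correction. You propose to control $|\eps\,\mathcal{E}^\alpha(v)\N|_{L^2(\mathbb{R}^2)}$ by applying the trace inequality \eqref{trace} to the bulk bound \eqref{CT}. That does not work as stated: the trace estimate $|f(\cdot,0)|_{0}\lesssim\|\partial_z f\|^{1/2}\|f\|^{1/2}$ requires a bound on $\|\partial_z\mathcal{E}^\alpha(v)\|$, which would involve $\partial_{zz}v$ at order $m-1$ and is not provided by \eqref{CT}. The paper instead estimates $\mathcal{E}^\alpha(v)$ \emph{directly on the boundary}: since $\alpha_3=0$ one has $\mathcal{C}^\alpha_{i,3}(v_j)=0$, and the remaining pieces $\mathcal{C}^\alpha_{i,1},\mathcal{C}^\alpha_{i,2}$ are estimated by repeating the product/commutator bounds of Lemma \ref{comi} at $z=0$, yielding a right-hand side in $|\nabla v(\cdot,0)|_{m-1}+|\nabla\eta(\cdot,0)|_{m-1}$, which is then converted to $|v^b|_m+|h|_m$ via Lemma \ref{lembord} and \eqref{etaharm}. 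Once you replace your trace-of-bulk step by this direct boundary argument, the rest of your proof goes through exactly as in the paper.
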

 
 \begin{proof}
 By applying $Z^\alpha$ to the boundary condition \eqref{bord2}  and by using the expansion  \eqref{comS}, we get  that
 \beq
 \label{bordV1}
  \eps \big( 2  S^\varphi \big( Z^\alpha v \big)  -  \D_{z} v \otimes \nabla^\varphi Z^\alpha \varphi  
     -  \nabla^\varphi Z^\alpha v \otimes \D_{z} v   \big) \N- \big( Z^\alpha q - g Z^\alpha h\big) \N
      + \big( 2 \eps S^\varphi v - (q- gh ) \big) Z^\alpha \N = \mathcal{C}^\alpha(\mathcal{B}) 
 \eeq
 where
 $$  \mathcal{C}^\alpha(\mathcal{B})= - \eps \mathcal{E}^\alpha(v) - \mathcal{C}^\alpha (\mathcal{B})_{1}   + \mathcal{C}^\alpha(\mathcal{B})_{2}$$ 
 with $\mathcal{E}$ defined after \eqref{comS} and
 \begin{align*}
 &  \mathcal{C}^\alpha (\mathcal{B})_{1}= \sum_{  \tiny{ \begin{array}{ll}\beta + \gamma = \alpha, \\
    0 <| \beta | <|\alpha| \end{array} }} \eps Z^\beta \big(S^\varphi v \big) \, Z^\gamma \N, \\
&  \mathcal{C}^\alpha(\mathcal{B})_{2}= \sum_{ \tiny{ \begin{array}{ll}\beta + \gamma = \alpha, \\
    0 <| \beta | <|\alpha| \end{array} }} Z^\beta \big( q- g h\big)\, Z^\gamma \N.
    \end{align*} 
   Next, by using the product and commutator estimates of Proposition \ref{sobbord} on the boundary, we obtain that
   \begin{align*} | \mathcal{C}^\alpha(\mathcal{B})_{1}|_{L^2(\mathbb{R}^2)}  & \lesssim \eps\,  \| S^\varphi v\|_{1, \infty} |Z \N|_{m-2}
    + \eps   | S^\varphi v|_{m-1} |Z \N|_{L^\infty(\mathbb{R}^2)} \\
     &\leq  \Lambda\big( {1 \over c_{0}},  \|\nabla \eta \|_{1, \infty} +  \|\nabla v \|_{1, \infty} \big) \big( \eps  |h|_{m} +
      \eps |\nabla v |_{m-1}\big)
      \end{align*}
      and hence, thanks to \eqref{etainfty} and \eqref{dzvb}, we get that
    $$ | \mathcal{C}^\alpha(\mathcal{B})_{1}|_{L^2(\mathbb{R}^2)}   \lesssim
     \Lambda\big( {1 \over c_{0}},  |h|_{ 2, \infty} + \|v\|_{E^{2, \infty}} \big)
      \big( \eps |h|_{m}+  \eps |v^b|_{m}\big).$$
   To estimate $\mathcal{C}^\alpha(\mathcal B)_{2}$, we first note that thanks to \eqref{bordv2}, we have
    $ Z^\beta (q- gh)= Z^\beta\big(  \eps S^\varphi v \n \cdot \n) $ and we get 
      in a similar way that 
    \begin{align*}
     | \mathcal{C}^\alpha(\mathcal{B})_{2}|_{L^2(\mathbb{R}^2)}  & \lesssim 
       \big( |  Z q^{NS} |_{L^\infty} + |h|_{L^\infty} \big) | \nabla h |_{m-1} + | \nabla h |_{{1, \infty}} \big(  | Z q^{NS}|_{m-2} \big) \\
        & \leq \Lambda({1 \over c_{0}},   |h|_{2, \infty} + \|v\|_{E^{2,\infty}} \big) \big(  \eps |v^b|_{m} + \eps |h|_{m})
       \end{align*}
       where we have set  $q^{NS}= \eps S^\varphi v \n \cdot \n$
        and the last estimate comes from  \eqref{dzvb}.
      
      It remains to estimate $\mathcal{E}^\alpha(v)$ and hence $|\mathcal{C}_{i}^\alpha (v_{j})|_{L^2(\mathbb{R}^2)}.$
      At first, we note that $\mathcal{C}_{i, 3}^\alpha(v_{j})= 0$ since we only consider the case that $\alpha_{3}= 0$.
      Then,  we get as in the proof of Lemma \ref{comi} that
    $$ |\mathcal{C}_{i}^\alpha (v_{j})|_{L^2(\mathbb{R}^2)} \leq  \Lambda\big( {1 \over c_{0}} , \| \nabla \eta \|_{1, \infty}
     + \| \nabla v \|_{1, \infty}\big) \big( | \nabla v |_{m-1} + |\nabla \eta |_{m-1}\big).$$
     Finally, we can use  Lemma \ref{lembord} and \eqref{etainfty}, \eqref{etaharm} to get that 
   $$  |\mathcal{C}_{i}^\alpha (v_{j})|_{L^2(\mathbb{R}^2)} \leq  \Lambda\big( {1 \over c_{0}} , | h|_{2, \infty} + 
   \|v\|_{E^{2, , \infty}}  \big) \big( |v^b|_{m} + | h |_{m}\big).$$
   This yields
 $$ | \eps \mathcal{E}^\alpha(v)|_{L^2(\mathbb{R}^2)} \leq  \Lambda\big( {1 \over c_{0}} , | h|_{2, \infty} + 
   \|v\|_{E^{2, \infty}}  \big)\big( \eps  |v^b|_{m} + \eps  | h |_{m}\big)$$
   and the estimate \eqref{CalphaB} follows.
   
   To get \eqref{bordV} from \eqref{bordV1}, it suffices to use Lemma \ref{lemal}.
 \end{proof}

It remains to study  the kinematic  boundary condition \eqref{bordv1}
\begin{lem}
\label{lembordh}
  For every $\alpha$, $1 \leq |\alpha | \leq m$ such that $\alpha_{3}= 0$ we have that on $\{z=0\}$
  \beq
  \label{bordh}
  \partial_{t} Z^\alpha h - v^b \cdot  Z^\alpha \N - V^\alpha \cdot \N = \mathcal{C}^\alpha(h)
  \eeq
  where
  \beq
  \label{bordhC}
 | \mathcal{C}^\alpha(h)|_{L^2(\mathbb{R}^2)}
  \leq \Lambda\big( {1 \over c_{0}},   | v|_{E^{1, \infty}} +  |h|_{2, \infty}  \big) \big(  |h|_{m} + \|v\|_{E^{m}}  
\big). 
\eeq
\end{lem}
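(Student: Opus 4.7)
The plan is to apply $Z^\alpha$ directly to the kinematic boundary condition \eqref{bordv1} and then use the constraint $\alpha_{3}=0$ to rewrite tangential derivatives at $z=0$ in terms of the good unknown $V^\alpha$. Since $\alpha_{3}=0$, the operator $Z^\alpha = \partial_{1}^{\alpha_{1}}\partial_{2}^{\alpha_{2}}$ is purely tangential, hence commutes with the trace on $\{z=0\}$; moreover, because $\chi(0)=1$ in \eqref{eqeta}, one has $\eta(t,y,0)=h(t,y)$, so $Z^\alpha \eta_{|z=0}=Z^\alpha h$. In particular, restricting the definition $V^\alpha = Z^\alpha v - \D_{z}v\,Z^\alpha\eta$ to $z=0$ gives
\begin{equation*}
 Z^\alpha v_{|z=0} = V^\alpha_{|z=0} + \bigl(\D_{z}v\bigr)_{|z=0}\, Z^\alpha h.
\end{equation*}

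First I would apply $Z^\alpha$ to \eqref{bordv1} and expand by Leibniz:
\begin{equation*}
 \partial_{t}Z^\alpha h \;=\; Z^\alpha v^b\cdot \N + v^b\cdot Z^\alpha \N \;+\; [Z^\alpha, v^b,\N],
\end{equation*}
where $[Z^\alpha,v^b,\N]=\sum_{\beta+\gamma=\alpha,\ 0<|\beta|<|\alpha|} C_{\beta,\gamma}\,Z^\beta v^b\cdot Z^\gamma \N$ is the symmetric commutator in the sense of \eqref{comsym}. Substituting $Z^\alpha v^b = V^\alpha + \D_{z}v\,Z^\alpha h$ and moving the two principal terms $V^\alpha\cdot \N$ and $v^b\cdot Z^\alpha\N$ to the left produces \eqref{bordh} with
\begin{equation*}
 \mathcal{C}^\alpha(h) \;=\; \bigl(\D_{z}v\cdot \N\bigr)_{|z=0}\, Z^\alpha h \;+\; [Z^\alpha,v^b,\N].
\end{equation*}

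The estimate \eqref{bordhC} then reduces to bounding these two pieces in $L^2(\mathbb R^2)$. For the first piece, $\|\D_{z}v\|_{L^\infty}\leq c_{0}^{-1}\|\partial_{z}v\|_{L^\infty}\leq \Lambda(c_{0}^{-1},\|v\|_{E^{1,\infty}})$ and $|\N|_{L^\infty}\leq 1+|h|_{1,\infty}$, so the contribution is controlled by $\Lambda\,|h|_{m}$. For the second piece, the symmetric-commutator estimate \eqref{comsym} applied on $\mathbb{R}^2$ gives
\begin{equation*}
 |[Z^\alpha,v^b,\N]|_{L^2(\mathbb{R}^2)} \;\lesssim\; |Zv^b|_{L^\infty}|Z\N|_{m-2} + |Z\N|_{L^\infty}|Zv^b|_{m-2}.
\end{equation*}
Since $\N=(-\partial_{1}h,-\partial_{2}h,1)^t$, we have $|Z\N|_{L^\infty}\lesssim |h|_{2,\infty}$ and $|Z\N|_{m-2}\lesssim |h|_{m}$. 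For $v^b$ we combine the trivial bound $|Zv^b|_{L^\infty}\lesssim \|v\|_{1,\infty}\leq \|v\|_{E^{1,\infty}}$ with the trace estimate \eqref{trace} (say $s_{1}=s_{2}=m-1$) to get $|Zv^b|_{m-2}\lesssim |v^b|_{m-1}\lesssim \|\partial_{z}v\|_{m-1}^{1/2}\|v\|_{m-1}^{1/2}\leq \|v\|_{E^m}$. Summing these contributions yields \eqref{bordhC}.

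The only mildly delicate point is the bookkeeping of derivatives on $\N$: a naive Moser estimate on $v^b\cdot\N$ in $H^m(\mathbb R^2)$ would require $|h|_{m+1}$, which is not available. This is precisely why the principal term $v^b\cdot Z^\alpha\N$ is kept on the left-hand side of \eqref{bordh} and only the strictly symmetric commutator enters $\mathcal{C}^\alpha(h)$; the use of \eqref{comsym} rather than \eqref{gues} is exactly what reduces the regularity demand on $h$ from $m+1$ to $m$.
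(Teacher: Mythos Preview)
Your proof is correct and follows essentially the same route as the paper's: apply $Z^\alpha$ to the kinematic condition \eqref{bordv1}, peel off the principal terms $v^b\cdot Z^\alpha\N$ and $V^\alpha\cdot\N$, and control the remainder $\mathcal{C}^\alpha(h)=(\D_{z}v\cdot\N)^b\,Z^\alpha h + [Z^\alpha,v^b,\N]$ via the symmetric commutator bound \eqref{comsym} together with the trace estimate \eqref{trace}. The paper writes the commutator as $-[Z^\alpha,v_y^b,\nabla_y h]$, but since $\N_3=1$ is constant this is identical to your $[Z^\alpha,v^b,\N]$, and the remaining estimates match.
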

\begin{proof}
We immediately get from \eqref{bordv1} that
$$ \partial_{t} Z^\alpha h +  v^b \cdot Z^\alpha \N +  V^\alpha \cdot \N = - [ Z^\alpha, v^b_{y}, \nabla_{y}h]  - {(\partial_{z} v)^b \over \partial_{z} \varphi} \cdot \N\,
 Z^\alpha h:= \mathcal{C}^\alpha(h)$$
 where we recall that we use the notation $f^b= f_{/z=0}$ for the trace on $z=0$
  Consequently, by using the  formula  \eqref{comsym}  on the boundary, we get that
  $$ \| \mathcal{C}^\alpha(h) \| \leq \Lambda\big( {1 \over c_{0}},   |Z v|_{L^\infty} +  |h|_{2, \infty} + |\partial_{z}v |_{L^\infty} \big) \big(  |h|_{m} + |Z v^b|_{m-2}
\big)$$
 and the result follows by using the trace estimate \eqref{trace}. 

\end{proof}

\section{Pressure estimates}
\label{sectionpressure}
In view of the equation \eqref{eqValpha}, in order to estimate the right hand side,  we  need to control the pressure.
 This is the aim of this section. 
By  applying $\nabla^\varphi \cdot$ to  the equation \eqref{NSv}, we get that the pressure $q$ solves in $\mathcal{S}$ the system
\beq
\label{eqpression}
\Delta^\varphi q = - \nabla^\varphi(v\cdot \nabla^\varphi v), \quad q_{/z=0} =  2 \eps S^\varphi \n \cdot \n + gh.
\eeq
We shall split the pressure into  an "Euler" and a "Navier-Stokes" part with the following decomposition:
\beq
\label{pressuredec}
q= q^E + q^{NS}
\eeq
where $q^E$ solves 
\beq
\label{peuler}
\Delta^\varphi q^E = - \nabla^\varphi(v\cdot \nabla^\varphi v), \quad q^E_{/z=0}= gh, 
\eeq
and $q^{NS}$ solves
\beq
\label{qNS}
\Delta^\varphi q^{NS} = 0, \quad q^{NS}_{/z=0}=  2 \eps S^\varphi v \,\n \cdot \,\n.
\eeq
The idea behind this decomposition is that  we shall  need more regularity on $v$ to estimate $q^{NS}$ but thanks to the gain
 of the $\eps$ factor,  this term will be controlled by the viscous regularization. The  Euler pressure solves
  the same equation with the same boundary condition as in the free boundary Euler equation.
Thanks to the explicit expressions of the operators  $\D_{i}$,  we get that the operator $\Delta^\varphi$ can be expressed as
\beq
\label{deltaphi} \Delta^{\varphi}f =  {1 \over \partial_{z} \varphi} \nabla \cdot \big( E \nabla f \big),\eeq
 with the matrix $E$ defined by 
 $$ E= \left( \begin{array}{ccc} \partial_{z} \varphi & 0 & {- \partial_{1} \varphi} \\
  0 & \partial_{z} \varphi & -\partial_{2} \varphi  \\ -\partial_{1} \varphi & -\partial_{2} \varphi &  
   {1 + (\partial_{1} \varphi)^2} + (\partial_{2} \varphi)^2 \over \partial_{z} \varphi \end{array} \right) 
   =  \frac1{\partial_{z} \varphi} PP^*  $$
  and where  we have  for the gradient and the divergence the expressions:
 \beq
 \label{graddiv} \nabla^\varphi \cdot v= {1 \over \partial_{z}\varphi}  \nabla\cdot \big( P v \big) , \quad  \nabla^\varphi f= {1 \over \partial_{z} \varphi} P^* \nabla f,
  \quad P= \left( \begin{array}{ccc}  \partial_{z} \varphi & 0 & 0 \\ 0 & \partial_{z} \varphi & 0 \\ -\partial_{1} \varphi  & - \partial_{2} \varphi & 1 \end{array}
   \right).\eeq
   
   Note that $E$ is symmetric positive and that if 
   $ \| \nabla_y \varphi \|_{L^\infty} \leq { 1 \over c_{0}} $ and   
 $  \partial_{z} \varphi \geq c_{0}>0$ then
   there exists  $\delta(c_{0})>0$ such that
   \beq
   \label{Eminor}
   E X \cdot X \geq \delta |X|^2, \quad \forall X \in \mathbb{R}^3.
   \eeq
   Moreover, we note that
   \beq
   \label{ELinfty}
   \| E\|_{W^{k, \infty}} \leq \Lambda({1 \over c_{0}}, |h|_{k+1, \infty})
   \eeq
   and that by using the decomposition
   $$ E= \mbox{Id}_{A} + \tilde{E}, \quad 
\tilde  E= \left( \begin{array}{ccc}  \partial_{z}\eta & 0 & {- \partial_{1} \eta} \\
  0 &  \partial_{z}\eta  & -\partial_{2} \eta  \\ -\partial_{1} \eta & -\partial_{2} \eta &  
   {A( (\partial_{1} \eta)^2} + (\partial_{2} \eta)^2) - \partial_{z} \eta  \over \partial_{z} \varphi \end{array} \right), \quad
    \, \mbox{Id}_{A}=\mbox{diag} (1, 1, 1/A),$$
   we have the estimate
 \beq
 \label{EHs} \| \tilde{E}\|_{H^s} \leq \Lambda({1 \over c_{0}}, |h|_{1, \infty} \big) |h|_{s+ {1 \over 2}} .
 \eeq
 Before proving the estimates that we need for $q^E$ and $q^{NS}$, we shall establish
  general elliptic estimates for the Dirichlet problem  of 
 the operator $\nabla\cdot\big( E \nabla \cdot \big)$.
  \begin{lem}
  \label{lemelgen}
  Consider the elliptic equation in $\mathcal{S}$
  \beq
  \label{elgen}
  - \nabla \cdot \big( E \nabla \rho)= \nabla \cdot F, \quad \rho_{/z=0}= 0.
  \eeq
  Then we have the estimates:
  \begin{eqnarray}
  \label{ElH2} & &   \| \nabla \rho \| \leq \Lambda\big( {1 \over c_{0}}, |h|_{1, \infty}\big) \|F\|_{L^2},  \quad 
  \| \nabla^2 \rho \| \leq \Lambda\big( {1 \over c_{0}}, |h|_{2, \infty}\big)\big( \|\nabla \cdot F \| + \|F\|_{1} \big),  \\
  \label{ElHmco} & &   \| \nabla \rho \|_{k} 
   \leq \Lambda\big( {1 \over c_{0}}, |h|_{2, \infty} + |h|_{3} + \|F \|_{H^2_{tan}}+ \| \nabla \cdot F \|_{H^1_{tan}}  \big)\big( |h|_{k+{1 \over 2}}+  \|F\|_{k} \big), \quad
   k \geq 1, \\ 
   \label{Eldzzco} & & \| \partial_{zz} \rho \|_{k-1} \leq  \Lambda\big( {1 \over c_{0}}, |h|_{2, \infty} + |h|_{3} + \|F \|_{H^2_{tan}}+ \| \nabla \cdot F \|_{H^1_{tan}}  \big) \\
   \nonumber  & & \quad \mbox{\hspace{6.5cm}} \big( |h|_{k+{1 \over 2}}+  \|F\|_{k} + \| \nabla \cdot F \|_{k-1} \big), \quad k \geq 2.
  \end{eqnarray}
  \end{lem}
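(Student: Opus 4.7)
The plan is to establish the four estimates successively, all anchored by the uniform ellipticity \eqref{Eminor} of $E$, the decomposition $E = \mbox{Id}_{A} + \tilde E$, and the fact that $Z^\alpha \rho_{/z=0} = 0$ for \emph{every} multi-index $\alpha$: when $\alpha_3 = 0$ this is immediate from $\rho_{/z=0}=0$ together with the tangentiality of $Z_1, Z_2$, while when $\alpha_3 \geq 1$ it follows from the vanishing factor $z/(1-z)$ in $Z_3$ at $z=0$. This uniform vanishing at the boundary is what allows the energy identity to be iterated.

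For the first half of \eqref{ElH2} I would multiply \eqref{elgen} by $\rho$, integrate by parts on $\mathcal{S}$ (the boundary term vanishes by $\rho_{/z=0}=0$), and apply \eqref{Eminor} to obtain $\|\nabla \rho\| \lesssim \|F\|/\delta$ with $\delta = \delta(c_0, |h|_{1, \infty})$. For the $H^2$ bound, applying a tangential derivative $\partial_i$ ($i = 1, 2$) preserves the Dirichlet condition so the same energy argument yields $\|\partial_i \nabla \rho\|$, the commutator $[\partial_i, E \nabla] \rho$ being absorbed via $\|\nabla E\|_{L^\infty} \lesssim \Lambda(|h|_{2, \infty})$. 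The missing $\partial_{zz}\rho$ is recovered algebraically by isolating it in the equation: since the coefficient of $\partial_{zz}$ in $\nabla\cdot(E\nabla\cdot)$ is $E_{33} \geq \delta$, one writes $\partial_{zz}\rho = E_{33}^{-1}\bigl(-\nabla\cdot F - \sum_{(i,j)\neq(3,3)} E_{ij}\partial_{ij}\rho - (\nabla \cdot E)\cdot\nabla\rho\bigr)$ and reads off the bound.

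For the conormal estimate \eqref{ElHmco} I would induct on $k$, the base case $k=1$ coming from the $H^2$ bound just established. Applying $Z^\alpha$ to \eqref{elgen} yields $-\nabla\cdot(E\nabla Z^\alpha \rho) = Z^\alpha \nabla\cdot F + \mathcal{R}^\alpha$, where $\mathcal{R}^\alpha$ collects the commutator $[Z^\alpha, \nabla\cdot(E\nabla\cdot)]\rho$ together with lower-order terms produced by $[Z_3,\partial_z] = -(1-z)^{-2}\partial_z$. Expanding via Leibniz, the dominant piece of $\mathcal{R}^\alpha$ has the schematic form $Z^\beta \tilde E \cdot Z^\gamma \nabla^2 \rho$ with $|\beta|+|\gamma|\leq k$, $|\beta|\geq 1$; precisely here the low regularity of $E$ forces the split $E = \mbox{Id}_A + \tilde E$, since only $\tilde E$ need be placed in $H^s$, and \eqref{EHs} produces the $|h|_{k+{1\over 2}}$ factor on the right. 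Combining the Moser--type product estimate \eqref{gues}, the $L^\infty$ bound \eqref{ELinfty}, and the inductive hypothesis at order $k-1$ gives $\|\mathcal{R}^\alpha\| \leq \Lambda(\cdots)(\|F\|_k + |h|_{k+{1\over 2}})$. Since $Z^\alpha \rho$ vanishes on $\{z=0\}$, the energy identity for the equation of $Z^\alpha\rho$ closes the induction, and swapping $\nabla Z^\alpha$ with $Z^\alpha \nabla$ costs only lower-order commutators already controlled. I expect the main obstacle to lie precisely in the bookkeeping of this commutator: one has to place most factors in $L^\infty$ (through the prefactor $\Lambda$) and only the highest-order factor in $L^2$, in order not to lose derivatives on $h$.

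The normal-derivative estimate \eqref{Eldzzco} is then a corollary: starting again from $\partial_{zz}\rho = E_{33}^{-1}\bigl(-\nabla\cdot F - (\text{terms with at most one } \partial_z \text{ on } \rho) - (\nabla\cdot E)\cdot\nabla\rho\bigr)$, I would apply $Z^\alpha$ with $|\alpha| \leq k-1$; every term on the right is already controlled by \eqref{ElHmco} or by $\|\nabla\cdot F\|_{k-1}$, and conormal derivatives of $E_{33}^{-1}$ are handled via \eqref{quot} and Proposition \ref{propeta}. The final bound follows by the same Moser/commutator manipulation as in Step~3, without any further regularity loss.
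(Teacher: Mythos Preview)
Your outline has the right skeleton, but it misses the one step that actually makes the tame estimate \eqref{ElHmco} close, and this gap is visible in the form of the prefactor: you never explain where the dependence on $|h|_{3}$, $\|F\|_{H^2_{tan}}$, and $\|\nabla\cdot F\|_{H^1_{tan}}$ inside $\Lambda$ comes from.

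Concretely, in your inductive step the commutator produces terms of the type $Z^\beta \tilde E\cdot Z^\gamma \nabla\rho$ (after you integrate by parts to keep the divergence form; your written schematic $Z^\gamma\nabla^2\rho$ is not what survives). The product estimate \eqref{gues} then gives
\[
\|Z^\beta \tilde E\cdot Z^\gamma \nabla\rho\|\;\lesssim\;\|Z\tilde E\|_{L^\infty}\,\|\nabla\rho\|_{k-1}\;+\;\|\nabla\rho\|_{L^\infty}\,\|Z\tilde E\|_{k-1}.
\]
The first term is fine by induction and \eqref{ELinfty}. The second term requires $\|\nabla\rho\|_{L^\infty}$, which you have not controlled. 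You cannot avoid this by always putting $\tilde E$ in $L^\infty$: that would force $\Lambda$ to depend on $|h|_{k+1,\infty}$ and destroy the tame (linear-in-highest-norm) structure of \eqref{ElHmco}. The paper handles this by inserting, \emph{before} the induction, an auxiliary bound on $\|\nabla\rho\|_{L^\infty}$: one first proves $\|\nabla\rho\|_{H^2_{tan}}$ and $\|\partial_{zz}\rho\|_{H^1_{tan}}$ by tangential differentiation (using an $L^3\times L^6\to L^2$ H\"older/Sobolev argument to handle $(\partial_y^\alpha\tilde E)\nabla\rho$ when $|\alpha|=2$, which is where $|h|_3$ enters), and then invokes the anisotropic embedding \eqref{emb} to get $\|\nabla\rho\|_{L^\infty}\le\Lambda\bigl(\tfrac{1}{c_0},|h|_{2,\infty}+|h|_3\bigr)\bigl(\|\nabla\cdot F\|_{H^1_{tan}}+\|F\|_{H^2_{tan}}\bigr)$. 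This is precisely the origin of those extra arguments in $\Lambda$, and without this step your induction does not close.

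A secondary point: to keep the right-hand side in divergence form after applying $Z^\alpha$ (so that the energy identity yields a bound on $\|\nabla Z^\alpha\rho\|$ directly), the paper introduces a modified field $\tilde Z_3$ satisfying $\tilde Z_3\partial_z=\partial_z Z_3$. Your remark about $[Z_3,\partial_z]$ suggests you are aware of the issue, but the mechanism you describe (collecting everything into a non-divergence $\mathcal{R}^\alpha$ and bounding its $L^2$ norm) would again require second normal derivatives of $\rho$ at order $k-1$, which are not yet available in the induction.
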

    Note that the estimate \eqref{ElHmco} is tame in the sense that it is linear in  the highest 
 norm with respect to the source term and $h$.
     Nevertheless,  in the statement, the function  $\Lambda$   depends on $k$.
    
    \begin{proof}
   We can construct by standard variational arguments  a  solution of \eqref{elgen} 
which satisfies
    the homogeneous $H^1$ estimate
  \beq
  \label{rhoH1} \| \nabla \rho \| \leq \Lambda( {1 \over c_{0}}) \| F\|. 
  \eeq
 Next, the classical elliptic regularity result  for \eqref{elgen} (see \cite{Gilbarg},  \cite{Lannes05} for example)  yields
\beq
\label{rhoH2}
 \| \nabla \rho \|_{H^1} \leq   \Lambda\big( {1 \over c_{0}}, |h|_{2, \infty} \big) \big( \| \nabla \cdot F  \| + \|F \|_{H^1_{tan}} \big).
 \eeq
 Before proving \eqref{ElHmco}, we shall need another auxiliary estimate for low order derivatives in order to control $\| \nabla \rho\|_{L^\infty}$.
  Indeed, from \eqref{emb}, we get that
  \beq
  \label{qinfty0}
  \| \nabla \rho \|_{L^\infty}^2 \lesssim \|\partial_{z} \nabla \rho \|_{H^{1}_{tan}} \, \| \nabla \rho \|_{H^2_{tan}}
  \eeq
By applying the tangential  derivation $\partial_{y}^\alpha $  to \eqref{elgen} for $| \alpha |= 2$, we get
  that  $$- \nabla \cdot  \big( E \nabla \partial_{y}^\alpha \nabla \rho\big) = \nabla \cdot\big( \partial_{y}^{\alpha} F) + \nabla \cdot \big( [\partial_{y}^{\alpha}, E] \nabla \rho), \quad
   \partial_{y}^\alpha \rho_{/z= 0}= 0$$
   and the standard energy estimate gives that
   $$ \| \nabla \partial_{y}^\alpha  \rho \| \leq \Lambda({1 \over c_{0}}) \big(  \| \partial_{y}^\alpha F \| + \| [\partial_{y}^\alpha, E] \nabla \rho\|\big).$$
   To control the commutator, we can expand it to get that 
   $$ \| [\partial_{y}^\alpha, E] \nabla \rho\| \lesssim \|E\|_{1, \infty} \| \nabla \rho \|_{H^1_{tan}} +   \|(\partial_{y}^\alpha  \tilde E)\nabla \rho\|
    \leq\Lambda\big({1 \over c_{0}}, |h|_{2, \infty} \big)\| \nabla \rho \|_{H^1_{tan}} + \|(\partial_{y}^\alpha  
   \tilde E)\nabla \rho\| $$
 where we have used  \eqref{ELinfty} for the second estimate. To control the above last term, we use successively the Holder
  inequality and the Sobolev inequality to get that
  \beq
  \label{L4}  \|(\partial_{y}^\alpha  
   \tilde E)\nabla \rho\| \lesssim \| \partial_{y}^\alpha \tilde E \|_{L^3}  \| \nabla \rho \|_{L^6} \lesssim  \|\partial_{y}^\alpha \tilde E\|_{H^{1\over2}}
    \, \| \nabla \rho \|_{H^1}, \quad | \alpha |= 2.\eeq
    Consequently, by using \eqref{EHs} and \eqref{rhoH2}, we obtain 
    \beq
   \label{rhos0}
    \| \nabla  \rho \|_{H^2_{tan}} \leq \Lambda({1 \over c_{0}}, |h|_{2, \infty} +  |h|_{3} \big) \|\nabla \cdot F\| + \|F\|_{H^{2}_{tan}}).
   \eeq
    Since the  equation \eqref{elgen}   gives
      \beq
      \label{eqdzzrho} \partial_{zz}  \rho ={1 \over E_{33}} \Big(  \nabla \cdot F -  \partial_{z}\big( \sum_{j<3}E_{3,j}\partial_{j} \rho  \big)- 
       \sum_{i<3, \, j} \partial_{i}\big( E_{i j} \partial_{j} \rho\big) \Big)\eeq
     we also obtain by using \eqref{rhos0},   \eqref{rhoH2} and \eqref{L4} that 
     \beq
     \label{dzzrho1} \|\partial_{zz} \rho \|_{H^1_{tan}} \leq \Lambda\big({1 \over c_{0}}, |h|_{2, \infty} + |h|_{3} \big)\big( \| \nabla \cdot F \|_{H^1_{tan}}
      + \|F\|_{H^2_{tan}} \big).\eeq
     By using  the anisotropic Sobolev embedding \eqref{qinfty0} and \eqref{rhos0}, \eqref{dzzrho1}, we finally get that
      \beq
      \label{qinfty1}
      \|\nabla q \|_{L^\infty} \leq \Lambda\big({1 \over c_{0}}, |h|_{2, \infty} + |h|_{3} \big)\big( \| \nabla \cdot F \|_{H^1_{tan}}
      + \|F\|_{H^2_{tan}} \big).
      \eeq

 We  can now establish the estimate \eqref{ElHmco}  by induction.
 Note that the case $k=1$ is just a consequence of  \eqref{ElH2}.  Next, we want to apply derivatives  to  the equation,
  use the induction assumption to control  commutators and use the  energy estimate \eqref{rhoH1} for the obtained equation.
   Note that for the last part of the argument  we need the source term to be in divergence form. Consequently, 
    a difficulty arises since $Z_{3}$ does not commute with $\partial_{z}$.
     To solve this difficulty, we can  introduce a modified field such that
     \beq
     \label{comtilde}
      \tilde Z_{3} \partial_{z}= \partial_{z} Z_{3}.
      \eeq
      This is achieved by setting
      $$ \tilde Z_{3} f = Z_{3} f  + {1 \over (1- z) ^2} f.$$
      Let us also set $\tilde Z ^\alpha = Z_{1}^{\alpha_{1}} Z_{2}^{\alpha_{2}} \tilde{Z}_{3}^{\alpha_{3}}.$ Note that
      $$\|\tilde Z^\alpha f - Z^\alpha f \| \lesssim \|f\|_{|\alpha| - 1}.$$
      By applying $\tilde Z^\alpha$ to \eqref{elgen}, we first get
      $$  \nabla \cdot \big( Z^\alpha \big( E \nabla \rho )\big) = \nabla \cdot\Big( Z^\alpha F  +\big( \tilde{Z}^\alpha - Z^\alpha \big)  F_{h}
       -  \big( \tilde Z^\alpha - Z^\alpha\big) (E\nabla \rho)_{h}\Big)$$
 where for a vector $X \in \mathbb{R}^3$, we set $X_{h}= (X_{1}, X_{2}, 0)^t$.Ê  Next, this yields
  \beq
  \label{elgender}
   - \nabla \cdot \big( E \cdot \nabla Z^\alpha\rho\big) = \nabla \cdot\Big( Z^\alpha F  +\big( \tilde{Z}^\alpha - Z^\alpha \big)  F_{h}
       -  \big( \tilde Z^\alpha - Z^\alpha\big) (E\nabla \rho)_{h}\Big) + \nabla \cdot  \mathcal{C}
   \eeq
         where the commutator $\mathcal{C}$ is given by 
     $$
     \mathcal{C}= - \big( E [Z^\alpha,  \nabla] \rho\big)  -  \big(  \sum_{ \beta + \gamma= \alpha, \beta \neq 0}
    c_{\beta, \gamma} Z^\beta E \cdot Z^\gamma \nabla \rho \big).$$
    From the standard energy estimate since $Z^\alpha \rho$ vanishes
       on the boundary, we get that
      \beq
      \label{elprov3} \| \nabla \rho\|_{k} \leq \Lambda({1 \over c_{0}} ) \big( \|F\|_{k} + \|E \nabla \rho \|_{k-1} + \| E [Z^\alpha , \nabla] \rho \| +  \big\|  \sum_{ \beta + \gamma= \alpha, \beta \neq 0} c_{\beta, \gamma}
      Z^\beta E \, Z^\gamma \nabla \rho \big\|\big).\eeq
      To estimate the right hand-side, we first use  \eqref{gues} and \eqref{EHs} to get
    \begin{eqnarray*} \|E \nabla \rho \|_{k-1}  &  \lesssim &  (1 +   \|\tilde{E}\|_{L^\infty}) \|\nabla \rho \|_{k-1} +  \| \nabla \rho \|_{L^\infty} \|\tilde{E}\|_{k-1} \\
      &  \lesssim &  \Lambda ({1 \over c_{0}},  |h|_{1, \infty }) \| \nabla \rho \|_{k-1} +  \Lambda( {1 \over c_{0}}, |h|_{1, \infty}) |h|_{k- {1 \over 2}} \| \nabla \rho \|_{L^\infty}.
      \end{eqnarray*}
       Next,  $\|\nabla \rho \|_{k-1}$ is controlled by the induction assumption and  we use \eqref{qinfty1} to get  
           $$ \|E \nabla \rho \|_{k-1}  \lesssim  \Lambda \big({1 \over c_{0}}, |h|_{2, \infty} + |h|_{3} +  \|F\|_{H^2_{tan}} + \| \nabla \cdot F\|_{H^1_{tan}}  \big)\big(
        |h|_{k- {1 \over 2}} + \|F\|_{k- 1} \big).$$
      In a similar way,  we can use \eqref{idcom} to get that
    $$[Z^\alpha, \partial_{z}]= \sum_{ | \beta | \leq | \alpha |-1} \varphi_{\alpha, \beta} \partial_{z}\big( Z^\beta  \cdot \big)$$
     for some harmless bounded functions $\varphi_{\alpha, \beta}$ and hence
     $$  \| E [Z^\alpha , \nabla] \rho \| \leq \Lambda\big( {1 \over c_{0}}, |h|_{1, \infty}\big) \| \nabla \rho \|_{k-1} \big)$$
       and  we get by using \eqref{gues} and \eqref{EHs} that
       $$\big\|  \sum_{ \beta + \gamma= \alpha, \beta \neq 0} c_{\beta, \gamma}
      Z^\beta E \, Z^\gamma \nabla \rho \big\| \lesssim \Lambda( |h|_{2, \infty} + \|\nabla \rho \|_{L^\infty} )(|h|_{k+ {1 \over 2}} + \|\nabla \rho\|_{k-1}).$$
      To conclude, we plug the two last estimates in \eqref{elprov3} and we use the induction assumption to  estimate $\| \nabla \rho\|_{k-1}$ and \eqref{qinfty1}. This proves \eqref{ElHmco}.     
      
      Finally, to get \eqref{Eldzzco}, it suffices to use the equation \eqref{elgen} in the form \eqref{eqdzzrho}         and to use again the  product estimates \eqref{gues}, and   \eqref{ELinfty}, \eqref{EHs}, \eqref{qinfty1}  together  with the  previous estimate
         \eqref{ElHmco}.

      \end{proof}
      
   Also,  
it will  be convenient to  have at  our disposal  the same kind of estimates as in Lemma \ref{lemelgen} but with
    nonhomogeneous Dirichlet condition.
  
  \begin{lem}
  \label{Elnh}
  Consider the elliptic equation in $\mathcal{S}$
  \beq
  \label{eqelgenbis}
  - \nabla \cdot \big( E \nabla \rho)= 0, \quad \rho_{/z=0}=  f^b.
  \eeq
  Then we have the estimates:
  \begin{eqnarray}
  \label{estElnh1}
 & &  \| \nabla  \rho \|_{H^k(\mathcal{S})}  \leq \Lambda\big( {1 \over c_{0}}, |h|_{2, \infty}  + |h|_{3}+|f^b|_{1, \infty}  + |f^b|_{5 \over 2} \big)
  \big( |h|_{k+{1 \over 2}} + |f^b|_{k+{1 \over 2}}\big).
  \end{eqnarray}
  \end{lem}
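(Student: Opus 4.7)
My plan is to reduce to the homogeneous case of Lemma \ref{lemelgen} by lifting the boundary datum. Define the smoothed extension $\tilde f$ of $f^b$ to $\mathcal{S}$ by
\begin{equation*}
\widehat{\tilde f}(\xi,z):=\chi(z\xi)\,\widehat{f^b}(\xi),
\end{equation*}
where $\chi$ is the same cutoff used in \eqref{eqeta}. Repeating verbatim the argument of Proposition \ref{propeta}, this extension satisfies
\begin{equation*}
\|\nabla\tilde f\|_{H^s(\mathcal{S})}\lesssim |f^b|_{s+{1\over 2}},\qquad \|\tilde f\|_{W^{s,\infty}}\lesssim |f^b|_{s,\infty}.
\end{equation*}
Setting $\tilde\rho:=\rho-\tilde f$, the new unknown vanishes on $\{z=0\}$ and solves
\begin{equation*}
-\nabla\cdot(E\nabla\tilde\rho)=\nabla\cdot F,\qquad F:=E\nabla\tilde f,
\end{equation*}
so it suffices to estimate $\|\nabla\tilde\rho\|_{H^k(\mathcal{S})}$ and add the contribution of $\tilde f$.

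I would then prove the bound by induction on $k$, mimicking the proof of \eqref{ElHmco}. For the base case, the standard energy estimate combined with \eqref{rhoH1}--\eqref{rhoH2} controls $\|\nabla\tilde\rho\|_{H^1}$ by $\|F\|+\|\nabla\cdot F\|+\|F\|_{H^1_{tan}}$, which thanks to the $L^\infty$ and Sobolev bounds \eqref{ELinfty}--\eqref{EHs} on $E$ and the extension bounds on $\tilde f$ reduces to $\Lambda({1\over c_0},|h|_{2,\infty}+|f^b|_{1,\infty})\,(|f^b|_{3/2}+|h|_{3/2})$. For the inductive step, apply the modified tangential field $\tilde Z^\alpha$ (with $\tilde Z_3$ as in \eqref{comtilde}, so the source remains in divergence form) to the equation for $\tilde\rho$, perform an $L^2$ energy estimate on the resulting equation with zero boundary condition, and control the commutators $[\tilde Z^\alpha,E]\nabla\tilde\rho$ and the source $\tilde Z^\alpha F$ through \eqref{gues}, \eqref{ELinfty}--\eqref{EHs} and the extension bounds for $\tilde f$. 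One then recovers the two normal derivatives from the equation in the form \eqref{eqdzzrho}, exactly as in the passage from \eqref{rhos0} to \eqref{dzzrho1}; since only standard (not conormal) tangential fields $\partial_y$ are involved here, the outcome is the genuine $H^{k}$ norm of $\nabla\tilde\rho$.

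The main obstacle is maintaining the \emph{tame} structure of the estimate, i.e.\ ensuring that $|f^b|_{k+{1\over 2}}$ and $|h|_{k+{1\over 2}}$ appear only linearly and are never multiplied together. At top order the dangerous term is $(\partial_y^\alpha\tilde E)\nabla\tilde\rho$ with $|\alpha|=k$, which after the H\"older--Sobolev trick \eqref{L4} transfers one half derivative onto $\tilde E$, producing the factor $|h|_{k+{1\over 2}}$, while only $\|\nabla\tilde\rho\|_{H^1}$ remains on the other side. To justify this split one first needs an $L^\infty$ control of $\nabla\tilde\rho$, obtained via the anisotropic embedding \eqref{qinfty0} applied at a lower regularity level; this is precisely what forces the quantities $|h|_{2,\infty}+|h|_3$ and $|f^b|_{1,\infty}+|f^b|_{5/2}$ to enter $\Lambda(\cdot)$ in \eqref{estElnh1}. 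Once this low-order $L^\infty$ bound is in place, the induction closes and adding back the easy contribution of $\tilde f$ yields the claimed tame estimate.
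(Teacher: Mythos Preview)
Your approach is essentially the same as the paper's: lift the boundary datum by the smoothed extension $\tilde f$ (the paper calls it $\rho^H$), reduce to a homogeneous Dirichlet problem with source $\nabla\cdot(E\nabla\tilde f)$, and then invoke the method of Lemma~\ref{lemelgen} adapted to standard Sobolev spaces. One small slip: in the inductive step you should differentiate with the genuine tangential fields $\partial_y^\alpha$ (and then recover $\partial_z$-derivatives from \eqref{eqdzzrho}), not the conormal field $\tilde Z_3$; with that correction your sketch is correct and matches the paper.
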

  The regularity  involving  $|h|_{3}$ in the estimate \eqref{estElnh1} is not optimal, nevertheless, since at
 the end we shall
   use the Sobolev embedding to control $|h|_{2, \infty}$, this is sufficient.
  \begin{proof}
   We write $\rho$ under the form
  $\rho= \rho^H + \rho^r$
  where $\rho^H$ will absorb the boundary data  and $\rho^r$ solves
  \beq
  \label{eqqr}
   - \nabla \cdot \big( E \nabla \rho^{r} \big)= \nabla \cdot\big( E \nabla \rho^H\big), \quad \rho^{r}_{/z= 0}= 0.
   \eeq 
 For $\rho^H$, we choose as in \eqref{eqeta}
 $$ \hat \rho^H(\xi, z)= \chi(z \, \xi) \hat{f}^b.$$
 Consequently, the estimates of Proposition \ref{propeta}  are also valid for $\rho^H$ in particular, we have for every $k \geq 0$:
 \beq
 \label{estqrbis} \|\nabla \rho^H \|_{H^k} \lesssim |f^b|_{k+ {1 \over 2 }}, \quad \|\rho^H\|_{k, \infty} \lesssim \| f^b\|_{k, \infty}.
 \eeq
 Next, we can estimate the solution of \eqref{eqqr}. In Lemma \ref{lemelgen}, we have studied the elliptic problem
 \eqref{elgen} in Sobolev conormal spaces. Nevertheless, by using the same approach one can also get 
  the estimate corresponding to \eqref{ElHmco} in standard Sobolev spaces. This yields 
 \begin{align*}
  \| \nabla  \rho^r \|_{H^k} \leq \Lambda\big( {1 \over c_{0}}, |h|_{2, \infty} + |h|_{3} + \|E\nabla \rho^H\|_{H^2}  \big)
 \big(  |h|_{k+{1 \over 2}} + \| E \nabla \rho^H \|_{H^k} \big).
 \end{align*}
 To estimate the right hand side,  since   $\tilde E$ and $q^H$ have a standard Sobolev regularity in $\mathcal{S}$,
  we get  
 thanks to \eqref{estqrbis}, \eqref{EHs} and  \eqref{ELinfty}  that 
 \begin{eqnarray*}
 & &  \| E \nabla q^H \|_{H^k} \lesssim \Lambda\big( |h|_{1, \infty} + |f^b|_{1, \infty}\big) \big( |h|_{k+ {1 \over 2 }} +  |f^b|_{k+ {1 \over 2 }}\big).
 \end{eqnarray*}
 This yields
 \beq
 \label{estqr}\| \nabla  q^r \|_{k} \leq \Lambda\big( {1 \over c_{0}}, |h|_{2, \infty} +|f^b|_{1, \infty} + |h|_{3} + |f^b|_{5 \over 2} \big)
  \big( |h|_{k+{1 \over 2}} + |f^b|_{k+{1 \over 2}}\big).
  \eeq
  Consequently, \eqref{estElnh1} is proven. 
  \end{proof}
We shall first  establish regularity estimates for $q^{NS}$
\begin{prop}
\label{propPNS}
For $q^{NS}$, we have for $m \geq 1$,  the estimate:
\beq
\label{estqNS}
\| \nabla q^{NS} \|_{H^{m-1}}  \leq   \Lambda\big( {1 \over c_{0}},|h|_{2, \infty} + \|v\|_{E^{2, \infty}} +   |h|_{4} + \|v\|_{E^4} \big)
\big( | \eps v^b |_{m+{1 \over 2}} + \eps |h|_{m+{1 \over 2 }}\big),
\eeq
and the $L^\infty$ estimate
\beq
\label{qNSLinfty} \|\nabla q^{NS} \|_{L^\infty} \leq \eps  \Lambda\big( {1 \over c_{0}},|h|_{2, \infty} + \|v\|_{E^{2, \infty}} +   |h|_{4} + \|v\|_{E^4} \big).
\eeq

\end{prop}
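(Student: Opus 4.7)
The plan is to recast \eqref{qNS} in the divergence form of Lemma \ref{Elnh}. Multiplying $\Delta^\varphi q^{NS} = 0$ by $\partial_z \varphi$ and invoking \eqref{deltaphi}, $q^{NS}$ satisfies
\[
  -\nabla \cdot (E \nabla q^{NS}) = 0, \qquad q^{NS}_{/z=0} = f^b := 2\eps\, S^\varphi v\, \n \cdot \n ,
\]
so that Lemma \ref{Elnh} with $k = m-1$ yields
\[
  \|\nabla q^{NS}\|_{H^{m-1}} \leq \Lambda\!\left(\tfrac{1}{c_0}, |h|_{2,\infty} + |h|_{3} + |f^b|_{1,\infty} + |f^b|_{5/2}\right)\!\left(|h|_{m-1/2} + |f^b|_{m-1/2}\right).
\]
The task is thus reduced to controlling the trace norm $|f^b|_{m-1/2}$ and the low-regularity seed quantities $|f^b|_{1,\infty}$ and $|f^b|_{5/2}$ by the right-hand side of \eqref{estqNS}.

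Next I would expand $f^b$: since $\n = \N/|\N|$ depends only on $\nabla h$, the boundary value $f^b$ is a smooth rational function of $\nabla h$ multiplied by $\eps$ times the trace of $\nabla v$ (recall that $\partial_z \varphi_{/z=0} = A$ by the evenness of $\chi$). Applying the tame Moser estimate \eqref{sobr2} on $\mathbb{R}^2$ gives
\[
  |f^b|_{m-1/2} \leq \eps\,\Lambda\!\left(\tfrac{1}{c_0},\|v\|_{1,\infty} + |h|_{1,\infty}\right)\!\left(|\nabla v(\cdot,0)|_{m-1/2} + |h|_{m-1/2}\right),
\]
and Lemma \ref{lembord} with $s = m-1/2$ trades the trace of $\nabla v$ for the trace of $v$ (paying one-half derivative):
\[
  |\nabla v(\cdot,0)|_{m-1/2} \leq \Lambda\!\left(\|v\|_{1,\infty} + |h|_{2,\infty}\right)\!\left(|v^b|_{m+1/2} + |h|_{m+1/2}\right).
\]
The auxiliary norms $|f^b|_{1,\infty}$ and $|f^b|_{5/2}$ are handled by the same reasoning: they only involve low-order norms of $v^b$ and $h$, bounded via the trace inequality \eqref{trace} and Remark \ref{remLinfty} by $\|v\|_{E^4} + |h|_4$, all carrying an $\eps$ prefactor. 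Combining these ingredients produces \eqref{estqNS}.

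For the $L^\infty$ bound \eqref{qNSLinfty}, I would apply the anisotropic Sobolev embedding \eqref{emb} componentwise:
\[
  \|\nabla q^{NS}\|_{L^\infty}^2 \lesssim \|\partial_z \nabla q^{NS}\|_{H^{s_2}_{tan}} \, \|\nabla q^{NS}\|_{H^{s_1}_{tan}}, \qquad s_1 + s_2 > 2.
\]
The tangential factor is controlled by a low-$m$ instance of \eqref{estqNS}. The normal factor is recovered by using the elliptic equation in the form \eqref{eqdzzrho} (with $F = 0$), which expresses $\partial_{zz} q^{NS}$ in terms of one tangential derivative of $\nabla q^{NS}$ plus lower-order terms controlled by $|h|_4 + \|v\|_{E^4}$. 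Both factors then inherit the $O(\eps)$ size of the boundary data, giving \eqref{qNSLinfty}.

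The one delicate point is the half-derivative loss in Lemma \ref{lembord}, which forces $|v^b|_{m+1/2}$ and $|h|_{m+1/2}$ (rather than $|\cdot|_{m}$) on the right-hand side of \eqref{estqNS}. This is precisely the reason for splitting $q = q^E + q^{NS}$: the loss is not an obstruction because the accompanying $\eps$ factor will later be absorbed by the viscous dissipation $\eps \int \|\nabla v\|_m^2$ via a trace estimate in the spirit of Proposition \ref{heps}, whereas the Euler part of the pressure (which lacks the small parameter) is handled through the Taylor sign condition \eqref{Taylor2}.
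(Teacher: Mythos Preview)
Your proof of \eqref{estqNS} is correct and matches the paper's argument essentially line for line: invoke Lemma~\ref{Elnh} with $f^b = 2\eps S^\varphi v\,\n\cdot\n$, then control $|f^b|_{m-1/2}$ via the product estimate \eqref{sobr2} and Lemma~\ref{lembord}, and handle the low-order seeds $|f^b|_{1,\infty}$, $|f^b|_{5/2}$ by trace estimates absorbing into $\|v\|_{E^4}+|h|_4$.

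For \eqref{qNSLinfty} your route differs slightly from the paper. You invoke the anisotropic embedding \eqref{emb} and recover the normal factor $\partial_{zz}q^{NS}$ through the equation \eqref{eqdzzrho}. This works, but it is more effort than needed: Lemma~\ref{Elnh} already gives $\nabla q^{NS}$ in the \emph{standard} Sobolev scale $H^{m-1}(\mathcal{S})$ (not merely conormal), so the paper simply applies the ordinary $3$D embedding $\|\nabla q^{NS}\|_{L^\infty}\lesssim\|\nabla q^{NS}\|_{H^2}$ and quotes \eqref{estqNS} with $m=3$, together with the trace bound $|v^b|_{7/2}\lesssim\|v\|_{E^4}$. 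The anisotropic embedding is the right tool when only conormal regularity is available (as for $q^E$ or for $v$ itself), but for $q^{NS}$ it is unnecessary. Both approaches yield the $O(\eps)$ bound; the paper's is just shorter.
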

 
\begin{proof}
We need to estimate the solution of 
\beq
\label{qNSbis}
 - \nabla \cdot \big( E \nabla q^{NS} \big)= 0, \quad q^{NS}_{/z= 0}= 2 \eps S^\varphi v \n \cdot \n,  
 \eeq
 therefore,  it suffices to  use Lemma \ref{Elnh} with  $f^b =2 \eps S^\varphi v \n \cdot \n $. Since, we have 
   $$ |f^b|_{1, \infty}  \leq \Lambda \big({1 \over c_{0}},  |v|_{E^{2, \infty}} + |h|_{2, \infty} \big)$$
    and hence by using \eqref{eqdzvn} and \eqref{eqdzvPi}, we find
    $$ |f^b|_{1, \infty}  \leq \Lambda \big({1 \over c_{0}},  |v|_{E^{1, \infty}} + |h|_{2, \infty} + \eps \| v \|_{2, \infty}\big)$$
   and 
   \begin{align*} | f^b|_{m-{1 \over 2}} &  \leq \Lambda\big( {1 \over c_{0}},  |v|_{E^{1, \infty}} + |h|_{2, \infty} \big) \big( \eps | \nabla v(\cdot, 0)|_{m-{1 \over2}}
    + \eps |h|_{m+{1 \over 2}}\big)  \\ 
     & \leq\Lambda\big( {1 \over c_{0}},  |v|_{E^{1, \infty}} + |h|_{2, \infty} \big) \big( \eps | v(\cdot, 0)|_{m+{1 \over2}}
    + \eps |h|_{m+{1 \over 2}}\big) 
    \end{align*}
    where the last estimate comes from \eqref{dzvb}
    In particular, since the above estimate holds for every $m$,  we  also have
    $$ |f^b|_{5\over 2 } \leq   \Lambda\big( {1 \over c_{0}},  |v|_{E^{1, \infty}} + |h|_{2, \infty}  + \eps |h|_{7\over 2} +  \eps | v^b|_{7\over 2}\big).$$
    To conclude and get \eqref{estqNS}, we use the trace estimate \eqref{trace} which yields
    \beq
    \label{trace7/2}  | v^b|_{7\over 2} \lesssim  \| \partial_{z}v \|_{3} +   \|v\|_{4} .  \eeq
     Finally, to obtain  \eqref{qNSLinfty},  it suffices to  use  the standard Sobolev embedding in $\mathcal{S}$ to write
    $$ \|\nabla q^{NS} \|_{L^\infty} \lesssim  \| \nabla  q^{NS}\|_{2} $$
     combined with \eqref{estqNS} and the trace estimate \eqref{trace7/2}.
    This ends the proof of Proposition \ref{propPNS}.

   \end{proof}
   
   It remains to estimate $q^E$.
   \begin{prop}
     \label{proppE}
   For $q^{E}$, we have the estimate:
\begin{eqnarray}
 \label{estqE}
 & & \| \nabla q^{E} \|_{m-1} + \|\partial_{zz} q^E \|_{m-2} \leq  \Lambda \big( {1 \over c_{0}}, |h|_{2, \infty}+ \|v\|_{E^{1 , \infty}} + |h|_{3} +
 |v|_{E^3}\big)
  \big( |h|_{m-{1 \over 2}}  + |v|_{E^m} \big), \\
\label{estqElinfty}  & & \| \nabla q^E \|_{1, \infty} + \| \partial_{z}^2 q^E \|_{L^\infty} \leq   \Lambda \big( {1 \over c_{0}}, |h|_{2, \infty}+ \|v\|_{E^{1 , \infty}}  +
 |h|_{4} + \|v\|_{E^{4}}\big), \\
\label{estqElinfty2} & &  \| \nabla q^E \|_{2, \infty} \leq   \Lambda \big( {1 \over c_{0}}, |h|_{2, \infty}+ \|v\|_{E^{1 , \infty}}  +
 |h|_{5} + \|v\|_{E^{5}}\big)
\end{eqnarray}
   \end{prop}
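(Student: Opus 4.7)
The plan is to split $q^E = q^{E,b} + q^{E,r}$ into a part absorbing the boundary data and a part with homogeneous Dirichlet boundary condition:
\begin{align*}
-\nabla\cdot(E\nabla q^{E,b}) &= 0, \quad q^{E,b}_{/z=0} = gh, \\
-\nabla\cdot(E\nabla q^{E,r}) &= \nabla\cdot F, \quad q^{E,r}_{/z=0} = 0,
\end{align*}
where I use \eqref{deltaphi} to rewrite $-\partial_z\varphi\,\Delta^\varphi q^E = -\nabla\cdot(E\nabla q^E)$. To put the nonlinear source of the original equation in divergence form, I exploit $\nabla^\varphi\cdot v=0$ to write $v\cdot\nabla^\varphi v = \nabla^\varphi\cdot(v\otimes v)$, then use the identity $\partial_z\varphi\,\nabla^\varphi\cdot G = \nabla\cdot(PG)$ from \eqref{graddiv}; this gives $F = -P(v\cdot\nabla^\varphi v)$, containing exactly one derivative of $v$ (possibly $\partial_z v$).

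Then $q^{E,b}$ is handled directly by Lemma~\ref{Elnh} with $f^b = gh$ and $k = m-1$: the resulting bound
\[
\|\nabla q^{E,b}\|_{H^{m-1}} \leq \Lambda\bigl(\tfrac{1}{c_0},|h|_{2,\infty}+|h|_3\bigr)\,|h|_{m-{1\over 2}}
\]
controls both $\|\nabla q^{E,b}\|_{m-1}$ and, via the tangential--normal decomposition, $\|\partial_{zz} q^{E,b}\|_{m-2}$. For $q^{E,r}$, Lemma~\ref{lemelgen} yields
\[
\|\nabla q^{E,r}\|_{m-1} + \|\partial_{zz} q^{E,r}\|_{m-2} \leq \Lambda_0\bigl(|h|_{m-{1\over 2}} + \|F\|_{m-1} + \|\nabla\cdot F\|_{m-2}\bigr),
\]
so the essential work is to bound $F$ and $\nabla\cdot F$ tamely. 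The product estimate \eqref{gues} of Proposition~\ref{sob}, applied to $F \sim v\cdot\nabla^\varphi v$, gives $\|F\|_{m-1}\leq \Lambda\bigl(\tfrac{1}{c_0},|h|_{2,\infty}+\|v\|_{E^{1,\infty}}\bigr)\bigl(|h|_{m-{1\over 2}}+\|v\|_{E^m}\bigr)$, where the contribution of $P$ is absorbed by \eqref{quot} together with \eqref{etainfty}, \eqref{etaharm}. Using $\nabla^\varphi\cdot v=0$ once more, $\nabla\cdot F = -\partial_z\varphi\,\mathrm{tr}\bigl((\nabla^\varphi v)^2\bigr)$ is purely quadratic in first derivatives of $v$, and after $m-2$ conormal derivatives we still keep at most one normal derivative on one factor, so the same estimate holds with $\|v\|_{E^{m-1}}$ in place of $\|v\|_{E^m}$. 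Combining these with Lemma~\ref{lemelgen} and the $q^{E,b}$ bound closes \eqref{estqE}.

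For the $L^\infty$ bounds \eqref{estqElinfty}--\eqref{estqElinfty2}, I would apply the anisotropic Sobolev embedding \eqref{emb} to $Z^\alpha\nabla q^E$ and to $\partial_{zz}q^E$ (with any $s_1,s_2$ satisfying $s_1+s_2>2$), reducing $L^\infty$ control to conormal $H^{s_i}_{tan}$ control; the already-proven \eqref{estqE} with $m=4$ then gives \eqref{estqElinfty}, and with $m=5$ gives \eqref{estqElinfty2}. The main obstacle I anticipate is arranging the conormal product rules so that the final bound is genuinely \emph{tame} --- linear in the top-order quantities $|h|_{m-{1\over 2}}$ and $\|v\|_{E^m}$ --- with all other factors absorbed into the $\Lambda$ prefactor; this is accomplished exactly as in the proof of Lemma~\ref{lemelgen}, by isolating a single high-derivative factor and placing the remaining ones in $W^{1,\infty}$-type norms. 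Note that, in contrast to the Alinhac good-unknown argument used for the velocity estimates, here the pressure is treated as a pure elliptic problem with $h$ entering only through the coefficients $E$ and the boundary data, so no linearization identity is needed.
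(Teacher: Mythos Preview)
Your approach is essentially the paper's: the same splitting $q^E=q^b+q^i$, Lemma~\ref{Elnh} for the boundary-absorbing part with $f^b=gh$, Lemma~\ref{lemelgen} with $F=P(v\cdot\nabla^\varphi v)$ for the interior part, and the anisotropic embedding \eqref{emb} for the $L^\infty$ bounds.

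There is one small gap in the $L^\infty$ argument. For $\|\partial_{zz}q^E\|_{L^\infty}$, applying \eqref{emb} directly to $\partial_{zz}q^E$ would require control of $\|\partial_z(\partial_{zz}q^E)\|_{H^{s_2}_{tan}}=\|\partial_{zzz}q^E\|_{H^{s_2}_{tan}}$, i.e.\ a \emph{third} normal derivative, which \eqref{estqE} does not supply: the conormal estimates give only $\|\nabla q^E\|_{m-1}$ and $\|\partial_{zz}q^E\|_{m-2}$, and neither dominates $\partial_{zzz}q^E$ (this is precisely the loss inherent to conormal spaces). The paper avoids this by instead reading $\partial_{zz}q^E$ algebraically off the equation, as in \eqref{eqdzzrho}:
\[
\partial_{zz}q^E=\frac{1}{E_{33}}\Bigl(\nabla\cdot F-\partial_z\!\sum_{j<3}E_{3j}\partial_j q^E-\sum_{i<3,\,j}\partial_i(E_{ij}\partial_j q^E)\Bigr),
\]
which involves at most one normal derivative of $\nabla q^E$; taking $L^\infty$ and invoking the already-proven bound on $\|\nabla q^E\|_{1,\infty}$ together with $\|\nabla\cdot F\|_{L^\infty}=\|\partial_z\varphi\,\nabla^\varphi v\cdot\nabla^\varphi v\|_{L^\infty}\leq\Lambda(|h|_{1,\infty},\|v\|_{E^{1,\infty}})$ closes the estimate. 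Your argument should incorporate this step.
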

  
   \begin{proof}
   By using \eqref{peuler} and \eqref{graddiv}, we see that we have to solve the elliptic problem
   \beq
   \label{peuler2}
     - \nabla \cdot \big( E \nabla q^E\big) =  \nabla \cdot \big( P ( v \cdot \nabla^\varphi v\big) =  \partial_{z}\varphi \nabla^\varphi v \cdot \nabla^\varphi v,
      \, (y,z) \in \mathcal{S}, 
      \quad q^E_{/z=0} =  gh.
      \eeq 
   We can split this equation in two parts by setting $q^E = q^b + q^i$ where $q^b$ solves the homogeneous equation
 $$  - \nabla \cdot \big( E \nabla q^b\big) = 0$$
 in $\mathcal{S}$  with  nonhomogeneous boundary condition $ q^b_{/z=0}= gh$
  and $q^i$ solves
 $$   - \nabla \cdot \big( E \nabla q^i\big) =  \nabla \cdot \big( P ( v \cdot \nabla^\varphi v)\big) =  \partial_{z}\varphi \nabla^\varphi v \cdot \nabla^\varphi v,
      \, (y,z) \in \mathcal{S}, 
      \quad q^i_{/z=0} = 0.$$
      We get the estimate of $q^b$  as  a consequence of Lemma \ref{Elnh} with $f^b=  gh$.   We find 
   $$ \| \nabla q^b \|_{m-1} + \|\partial_{zz} q^b \|_{m-2} \leq \Lambda \big( {1 \over c_{0}}, |h|_{2, \infty} + |h|_{3 }\big) |h|_{m-{1 \over 2}}.$$
    To estimate  $q^i$ we can use  Lemma \ref{lemelgen}. This yields
    \begin{multline*} \| \nabla q^i \|_{m-1}   \leq   \Lambda \big( {1 \over c_{0}}, |h|_{2, \infty}  + |h|_{3} + \|P ( v \cdot \nabla^\varphi v)\|_{2}
     +   \|\partial_{z}\varphi \nabla^\varphi v \cdot \nabla^\varphi v\|_{1} \big) \\  \big( |h|_{m-{1 \over 2 }}
 +   \|P ( v \cdot \nabla^\varphi v)\|_{m-1} \big).
 \end{multline*} 
 By using again product estimates we thus  find
 $$ \| \nabla q^i \|_{m-1}   \leq   \Lambda \big( {1 \over c_{0}}, |h|_{2, \infty}+ \|v\|_{E^{1 , \infty}}  + |h|_{3 }  + \|v\|_{E^3} \big)
  \big( |h|_{m-{1 \over 2}}  + |v|_{E^m} \big).$$
  In a similar way, using  Lemma \ref{lemelgen},  we  get the estimate for $ \partial_{zz} q^i$.
  
  To get \eqref{estqElinfty}, we use  \eqref{sob} to write that
  $$ \| \nabla q^E \|_{k, \infty} \lesssim \| \partial_{z} \nabla q^{E}\|_{k+1} \| \nabla q^E \|_{k+2}, \quad k=1, \, 2$$
  and hence we get  the estimate for $\| \nabla q^E \|_{k,\infty},$  $k=1, \, 2$  by using \eqref{estqE} with $m= 4$ or $m=5$.  Finally, to estimate $\|\partial_{zz} q^E \|_{L^\infty}$, it suffices
    to rewrite the  equation \eqref{peuler2} under the form  \eqref{eqdzzrho} and to use the  estimate for   $\| \nabla q^E \|_{1, \infty}$ just established.
  
    \end{proof}

     This  ends the proof of Proposition \ref{proppE}.     
   It remains a last estimate for  $q^E$ that   will be  used for  the control of the Taylor sign condition.
   \begin{prop}
   \label{proptaylor}
   For $T\in (0, T^\eps)$, we have  the estimate
   $$ \int_{0}^T | (\partial_{z} \partial_{t} q^E)^b |_{L^\infty} \leq  \int_{0}^T \Lambda \big( {1 \over c_{0}}, \|v\|_{6}+ \|\partial_{z}v \|_{4} +  \|v\|_{E^{2, \infty}} +|h|_{6}+ |h|_{3,  \infty}\big) \\ 
       \cdot \big( 1 +  \eps \|\partial_{zz}v\|_{L^\infty} + \eps \| \partial_{zz} v\|_{3}\big).$$
   
   \end{prop}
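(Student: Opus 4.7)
The strategy is to differentiate the elliptic equation \eqref{peuler2} in time, apply standard elliptic regularity to $r:=\partial_t q^E$, and conclude via Sobolev embedding. Setting $r=\partial_t q^E$ and differentiating \eqref{peuler2} yields
\begin{equation*}
-\nabla\cdot(E\nabla r) = \nabla\cdot(\partial_t E\,\nabla q^E)+\partial_t\bigl(\partial_z\varphi\,\nabla^\varphi v\cdot\nabla^\varphi v\bigr),\quad r_{/z=0} = g\partial_t h.
\end{equation*}
The boundary datum equals $g\,v^b\cdot\N$ by \eqref{bordv1}, whose $H^{5/2}(\mathbb{R}^2)$-norm is controlled via Proposition \ref{sobbord}, the trace estimate \eqref{trace}, and Proposition \ref{propeta}, in terms of $\|v\|_4+\|\partial_z v\|_3+|h|_3$, with the remaining $L^\infty$-prefactors absorbed into $\Lambda$.

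Next I apply the standard Sobolev analogue of Lemmas \ref{lemelgen}--\ref{Elnh} to bound $\|r\|_{H^3(\mathcal{S})}$; this is legitimate because $E$ depends only on $\varphi=Az+\eta$, which by Proposition \ref{propeta} has full Sobolev regularity on $\mathcal{S}$ controlled by $|h|_{s+1/2}$. The resulting inequality takes the schematic form $\|r\|_{H^3(\mathcal{S})}\leq \Lambda(\cdot)\bigl(|r^b|_{H^{5/2}}+\|F\|_{H^1(\mathcal{S})}\bigr)$, where $F$ is the source produced by the time differentiation. Time derivatives of the coefficients, $\partial_t E$ and $\partial_t\varphi$, are controlled via \eqref{dtetaharm} and \eqref{dtetainfty} by norms of $v$ and $h$ that fit inside the $\Lambda$-factor of the claim. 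The only dangerous contribution is $\partial_t v$, which enters linearly through $\partial_t(\nabla^\varphi v)$ in $F$.

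To handle this $\partial_t v$ contribution I substitute from the Navier--Stokes system \eqref{NSv}, writing
\begin{equation*}
\partial_t v = -v_y\cdot\nabla_y v - V_z\partial_z v - \nabla^\varphi q + 2\eps\nabla^\varphi\cdot S^\varphi v,
\end{equation*}
and estimate each piece separately in $H^1(\mathcal{S})$. The transport and pressure contributions produce factors $\|v\|_6$, $\|\partial_z v\|_4$, $|h|_6$ and $|h|_{3,\infty}$ via Propositions \ref{proppE} and \ref{propPNS} applied at indices $m=4,5$, together with \eqref{sob}, \eqref{quot}, and Remark \ref{remLinfty}. The viscous term $2\eps\nabla^\varphi\cdot S^\varphi v$ is precisely what produces the factors $\eps\|\partial_{zz}v\|_{L^\infty}$ and $\eps\|\partial_{zz}v\|_3$ in the conclusion.

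Finally, the standard Sobolev embedding $H^2(\mathcal{S})\hookrightarrow L^\infty(\mathcal{S})$ gives $\|\partial_z r\|_{L^\infty(\mathcal{S})}\lesssim\|r\|_{H^3(\mathcal{S})}$, and a fortiori $|(\partial_z\partial_t q^E)^b|_{L^\infty(\mathbb{R}^2)}\lesssim \|r\|_{H^3(\mathcal{S})}$; integration over $t\in[0,T]$ then yields the stated bound. The main obstacle I expect is the careful bookkeeping required to keep the $\eps\|\partial_{zz}v\|$ contributions linear (not quadratic) in $\eps$, which forces me to isolate the single occurrence of $\partial_t v$ in $F$ and absorb every other factor into the $\Lambda$-prefactor without exceeding the regularity budget listed in the statement.
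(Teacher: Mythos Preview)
Your plan has a genuine gap: routing through $\|r\|_{H^3(\mathcal S)}$ and the embedding $H^2(\mathcal S)\hookrightarrow L^\infty$ forces the source into a \emph{standard} Sobolev space, and that demands normal regularity of $v$ that is simply not available uniformly in $\eps$. Concretely, to get $\|r\|_{H^3}$ from $-\nabla\cdot(E\nabla r)=\nabla\cdot G$ you need $G\in H^2(\mathcal S)$, hence $\partial_{zz}G\in L^2$. But $G$ contains terms like $P(\partial_t v\cdot\nabla^\varphi v)$ and $P(v\cdot\partial_t\nabla^\varphi v)$; two normal derivatives of these produce $\partial_{zz}\partial_t v$ and $\partial_{zzz}v$. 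Substituting the viscous part of \eqref{NSv} into $\partial_{zz}\partial_t v$ then generates $\eps\,\partial_{zzzz}v$. Neither $\|\partial_{zzz}v\|_{L^2}$ nor $\eps\|\partial_{zzzz}v\|_{L^2}$ is controlled by the quantities in the statement (only $\eps\|\partial_{zz}v\|_{3}$, a \emph{conormal} norm, appears). The same obstruction hits $\nabla\cdot(\partial_t E\,\nabla q^E)$: one normal derivative in $H^1$ asks for $\partial_{zz}\nabla q^E$, which is not covered by Proposition~\ref{proppE}.

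The paper avoids this by never asking for normal regularity of the source. It proves a dedicated trace estimate (Lemma~\ref{lemtaylor}) of the form
\[
|(\partial_z\rho)^b|_{L^\infty}\ \le\ \Lambda(\cdot)\bigl(\|F\|_{H^{k+1}_{tan}}+|F^b|_{L^\infty}\bigr),
\]
obtained by multiplying the ODE-in-$z$ form \eqref{taylorel2} by $\partial_z\rho$ and using the anisotropic embedding, so that only tangential derivatives of $F$ enter. In parallel, it rewrites $\nabla\cdot\partial_t\bigl(P(v\cdot\nabla^\varphi v)\bigr)$ using $\nabla^\varphi\cdot v=0$ so that the resulting $F$ (see \eqref{defFtaylor}) involves $\partial_t v$ \emph{without} spatial derivatives; then $\|F\|_{H^3_{tan}}$ needs only $\|\partial_t v\|_3$ (conormal), and via \eqref{NSv} this costs exactly $\eps\|\partial_{zz}v\|_3$. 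Your scheme would work only if you replaced the $H^3(\mathcal S)$ step by such a tangential-regularity argument and performed the divergence-free rewriting; as written it overshoots the normal regularity budget.
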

  
  The proof of proposition \ref{proptaylor} is  a consequence of the following general  lemma:
\begin{lem}
\label{lemtaylor}
Consider $\rho$ the solution of the equation 
\beq
\label{taylorel}
 - \nabla \cdot \big(E \nabla \rho)= \nabla \cdot F, \quad \rho_{/z=0}=0
 \eeq 
 then  for every $k>1$,  $\rho$ satisfies the estimate
 $$ \|(\partial_{z} \rho)^b \|_{L^\infty} \leq    \Lambda \big( {1 \over c_{0}}, |h|_{k+2, \infty} \big)
   \big(  \| F\|_{H^{k+ 1}_{tan}}  +  |(F)^b|_{L^\infty(\mathbb{R}^2)}\big).$$
\end{lem}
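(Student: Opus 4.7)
Plan. The strategy is to obtain a duality-based representation formula for $(\partial_z \rho)^b(y_0)$ and control the two resulting terms separately. Let $\psi(y_0;\cdot)$ denote (the limit of suitable regularizations of) the solution of the adjoint Dirichlet problem $-\nabla\cdot(E\nabla\psi)=0$ in $\mathcal{S}$ with boundary datum $\psi|_{z=0}=\delta_{y_0}$. Testing $-\nabla\cdot(E\nabla\rho)=\nabla\cdot F$ against $\psi$ and integrating by parts twice — using that $E$ is symmetric, that $\rho|_{z=0}=0$ (which also forces $\partial_j\rho|_{z=0}=0$ for $j=1,2$), and that $\psi|_{z=0}=\delta_{y_0}$ — produces the identity
$$E_{33}^b(y_0)\,(\partial_z\rho)^b(y_0)\,=\,\int_{\mathcal{S}}\nabla\psi(y_0;y,z)\cdot F(y,z)\,dy\,dz\,-\,F_3^b(y_0).$$
Since $E_{33}^b\geq c_0$ by \eqref{Eminor}, the prefactor on the left can be absorbed into $\Lambda(1/c_0)$, and the second term on the right immediately yields the $|F^b|_{L^\infty}$ contribution in the conclusion. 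Thus only the bulk integral needs to be estimated.

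For the bulk integral, use the duality pairing
$$\Bigl|\int_{\mathcal{S}}\nabla\psi\cdot F\,dy\,dz\Bigr|\,\leq\,\|\nabla\psi(y_0;\cdot)\|_{(H^{k+1}_{tan}(\mathcal{S}))^{*}}\,\|F\|_{H^{k+1}_{tan}(\mathcal{S})}.$$
The key claim is the uniform-in-$y_0$ estimate $\|\nabla\psi(y_0;\cdot)\|_{(H^{k+1}_{tan}(\mathcal{S}))^{*}}\leq \Lambda(1/c_0,|h|_{k+2,\infty})$. This follows by dualizing the tangential regularity estimate \eqref{estElnh1} of Lemma \ref{Elnh}, applied to smooth mollifications of $\delta_{y_0}$ and then passing to the limit. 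The hypothesis $k>1$ is used precisely to ensure $\delta_{y_0}\in H^{-(k+1)}(\mathbb{R}^2)$ with $y_0$-independent norm, via the Sobolev embedding $H^{k+1}(\mathbb{R}^2)\hookrightarrow L^\infty(\mathbb{R}^2)$. The coefficient regularity $|h|_{k+2,\infty}$ arises because the tangential estimates for $\psi$ at order $k+1$ require $k+1$ tangential derivatives of $E$, which via \eqref{etainfty} translates into $|h|_{k+2,\infty}$.

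The main obstacle is the careful handling of the singular adjoint boundary datum $\delta_{y_0}$: one has to justify the representation formula for all $y_0$ and show that the dual-norm control of $\nabla\psi$ is stable under mollification and independent of $y_0$. A more concrete alternative route is to split $F=F^{\flat}+F^{\sharp}$, where $F^{\flat}$ is a Poisson-type extension of $F^b$ into $\mathcal{S}$ (built as in \eqref{eqeta}) and $F^{\sharp}=F-F^{\flat}$ vanishes at $z=0$, and to split $\rho=\rho^{\flat}+\rho^{\sharp}$ accordingly. For $\rho^{\sharp}$ one applies tangential derivatives $\partial_y^{\alpha}$ with $|\alpha|\leq k+1$ to the equation and uses Lemma \ref{lemelgen} (together with the commutator arguments already used to prove \eqref{ElHmco}) to bound $\|\nabla\rho^{\sharp}\|_{H^{k+1}_{tan}}$; one then controls $\partial_{zz}\rho^{\sharp}$ via \eqref{eqdzzrho} in $H^{k}_{tan}$ and concludes by the trace estimate \eqref{trace} combined with $H^{s}(\mathbb{R}^2)\hookrightarrow L^{\infty}$ for $s>1$. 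For $\rho^{\flat}$, whose source is smooth but only $L^{\infty}$-controlled at the boundary, the representation formula above reduces the estimate to the desired $|F^b|_{L^\infty}$ term directly.
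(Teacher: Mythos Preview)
Your representation formula is correct, but the proof has a genuine gap at the crucial step: the bound $\|\nabla\psi(y_0;\cdot)\|_{(H^{k+1}_{tan})^{*}}\leq\Lambda$ is asserted, not proven. ``Dualizing Lemma~\ref{Elnh}'' does not work: the estimate \eqref{estElnh1} has \emph{nonlinear} dependence on the boundary datum (it sits inside $\Lambda$), so there is nothing to dualize in the usual operator-theoretic sense. Worse, if you try to establish the dual bound directly by pairing $\nabla\psi$ with an arbitrary $G\in H^{k+1}_{tan}$ and integrating by parts, you land exactly on $E_{33}^b(y_0)(\partial_z\sigma[G])^b(y_0)+G_3^b(y_0)$, where $\sigma[G]$ solves the same problem as $\rho$ with $G$ in place of $F$. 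Controlling $|(\partial_z\sigma)^b|_{L^\infty}$ by $\|G\|_{H^{k+1}_{tan}}$ is precisely the lemma you are trying to prove --- the argument is circular. Your alternative route inherits the same issue: the treatment of $\rho^{\flat}$ still relies on the unproven dual bound, and for $\rho^{\sharp}$ the bound on $\|\partial_{zz}\rho^{\sharp}\|_{H^k_{tan}}$ via \eqref{eqdzzrho} requires $\|\nabla\cdot F^{\sharp}\|_{H^k_{tan}}$, hence $\|\partial_z F\|_{H^k_{tan}}$, which is not among the hypotheses.

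The paper's proof avoids duality entirely. First it proves the linear tangential bound $\|\nabla\rho\|_{H^m_{tan}}\leq\Lambda(1/c_0,|h|_{m+1,\infty})\|F\|_{H^m_{tan}}$ by applying $\partial_y^\alpha$ and the basic energy estimate. Then it treats the equation as a second-order ODE in $z$ with $y$ a parameter: write $-E_{33}\partial_{zz}\rho=\partial_z F_3+R$ with $R$ involving only tangential derivatives and first normal derivatives. Multiplying by $\partial_z\rho$ and integrating over $z$ (and using the equation once more to handle $\int F\,\partial_{zz}\rho$) gives a pointwise-in-$y$ bound $|\partial_z\rho(y,0)|\lesssim |\partial_z\rho|_{L^2_z}+|R|_{L^2_z}+|F(y,0)|$. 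Taking the supremum in $y$ and applying the 2D Sobolev embedding $H^k(\mathbb{R}^2)\hookrightarrow L^\infty$ (this is where $k>1$ enters) to the $L^2_z$ terms converts them to $\|\cdot\|_{H^k_{tan}}$ norms, which are then controlled by the tangential bound and $\|F\|_{H^{k+1}_{tan}}$. This ODE-in-$z$ energy argument is the missing idea.
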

We recall that we denote 
 the trace of   a function $f$ on the boundary $z=0$ by $f_{/z=0}$  or $f^b$.
  \begin{proof}
   The first step is  to establish that $\rho$ satisfies for every $m\geq 0$ the estimate:
  \beq
  \label{eltaylor1}
   \| \nabla \rho \|_{H^m_{tan}} \leq \Lambda \big( {1 \over c_{0}}, |h|_{m+1, \infty}\big) \|F\|_{H^m_{tan}}.
  \eeq 
   To  get this estimate, it suffices to apply $\partial_{y}^\alpha$ with $| \alpha| \leq m$ to \eqref{taylorel}
    and to use the standard energy estimate in a classical way.
    Next, we can rewrite \eqref{taylorel} as
    \beq
    \label{taylorel2}
    - E_{33}\partial_{zz} \rho =  \partial_{z} F + R 
    \eeq
    with $R$ given by 
    \beq
    \label{Rtaylor}
     R= \sum_{i<3} \partial_{i} F_{i}  + \partial_{z} E_{33} \partial_{z} \rho + \sum_{i<3}\Big( \partial_{i}\big( E \nabla \rho)_{i} +\partial_{z}(E_{3i}\partial_{i}\rho)\Big).
    \eeq
  For the moment, we see \eqref{taylorel2} as an ordinary differential equation in $z$, the variable $y$ being only a parameter.
   We multiply \eqref{taylorel2} by $\partial_{z} \rho$ and we integrate in $z$ to get after integration by parts:
   \beq
   \label{eltaylor2}| \partial_{z} \rho(0)|^2 \leq \Lambda \big( {1 \over c_{0}}, |h|_{2, \infty} \big)
   \big(  |\partial_{z} \rho|_{L^2_{z}}^2 +  |R|_{L^2_{z}}^2 + |F(0)|\, | \partial_{z} \rho(0)| + \big| \int_{-\infty}^0  F\, \partial_{zz} \rho\, dz \big|\big).\eeq
Note that here $| \cdot |$ stands for the absolute value. To estimate the last term, we use again 
 the equation \eqref{taylorel2} to obtain
 $$  \big| \int_{-\infty}^0  F\, \partial_{zz} \rho\, dz \big| \leq \big|  \int_{-\infty}^0  F\, {\partial_{z}  F \over E_{33}} \, dz \big|
  + \Lambda\big({1 \over c_{0}}\big) |F|_{L^2_{z}}\, |R|_{L^2_{z}}$$
  and hence from an integation by parts we find
 $$  \big| \int_{-\infty}^0  F\, \partial_{zz} \rho\, dz \big| \leq \Lambda\big({1 \over c_{0}}, |h|_{2, \infty}\big)
 \big(  |F(0)| +  |F|_{L^2_{z}}^2 +  |R|_{L^2_{z}}^2 \big).$$ 
 Consequently, we get from \eqref{taylorel2} that
 $$| \partial_{z} \rho(0)| \leq \Lambda \big( {1 \over c_{0}}, |h|_{2, \infty} \big)
   \big(  |\partial_{z} \rho|_{L^2_{z}} +  |R|_{L^2_{z}} + |F(0)| \big).$$
   Now, by taking the supremum in $y$  and by using the two-dimensional Sobolev embedding for the right hand side (except for the last term), we find that
   $$ | (\partial_{z} \rho)^b|_{L^\infty(\mathbb{R}^2)} \leq  \Lambda \big( {1 \over c_{0}}, |h|_{2, \infty} \big)
   \big(  \|\partial_{z} \rho\|_{H^k_{tan}} +  \|R\|_{H^k_{tan}} + |F^b|_{L^\infty(\mathbb{R}^2)} \big)$$
    for $k>1$.
    To conclude, we see from the definition of $R$ that
    $$ \|R\|_{H^k_{tan}} \lesssim   \Lambda\big( {1 \over c_{0}}, |h|_{k+2,\infty}\big) \|\nabla \rho\|_{H^{k+1}_{tan}}+ \|F\|_{H^{k+1}_{tan}}$$
    and hence, we get  from \eqref{eltaylor1} that 
    $$  | (\partial_{z} \rho)^b|_{L^\infty(\mathbb{R}^2)} \leq  \Lambda \big( {1 \over c_{0}}, |h|_{k+2, \infty} \big)
   \big(  \| F\|_{H^{k+ 1}_{tan}}  +  |F^b|_{L^\infty(\mathbb{R}^2)}\big).$$
   This ends the proof of Lemma \ref{lemtaylor}.
  \end{proof}
  We are now in position to give the proof of Proposition \ref{proptaylor}:
 
  {\bf Proof of proposition \ref{proptaylor}.}
  
  We note that $\partial_{t} q^E$ solves the elliptic equation
  $$ \nabla\cdot( E \nabla \partial_{t} q^E) = \nabla \cdot\big(  \partial_{t} \big( P (v\cdot \nabla^\varphi v) )\big)-
   \nabla \cdot \big( \partial_{t} E \, \nabla q^E\big), \quad  \partial_{t} q^E_{/z=0} = g \partial_{t} h $$
   consequently, we can again split $\partial_{t} q^E=  q^i + q^B$ where $q^B$ absorbs the boundary term:
   $$ \nabla\cdot( E \nabla  q^B)= 0, \quad q^B_{/z=0}= g \partial_{t} h$$
    and $q^i$ solves
   \beq
   \label{eqtaylorqi}
    \nabla\cdot( E \nabla \partial_{t} q^i) = \nabla \cdot\big(  \partial_{t} \big( P v\cdot \nabla^\varphi v) \big)-
   \nabla \cdot \big( \partial_{t} E \, \nabla q^E\big), \quad   q^i_{/z=0} = 0. 
   \eeq
  The estimate of $\|\nabla q^B\|_{L^\infty}$ is a consequence of Lemma \ref{Elnh}. Indeed,  for $k>3/2$, we have
  $$ \| \nabla q^B\|_{L^\infty} \lesssim \| \nabla q^B \|_{H^k}$$
   and from the estimate  of Lemma \ref{Elnh}, we  find that
   $$
   \| \nabla q^B\|_{L^\infty} \leq  \Lambda\big({ 1 \over c_{0}}, |h|_{2, \infty} + |h|_{3} +|\partial_{t} h|_{3}\big).
 $$
  From the boundary condition \eqref{bordv1}, we obtain
  $$ |\partial_{t}h|_{3} \leq \Lambda( \|v\|_{L^\infty} + \|h\|_{1, \infty}\big) \big( |h|_{4}+  |v^b|_{3}\big)
   \leq   \Lambda( \|v\|_{L^\infty} + \|h\|_{1, \infty}+ |h|_{4} + \|v\|_{E^4}\big)$$
   where the last estimate comes from the trace inequality \eqref{trace}.  We thus finally get for $q^B$ that
   \beq
   \label{qBtaylor}
    \| \nabla q^B\|_{L^\infty} \leq  \Lambda\big({ 1 \over c_{0}}, |h|_{2, \infty} + \|v\|_{L^\infty} + |h|_{4} +  \|v\|_{E^4}\big).
   \eeq
  
     It remains to estimate $q^i$. We shall use Lemma \ref{lemtaylor}, but we need to be carefull with the structure of the right hand side
      in \eqref{eqtaylorqi}. We first note thanks to the identities \eqref{graddiv} that
   $$ \nabla \cdot\big(  \partial_{t} \big( P (v\cdot \nabla^\varphi v) )\big)= \nabla \cdot \big( (\partial_{t} P) v\cdot \nabla^\varphi v\big)
    + \nabla \cdot \big( P (\partial_{t}v \cdot \nabla^\varphi v) \big) + \partial_{z}\varphi \nabla^\varphi\cdot  \big( v \cdot \partial_{t}(\nabla^\varphi v)\big).$$
    For the last term, by using again \eqref{graddiv} and the summation convention on repeated indices,  we  can write
    $$ \partial_{z}\varphi \nabla^\varphi \cdot \big( v \cdot \partial_{t}(\nabla^\varphi v)\big)=
    \partial_{z}\varphi \partial_{i}^\varphi\big( v \cdot  \partial_{t}\big( {1 \over \partial_{z}\varphi }P^*\nabla v_{i}\big)
    =  \partial_{z}\varphi \partial_{i}^\varphi\big( v \cdot \partial_{t}\big( {1 \over \partial_{z}\varphi }P^*\big)  \nabla v_{i}\big) + 
       \partial_{z}\varphi \, \partial_{i}^\varphi\big( v \cdot \nabla^\varphi \partial_{t} v_{i}\big)$$
       and the crucial observation is that since $\nabla^\varphi \cdot v=0$, we   have
       $$  \partial_{i}^\varphi\big( v \cdot \nabla^\varphi \partial_{t} v_{i}\big)= \partial_{i}^\varphi \big( v_{j}\partial_{j}^\varphi \partial_{t}v_{i}\big)
       = \partial_{j}^\varphi \big( \partial_{t}v_{i}\partial_{i}^\varphi v_{j}\big) +
        \partial_{j}^\varphi \big( \D_{i} \partial_{t}v_{i} \, v_{j}\big).$$
         Again since $\nabla^\varphi \cdot v= 0$, we can write
         $$\D_{i} \partial_{t}v_{i} = - {1 \over \partial_{z}\varphi} \nabla \cdot(\partial_{t} P v)
       $$
       and hence, we obtain
     $$ \partial_{i}^\varphi\big( v \cdot \nabla^\varphi \partial_{t} v_{i}\big)= \partial_{j}^\varphi \big( \partial_{t}v_{i}\partial_{i}^\varphi v_{j}\big) 
     - \partial_{j}^\varphi \Big( v_{j}  
      {1 \over \partial_{z}\varphi} \nabla \cdot(\partial_{t} P v)  \Big).$$
       Consequently, we have proven that
     \begin{multline*} \nabla \cdot\big(  \partial_{t} \big( P (v\cdot \nabla^\varphi v) )\big)=
      \nabla\cdot \big( (\partial_{t} P) v\cdot \nabla^\varphi v\big)+ 2  \nabla \cdot \big( P (\partial_{t}v \cdot \nabla^\varphi v) \big)
       +  \nabla \cdot \big( v \cdot \partial_{t}\big( {1 \over \partial_{z}\varphi }P^*\big)  \nabla v\big) \\
        - \nabla \cdot\Big( {1 \over \partial_{z}\varphi} \nabla \cdot(\partial_{t} P v) \, v \Big).
       \end{multline*}
       This allows to observe that
        \eqref{eqtaylorqi} is under the form \eqref{taylorel} with
        \beq
        \label{defFtaylor} F= (\partial_{t} P) v\cdot \nabla^\varphi v + 2  P (\partial_{t}v \cdot \nabla^\varphi v)+  v \cdot \partial_{t}\big( {1 \over \partial_{z}\varphi }P^*\big)  \nabla v  - {1 \over \partial_{z}\varphi} \nabla \cdot(\partial_{t} P v) + \partial_{t} E \nabla q^E.\eeq
        Consequently, by using Lemma \ref{lemtaylor} for $k=2$, we get that
        $$ | \partial_{z}q^i_{/z=0}|_{L^\infty} \leq \Lambda\big({ 1 \over c_{0}}, |h|_{4, \infty}\big)
         \big( \|F\|_{3} + |F^b|_{L^\infty}\big)$$
          and hence that
       \beq
       \label{taylorqi3} \int_{0}^T | \partial_{z}q^i_{/z=0}|_{L^\infty} \leq \int_{0}^T
       \Lambda\big({ 1 \over c_{0}}, |h(t)|_{4, \infty}\big) \big( \|F\|_{3} + |F^b|_{L^\infty}\big).
       \eeq 
         Next, by using Proposition \ref{propeta} and \eqref{gues}, we get that
       $$   \|F\|_{3} \leq \Lambda \big( {1 \over c_{0}}, \|v\|_{E^4}+ \|v\|_{E^{1, \infty}} +|h|_{4}+ |h|_{1,  \infty}+ |\partial_{t} h|_{5} + 
        |\partial_{t}h|_{2, \infty} + \| \nabla q^E \|_{3}\big)\big( 1 + \|\partial_{t} v\|_{L^\infty} + \|\partial_{t}v\|_{3}\big).$$
        Therefore, by using again the boundary condition \eqref{bordv1} and Proposition \ref{proppE}, we find
        $$  \|F\|_{3} \leq \Lambda \big( {1 \over c_{0}}, \|v\|_{E^5}+ \|v\|_{E^{2, \infty}} +\|v\|_{6}+|h|_{6}+ |h|_{3,  \infty}\big)( 1 + \| \partial_{t}v \|_{L^\infty}
         + \|\partial_{t}v \|_{3}\big).$$
         By using again  the expression \eqref{defFtaylor}, we also find
        \begin{align*}
         |F^b|_{L^\infty} &  \leq \Lambda\big( {1 \over c_{0}}, |h|_{3, \infty} +  \|v\|_{E^{2, \infty}}   + \|\nabla q^E \|_{L^\infty}
          \big)\big( 1 + \| \partial_{t} v\|_{L^\infty}\big) \\
          & \leq  \Lambda\big( {1 \over c_{0}}, |h|_{3, \infty} +  \|v\|_{E^{2, \infty}}  + |h|_{4}+ \|v\|_{E^4}\big)\big( 1 + \|\partial_{t}v \|_{L^\infty}\big)
          \end{align*}
          where the last estimate comes from Proposition \ref{proppE}. Consequently, by plugging these last two estimates
           in \eqref{taylorqi3}, we find
           $$   \int_{0}^T | \partial_{z}q^i_{/z=0}|_{L^\infty} \leq \int_{0}^T
          \Lambda \big( {1 \over c_{0}}, \|v\|_{E^5}+ \|v\|_{E^{2, \infty}} +  \|v\|_{6}+|h|_{6}+ |h|_{3,  \infty}\big)( 1 + \| \partial_{t}v \|_{L^\infty}
         + \|\partial_{t}v \|_{3}\big).$$
         To conclude, we can use the equation \eqref{NSv} to get 
       \begin{multline*}  \|\partial_{t}v \|_{3} + \| \partial_{t}v \|_{L^\infty} \leq   \Lambda \big( {1 \over c_{0}}, \|v\|_{E^5}+ \|v\|_{E^{2, \infty}} +|h|_{5}+ |h|_{2,  \infty}\big) \\ 
       \cdot \big( 1 +  \eps \|\partial_{zz}v\|_{L^\infty} + \eps \| \partial_{zz} v\|_{3} + \|\nabla q\|_{L^\infty} + \|\nabla q \|_{3}\big)
         \end{multline*}
         and then Proposition \ref{propPNS} and Proposition \ref{proppE} combined with the trace estimate \eqref{trace} to find
         $$  \|\partial_{t}v \|_{3} + \| \partial_{t}v \|_{L^\infty} \leq   \Lambda \big( {1 \over c_{0}}, \|v\|_{E^5}+ \|v\|_{E^{2, \infty}} +|h|_{5}+ |h|_{2,  \infty}\big) \\ 
       \cdot \big( 1 +  \eps \|\partial_{zz}v\|_{L^\infty} + \eps \| \partial_{zz} v\|_{3}\big).$$
       This ends the proof of Proposition \ref{proptaylor}. 
            \section{Conormal estimates for $v$ and $h$}
            \label{sectionconorm2}    
  \subsection{Control given by  the good unknown}
  To perform  our  higher order energy estimates, we shall use  the good unknown
     $ V^\alpha= Z^\alpha  v - \D_{z}v  Z^\alpha  \varphi$.  A crucial point is therefore to establish that
      the control of  this type of  quantity and  $Z^\alpha   h $  yield a control  of all
       the needed quantities.
    
    We shall perform a priori estimates on an interval of time $[0, T^\eps]$ for which we assume that 
         \beq
    \label{apriori}
    \partial_{z} \varphi \geq {c_{0}}, \quad |h|_{2, \infty} \leq {1 \over c_{0}}, \quad g  -  (\partial_{z}^\varphi q^E)_{/z=0} \geq {c_{0}\over 2},  \quad \forall t \in [0, T^\eps]
    \eeq
for some $c_{0}>0$. In the following, we shall use the notation $\Lambda_{0}= \Lambda(1/c_{0})$.
    Note  in particular that  this will allow us to use the Korn inequality recalled in Proposition \ref{Korn}.
    
    Let us introduce a few notations. As we have already used, we set
     $$ V^\alpha= Z^\alpha v - \partial_{z}^\varphi v Z^\alpha h, \, \alpha \neq 0, \quad V^0= v$$
      (and $V^0= v$), 
     and  we shall use the norms:
     $$ \|V^m(t) \|^2 = \sum_{| \alpha| \leq m} \|V^\alpha (t)\|^2, \quad  \| S^\varphi V^m(t) \|^2= \sum_{| \alpha| \leq m } \| S^\varphi V^\alpha(t) \|^2.$$
     By using \eqref{apriori}, we  get that for every $t \in [0, T^\eps]$, we have the equivalence
     \beq
     \label{equiv1} 
      \|v\|_{m} \lesssim   \|V^m\| +  \Lambda({1 \over c_{0}} , \| \nabla v\|_{L^\infty}) |h|_{m-{1 \over 2 }},  \quad \|V^m\| \lesssim  
       \|v\|_{m} +   \Lambda({1 \over c_{0}} , \| \nabla v\|_{L^\infty}  )|h|_{m-{1 \over 2 }}.
     \eeq  

 \subsection{Main estimate}

 \begin{prop}
  \label{conormv}
  Let us define for $t \in [0, T^\eps]$, 
 \beq
 \label{deflambdainfty1}   \Lambda_{\infty}(t)= \Lambda \big( {1 \over c_{0}},  \|v(t)\|_{E^{2, \infty}} + \eps^{1 \over 2} \|\partial_{zz}v\|_{L^\infty}+|h(t)|_{4} + |v(t)|_{E^4} \big),
 \eeq
 then, for every $t \in [0, T]$, a sufficiently smooth solution of \eqref{NSv}, \eqref{bordv1}, \eqref{bordv2} satisfies
  for every $m \geq 0$  the estimate
 \begin{align*}
   &    \|V^m(t)\|^2 + |h(t)|_{m}^2 + \eps |h(t)|_{m+{1 \over 2}}^2  + \eps \int_{0}^t \| \nabla V^m\|^2  \\
     &   \leq \Lambda_{0}\big(\| V^m(0)\|^2 + |h(0)|_{m}^2 + \sqrt{\eps}|h(0)|_{m+{1 \over 2}}^2\big) \\
      & \quad + \int_{0}^t\Lambda_{\infty} \big( 1 +   | (\partial_z \partial_{t}q^E)^b |_{L^\infty}
     \big) \big(\|V^m\|^2   +    |h(t)|_{m}^2 + \eps |h(t)|_{m+{1 \over 2}}^2\big)  +   \int_{0}^T \Lambda_{\infty} \|\partial_{z} v\|_{m-1}^2.
 \end{align*}
     where as before $\Lambda_{0}= \Lambda(1/c_{0})$.
  \end{prop}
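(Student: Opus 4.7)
The plan is to perform an $L^2$-type energy estimate on the system \eqref{eqValpha}--\eqref{divValpha} for the good unknowns $(V^\alpha, Q^\alpha)$, then combine the resulting boundary contribution with the kinematic boundary identity \eqref{bordh} to pick up the surface energy controlled by the Taylor sign condition. For $|\alpha|=0$ the estimate is already Corollary \ref{corL2}; for $1 \le |\alpha| \le m$ we test \eqref{eqValpha} against $V^\alpha$ in the weighted inner product $\int_\mathcal{S} \cdot\, d\V$ and sum over $\alpha$. The transport term produces $\frac{1}{2}\tfrac{d}{dt}\|V^\alpha\|_\V^2$ via \eqref{ippt}, with a boundary piece that vanishes by \eqref{bord1}. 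The pressure term and the viscous term yield, after \eqref{ippD}--\eqref{ippS} and using the divergence relation \eqref{divValpha} to trade $\nabla^\varphi\cdot V^\alpha$ against the commutator $\mathcal C^\alpha(d)$, the identity
\begin{equation*}
\tfrac{1}{2}\tfrac{d}{dt}\!\int_\mathcal{S}\!|V^\alpha|^2 d\V + 2\eps\!\int_\mathcal{S}\!|S^\varphi V^\alpha|^2 d\V
= \int_{z=0}\!\bigl(2\eps S^\varphi V^\alpha\,\N - Q^\alpha \N\bigr)\cdot V^\alpha\,dy + \mathcal R^\alpha,
\end{equation*}
where $\mathcal R^\alpha$ collects all interior commutators and the $\eps$-commutators from $\mathcal D^\alpha$ and $\nabla^\varphi\!\cdot\mathcal E^\alpha(v)$.

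The crucial step is the boundary integral. Substituting \eqref{bordV} and writing $Q^\alpha = Z^\alpha q - \partial_z^\varphi q\,Z^\alpha h$ at $z=0$, the main boundary contribution becomes
\begin{equation*}
\int_{z=0}\bigl(g-\partial_z^\varphi q^E\bigr)\,Z^\alpha h\;(V^\alpha\cdot\N)\,dy + (\text{terms with }q^{NS}\text{ or }\mathcal C^\alpha(\mathcal B)).
\end{equation*}
Using \eqref{bordh} to replace $V^\alpha\cdot\N$ by $\partial_t Z^\alpha h - v^b\!\cdot Z^\alpha\N - \mathcal C^\alpha(h)$ and integrating in time, the $\partial_t Z^\alpha h$ factor produces
\begin{equation*}
\tfrac{1}{2}\tfrac{d}{dt}\!\int_{z=0}(g-\partial_z^\varphi q^E)|Z^\alpha h|^2\,dy -\tfrac{1}{2}\!\int_{z=0}\partial_t(g-\partial_z^\varphi q^E)|Z^\alpha h|^2\,dy,
\end{equation*}
which, under the Taylor sign assumption \eqref{apriori}, gives uniform control of $|h|_m^2$ with the help of Proposition \ref{proptaylor} to handle $\partial_t\partial_z^\varphi q^E$ at the boundary. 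The $v^b\cdot Z^\alpha\N$ piece is absorbed by a trace estimate and $\mathcal C^\alpha(h)$ is bounded directly by \eqref{bordhC}, while the $q^{NS}$ contribution and $\mathcal C^\alpha(\mathcal B)$ are controlled using Proposition \ref{propPNS} (gaining an $\eps$) and \eqref{CalphaB}, both absorbable into $\Lambda_\infty$ times $(\|V^m\|^2+|h|_m^2+\eps|h|_{m+1/2}^2+\|\partial_z v\|_{m-1}^2)$.

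The interior remainder $\mathcal R^\alpha$ is estimated by: (i) $\mathcal C^\alpha(q) \cdot V^\alpha$ using \eqref{Cq} together with the pressure bounds of Propositions \ref{propPNS} and \ref{proppE}, where the high-order $|h|_{m-1/2}$ factor is absorbed into the energy; (ii) $\mathcal C^\alpha(\mathcal T)\cdot V^\alpha$ using \eqref{CT}; (iii) the viscous commutators by integrating $\eps\nabla^\varphi\!\cdot\mathcal E^\alpha(v)$ by parts via \eqref{ippD}, which transfers one derivative onto $V^\alpha$ and produces a term bounded by Young's inequality by $\delta\,\eps\|\nabla V^m\|^2 + C_\delta\,\eps\|\mathcal E^\alpha(v)\|^2$, the second being controllable by $\Lambda_\infty(\|\partial_z v\|_{m-1}^2 + |h|_{m-1/2}^2)$, and similarly for $\eps\mathcal D^\alpha(S^\varphi v)$ after a symmetric integration by parts. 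The Korn inequality (Proposition \ref{Korn}) converts the dissipation $4\eps\|S^\varphi V^\alpha\|_\V^2$ into $\eps\|\nabla V^m\|^2$ up to a harmless $\eps\|V^m\|^2$ term, which allows absorption of the $\delta$-term.

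Finally, the $\eps|h|_{m+1/2}^2$ contribution is added from Proposition \ref{heps}, whose right-hand side $\eps\int_0^t|v^b|_{m+1/2}^2$ is controlled via the trace estimate \eqref{trace} by $\eps\int_0^t\|\nabla v\|_m^2$, which is precisely the dissipation term just produced (again via \eqref{equiv1} and Korn). The main technical obstacle will be Step (iii): making sure the $\eps$-commutator terms $\eps\mathcal D^\alpha(S^\varphi v)$ and $\eps\nabla^\varphi\!\cdot\mathcal E^\alpha(v)$ can be absorbed without demanding more than $\|\partial_z v\|_{m-1}$ of normal regularity, since a naive estimate would give two normal derivatives on $v$. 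The key is to integrate by parts in $\nabla^\varphi$ first, then apply Lemma \ref{comi} with one tangential vector field extracted from $Z^\alpha$, keeping the $\eps$ factor paired with the highest-order normal derivative so that Young's inequality against the viscous dissipation closes the estimate.
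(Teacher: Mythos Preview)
Your plan is essentially the paper's own proof: the same energy identity on $(V^\alpha,Q^\alpha)$, the same use of \eqref{bordV} and \eqref{bordh} to extract the Taylor-weighted surface energy $\int_{z=0}(g-\partial_z^\varphi q^E)|Z^\alpha h|^2$, the same splitting of commutators into $\mathcal R_C$ (transport and pressure) and $\mathcal R_S$ (viscous), the same integration-by-parts treatment of $\eps\mathcal D^\alpha(S^\varphi v)$ and $\eps\nabla^\varphi\!\cdot\mathcal E^\alpha(v)$ (which the paper isolates as Lemma~\ref{comDS}), and the same closing via Korn and Proposition~\ref{heps}.

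One point to watch when you fill in step (i): a direct application of \eqref{Cq} to the full pressure would force $\|\nabla q^{NS}\|_{1,\infty}$ into your $\Lambda_\infty$, and controlling that via Sobolev embedding and Proposition~\ref{propPNS} brings in $\eps|v^b|_{9/2}\lesssim \eps\|v\|_{E^5}$, which is not in the $\Lambda_\infty$ of \eqref{deflambdainfty1}. The paper avoids this by splitting $\mathcal C^\alpha(q)=\mathcal C^\alpha(q^E)+\mathcal C^\alpha(q^{NS})$, using \eqref{Cq} only on $q^E$, and for $q^{NS}$ exploiting instead the commutator structure \eqref{com} together with $\|\nabla q^{NS}\|_{m-1}\lesssim \Lambda_\infty(\eps|h|_{m+1/2}+\eps|v^b|_{m+1/2})$ and the $L^\infty$ bound \eqref{qNSLinfty}, so that the extra $\eps$ pairs with $|v^b|_{m+1/2}$ and is absorbed by the dissipation via \eqref{vbm+}. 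This is the only place where your sketch is slightly too coarse; the rest matches.
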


  \begin{proof}
  By using the equations \eqref{eqValpha}, \eqref{divValpha} and  the boundary condition \eqref{bordV},  the same integrations by parts as in the proof of Lemma \ref{basicL2}, yield 
  that 
  \beq
  \label{en1}
   {d \over dt} \int_{\mathcal{S}} |V^\alpha |^2 d\V + 4 \eps \int_{\mathcal{S}} |S^\varphi V^\alpha |^2\, d\V \\
    = \mathcal{R}_{S}+ \mathcal{R}_{C}  + 2  \int_{z= 0} \big( 2 \eps S^\varphi V^\alpha  - Q^\alpha \mbox{Id} \big) \N \cdot V^\alpha\, dy  
   \eeq
  where we set
  \begin{align}
  \label{RS} & \mathcal{R}_{S}= 2 \int_{\mathcal{S}} \Big( \eps D^\alpha(S^\varphi v)  + \eps \nabla^\varphi \cdot \big( \mathcal{E}^\alpha (v) \big) \cdot V^\alpha
   \, d\V, \\
   \label{RC} & \mathcal{R}_{C}=- 2 \int_{\mathcal{S}} \Big( \big(\mathcal{C}^\alpha(\mathcal{T})+ \mathcal{C}^\alpha(q)\big) \cdot V^\alpha - \mathcal{C}^\alpha (d) Q^\alpha \Big)\, d\V.
  \end{align}
    
  Let us start with the analysis of the last term in \eqref{en1}. Note that this is the crucial term for which we shall need
   to use the physical condition.  We first notice that if $\alpha_{3} \neq 0$ since $V^\alpha_{/z=0}=0$, 
   this term  vanishes. Consequently, we only need to study the case $\alpha_{3}= 0$.
  By using \eqref{bordV}, we get that
  \begin{align}
  \label{enbor} \int_{z= 0} \big( 2 \eps S^\varphi V^\alpha  - Q^\alpha \mbox{Id} \big) \N \cdot V^\alpha & =  \int_{z=0}\big( -g Z^\alpha h + \D_{z} q\, Z^\alpha h \big)  \N\cdot
   V^\alpha \\
    & \nonumber - \int_{z= 0} \big(2 \eps S^\varphi v - \big( q-gh){\mbox{Id}} \big) Z^\alpha \N \cdot V^\alpha \, dy + \mathcal{R}_{B}
    \end{align}
    where
    \beq
    \label{RB}
    \mathcal{R}_{B}= \int_{z=0} \big( \mathcal{C}^\alpha(\mathcal{B}) - 2 \eps Z^\alpha h\, \D_{z} \big( S^\varphi v\big) \N) \cdot V^\alpha \, dy.
    \eeq
  \end{proof}
  To estimate $\mathcal{R}_{B}$, we  can use \eqref{CalphaB} to get that
  \begin{align*} 
   | \mathcal{R}_{B}| &  \leq \Lambda\big(  {1 \over c_{0}}, |h|_{2, \infty}  +\|v \|_{E^{2, \infty}}
   \big) \big( \eps(1 + \| \partial_{zz} v \|_{L^\infty}) |h|_{m}+   \eps |v^b|_{m}  \big) | (V^\alpha)^b |_{L^2} \\
   &\leq  \Lambda_{\infty} 
  \big( \eps(1 + \| \partial_{zz} v \|_{L^\infty}) |h|_{m}+   \eps |v^b|_{m}  \big) | (V^\alpha)^b |_{L^2}.
   \end{align*}
   Note that in the proof, when we do not specify the time variable, this means that  all the quantities that appear
    are evaluated at time $t$.
  To estimate the second term in the right hand side of  \eqref{enbor}, we first note that 
  $$ \int_{z= 0} \big(2 \eps S^\varphi v - \big( q-gh){\mbox{Id}} \big) Z^\alpha \N \cdot V^\alpha \, dy
  =  \int_{z= 0} \big(2 \eps S^\varphi v -  q^{NS}{\mbox{Id}} \big) Z^\alpha \N \cdot V^\alpha \, dy$$
   since $q= q^E+ q^{NS}$ and  $q^E= gh$ on the boundary. This yields
   \begin{align*}\Big|  \int_{z= 0} \big(2 \eps S^\varphi v -  q^{NS}{\mbox{Id}} \big) Z^\alpha \N \cdot V^\alpha \, dy\Big|
     & \leq |  Z^\alpha \nabla h |_{-{1 \over 2 }} \, | \big(2 \eps S^\varphi v - q^{NS}{\mbox{Id}} \big)(V^\alpha)^b|_{{1 \over 2}}.
      \end{align*}
    Since $q^{NS}= S^\varphi v \n \cdot \n$ on the boundary, we obtain by using \eqref{cont2D} that 
    $$\Big|  \int_{z= 0} \big(2 \eps S^\varphi v -  q^{NS}{\mbox{Id}} \big) Z^\alpha \N \cdot V^\alpha \, dy\Big|
     \leq \Lambda_{\infty}\, \eps |h|_{m+{1 \over 2}} |(V^\alpha)^b |_{1\over 2}. $$
  To  express the first term in the right hand side of \eqref{enbor}, we use  first use the decomposition $q= q^E+ q^{NS}$
   and we write
   $$ \int_{z=0}\big( -g Z^\alpha h + \D_{z} q\, Z^\alpha h \big)  \N\cdot
   V^\alpha =   \int_{z=0}\big( -g Z^\alpha h + \D_{z} q^E\, Z^\alpha h \big)  \N\cdot
   V^\alpha \, dy + \int_{z=0} \D_{z}q^{NS} Z^\alpha h  V^\alpha \cdot \N.$$
For the second term, we get thanks to \eqref{qNSLinfty}   that
\begin{align*} \Big| \int_{z=0} \D_{z}q^{NS} Z^\alpha h  V^\alpha \cdot \N \Big| \leq  \|\partial_{z}^\varphi q^{NS}\|_{L^\infty}
 |h|_{m} |(V^\alpha)^b| &  \lesssim \Lambda_{\infty}\,  \eps |h|_{m}|(V^\alpha)^b| 
  \end{align*}
 For the first term, we use the kinematic boundary condition under the form given by Lemma \ref{lembordh}:
 \begin{align*}  \int_{z=0}\big( -g Z^\alpha h + \D_{z} q^E\, Z^\alpha h \big)  \N\cdot
   V^\alpha \, dy   & = \int_{z=0}\big( -g Z^\alpha h + \D_{z} q^E\, Z^\alpha h \big)\partial_{t} Z^\alpha h
    \\
    &  - \int_{z=0}\big( -g Z^\alpha h + \D_{z} q^E\, Z^\alpha h \big)v^b \cdot (Z^\alpha \nabla_{y}h - \mathcal{C}^\alpha(h))\, dy.
    \end{align*}
    Thanks to an integration by parts,   Proposition \ref{proppE} and  \eqref{bordhC}, we obtain
   \begin{align*} \Big| \int_{z=0}\big( -g Z^\alpha h + \D_{z} q^E\, Z^\alpha h \big)v^b \cdot (Z^\alpha \nabla_{y}h - \mathcal{C}^\alpha(h))\, dy\Big|
  &  \lesssim \|v\|_{1, \infty}( 1 +  \|\D_{z} q^E\|_{1, \infty}) \big( |Z^\alpha h| + | \mathcal{C}^\alpha(h) | \big) \\
  &  \leq   \Lambda_{\infty}\big( |h|_{m} + \|v\|_{E^m})|h|_{m}
  \end{align*}
     and  we finally write 
     $$  \int_{z=0}\big( -g Z^\alpha h + \D_{z} q^E\, Z^\alpha h \big)\partial_{t} Z^\alpha h
      = - {1 \over 2} {d \over dt}  \int_{z=0} ( g- \D_{z} q^E) |Z^\alpha h |^2 - \int_{z=0} \partial_{t}\big( \D_{z} q^E \big) |Z^\alpha h|^2.$$
      Consequently,  gathering all the previous estimates, we have proven from \eqref{enbor} that
      \beq
      \label{enbor2}   \int_{z= 0} \big( 2 \eps S^\varphi V^\alpha  - Q^\alpha \mbox{Id} \big) \N \cdot V^\alpha
       = - {1 \over 2} {d \over dt}  \int_{z=0} ( g- \D_{z} q^E) |Z^\alpha h |^2 +  \tilde{ \mathcal{R}}_{B}\eeq
       where
   \begin{align*}
  | \tilde{\mathcal{R}}_{B}|  &  \leq  \Lambda_{\infty} \Big(  \eps \big(1+   \|\partial_{zz} v \|_{L^\infty} \big)|h|_{m} + \eps |v^b |_{m}\big) |(V^\alpha)^b| \\
  & \quad   + \eps |h|_{m+{1 \over 2}} |(V^\alpha)^b|_{{1 \over 2}} + (1 + |(\partial_{z} \partial_{t} q^E)^b |_{L^\infty})|h|_{m}^2 + \|v\|_{E^m} |h|_{m}\Big)
  \end{align*}
 and we can use successively, \eqref{equiv1},   that
 \beq
 \label{vbm}|v^b|_{m} \leq \Lambda_{\infty}\big( |(V^m)(\cdot, 0)| +|h|_{m}\big), \eeq
 and  the trace inequality \eqref{trace} which yields
 $$ |(V^\alpha)^b|_{L^2}^2\lesssim  \| \partial_{z} V^\alpha \| \|V^\alpha \|, \quad   |(V^\alpha)^b|_{1\over 2 } \lesssim  \|\nabla V^\alpha\| + \|V^\alpha \|$$
  to get that
 \begin{align}
 \label{tildeRB}
   | \tilde{\mathcal{R}}_{B}|  & \leq   \eps \| \nabla V^m\|\, \|V^m\| + \Lambda_{\infty} \|\partial_{z}v\|_{m-1}|h|_{m} \\
  \nonumber & \quad \quad +
     \Lambda_{\infty} \big( 1  + |(\partial_{z} \partial_{t} q^E)^b |_{L^\infty}\big) \big( |h|_{m}^2
   + \eps |h|_{m+{1 \over 2}}^2  + \|V^m \|^2 \big)
   \end{align}
  
 By plugging \eqref{enbor2} into \eqref{en1}, we get that
 \beq
 \label{en2}
 {d \over dt }{1 \over 2 }\Big(
 \int_{\mathcal{S}} |V^\alpha |^2 d\V  +  \int_{z=0} ( g- \D_{z} q^E) |Z^\alpha h |^2 \, dy \Big)
  + 4 \eps \int_{\mathcal{S}} |S^\varphi V^\alpha |^2\, d\V= \mathcal{R}_{S} + \mathcal{R}_{C}+ \tilde{\mathcal R}_{B}  
 \eeq 
 and it remains to estimate the commutators $ \mathcal R_{S}$, $\mathcal{R}_{C}$.
 
 Let us begin with the estimate of $\mathcal{R}_{C}$.
 By using \eqref{Cd}, \eqref{CT} and the Sobolev embedding, we immediately get that
 \beq
 \label{RC1}|\mathcal{R}_{C}|\leq \Lambda_{\infty} \big( \|v\|_{m} +\|\partial_{z} v\|_{m-1} + |h|_{m} \big) \|V^m\| +\Big| \int \mathcal{C}^\alpha(q) \cdot V^m
  d\V \Big|.
 \eeq 
  To estimate the last term, we could use directly \eqref{Cq}.  Neverthess,  the estimate \eqref{Cq} implies that we need to control
   $\| \nabla q \|_{1, \infty}$ and the control of $\| \nabla q^{NS} \|_{1, \infty}$  through Sobolev embedding
    and \eqref{estqNS} would  involve   a  dependence  in  $\eps | v^b|_{s}$ with $s>4$ in the definition of $\Lambda_{\infty}$. 
     We can actually easily  get   a better estimate by using that
    $\mathcal{C}^\alpha (q) = \mathcal{C}^\alpha(q^E) + \mathcal{C}^\alpha(q^{NS})$ and by handling the two terms in two different ways.
    For the Euler pressure, we can use directly \eqref{Cq} and Proposition \ref{proppE}  to get that
    \beq
    \label{CqE}
    \| \mathcal{C}^\alpha(q^E) \| \, \| V^m\| \lesssim \Lambda_{\infty} \big( \|v\|_{E^{m-1}}+ |h|_{m} \big) \| V^m \|.
    \eeq
    To estimate $\mathcal{C}^\alpha(q^{NS})$ we can have a closer look at the structure of the commutator.
     By using the decomposition \eqref{Cialpha},  and \eqref{Calpha2}, \eqref{Calpha3} combined with Proposition \ref{propPNS}, we
     have that
    $$ \| \mathcal{C}^\alpha_{i, 2}(q^{NS})\| +\| \mathcal{C}^\alpha_{i,3}(q^{NS} )\| \leq \Lambda_{\infty} \big( \|v\|_{E^{m-1}}+ |h|_{m} \big).$$
    Consequently, it only remains to study $\mathcal{C}^\alpha_{i,1}(q^{NS})= - [Z^\alpha, {\partial_{i} \varphi \over \partial_{z} \varphi}, \partial_{z} q^{NS}]
   $(the case $i=3$ can be handled by similar arguments). For this term, it is actually better to write it under the form
    $$ \mathcal{C}_{i, 1}^{\alpha}(q^{NS})= - \Big( [Z^\alpha, {\partial_{i}\varphi \over \partial_{z} \varphi } ]\partial_{z} q^{NS} - Z^\alpha \big( {\partial_{i} \varphi
     \over \partial_{z}\varphi}\big) \partial_{z} q^{NS}\Big).$$ 
    We can use the commutator estimate \eqref{com} for the first term and Proposition \ref{propeta} and Proposition \ref{propPNS}
      to get that
  \begin{align*} \| [Z^\alpha, {\partial_{i}\varphi \over \partial_{z} \varphi } ]\partial_{z} q^{NS}\| + \big\| Z^\alpha \big( {\partial_{i} \varphi
     \over \partial_{z}\varphi}\big) \partial_{z} q^{NS}\big|
   &   \leq   \Lambda\big({1 \over c_{0}},  \eps^{-1}\|\partial_{z} q^{NS} \|_{L^\infty}\big)\big( \|\partial_{z} q^{NS}\|_{m-1} +\eps |h|_{m+{1 \over 2 }} \big) 
     \\  &  \leq   \Lambda_{\infty} \big(  \eps   |h|_{m+{1 \over 2 }} + \eps |v^b|_{m+{1 \over  2 } }\big).
   \end{align*}
  This yields
  $$ |\mathcal{R}_{C}|\leq \Lambda_{\infty} \big( \|v\|_{m} + \|\partial_{z}v \|_{m-1} + |h|_{m} + \eps^{1 \over 2 } | h|_{m+{1 \over 2 }}
   + \eps |v^b|_{m+{1 \over 2 }} \big) \|V^m\|.$$
   To estimate the last  term  $ \eps |v^b|_{m+{1 \over 2 }}  \|V^m\|$ in the above estimate, we can  write that
    $$  |v^b|_{m+{1 \over 2 }} \leq  \sum_{|\alpha| \leq m} |V^\alpha|_{1\over 2} + |\partial_{z}^\varphi v Z^\alpha h|_{1\over 2}$$
     and we use \eqref {cont2D} and the trace estimate \eqref{trace} to get that 
     \beq
     \label{vbm+}  |v^b|_{m+{1 \over 2 }} \leq \| \nabla V^m\| + \|V\|_{m} + \Lambda_{\infty} |h|_{m+{1 \over 2 }}.\eeq
   Hence,   we find
   \beq
   \label{RCfinal}
    |\mathcal{R}_{C}|\leq   \eps \|\nabla V^m\|\, \|V^m\| +\Lambda_{\infty} \|\partial_{z}v \|_{m-1} \|V\|_{m} + \Lambda_{\infty}\big( \|V^m\|^2 + |h|_{m}^2
     + \eps |h|_{m+{1 \over 2}}^2\big).
     \eeq
     It remains to estimate $\mathcal{R}_{S}$ which is given by \eqref{RS}. For the second term, we use an integration by parts:
     $$ \int_{\mathcal{S}} \eps \nabla^\varphi( \mathcal{E}^\alpha(v) \big) \cdot V^\alpha d \V=
      - \int_{\mathcal{S}} \eps\, \mathcal{E}^\alpha(v) \cdot \nabla V^\alpha d \V+ \int_{z=0}\eps\,  \mathcal{E}^\alpha(v) \N \cdot V^\alpha \, dy.$$
      The estimate of the first term comes directly from \eqref{CT}, we get
      $$ \Big| \int_{\mathcal{S}} \eps\, \mathcal{E}^\alpha(v) \cdot \nabla V^\alpha d \V \Big| \leq \Lambda_{\infty}
      \big( \|v\|_{E^m}+  |h|_{m} \big)  \eps\, \|\nabla V^\alpha \|.$$
      To estimate the boundary term, we, need an estimate of $\mathcal{E}^\alpha(v)$ on the boundary.  The same arguments
       yielding the proof of \eqref{CT} can be used by using the commutator estimates on the boundary to get that
       $$ | \mathcal{E}^\alpha(v)(\cdot, 0)|_{L^2} \leq \Lambda_{\infty}\big(  |h|_{m}+ |v^b|_{m} +|\partial_{z} v^b|_{m-1} \big)$$
       and hence  by using \eqref{dzvb}, we find
       $$ | \mathcal{E}^\alpha(v)(\cdot, 0)|_{L^2} \leq \Lambda_{\infty}\big(  |h|_{m}+ |v^b|_{m}  \big).$$
       Consequently, we have proven that
       $$  \Big| \int_{\mathcal{S}} \eps \nabla^\varphi( \mathcal{E}^\alpha(v) \big) \cdot V^\alpha d \V \Big|
        \leq \Lambda_{\infty} \Big(  \big(  \|v\|_{E^m}+  |h|_{m} \big)  \eps\, \|\nabla V^\alpha \| +  \big(  |h|_{m}+ |v^b|_{m}  \big) \eps\,  |(V^\alpha)^b| \Big).
        $$
        and by using again the trace inequality, we find that
        \beq
        \label{RS1}
        \Big| \int_{\mathcal{S}} \eps \nabla^\varphi( \mathcal{E}^\alpha(v) \big) \cdot V^\alpha d \V \Big|
        \leq \Lambda_{\infty}  \eps \| \nabla V^m \|\big( \|V^m \| + \|\partial_{z}v\|_{m-1}+  |h|_{m} \big)        \eeq
        It remains  to estimate  $\eps \int_{\mathcal{S}} \mathcal D^\alpha(S^\varphi v) \cdot V^\alpha d \V$
        This is given in the following lemma:
      \begin{lem}
      \label{comDS}
      We have the estimate:
       \begin{align*}
       \eps\Big| \int_{\mathcal{S}} \mathcal D^\alpha(S^\varphi v) \cdot V^\alpha d \V\Big| \leq 
       \Lambda_{\infty} \Big( \eps  \|\nabla V^m\| \big(\|V^m \| + \|\partial_{z}v \|_{m-1}+ |h|_{m+{1 \over2}}\big) + \eps\big( \|v\|_{E^m}^2 + |h|_{m+{1 \over 2}}^2 \big) \\ 
        + \eps \|\partial_{zz} v\|_{L^\infty}\big( |h|_{m}^2 + \| V^m\|^2\Big).
        \end{align*}
      \end{lem}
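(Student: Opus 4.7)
The plan is to unpack $\mathcal{D}^\alpha(S^\varphi v)_i = 2\mathcal{C}_j^\alpha\big((S^\varphi v)_{ij}\big)$ using the three-piece decomposition \eqref{Cialpha} of $\mathcal{C}_j^\alpha$ and then estimate each piece carefully. A naive use of Lemma \ref{comi} would give a bound in terms of $\|\nabla(S^\varphi v)\|_{m-1}$, which contains $\partial_{zz} v$ in $L^2$ without an $\eps$ gain and is not controlled by $\mathcal{N}_{m,T}$. The point is therefore to redistribute derivatives via integration by parts so that $\partial_{zz} v$ only appears either (a) paired with $\eps \|\nabla V^m\|$, which the dissipation term in \eqref{en2} absorbs via Proposition \ref{Korn} and Lemma \ref{mingrad}, or (b) controlled in $L^\infty$, giving the factor $\eps\|\partial_{zz}v\|_{L^\infty}$ that appears in the claimed bound.

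First I will split, for each triple $(i,j,\alpha)$, the symmetric commutator $\mathcal{C}_{j,1}^\alpha$ from \eqref{Cialpha} into contributions $Z^\beta\big(\partial_j\varphi/\partial_z\varphi\big)\,Z^\gamma\partial_z(S^\varphi v)_{ij}$ with $\beta+\gamma=\alpha$, $|\beta|,|\gamma|\geq 1$. Three regimes arise. When $|\gamma|=0$ (i.e.\ $|\beta|=|\alpha|$), the factor $\partial_z(S^\varphi v)_{ij}$ is controlled pointwise by $\|\partial_{zz}v\|_{L^\infty}+\Lambda_\infty$ (using \eqref{quot} and Proposition \ref{propeta}), while $Z^\alpha(\partial_j\varphi/\partial_z\varphi)$ is paired in $L^2$ against $V^\alpha$; the $|h|_{m+1/2}$ loss present in a direct Proposition \ref{propeta} estimate is eliminated by an integration by parts in the tangential direction $\partial_j$ (for $j=1,2$) or an equivalent manipulation for $j=3$, leaving $|h|_m$. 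When $|\beta|=0$, the coefficient is in $L^\infty$ and $Z^\alpha\partial_z(S^\varphi v)_{ij}$ is integrated by parts in $z$ against $V^\alpha \partial_z\varphi$, converting the bad $\partial_z$ factor into an $\|\nabla V^m\|$ factor plus lower-order bulk and boundary contributions; the boundary terms at $z=0$ are handled by the trace inequality \eqref{trace} combined with Lemma \ref{lembord}. In the intermediate regime, the symmetric commutator estimate \eqref{comsym} applied to $f=\partial_j\varphi/\partial_z\varphi$ and $g=\partial_z(S^\varphi v)_{ij}$ gives bounds of the form $\|Zf\|_{L^\infty}\|Zg\|_{m-2}+\|Zg\|_{L^\infty}\|Zf\|_{m-2}$; the $Zg$ terms involve $\partial_{zz}v$ and are distributed either into $\eps^{1/2}\|\partial_{zz}v\|_{L^\infty}$ (which hides inside $\Lambda_\infty$) or into the dissipation via a further integration by parts.

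Next I will treat $\mathcal{C}_{j,2}^\alpha$ and $\mathcal{C}_{j,3}^\alpha$. These carry an explicit factor $\partial_z(S^\varphi v)_{ij}$ multiplied by high-order derivatives of $\eta$ (respectively $1/\partial_z\varphi$ coming from \eqref{Z1/dzphi} and tangential commutators $[Z^\alpha,\partial_z]$). For $\mathcal{C}_{j,2}^\alpha$, I combine \eqref{com} with Proposition \ref{propeta} and place $\partial_z(S^\varphi v)_{ij}$ in $L^\infty$ (producing the $\eps\|\partial_{zz}v\|_{L^\infty}|h|_m^2$ contribution after pairing with $V^\alpha$ and using Young). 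For $\mathcal{C}_{j,3}^\alpha$, the identity \eqref{idcom} reduces $[Z^\alpha,\partial_z]f$ to $\partial_z(Z^\beta f)$ with $|\beta|\leq m-1$; an integration by parts in $z$ then transfers the outermost $\partial_z$ to the test function, again yielding an $\eps\|\nabla V^m\|$ factor multiplied by conormal norms of $v$ and $h$ already appearing in $\|V^m\|+\|\partial_z v\|_{m-1}+|h|_{m+1/2}$. Boundary contributions from all the integrations by parts in $z$ are handled uniformly by combining \eqref{trace}, \eqref{dzvb} and the equivalence \eqref{equiv1}.

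Finally I collect all the contributions. Each is either of the form $\eps\|\nabla V^m\|\cdot X$ with $X\leq \Lambda_\infty(\|V^m\|+\|\partial_z v\|_{m-1}+|h|_{m+1/2})$, or $\eps\|\partial_{zz}v\|_{L^\infty}\cdot Y$ with $Y\leq \Lambda_\infty(|h|_m^2+\|V^m\|^2)$ after Young, or directly $\eps\Lambda_\infty(\|v\|_{E^m}^2+|h|_{m+1/2}^2)$ from the tame pieces. Summing yields the claimed estimate. The main obstacle I expect is the bookkeeping of the boundary contributions produced by integrations by parts in $z$: since $V^\alpha$ does not vanish on $\{z=0\}$ when $\alpha_3=0$, one must show that the resulting $L^2(\mathbb R^2)$ products can be reabsorbed using the trace estimate \eqref{trace} and Lemma \ref{lembord} rather than spoiling the tame structure of the bound.
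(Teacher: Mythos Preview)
Your overall plan---decompose along $\mathcal{C}_{j,1}^\alpha,\mathcal{C}_{j,2}^\alpha,\mathcal{C}_{j,3}^\alpha$ and move the dangerous $\partial_z$ onto $V^\alpha$ by integration by parts in $z$---is exactly the paper's approach, and your treatment of $\mathcal{C}_{j,2}^\alpha$ and $\mathcal{C}_{j,3}^\alpha$ is right.

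Your handling of $\mathcal{C}_{j,1}^\alpha$, however, is confused. By definition of the symmetric commutator $[Z^\alpha,f,g]=Z^\alpha(fg)-(Z^\alpha f)g-f(Z^\alpha g)$, every term in its expansion has $\beta\neq 0$ \emph{and} $\tilde\gamma\neq 0$, so your cases ``$|\gamma|=0$'' and ``$|\beta|=0$'' simply do not occur, and the tangential integration by parts you propose for the first of these is never needed. Everything falls under what you call the ``intermediate regime,'' and there your plan---apply the norm estimate \eqref{comsym} and then perform ``a further integration by parts''---is incoherent: once you have taken $L^2$ norms you can no longer integrate by parts. The correct procedure, which is what the paper does for \emph{every} term of $\mathcal{C}_{j,1}^\alpha$, is the mechanism you (mis)assigned to the nonexistent ``$|\beta|=0$'' case: first commute $Z^{\tilde\gamma}$ past $\partial_z$ via \eqref{idcom} to write $Z^{\tilde\gamma}\partial_z(S^\varphi v)_{ij}=\sum c_\gamma\partial_z Z^\gamma(S^\varphi v)_{ij}$ with $|\gamma|\leq m-1$, and \emph{then} integrate by parts in $z$, producing three pieces $\mathcal I_1,\mathcal I_2,\mathcal I_3$ (derivative on $V^\alpha$, derivative on $Z^\beta(\partial_j\varphi/\partial_z\varphi)$, and boundary).

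One step you have not anticipated: in $\mathcal I_2$ the product estimate \eqref{gues} leaves you with $\eps\|S^\varphi v\|_m$ (since $|\gamma|$ ranges up to $m-1$). This is not directly controlled; it must be traded for $\|S^\varphi V^m\|$ via the Alinhac identity \eqref{al11} and Lemma~\ref{comi}, and the error in that exchange is $\Lambda_\infty(1+\|\partial_{zz}v\|_{L^\infty})|h|_m+\Lambda_\infty(\|\nabla v\|_{m-1}+|h|_{m-1/2})$. This, not only $\mathcal{C}_{j,2}^\alpha$, is a primary source of the $\eps\|\partial_{zz}v\|_{L^\infty}(|h|_m^2+\|V^m\|^2)$ term in the statement.
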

      We shall first finish the proof of Proposition  \ref{conormv} and then prove Lemma \ref{comDS}.
      
     From  \eqref{en2}, we can sum over $\alpha$ (for $\alpha=0$, we use the basic estimate \ref{corL2}),  integrate in time, use our a priori assumption \eqref{apriori} on the Taylor condition  and use 
     \eqref{RCfinal}, \eqref{tildeRB} and Lemma \ref{comDS} to get that
     \begin{align}
     \label{presquefini}
     &   \|V^m(t) \|^2 + |h(t)|_{m}^2 + \int_{0}^t \| S^\varphi V^m \|^2 
      \leq \Lambda_{0}\big(
       \|V^m(0)\|^2 + |h(0)|_{m}^2\big)  \\ 
   \nonumber  & \quad     + \int_{0}^t \Big(
       \eps  \, \Lambda_{\infty} \|\nabla V^m\| \big(\|V^m \| + \|\partial_{z}v \|_{m-1}+ |h|_{m+{1 \over2}}\big)  \\
  \nonumber     &    \quad \quad   \quad     +  \Lambda_{\infty}(1 +  |(\partial_{z}\partial_{t}q^E)^b |_{L^\infty}\big) \big( |h|_{m}^2 + \| V^m\|^2
        + \eps |h|_{m+{1 \over 2}}^2\big) +    \Lambda_{\infty} \|\partial_{z}v \|_{m-1}^2 
     )\Big).       
     \end{align}
      Now,  we can  use Proposition \ref{mingrad} and the Korn inequality of Proposition \ref{Korn} to get that
      $$\|\nabla V^m\| \leq \Lambda\big({1\over c_{0}}\big) \big( \| S^\varphi V^m\| + \|v\|_{m}\big)$$
     and the estimate of Proposition \ref{conormv} follows from \eqref{presquefini} and the Young inequality \eqref{young}.
 This ends  the proof of Proposition \ref{conormv}.
 
 \bigskip
 
       It remains the: 
      \subsubsection*{Proof of Lemma \ref{comDS}}
       By using \eqref{Ddef}, we actually have to estimate
       \begin{align}
     \nonumber \mathcal{R}_{Si}  & =  \eps \int_{\mathcal{S}} \mathcal C^\alpha_{j}(S^\varphi v)_{ij}  V^\alpha_{j} d \V \\
   \label{RSi}     & = 
      \eps \int_{\mathcal{S}} \mathcal C^\alpha_{j, 1}(S^\varphi v)_{ij}  V^\alpha_{j} d \V 
      + \eps \int_{\mathcal{S}} \mathcal C^\alpha_{j, 2}(S^\varphi v)_{ij}  V^\alpha_{j} d \V  
       +  \eps \int_{\mathcal{S}} \mathcal C^\alpha_{j, 3}(S^\varphi v)_{ij}  V^\alpha_{j} d \V \\
        &: = \mathcal{R}_{Si}^1 + \mathcal{R}_{Si}^2 + \mathcal{R}_{Si}^3
         \end{align}
       where we have used the decomposition \ref{Cialpha}. For the first term, by using the definition of the symmetric commutator
        we see that we need to estimate terms like
        $$\eps  \int_{\mathcal{S}}  Z^\beta \big( {\partial_{j} \varphi \over \partial_{z} \varphi}\big) \big(Z^{\tilde{\gamma}} \partial_{z}(S^\varphi v)_{ij} \big)V^\alpha_{j} d \V$$
         where $\beta $ and $\tilde\gamma$ are such that $\beta \neq 0, \, \tilde \gamma \neq 0$ and $|\beta | + |\tilde \gamma|=m.$
          By using \eqref{idcom}, we can reduce the problem to the estimate of 
       $$ \eps  \int_{\mathcal{S}}  c_{\gamma} Z^\beta \big( {\partial_{j} \varphi \over \partial_{z} \varphi}\big)  \partial_{z}\big( Z^\gamma (S^\varphi v)_{ij} \big)V^\alpha_{j} d \V$$
       with $\beta$ as before (thus $| \beta | \leq m-1$) and $| \gamma | \leq | \tilde \gamma|\leq m-1.$ By using an integration by parts, we are lead to the estimate
        of  three types of terms:
        $$  \mathcal{I}_{1}=  \eps\int_{\mathcal{S}} Z^\beta \big( {\partial_{j} \varphi \over \partial_{z} \varphi } \big)\,Z^\gamma (S^\varphi v)_{ij}\, 
        \partial_{z} V_{j}^\alpha d \V, \quad \mathcal{I}_{2}= 
      \eps\int_{\mathcal{S}} \Big( \partial_{z}Z^\beta \big( {\partial_{j} \varphi \over \partial_{z} \varphi }  \big) \Big)Z^\gamma (S^\varphi v)_{ij} V_{j}^\alpha d \V, \quad $$
       and the term on the boundary
       $$ \mathcal{I}_{3}= \eps \int_{y=0} Z^\beta \big( {\partial_{j} \varphi \over \partial_{z} \varphi } \big)\, Z^\gamma (S^\varphi v)_{ij} V_{j}^\alpha dy.$$
       For the first term,  since $\beta \neq 0$, we get by using again \eqref{gues}, \eqref{quot} and Proposition \ref{propeta} that
       $$| \mathcal{I}_{1}| \leq  \eps \| \nabla V^m\|\, \Lambda_{\infty}\Big(   \|  v\|_{m} + \| \partial_{z}v\|_{m-1}
         +  |h|_{m+{1 \over 2}} \Big).$$
         To estimate $\mathcal{I}_{2}$, we can first use \eqref{idcom} to  expand it as a sum of terms under the form
         $$ \tilde{\mathcal{I}}_{2} =  \eps\int_{\mathcal{S}} \Big( c_{\tilde \beta }Z^{\tilde \beta} \partial_{z} \big( {\partial_{j} \varphi \over \partial_{z} \varphi }  \big) \Big)Z^\gamma (S^\varphi v)_{ij} V_{j}^\alpha d \V$$
         with $|\tilde{\beta}|\leq \beta.$ If $\gamma  = 0$, since $|\tilde \beta | \leq m-1$,  we just write
         $$|\tilde{\mathcal{I}}_{2}| \leq  \eps  \big\| \partial_{z} \big(  {\partial_{j} \varphi \over \partial_{z} \varphi }\big)\big\|_{m-1} \|S^\varphi\|_{L^\infty}
          \, \| V_{j}^\alpha \|$$
          while for for $\gamma \neq 0$, we use \eqref{gues}  to get
          $$  |\tilde{\mathcal{I}}_{2}| \leq  \eps \big\|   \partial_{z} \big(  {\partial_{j} \varphi \over \partial_{z} \varphi }\big)\big\|_{L^\infty} \|S^\varphi v\|_{m}
          \, \| V_{j}^\alpha \| + \eps \| S^\varphi v \|_{1, \infty} \big\|   \partial_{z} \big(  {\partial_{j} \varphi \over \partial_{z} \varphi }\big)\big\|_{m-1}.$$
          Consequently, by using \eqref{quot} and Proposition \ref{propeta}, we obtain
          $$ \| \mathcal{I}_{2}\| \leq \Lambda_{\infty} \, \|V^m\| \big( \eps \| S^\varphi v\|_{m} + \eps \|h\|_{m+{1 \over 2}} \big).$$
          To conclude, we need to relate $\| S^\varphi v\|_{m}$ to the energy dissipation term. By using \eqref{com1} and
           Lemma \ref{comi} combined with the identity  \eqref{al11}, we get that
          $$ \| S^\varphi v\|_{m} \leq \|S^\varphi V^m \| +  \Lambda_{\infty}\big( 1 + \| \partial_{zz}v\|_{L^\infty}\big) |h|_{m} +
           \Lambda_{\infty}\big(\| \nabla v\|_{m-1} + |h|_{m-{1\over 2}}\big) $$
           and hence we finally obtain that
          $$  | \mathcal{I}_{2}| \leq \Lambda_{\infty} \, \|V^m\| \big( \eps \| S^\varphi V^m\| + \eps \|h\|_{m+{1 \over 2}} + \eps(1+ \|\partial_{zz}v \|_{L^\infty})
           |h|_{m} + \|v\|_{E^m}) \big).$$
         It remains to estimate $\mathcal{I}_{3}$. By using product estimates on the boundary we first  find by using that $\beta \neq 0$ that
         $$ | \mathcal{I}_{3}| \leq \Lambda_{\infty}\, \eps\big( |h|_{m+{1 \over 2 }} +  |(S^\varphi v)^b|_{m-1} \big)|(V^\alpha)^b|\leq 
       \Lambda_{\infty}\, \eps\big( |h|_{m+{1 \over 2 }} +  |v^b|_{m} \big)|(V^\alpha)^b|   $$
       where we have used \eqref{dzvb} in the second estimate. By using  again \eqref{vbm} and the trace inequality, this yields
       $$ | \mathcal{I}_{3}| \leq \Lambda_{\infty} \Big( \eps  \| \nabla V^m\|\, \|V^m\| + \eps   |h|_{m+{1 \over 2}}^2  + \eps  \|v\|_{E^m}^2\big).$$
       Consequently,  we get from  the previous estimates    that
       \beq
       \label{RSi1}
       | \mathcal{R}_{Si}^1 | \leq \Lambda_{\infty} \Big( \eps \| \nabla V^m \| \big(\|V^m\|+ |h|_{m+{1 \over2}}\big) + \eps\big( \|v\|_{E^m}^2 + |h|_{m+{1 \over 2}}^2 \big)
        + \eps \|\partial_{zz} v\|_{L^\infty}( |h|_{m}^2+ \|V^m\|^2\Big).
        \eeq
        The estimate of $\mathcal{R}_{Si}^2$ is straightforward, from the definition after \eqref{Cialpha}, we immediately get that
        \beq
        \label{RSi2}
      | \mathcal{R}_{Si}^2 | \leq \eps \Lambda_{\infty}(1 + \| \partial_{zz} v \|_{L^\infty}) \|V^m\| \, |h|_{m-{1 \over 2}}.
        \eeq
        It remains to estimate $\mathcal{R}_{Si}^3$. By using the definition of $\mathcal{C}_{i, 3}^{\alpha}(S^\varphi v)$,
       we  get by using again \eqref{idcom} that
        $$
        \eps \Big| \int_{\mathcal{S}}  {\partial_{i}\varphi \over (\partial_{z}\varphi)^2} \partial_{z}\big( S^\varphi v)  [Z^\alpha, \partial_{z}]\varphi d\V\Big| \leq \eps \Lambda_{\infty} ( 1 + \|\partial_{zz}v\|_{L^\infty}) |h|_{m-{1 \over 2}} \|V^m\|.
        $$
         For the term
         $$\eps \int_{\mathcal{S}}{\partial_{i}\varphi \over \partial_{z} \varphi}  V^\alpha \, [Z^\alpha, \partial_{z}](S^\varphi v)  d\V, $$
         we perform an integration by parts as in the estimate of $\mathcal{R}_{Si}^{1}$. We obtain by similar arguments that
         $$ \Big|  \int_{\mathcal{S}}{\partial_{i}\varphi \over \partial_{z} \varphi}  V^\alpha \, [Z^\alpha, \partial_{z}](S^\varphi v)  d\V\Big| \leq 
          \Lambda_{\infty}\, \eps \big( \|V^m\| + \| \nabla V^m \|\big) \|v\|_{E^m}.$$      
       From the two last estimates and \eqref{RSi1}, \eqref{RSi2}, we finally obtain that
       \begin{align*} |\mathcal{R}_{Si}| \leq 
       \Lambda_{\infty} \Big( \eps  \|\nabla V^m\| \big(\|V^m \| + \|\partial_{z}v \|_{m-1}+ |h|_{m+{1 \over2}}\big) + \eps\big( \|v\|_{E^m}^2 + |h|_{m+{1 \over 2}}^2 \big) \\ 
        + \eps \|\partial_{zz} v\|_{L^\infty}\big( |h|_{m}^2 + \| V^m\|^2\Big).
        \end{align*}
        This end the proof of Lemma \ref{comDS}.
       
    \section{Normal derivative estimates part I}
    \label{sectionnorm1}
    
    In order to close the estimate of Proposition \ref{conormv}, we need to estimate $\|\partial_{z}v \|_{m-1}^2$.
         For the normal component of $v$, this is given for free, we have
     \begin{lem}
     \label{lemvnorm}
     For every $m \geq 1$, we have
     \beq
     \label{vnorm}
    \| \partial_{z}v \cdot \n \|_{m-1} \leq \Lambda\big( {1 \over c_{0},} \|\nabla v\|_{L^\infty}\big)\big( \|V^m\| + |h|_{m-{1 \over 2}}\big)
    \eeq
    where $\N= (- \partial_{1} \varphi, -\partial_{2} \varphi, 1)^t$, $\n= \N/|\N|$.
        \end{lem}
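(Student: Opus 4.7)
The strategy is to exploit the incompressibility condition to express the normal derivative of $v$ in the normal direction purely in terms of \emph{tangential} derivatives of $v$ plus factors depending on $h$ (equivalently $\eta$). This avoids any actual normal derivatives on the right-hand side, which is exactly why the estimate does not involve $\|\partial_z v\|$-type norms.

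First, I would recall that since $\nabla^\varphi \cdot v = 0$, expanding via the definition of $\partial_i^\varphi$ and multiplying through by $\partial_z\varphi$ gives the algebraic identity
$$\partial_z v \cdot \N = \partial_z \varphi\,(\partial_1 v_1 + \partial_2 v_2),$$
which has already been used in \eqref{eqdzvn}. Dividing by $|\N|=(1+|\nabla_y\eta|^2)^{1/2}$ (recall that $\partial_i\varphi=\partial_i\eta$ for $i=1,2$), one obtains
$$\partial_z v \cdot \n \;=\; \frac{A+\partial_z\eta}{|\N|}\bigl(\partial_1 v_1 + \partial_2 v_2\bigr).$$

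The right-hand side is a product of (i) a smooth function $F(\nabla\eta)$ that is bounded together with all its derivatives on the set $\{\partial_z\varphi\geq c_0\}$ and vanishes when $\nabla\eta=0$ up to a harmless constant, and (ii) the tangential derivative $\partial_1 v_1+\partial_2 v_2$, which is of the form $Z_i v_j$ and is therefore controlled in $H^{m-1}_{co}$ by $\|v\|_m$. Applying the Moser-type product estimate \eqref{gues} (and composition in $F$, in the same fashion as the proof of \eqref{quot}), we get
$$\|\partial_z v \cdot \n\|_{m-1}\;\leq\;\Lambda\bigl(\tfrac{1}{c_0},\|\nabla v\|_{L^\infty}+\|\nabla\eta\|_{L^\infty}\bigr)\bigl(\|v\|_m + \|\nabla\eta\|_{m-1}\bigr).$$

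Now I would invoke Proposition \ref{propeta}, namely the inequality \eqref{etaharm} and its $L^\infty$ counterpart \eqref{etainfty}, to replace $\|\nabla\eta\|_{m-1}$ by $|h|_{m-1/2}$ and $\|\nabla\eta\|_{L^\infty}$ by $|h|_{1,\infty}$ (which is absorbed into the a priori assumption $|h|_{2,\infty}\leq 1/c_0$ that may be incorporated into $\Lambda({1\over c_0},\cdot)$ if needed, or else tacitly bounded by the $\|\nabla v\|_{L^\infty}$ dependence via the overall setting). Finally, by the equivalence \eqref{equiv1} one has $\|v\|_m \leq \|V^m\|+\Lambda(1/c_0,\|\nabla v\|_{L^\infty})|h|_{m-1/2}$, which when combined with the previous display yields exactly
$$\|\partial_z v\cdot\n\|_{m-1}\leq \Lambda\bigl(\tfrac{1}{c_0},\|\nabla v\|_{L^\infty}\bigr)\bigl(\|V^m\|+|h|_{m-1/2}\bigr).$$

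There is no real obstacle here; the lemma is essentially bookkeeping. The only point that deserves care is the correct application of the composition/quotient estimate to handle the factor $1/|\N|$ and the $1/\partial_z\varphi$ buried in the expression for $\partial_z\varphi$ after factoring, which is standard given \eqref{quot} and the smoothness of $x\mapsto (1+|x|^2)^{-1/2}$.
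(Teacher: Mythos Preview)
Your proof is correct and follows essentially the same approach as the paper: both rely on the divergence-free identity $\partial_z v\cdot\N = \partial_z\varphi(\partial_1 v_1+\partial_2 v_2)$ (this is exactly \eqref{dzVid}), then conclude via the product estimate \eqref{gues} and Proposition~\ref{propeta}. The paper's proof is two lines and leaves the division by $|\N|$ and the passage from $\|v\|_m$ to $\|V^m\|$ via \eqref{equiv1} implicit; you have simply spelled these steps out.
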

        Note that in the above definition $\N$ is defined in the whole $\mathcal{S}$ via $\varphi$.
        \begin{proof}
       From \eqref{NSv}, the divergence free condition yields
        \beq
        \label{dzVid}
        \partial_{z}v \cdot \N= \partial_{z}\varphi \big( \partial_{1} v_{1} + \partial_{2} v_{2} \big)
        \eeq
        and hence the estimate follows from \eqref{gues} and  Proposition \ref{propeta}.
        
        \end{proof}
        
        The next step is to estimate the tangential components of $\partial_{z} v $.
     We shall proceed in two steps. As a first step, we shall estimate   $\sup_{[0, T]} \| \partial_{z} v \|_{m-2}$. 
      This   estimate  
       will be important  in order  to control the $L^\infty$ norms that occur in the definition of  $\Lambda_{\infty}$ in 
        in Proposition \ref{conormv}  in terms of known quantities  via Sobolev embeddings and $L^\infty$ estimates. We shall prove in the end 
         of the paper the  more delicate estimate  which allows to control  
       $\int_{0}^T \| \partial_{z} v \|_{m-1}^2.$
              
       We begin with the following remark:
       \begin{lem}
       \label{lemdzS}
For every $k \geq 0$, we have
       \begin{eqnarray}
       \label{dzvk} \| \partial_{z} v\|_{k} \leq \Lambda\big( {1 \over c_{0}}, \|\nabla v\|_{L^\infty}\big) \big( \| S_{\n}\|_{k} + |h|_{k+{1 \over 2 }} +  \| v\|_{k+1}\big),  \\
       \label{dzzvk} \|\partial_{zz} v \|_{k} \leq \Lambda\big( {1 \over c_{0}}, |v|_{E^{2, \infty}}\big)\big( \|\nabla S_{\n} \|_{k} + |h|_{k+{ 3 \over 2} } + \|v\|_{k+2}\big)
       \end{eqnarray}
       where
       \beq
       \label{Sn} S_{\n}= \Pi S^\varphi v \, \n, \quad  \Pi = Id- \n\otimes\n\eeq 
       and $\n$ is defined by  $\n= \N/|\N|.$
       \end{lem}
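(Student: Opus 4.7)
The strategy is to establish a pointwise algebraic identity expressing $\partial_{z}v$ in terms of the quantity $S_{\n}$, tangential derivatives of $v$, and smooth coefficients built from $\nabla\varphi$, and then to use the Moser-type product estimates of Proposition \ref{sob} together with the regularity of $\eta$ given by Proposition \ref{propeta} to read off the conormal bound. To set this up I would orthogonally decompose $\partial_{z}v=(\partial_{z}v\cdot\n)\n+\Pi\partial_{z}v$. The normal component $\partial_{z}v\cdot\n$ is completely determined by the divergence-free relation \eqref{dzVid}, so its $H^{k}_{co}$ norm is bounded by $\Lambda(1/c_{0},\|\nabla v\|_{L^\infty})(|h|_{k+1/2}+\|v\|_{k+1})$ via Lemma \ref{lemvnorm}, Proposition \ref{propeta} and \eqref{gues}.

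For the tangential component I would redo, in the interior, the same manipulation that produced \eqref{eqdzvPi} on the boundary. Writing $2S^\varphi v\,\n=\partial_{\n}^{\varphi}v+g^{ij}(\partial_{y^{j}}v\cdot\n)\partial_{y^{i}}$ as in \eqref{Su}, and expanding $\partial_{\n}^{\varphi}v=(|\N|^{2}/\partial_{z}\varphi)\partial_{z}v-\partial_{1}\varphi\,\partial_{1}v-\partial_{2}\varphi\,\partial_{2}v$, one solves for $\Pi\partial_{z}v$:
\begin{equation*}
\Pi\partial_{z}v=\frac{\partial_{z}\varphi}{1+|\nabla_{y}\eta|^{2}}\Bigl(2\,\Pi S^\varphi v\,\n+\Pi\bigl(\partial_{1}\eta\,\partial_{1}v+\partial_{2}\eta\,\partial_{2}v\bigr)-g^{ij}(\partial_{y^{j}}v\cdot\n)\,\Pi\partial_{y^{i}}\Bigr).
\end{equation*}
All coefficients on the right are smooth functions of $\nabla\varphi$, bounded together with their derivatives in terms of $1/c_{0}$ and $\|\nabla\eta\|_{L^\infty}\lesssim|h|_{1,\infty}$. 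Applying $Z^{\alpha}$ with $|\alpha|\leq k$ and using \eqref{gues}, \eqref{quot}, together with $\|\nabla\eta\|_{k}\lesssim|h|_{k+1/2}$ and $\|\eta\|_{k,\infty}\lesssim|h|_{k,\infty}$ from Proposition \ref{propeta}, yields \eqref{dzvk}; the cross terms $g^{ij}(\partial_{y^{j}}v\cdot\n)\Pi\partial_{y^{i}}$ produce only $\|v\|_{k+1}$ and $|h|_{k+1/2}$ contributions after absorbing the $\partial_{z}v\cdot\n$ piece by the divergence-free estimate already used above.

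To get \eqref{dzzvk} I would differentiate the above identity once more in $z$. This gives $\partial_{zz}v$ as a sum of $\partial_{z}(\text{coeff})\cdot(S_{\n},\nabla_{y}v,\partial_{z}v\cdot\n)$ plus $(\text{coeff})\cdot(\partial_{z}S_{\n},\nabla_{y}\partial_{z}v,\partial_{z}(\partial_{z}v\cdot\n))$. The lossy factor $\partial_{z}$ on coefficients costs one extra derivative on $\eta$, so one gains $|h|_{k+3/2}$ instead of $|h|_{k+1/2}$; similarly $\nabla_{y}\partial_{z}v$ and $\partial_{z}(\partial_{z}v\cdot\n)$ are controlled by $\|v\|_{k+2}$ and $|h|_{k+3/2}$ via \eqref{dzVid} differentiated in $z$, while $\partial_{z}S_{\n}$ is absorbed into $\|\nabla S_{\n}\|_{k}$. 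Conormal estimation with Proposition \ref{sob} then closes \eqref{dzzvk} with the dependence on $\|v\|_{E^{2,\infty}}$ coming from the $L^\infty$ norms of the coefficients and their first derivatives. The main technical nuisance is keeping the estimates tame and making sure that every occurrence of $\varphi$ yields only the claimed power of $|h|$ on the right-hand side; this is routine given Proposition \ref{propeta} and \eqref{quot}, but requires some care with the commutator $[Z_{3},\partial_{z}]$ (cf.\ \eqref{idcom}) when passing $\partial_{z}$ across conormal fields.
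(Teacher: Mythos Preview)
Your proposal is correct and follows essentially the same approach as the paper: both proofs hinge on solving \eqref{Subis}--\eqref{Subis1} for $\partial_{z}v$ in terms of $S^{\varphi}v\,\n$, tangential derivatives of $v$, and the normal component $\partial_{z}v\cdot\n$, the latter being handled via the divergence-free identity \eqref{dzVid} (Lemma \ref{lemvnorm}). The only organizational difference is that you project orthogonally from the start and work directly with $S_{\n}=\Pi S^{\varphi}v\,\n$, whereas the paper first bounds $\|\partial_{z}v\|_{k}$ in terms of $\|S^{\varphi}v\,\n\|_{k}$ and then invokes the splitting $S^{\varphi}v\,\n=S_{\n}+(D^{\varphi}_{\n}v\cdot\n)\n$ together with Lemma \ref{lemvnorm} a second time; the content is identical.
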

        
       The main consequence of  \eqref{dzvk} is that for $k \leq m-1 $, since $\|v\|_{k+1}$ is estimated in Proposition \ref{conormv}, we can look for an estimate of  $\|S_{\n}\|_{k}$
        instead of $\| \partial_{z}v \|_{k}$.
      \begin{proof}
      For the proof of Lemma \ref{lemdzS}, we can combine the identity \eqref{Su} which can be written in the equivalent form 
    \beq
    \label{Subis}
      2 S^\varphi v \ n= \partial_{n}u + g^{ij} \big(\partial_{j} v \cdot \n\big) \partial_{y^i}
      \eeq
      and the fact that
      \beq
      \label{Subis1} \partial_{\N} u = {1 + |\partial_{1}\varphi|^2 + |\partial_{2} \varphi |^2 \over \partial_{z}\varphi} \partial_{z} v - \partial_{1}\varphi \partial_{1}v
       - \partial_{2} \varphi \partial_{2} v\eeq
       to obtain
       $$ \| \partial_{z} v\|_{k} \leq \Lambda\big( {1\over c_{0}}, \|\nabla v\|_{L^\infty}\big)\big( \|V^{k+1}\| + |h|_{k+{1 \over 2 }} + \| \partial_{z}v \cdot \n \|_{k} + \|S^\varphi v\, \n \|_{k}\big). $$
   Recall  that  we have the estimate $|h|_{2,\infty} \leq 1/c_{0}$ thanks to \eqref{apriori}.
       To estimate $  \| \partial_{z}v \cdot \n \|_{k}$, we use   Lemma \ref{lemvnorm} and to estimate the last term, we note that
       \beq
       \label{Pisu} S^\varphi v \, \n= S_{\n} + \big( S^\varphi v \, \n \cdot \n \big)\, \n= S_{\n}+  D_{\n}^\varphi v\,  \cdot \n \, \n\eeq
        and we use Lemma \ref{lemvnorm}. This yields \eqref{dzvk}.
        
        To obtain \eqref{dzzvk}, we first observe that by using again \eqref{dzVid}, we obtain that
        $$ \| \partial_{zz}v \cdot \n \|_{k} \leq \Lambda\big( {1 \over c_{0}}, |v|_{E^{2, \infty}}\big)\big(  |h|_{k+{3 \over 2}} + \|v\|_{k+1} +  \|\partial_{z}v\|_{k+1}\big)$$
         and hence, we find by using \eqref{dzvk} that
         \beq
         \label{dzzvk1}
         \| \partial_{zz}v \cdot \n \|_{k} \leq  \Lambda\big( {1 \over c_{0}}, |v|_{E^{2, \infty}}\big)\big(  |h|_{k+{3 \over 2}} + \|v\|_{k+2} +  \| S_{\n}\|_{k+1}\big).
         \eeq
       To get  the estimate of the  other components of $\partial_{zz}v$, it suffices  to  use again \eqref{Subis}, \eqref{Subis1}
       combined with the previous estimates. This ends the proof of Lemma \ref{lemdzS}.

      \end{proof}
      
     The aim of the following Proposition is to prove the  estimate on $S_{\n}$.
     Let us  recall that $\Lambda_{\infty}$ is defined  in  \eqref{deflambdainfty1} in Proposition \ref{conormv} and that $\Lambda_{0}= \Lambda(1/c_{0}).$
       \begin{prop}
       \label{propdzvm-2}
       For every $m \geq 2$, the solution of \eqref{NSv}, \eqref{bordv1}, \eqref{bordv2} satisfies,   
 for every $t \in [0, T^\eps]$,  the estimate
         \begin{align}
   \nonumber 
   &  \|S_{\n}\|_{m-2}^2    + \eps \int_{0}^t  \| \nabla S_{\n}\|_{m-2}^2 \\  
 \label{estSnm-2} & \leq  \Lambda_{0}\|S_{\n}(0)\|_{m-2}^2 + \int_{0}^t\Lambda_{\infty}\big(  \|V^m\|^2 + |h|_{m}^2  + \eps |h|_{m+{1 \over 2 }}^2
  + \|S_{\n}\|_{m-2}^2\big) \\
  \nonumber  &  + \int_{0}^t \Lambda_{\infty} \|\partial_{z} v\|_{m-1}^2 + \eps \int_{0}^t \|\nabla V^m\|^2.
 \end{align}  
         \end{prop}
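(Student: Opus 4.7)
The plan is to exploit the observation that $S_{\n}= \Pi S^\varphi v\, \n$ vanishes on $\{z=0\}$ thanks to the second boundary condition $\Pi S^\varphi v\,\n=0$ coming from \eqref{bord2}. This homogeneous Dirichlet condition is exactly what is needed to perform a conormal energy estimate on a convection-diffusion equation without worrying about boundary contributions when integrating by parts the viscous term.

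First I would derive the evolution equation satisfied by $S_{\n}$. Applying the symmetric gradient and taking the tangential/normal projection against $\n$ to the momentum equation \eqref{NSv} gives an equation of the schematic form
\begin{equation*}
\D_t S_{\n}+(v\cdot\nabla^\varphi)S_{\n}-\eps\Delta^\varphi S_{\n}= F(v,q,h),
\end{equation*}
where $F$ collects: (i) commutators between $\Pi$, multiplication by $\n$ and the operators $\D_t+v\cdot\nabla^\varphi -\eps\Delta^\varphi$, which involve only $\partial_t\n$, $\nabla\n$ and derivatives of $v$ of order $\leq 2$; (ii) a pressure contribution $\Pi(\nabla^\varphi \otimes\nabla^\varphi q\,\n)$ together with lower order terms in $q$; (iii) quadratic-in-$\nabla^\varphi v$ terms arising from $S^\varphi(v\cdot\nabla^\varphi v)$. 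All coefficients are controlled by $\Lambda_\infty$ thanks to Propositions \ref{propeta}, \ref{heps}, \ref{proppE} and \ref{propPNS}.

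Next I would apply $Z^\alpha$ with $|\alpha|\le m-2$ to this equation and perform the standard $L^2$ energy estimate. The transport part produces only harmless terms (as in Corollary \ref{coripp}) because $\nabla^\varphi\cdot v=0$ and $v\cdot\N=\partial_t h$ on the boundary. The dissipation $-\eps\int \Delta^\varphi Z^\alpha S_\n\cdot Z^\alpha S_\n\, d\V$, after the integration by parts of Corollary \ref{coripp}, gives the parabolic contribution $\eps\|\nabla S_\n\|_{m-2}^2$ (modulo the Korn / $\nabla$ vs.\ $\nabla^\varphi$ equivalence of Lemma \ref{mingrad} and Proposition \ref{Korn}) without any boundary term, precisely because $Z^\alpha S_\n|_{z=0}=0$. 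Commutators $[Z^\alpha,\D_t+v\cdot\nabla^\varphi]S_\n$ and $[Z^\alpha,\eps\Delta^\varphi]S_\n$ are treated exactly as in Lemmas \ref{comi} and \ref{lemValpha}: the worst surviving term in the viscous commutator is handled by an extra integration by parts (as in Lemma \ref{comDS}) and absorbed into $\eps\|\nabla S_\n\|_{m-2}^2$ and $\eps\|\nabla V^m\|^2$ via Young's inequality. The source $Z^\alpha F$ is bounded, using Propositions \ref{sob}, \ref{proppE}, \ref{propPNS} and Lemma \ref{lemdzS}, by $\Lambda_\infty\,(\|V^m\|+|h|_m+\eps^{1/2}|h|_{m+1/2}+\|S_\n\|_{m-2}+\|\partial_z v\|_{m-1})$.

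The main obstacle I anticipate is controlling the pressure contribution $Z^\alpha \nabla^\varphi\nabla^\varphi q\cdot\n$ at the level $m-2$, which a priori calls for two normal derivatives of $q$. For the Euler part $q^E$ the estimate \eqref{estqE} of Proposition \ref{proppE} gives exactly $\|\partial_{zz}q^E\|_{m-2}\lesssim\Lambda_\infty(|h|_{m-1/2}+\|v\|_{E^m})$, which is tame and plugs directly into the right-hand side of \eqref{estSnm-2}. For the Navier–Stokes part $q^{NS}$ we use \eqref{estqNS} together with the $\eps$ factor to obtain a contribution bounded by $\eps^{1/2}\Lambda_\infty\,|h|_{m+1/2}+\eps\,\Lambda_\infty\,|v^b|_{m+1/2}$, which via the trace inequality \eqref{trace} and Young's inequality can be reabsorbed into $\eps\int_0^t\|\nabla V^m\|^2+\int_0^t\Lambda_\infty(|h|_m^2+\eps|h|_{m+1/2}^2)$. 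Summing over $|\alpha|\le m-2$ and integrating in time yields \eqref{estSnm-2}.
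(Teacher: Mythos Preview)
Your overall strategy is exactly the paper's: derive a convection--diffusion equation for $S_{\n}$, use the homogeneous Dirichlet condition $S_{\n}|_{z=0}=0$ to perform energy estimates on $Z^\alpha S_{\n}$ for $|\alpha|\le m-2$, and control the pressure via Propositions~\ref{proppE} and~\ref{propPNS}. However, there is one genuine gap and one significant omission.

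\textbf{The gap: the transport commutator.} You say $[Z^\alpha, V_z\partial_z]S_{\n}$ is handled ``exactly as in Lemmas~\ref{comi} and~\ref{lemValpha}''. But those estimates, applied verbatim with $S_{\n}$ in place of $v$, produce a term of size $\|\partial_z S_{\n}\|_{m-3}$ (the analogue of the $\|\partial_z v\|_{m-1}$ piece of $\|v\|_{E^m}$ in \eqref{CT}). This is a second normal derivative of $v$ with no factor of $\eps$ in front, and it is not bounded by anything on the right-hand side of \eqref{estSnm-2}; the estimate would not close. The paper's actual argument exploits that $V_z$ vanishes on the boundary: one writes $Z^\beta V_z\,\partial_z Z^\gamma S_{\n} = c_{\tilde\beta}\,Z^{\tilde\beta}\!\big(\tfrac{1-z}{z}V_z\big)\cdot Z_3 Z^\gamma S_{\n}$, trading the uncontrolled $\partial_z S_{\n}$ for a conormal derivative at the price of a singular weight on $V_z$, and then controls the weighted factor via the Hardy-type inequalities of Lemma~\ref{hardybis}. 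This Hardy argument is the missing idea; without it the commutator estimate \eqref{CSz} (which involves only $\|S_{\n}\|_{m-2}$, $\|v\|_{E^{m-1}}$, $|h|_{m-1/2}$) cannot be obtained.

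\textbf{The omission: induction on $m$.} Your claim that the viscous commutator, after integration by parts, is ``absorbed into $\eps\|\nabla S_{\n}\|_{m-2}^2$'' does not quite work: the integration by parts (the paper's \eqref{CS21}--\eqref{CS23}) leaves a residual $\Lambda_0\,\eps\|\nabla S_{\n}\|_{m-3}^2$ whose constant $\Lambda_0$ is not small, so it cannot be swallowed by the left-hand side directly. The paper closes this by induction on $m$, using the already established estimate at level $m-3$ to bound this residual. You should make this induction explicit.
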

         Note that the last term on the right-hand side of  \eqref{estSnm-2}
 can  be estimated  by using Proposition \ref{conormv}.
   We also point out  that in the above estimate, because of the dependence in $|h|_{m}$ in the right-hand side,  we cannot change $m-2$ into $m-1$.
     The occurence of this term  is mainly due to the Euler part of the pressure when we study the equation for $S_{\n}$.
      Indeed,  when we perform energy estimates on   the   equation for $S_{\n}$, the Hessian $D^2p^E$ of the pressure
        is handled as a source term.  Since $p^E= gh$ on the boundary, we get that the estimate of   $\|D^2 p^E\|_{k}$ necessarily involves
         $ |h|_{k+{3 \over 2}}$. This is why we are restricted to $k \leq m-{3/2}<m-1$. 
         
    \begin{proof}
         The first step is to find an equation for $S_{\n}$.  From \eqref{NSv}, we find the equation
    $$ \D_{t} \nabla^\varphi v+ \big(v \cdot \nabla^\varphi\big) \nabla^\varphi v  + (\nabla^\varphi v)^2 + (D^\varphi)^2 q  -\eps \Delta^\varphi \nabla^\varphi v = 0$$
     where $(D^\varphi)^2q$ stands for the Hessian matrix of the pressure,  $((D^\varphi)^2q)_{ij}= \D_{i}\D_{j}q$ and hence
      by taking the symmetric part of the equation, we get
     \beq
     \label{eqS}
      \D_{t} S^\varphi v+ \big(v \cdot \nabla^\varphi\big) S^\varphi v  + { 1\over 2}\big((\nabla^\varphi v)^2+ ((\nabla^\varphi v)^t)^2 +  (D^\varphi)^2 q  -\eps \Delta^\varphi (S^\varphi v)= 0.
     \eeq
     This allows to get an evolution equation for $S_{\n}= \Pi \big( S^\varphi v\, \n)$:
     \beq
     \label{eqPiS}
     \D_{t}  S_{\n}+ \big(v \cdot \nabla^\varphi\big) S_{\n}  -\eps \Delta^\varphi (S_{\n})= F_{S}
     \eeq
     where the source term $F_{S}$ is given by 
     \beq
     \label{FSdef} F_{S}= F_{S}^1 + F_{S}^2, 
     \eeq
     with 
     \begin{align}
  \label{FS1} &  F_{S}^1 =   - {1 \over 2} \Pi\big( (\nabla^\varphi v)^2+ ((\nabla^\varphi v)^t)^2\big)\, \n + \big( \partial_{t} \Pi + v \cdot \nabla^\varphi \Pi \big)
      S^\varphi v \, \n +    \Pi S^\varphi v \big( \partial_{t} \n + v \cdot \nabla^\varphi \n\big)  \\ 
    \label{FS2}   & F_{S}^2 =   - 2 \eps \, \D_{i} \Pi\, \D_{i} \big( S^\varphi v \, \n \big)  -  2 \eps  \Pi \big( \D_{i}\big( S^\varphi v \big) \, \D_{i} \n \big) - \eps \big(\Delta^\varphi \Pi  \big)S^\varphi v \, \n  - \eps  \Pi S^\varphi v \Delta^\varphi \n \\
\nonumber    & \quad \quad \quad    - \Pi\big( (D^\varphi)^2 q \big) \n.
     \end{align} 
     Note that we have used the summation convention for the last term. By using   Proposition \ref{propeta} and
       Propositions \ref{proppE}, \ref{propPNS}, and the product estimates \eqref{gues}, we have that the source term $F_{S}^1$
        is  bounded by 
   $$
    \| F_{S}^1 \|_{m-2} \leq \Lambda_{\infty}\big( \|\nabla^\varphi v \|_{m-2}  + |h|_{m-{1 \over 2}} + \|v\|_{m-2} \big).
 $$
 We recall that the above notation implicitely means that all the quantities are evaluated at time $t$;
    Note that by using  Lemma \eqref{dzvk}, we can rewrite this estimate as
   \beq
     \label{estFS}
    \| F_{S}^1 \|_{m-2} \leq \Lambda_{\infty}\big( \|S_{\n}  \|_{m-2}  + |h|_{m- {1 \over 2}} + \|v\|_{m-1} \big).
    \eeq 
  In  a  similar way, we get  for $F_{S}^2$ that 
  $$
   \| F_{S}^2 \|_{m-2} \leq \Lambda_{\infty}\,  \eps   \big(   \|\partial_{zz}v\|_{m-2} + \|\partial_{z}v\|_{m-1} + \|v\|_{m} + |h|_{m+{1 \over 2}}
    \big) + \Lambda_{\infty}\|\nabla q\|_{E^{m-1}}.
 $$
  Indeed, let us give more details on one  example:
$$ \| \eps \, \D_{i} \Pi\, \D_{i} \big( S^\varphi v \, \n \big) \|_{m-2} \lesssim 
 \eps \|\D_{i} \Pi\|_{L^\infty}\, \| \D_{i}\big(S^\varphi v \, \n \big) \|_{m-2} +  \eps \|\partial_{i}^\varphi\big(S^\varphi v \, \n \big) \|_{L^\infty} \,  \|\D_{i} \Pi\|_{m-2}$$
 and the result follows by using \eqref{gues} again (note that $\Lambda_{\infty}$ involves
  $\eps^{1 \over 2} |h|_{3, \infty}$  and $\eps^{1 \over 2} \|\partial_{zz}v\|_{L^\infty}.$) Actually, we get an estimate of $F_{S}^2$
   that involves  only $\eps \|\partial_{zz}v\|_{L^\infty}$, but this improvement does not seem useful.
   Next, we can use Proposition \ref{propPNS} and Proposition \ref{proppE} to estimate the pressure.
    This yields
    $$ \| F_{S}^2 \|_{m-2} \leq \Lambda_{\infty}\,  \eps \big(  \|\partial_{zz}v\|_{m-2}  
    + |h|_{m+{1 \over 2}} +|v^b|_{m+{1 \over 2}} \big) + \Lambda_{\infty}\big( |v|_{E^m} + |h|_{m - {{1 \over 2}}}\big)$$
    and by using Lemma \ref{lemdzS} and Lemma \ref{mingrad}, we can rewrite this last estimate in the alternative form
 \beq
  \label{estFS2}
    \| F_{S}^2 \|_{m-2} \leq \Lambda_{\infty}\,  \eps \big(  \|\nabla^\varphi S_{\n}\|_{m-2}  
    + |h|_{m+{1 \over 2}} +|v^b|_{m+{1 \over 2}} \big) + \Lambda_{\infty}\big( |v|_{E^m} + |h|_{m - {{1 \over 2}}}\big).
    \eeq

%
     Note that thanks to the boundary condition \eqref{bordv2}, we have that $S_{\n}$ verifies an homogeneous Dirichlet boundary condition
      on $z=0$, 
      \beq
      \label{bordSN}
       (S_{\n})_{/z=0}= 0
       \eeq
     hence we shall be able to estimate $S_{\n}$ through standard energy estimates. 
   We shall first prove for $m \geq 2$  the following estimate by induction:
   \begin{align}
   \label{estSnm-2pf}
   &  \|S_{\n}(t)\|_{m-2}^2    + \eps \int_{0}^t \| \nabla^\varphi S_{\n}|_{m-2}^2  \\  
 \nonumber  & \leq  \Lambda_{0}\|S_{\n}(0)\|_{m-2}^2 + \int_{0}^t\Lambda_{\infty}\big( \|v\|_{E^m}^2 + |h|_{m-{1 \over 2}}^2 + \eps|h|_{m+{1 \over 2 }}^2 + \|S_{\n}\|_{m-2}^2\big)+ \eps\int_{0}^t |v^b|_{m+{1 \over 2} } \|S_{\n}\|_{m-2}.
 \end{align}

      For the $L^2$ estimate (which corresponds to $m=2$),  by using the boundary condition \eqref{bordSN} and Lemma \ref{lemipp}, we obtain 
 $$   {d \over dt} {1 \over 2} \int_{\mathcal{S}}  | S_{\n}|^2 \, d\V    + \eps \int_{\mathcal{S}} | \nabla^\varphi S_{\n}|^2 \, d\V
 = \int_{\mathcal{S}}   F_{S}  \cdot  S_{\n}\, d\V.$$
 To conclude, we integrate in time and we use
   \eqref{estFS}, \eqref{estFS2} and the Young inequality to absorb the term $\| \nabla^\varphi S_{\n}\|$ in the left hand side. Note that
    thanks to \eqref{apriori}, we can again use that
    \beq
    \label{remS0}  \| \nabla^\varphi S_{\n}(t) \|^2 \leq \Lambda_{0} \int_{\mathcal{S}} |\nabla^\varphi S_{\n} |^2 \, d\V.
    \eeq

 Assuming that the estimate \eqref{estSnm-2pf} is proven for $k \leq m-3$, we shall prove that it is verified for $m-2$. 
     We first need to compute the equation
      satisfied by $Z^\alpha S_{\n}$ for $| \alpha | \leq m-2.$ By using the expression \eqref{transportW} for the transport part
       of the equation, we get that
       \beq
       \label{SNalpha}
       \D_{t}  Z^\alpha S_{\n}+ \big(v \cdot \nabla^\varphi\big)Z^\alpha S_{\n}  -\eps \Delta^\varphi Z^\alpha (S_{\n})= F_{S} + \mathcal{C}_{S}
       \eeq
       where  the commutator is given by 
       \beq
       \label{CS}
       \mathcal{C}_{S}= \mathcal  C_{S}^1 + \mathcal C_{S}^2\eeq
       with
       $$\mathcal{C}_{S}^1= [Z^\alpha v_{y}]\cdot  \nabla_{y} S_{\n} +  [Z^\alpha, V_{z}] \partial_{z} S_{\n}:=
        C_{Sy}+ C_{Sz}, \quad \mathcal{C}_{S}^2 =  - \eps [Z^\alpha, \Delta^\varphi] S_{\n}.
       $$
       Since $Z^\alpha S_{\n}$ vanishes on the boundary, we get by using Corollary \ref{coripp} and  a standard energy estimate 
        that
        \beq
        \label{m-21}
{d \over dt} {1 \over 2} \int_{\mathcal{S}}  |Z^\alpha S_{\n}|^2 \, d\V    + \eps \int_{\mathcal{S}} | \nabla^\varphi Z^\alpha S_{\n}|^2 \, d\V
 = \int_{\mathcal{S}}  \big( F_{S} + \mathcal{C}_{S} \big) \cdot Z^\alpha S_{\n}\, d\V
 \eeq
     and we need to estimate the right hand-side. We shall begin with the part involving $\mathcal{C}_{S}$.
      Thanks to the decomposition \eqref{CS}, we first observe that thanks  to  \eqref{com}
      \beq
      \label{CSy} \|C_{Sy}\|
      \leq \Lambda_{\infty}\big(  \|S_{\n}\|_{m-2} +  \|v\|_{E^{m-2}} \big).\eeq
      To estimate $\mathcal{C}_{Sz}$,  we need to study more carefully the commutator in order to avoid the appearance     of  $\| \partial_{z}  S_{\n}\|_{k}$
       which is  not controlled (and cannot be controlled uniformly in $\eps$  due to the presence of boundary layers). 
        By expanding the commutator and by using \eqref{idcom}, we see that we have to estimate terms like
        $$ \| Z^\beta V_{z} \, \partial_{z} Z^\gamma S_{\n}  \|$$
        with $ | \beta | +  | \gamma | \leq m-2$, $ | \gamma |  \leq m-3$. Next, we shall use that
        $$ Z^\beta V_{z} \, \partial_{z} Z^\gamma S_{\n}= {1 - z \over z } Z^\beta V_{z} \, Z_{3} Z^\gamma S_{\n}$$
        and we can finally  rewrite  the  above expression as a sum of terms under the form:
        \beq
        \label{CSz0} c_{\tilde \beta }Z^{\tilde \beta }\big(  {1- z  \over {z }} V_{z} \big)   \, Z_{3} Z^\gamma S_{\n}\eeq
        where $c_{\tilde \beta}$ are harmless bounded functions and $|\tilde \beta | \leq | \beta |$.  Indeed, this comes from
         the fact that $ Z_{3} ( (1-z)/z ) = \tilde{c} (1-z)/z$ for $\tilde c$ a smooth bounded function. 
        
        If $\tilde \beta = 0$, we  get 
                 that 
   $$   \left\| c_{\tilde \beta }Z^{\tilde \beta }\big(  {1- z  \over {z }} V_{z} \big)   \, Z_{3} Z^\gamma S_{\n}  \right\|
       \lesssim   \|S_{\n}\|_{m-2}$$
        
        When $\tilde \beta \neq 0$ we can use \eqref{gues} to obtain that 
        $$  \left\| c_{\tilde \beta }Z^{\tilde \beta }\big(  {1- z  \over {z }} V_{z} \big)   \, Z_{3} Z^\gamma S_{\n}  \right\|
         \lesssim \big\|  Z \big(  {1- z  \over {z }} V_{z} \big) \big\|_{L^\infty} \|S_{\n} \|_{m-2} +  \|S_{\n}\|_{L^\infty} 
          \big\|  Z \big(  {1- z  \over {z }} V_{z} \big) \big\|_{m-3}$$
        Next, we observe that         $$   \big\|  Z \big(  {1- z  \over {z }} V_{z} \big) \big\|_{L^\infty}  \lesssim \|V_{z} \|_{W^{1, \infty}} +  \|ZV_{z}\|_{W^{1, \infty}}\lesssim \|V_{z}\|_{E^{2, \infty}}$$
    since  thanks to \eqref{Wdef}  and \eqref{bordv1}, we have that $V_{z}$ vanishes on the boundary.    Moreover, 
     again   since $V_{z}$ vanishes on the boundary,  we shall prove that
    \beq
    \label{hardyfin}   \big\|  Z \big(  {1- z  \over {z }} V_{z} \big) \big\|_{m-3} \lesssim \|ZV_{z}\|_{m-3} + \|\partial_{z} Z V_{z} \|_{m-3} + \|\partial_{z} V_{z} \big \|_{m-3}.\eeq
    Since 
   $$  \big\|  Z \big(  {1- z  \over {z }} V_{z} \big) \big\|_{m-3} \lesssim  \big\|   {1- z  \over {z }} Z V_{z} \big\|_{m-3}  +  \big\|  {1 \over z (1-z)}V_{z}  \big\|_{m-3},
   $$
   we  have to estimate
    $$ \| {1 - z \over z} Z^\beta  ZV_{z}\|,  \quad \|  {1 \over z(1-z)} Z^\beta  V_{z}\|, \quad |\beta| \leq m-3.$$
       By using   the following
    variants of the Hardy inequality: 
    \begin{lem}
    \label{hardybis} If $f(0)$= 0, we have the inequalities, 
  \begin{align*}
  &   \int_{-\infty}^0 {1\over z^2(1-z)^2} |f(z)|^2 dz \lesssim    \int_{-\infty}^0 |\partial_{z}f(z)|^2 \, dz, \\
  &   \quad \int_{-\infty}^0 \big({ 1- z \over z}\big)^2|f(z)|^2
     \lesssim \int_{-\infty}^0\big( |f(z)|^2 + |\partial_{z}f(z)|^2 \big) dz.
     \end{align*}
    \end{lem}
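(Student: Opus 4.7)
\medskip

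\noindent\textbf{Proof plan for Lemma \ref{hardybis}.} The plan is to reduce both inequalities to the classical one-dimensional Hardy inequality on the half-line, after the change of variable $z = -x$ with $x > 0$. Under this change $f$ becomes a function $\tilde f(x) = f(-x)$ defined on $(0, \infty)$, still vanishing at $x = 0$, and the weights transform as $\frac{1}{z^2(1-z)^2} = \frac{1}{x^2(1+x)^2}$ and $\left(\frac{1-z}{z}\right)^2 = \frac{(1+x)^2}{x^2}$.

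The central ingredient is the standard Hardy inequality: for any $u \in H^1(0,\infty)$ with $u(0)=0$,
\begin{equation*}
\int_0^\infty \frac{|u(x)|^2}{x^2}\, dx \;\leq\; 4 \int_0^\infty |u'(x)|^2\, dx.
\end{equation*}
This is obtained by writing $\frac{1}{x^2} = -\frac{d}{dx}\bigl(\frac{1}{x}\bigr)$, integrating by parts in $\int_0^\infty u^2/x^2$ (the boundary contribution at $x=0$ vanishes because $u(x)^2 = o(x)$ near $0$, and the contribution at $\infty$ vanishes either by truncation-density or by dominated convergence since $u^2 \in L^1$), and applying Cauchy--Schwarz.

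For the first inequality, I simply bound $\frac{1}{(1+x)^2} \leq 1$ and apply the classical Hardy inequality to $\tilde f$:
\begin{equation*}
\int_{-\infty}^0 \frac{|f(z)|^2}{z^2(1-z)^2}\, dz = \int_0^\infty \frac{|\tilde f(x)|^2}{x^2(1+x)^2}\, dx \leq \int_0^\infty \frac{|\tilde f(x)|^2}{x^2}\, dx \leq 4 \int_0^\infty |\tilde f'(x)|^2\, dx = 4 \int_{-\infty}^0 |\partial_z f(z)|^2\, dz.
\end{equation*}
For the second inequality, I expand the weight as $\frac{(1+x)^2}{x^2} = \frac{1}{x^2} + \frac{2}{x} + 1$. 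The $1/x^2$ piece is handled by Hardy; the constant piece gives exactly $\|f\|_{L^2}^2$; the cross term $\int_0^\infty \frac{|\tilde f|^2}{x}\, dx$ is controlled by Cauchy--Schwarz as
\begin{equation*}
\int_0^\infty \frac{|\tilde f|^2}{x}\, dx \leq \Bigl(\int_0^\infty \frac{|\tilde f|^2}{x^2}\, dx\Bigr)^{1/2} \Bigl(\int_0^\infty |\tilde f|^2\, dx\Bigr)^{1/2} \leq C\, \|\tilde f'\|_{L^2}\, \|\tilde f\|_{L^2} \leq C\bigl(\|\tilde f\|_{L^2}^2 + \|\tilde f'\|_{L^2}^2\bigr),
\end{equation*}
where Hardy was used once more. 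Summing the three pieces gives the claimed bound. There is no real obstacle here; the only minor point to justify carefully is the vanishing of the boundary terms in the integration by parts proof of Hardy, which is handled by a standard density argument approximating $f \in H^1((-\infty,0))$ with $f(0)=0$ by smooth compactly supported functions vanishing at $0$.
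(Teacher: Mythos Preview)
Your proof is correct. The paper's argument is very close in spirit but organizes the reductions slightly differently. For the first inequality, rather than bounding $(1-z)^{-2}\le 1$ and invoking the classical Hardy inequality on the half-line, the paper integrates by parts directly with the tailored weight $\frac{1}{z(1-z)}$: since $\partial_z\bigl(\frac{1}{z(1-z)}\bigr)=-\frac{1-2z}{z^2(1-z)^2}$ and $1-2z\ge 1$ for $z\le 0$, one obtains the full weighted bound in one step, then concludes by Cauchy--Schwarz. For the second inequality, instead of expanding $\frac{(1+x)^2}{x^2}=\frac{1}{x^2}+\frac{2}{x}+1$ and handling the cross term by Cauchy--Schwarz, the paper splits the domain at $z=-1$: on $(-1,0)$ one has $(1-z)^2\le 4$ and applies the standard Hardy inequality on the bounded interval, while on $(-\infty,-1)$ one has $(1-z)^2/z^2\le 4$ and the $L^2$ norm suffices. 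Both routes are equally elementary; your expansion of the weight is perhaps a touch more systematic, while the paper's domain splitting avoids estimating the $1/x$ cross term separately.
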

     The estimate \eqref{hardyfin} follows. Note that we had to be careful in these estimates since $V_{z}$ is not in $L^2$ while
      its derivatives are. The proof  of this Lemma which only relies on an integration by parts is given in section \ref{sectiontech}.
      Looking at the previous estimates, 
    we have thus proven that
    $$ \| [ Z^\alpha, V_{z}\partial_{z}] S_{\n}\| \leq \|V_{z}\|_{E^{2, \infty}} \|S_{\n}\|_{m-2} + \|S_{\n}\|_{L^\infty}\big( \|ZV_{z}\|_{E^{m-2}}
     + \|\partial_{z}V_{z}\|_{m-3}\big).
   $$
    Thanks to Proposition \eqref{propeta}, we have that $\|V_{z}\|_{E^{2, \infty}}\leq \Lambda_{\infty}$.
     Moreover, by using Lemma \ref{lemestW} and Remark \ref{remdzVz} we also find that
    $$ \|ZV_{z}\|_{E^{m-2}} \leq \Lambda_{\infty}\big( \|v\|_{E^{m-1}} + |h|_{m-{1 \over 2}}\big).$$
    We have thus proven the commutator estimate
    \beq
    \label{CSz}
    \| [ Z^\alpha, V_{z}\partial_{z}] S_{\n}\| \leq \Lambda_{\infty} \big(   \|v\|_{E^{m-1}} + |h|_{m-{1 \over 2}} + \|S_{\n}\|_{m-2}\big)
    \eeq
      and hence, in view of \eqref{CS1},  this yields
    \beq
    \label{CS1}
   \|\mathcal{C}_{S}^1 \|
      \leq \Lambda_{\infty}\big(  \|S_{\n}\|_{m-2} +  \|v\|_{E^{m-1}} + |h|_{m-{1 \over 2}} \big).
    \eeq
     Now, we shall estimate the term involving $\mathcal{C}_{S}^2$.
      To expand this commutator, we can use the expression \eqref{deltaphi} of $\Delta^\varphi$. This yields
     $$ \mathcal{C}_{S}^2 = \mathcal{C}_{S1}^2 + \mathcal{C}_{S2}^2 + \mathcal{C}_{S3}^2$$
      where
      $$
       \mathcal{C}_{S1}^2= \eps [Z^\beta , {1 \over \partial_{z} \varphi}] \nabla \cdot \big( E \nabla S_{\n} \big), \quad
        \mathcal{C}_{S2}^2=\eps {1 \over \partial_{z} \varphi} [Z^\alpha, \nabla] \cdot \big( E \nabla S_{\n}\big), \quad
         \mathcal{C}_{S3}^2 =\eps {1 \over \partial_{z}\varphi} \nabla\cdot  \big( [Z^\alpha, E \nabla \big] S_{\n}\big).$$   
   By expanding the first commutator $\mathcal{C}_{S1}^2$, we get that we need to estimate terms under the form
   $$\eps \int_{\mathcal{S}} Z^{\beta}\big( {1 \over \partial_{z}\varphi} \big) Z^{\tilde\gamma} \big(\nabla \cdot ( E\nabla S_{\n})\big)\, \cdot Z^\alpha S_{\n} \, d\V$$
   with $\beta + \tilde \gamma = \alpha$, $\beta \neq 0$ and next we can commute $Z^{\tilde \gamma}$ and $\nabla$
    (we recall that $Z_{3}$ and $\partial_{z}$ does not commute)  
    to reduce the problem  to the estimate of 
    $$ \eps\int_{\mathcal{S}} Z^{\beta}\big( {1 \over \partial_{z}\varphi} \big)  \partial_{i} Z^\gamma\big( ( E\nabla S_{\n})_{j}\big)\, \cdot Z^\alpha S_{\n} \, d\V $$ 
        with $ |\gamma| \leq| \tilde \gamma|$. Since $S_{\n}$ vanishes on the boundary, we can integrate by parts  
      to get
   \begin{align*} &  \Big|  \eps\int_{\mathcal{S}} Z^{\beta}\big( {1 \over \partial_{z}\varphi} \big)  \partial_{i} Z^\gamma\big( ( E\nabla S_{\n})_{j}\big)\, \cdot Z^\alpha S_{\n} \, d\V\Big| \leq 
     \\
    &  \Lambda_{0} \Big(  \eps  \big\| Z^{\beta}\big( {1 \over \partial_{z}\varphi} \big)  Z^\gamma\big( ( E\nabla S_{\n})_{j} \big)\big\|\, 
   \big( \| \nabla^\varphi  Z^\alpha S_{\n}\| + \|S_{\n}\|_{m-2}\big) +  \eps   \big\|\partial_{i} Z^{\beta}\big( {1 \over \partial_{z}\varphi} \big)  Z^\gamma\big( ( E\nabla S_{\n})_{j} \big) \big\|\, 
    \|  S_{\n}\|_{m-2}\Big).
    \end{align*}
    Next, we can use \eqref{gues},  Proposition \ref{propeta} and \eqref{ELinfty}, \eqref{EHs} to obtain
  \begin{align*}
    \eps^{1 \over 2 } \big\| Z^{\beta}\big( {1 \over \partial_{z}\varphi} \big)  Z^\gamma\big( ( E\nabla S_{\n})_{j} \big)\big\|
    &  \leq       \eps^{1 \over 2}  \Big( \big\| Z\big({1 \over \partial_{z} \varphi}\big)\|_{L^\infty}\, \| E\nabla S_{\n}\|_{m-3} + 
    \| E \nabla S_{\n}\|_{L^\infty} \big\|Z\big( {1 \over \partial_{z} \varphi}\big) \big\|_{m-3}\Big) \\
     &   \leq    \eps^{1 \over 2} \Lambda_{0} \|\nabla S_{\n}\|_{m-3} +  \Lambda_{\infty}   \,  |h|_{m-{3 \over 2}}.  
  \end{align*}
  Note that $\Lambda_{\infty}$ contains a control of $\sqrt{\eps}|\partial_{zz}v\|_{L^\infty}$ that we  have used to  get the last line.
  From the same arguments, we also get that 
  \begin{align*}
   \eps  \big\|\partial_{i} Z^{\beta}\big( {1 \over \partial_{z}\varphi} \big)  Z^\gamma\big( ( E\nabla S_{\n})_{j} \big) \big\|
  \leq    \eps^{1 \over 2} \Lambda_{0} \|\nabla S_{\n}\|_{m-3} +  \Lambda_{\infty}   \, \eps^{1 \over 2}\, |h|_{m-{1 \over 2}}  
  \end{align*}
  We have thus proven the estimate
 \begin{multline}
 \label{CS21}
 \Big| \int_{\mathcal{S}} \mathcal{C}_{S1}^2 \cdot Z^\alpha S_{\n} d\V\Big|
  \leq  \Lambda_{0}\big( \eps^{1 \over 2} \| \nabla^\varphi  Z^\alpha S_{\n} \| + \|S_{\n}\|_{m-2}\big) \\  \big( \eps^{1\over 2 }\|\nabla S_{\n}\|_{m-3}+ \|S_{\n}\|_{m-2} + \Lambda_{\infty}( |h|_{m-{3 \over 2 }} +\eps^{1 \over 2} |h|_{m-{1 \over 2}}) \big).
  \end{multline}
  By using \eqref{idcom} we see that to handle the term involving $\mathcal{C}_{S2}^2$, we have to estimate
$$ \eps \int_{\mathcal{S}}\partial_{z} Z^\beta\big( E\nabla S_{\n}\big) \cdot Z^\alpha S_{\n} \,dy dz$$
with $|\beta | \leq m-3$, 
 hence we perform an integration by parts as above to get that
 \beq
 \label{CS22}
  \Big| \int_{\mathcal{S}} \mathcal{C}_{S2}^2 \cdot Z^\alpha S_{\n}\, d\V \Big|
  \leq  \Lambda_{0}\, \eps^{1 \over 2} \| \nabla^\varphi  Z^\alpha S_{\n} \|  \big( \eps^{1\over 2 }\|\nabla S_{\n}\|_{m-3}+ \Lambda_{\infty} |h|_{m-{3 \over 2 }}\big).
  \eeq
In a similar way,  by integrating by parts, we get that
\begin{align}
\label{CS23} \Big| \int_{\mathcal{S}} \mathcal{C}_{S3}^2 \cdot Z^\alpha S_{\n}  \, d\V\Big| &  \leq  \eps \| [Z^\alpha, E \nabla ] S_{\n} \| \, \| \nabla Z^\alpha S_{\n}\|  \\ \nonumber & \leq   \Lambda_{0} \eps^{1 \over 2} \| \nabla^\varphi  Z^\alpha S_{\n} \|  \big( \eps^{1\over 2 }\|\nabla S_{\n}\|_{m-3}+ \Lambda_{\infty} |h|_{m-{3 \over 2 }}\big).
  \end{align}
 In view of \eqref{CS21}, \eqref{CS22}, \eqref{CS23}, we have actually proven that
 \begin{multline}
 \label{CS2}
  \Big| \int_{\mathcal{S}} \mathcal{C}_{S}^2 \cdot Z^\alpha S_{\n} d\V\Big|
  \leq  \Lambda_{0}\big( \eps^{1 \over 2} \| \nabla   Z^\alpha S_{\n} \| + \|S_{\n}\|_{m-2}\big) \\  \big( \eps^{1\over 2 }\|\nabla S_{\n}\|_{m-3}+ \|S_{\n}\|_{m-2} + \Lambda_{\infty}( |h|_{m-{3 \over 2 }}+ \eps^{1 \over 2} |h|_{m-{1 \over 2}})\big).
  \end{multline}
  To conclude our energy estimate, we can use the identity  \eqref{m-21}, the estimates \eqref{FS1}, \eqref{FS2} and \eqref{CS1}, \eqref{CS2}, 
   to obtain
   \begin{align*}
   &  {d \over dt} {1 \over 2} \int_{\mathcal{S}}  |Z^\alpha S_{\n}|^2 \, d\V    +{ \eps\over 2} \int_{\mathcal{S}} | \nabla^\varphi Z^\alpha S_{\n}|^2 \, d\V \\
   & \leq \Lambda_{\infty}\big( \|v\|_{E^m} + |h|_{m-{1 \over 2}} + \eps^{1\over 2}|h|_{m+{1 \over 2 }} \big)\big( \|S_{\n}\|_{m-2} + |h|_{m-{1 \over 2 }}\big)
+ \eps |v^b|_{m+{1 \over 2} } \|S_{\n}\|_{m-2} + \Lambda_{0} \|\nabla S_{\n}\|_{m-3}^2.
 \end{align*}

 Note that we have used the Young inequality and \eqref{remS0} for $Z^\alpha S_{\n}$  to absorb the term $\|\nabla Z^\alpha S_{\n}\|_{m-2}$  by the energy dissipation term
  in the left hand side.  Next, we can integrate in time to obtain
 \begin{align*}
   &  \|S_{\n}\|_{m-2}^2    + \eps \int_{0}^t  | \nabla^\varphi S_{\n}|_{m-2}^2 \\  
   & \leq  \Lambda_{0}\|S_{\n}(0)\|_{m-2}^2 + \int_{0}^t\Big(\Lambda_{\infty}\big( \|v\|_{E^m} + |h|_{m-{1 \over 2}} + \eps^{1\over 2}|h|_{m+{1 \over 2 }} \big)\big( \|S_{\n}\|_{m-2} + \|h\|_{m-{1 \over 2 }}\big) \\
& \quad + \eps\int_{0}^t |v^b|_{m+{1 \over 2} } \|S_{\n}\|_{m-2} + \Lambda_{0}\eps \int_{0}^t \|\nabla S_{\n}\|_{m-3}^2
 \end{align*}
 and we finally get  \eqref{estSnm-2pf} by using the induction assumption to control $\Lambda_{0}\eps \int_{0}^t \|\nabla S_{\n}\|_{m-3}^2$.

  To end the proof of Proposition \ref{propdzvm-2}, we can use again \eqref{vbm+} 
    to get 
    $$ \eps\int_{0}^t |v^b|_{m+{1 \over 2} } \|S_{\n}\|_{m-2} \leq  \eps \int_{0}^t \|\nabla V^m\|^2    + \int_{0}^t \Lambda_{\infty}\big(\|V^m\|^2
     + \|S_{n}\|_{m-2}^2 + \eps |h|_{m+{1 \over }}^2  \big).$$
 and we can use  Lemma \ref{mingrad} to replace $ \|\nabla^\varphi S_{\n}\|_{m-2}$ by $\|\nabla S_{\n}\|_{m-2}$ in the left hand side.
     This ends the proof of Proposition \ref{propdzvm-2}.
     \end{proof}

   \section{$L^\infty$ estimates}
   \label{sectionLinfty}
   In view of the estimate of Proposition \ref{propdzvm-2}, to close the argument, we need to estimate  the $L^\infty$ norms  contained in $\Lambda_{\infty}$
    and $\int_{0}^t \|\partial_{z}v\|_{m-1}^2$. We shall first  estimate the $L^\infty$ norms in terms of the quantities 
     in the left hand side of the estimate of Proposition \ref{propdzvm-2} and Proposition \ref{conormv}.
    
   We shall begin with the estimates which can be easily  obtained through Sobolev embeddings.
   \begin{prop}
   \label{propLinfty1}
   We have the following estimates: 
   \begin{itemize}
 \item for $ k\in \mathbb{N}$,  
   \beq
   \label{hinfty} |h|_{k, \infty}  + \sqrt{\eps}|h|_{k+1, \infty} \lesssim  |h|_{2+k}+ \eps^{1 \over 2} |h|_{2+k+{1 \over 2}},
   \eeq
   \item  for the velocity, we have 
   \beq
   \label{u2infty}
   \|v(t)\|_{2, \infty} \leq \Lambda\big({ 1 \over c_{0}},  |h|_{4, \infty} +   \|V^4\| + \|S_{\n}\|_{3}\big)
   \eeq
   \item and also
   \beq
   \label{dzv1infty1}
    \|\partial_{z}v \|_{1, \infty} \leq  \Lambda_{0} \big( \|S_{\n}\|_{1, \infty}  + \|v\|_{2, \infty}\big), \quad \sqrt{\eps} \|\partial_{zz}v\|_{L^\infty}
     \leq  \Lambda_{0} \big(\sqrt{\eps} \|\partial_{z}S_{\n}\|_{L^\infty} + \|S_{\n}\|_{1, \infty}  + \|v\|_{2, \infty}\big ).
    \eeq
        \end{itemize}
   \end{prop}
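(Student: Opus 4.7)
The first estimate \eqref{hinfty} is a direct consequence of the two-dimensional Sobolev embedding $H^s(\mathbb{R}^2)\hookrightarrow C^\ell(\mathbb{R}^2)$ valid when $s>\ell+1$. Applying it with $(\ell,s)=(k,k+2)$ controls $|h|_{k,\infty}\lesssim|h|_{k+2}$, and with $(\ell,s)=(k+1,k+5/2)$ gives $|h|_{k+1,\infty}\lesssim|h|_{k+5/2}$; multiplying the second by $\sqrt{\eps}$ and adding yields the claim.

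For \eqref{u2infty}, the plan is to invoke the anisotropic Sobolev inequality \eqref{emb} on $Z^\alpha v$ for $|\alpha|\le 2$. Because the tangential Fourier multiplier $\Lambda^s$ commutes with each $Z_i$ (for $i=1,2$ trivially, and for $Z_3=z/(1-z)\,\partial_z$ since its coefficient depends only on $z$), one has $\|Z^\alpha v\|_{H^s_{tan}}\le\|v\|_{|\alpha|+s}$ and the analogous bound for $\partial_z Z^\alpha v$. Choosing $s_1=2$, $s_2=1$ (so $s_1+s_2=3>2$) gives
$$\|v\|_{2,\infty}^2\;\lesssim\;\|\partial_z v\|_{3}\,\|v\|_{4}.$$
Next, apply the equivalence \eqref{equiv1} to write $\|v\|_4\lesssim\|V^4\|+\Lambda(1/c_0,\|\nabla v\|_{L^\infty})\,|h|_{7/2}$, and Lemma \ref{lemdzS} (with $k=3$) to write $\|\partial_z v\|_3\lesssim\Lambda(1/c_0,\|\nabla v\|_{L^\infty})(\|S_{\n}\|_3+|h|_{7/2}+\|v\|_4)$. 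The residual factors $\|\nabla v\|_{L^\infty}\le\|v\|_{1,\infty}$ and $|h|_{7/2}$ are then absorbed into the outer continuous increasing function $\Lambda$ using \eqref{hinfty} (which dominates $L^\infty$-type low-regularity quantities by $|h|_{4,\infty}$ and standard Sobolev arguments).

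The pair of bounds in \eqref{dzv1infty1} follows by redoing the algebraic manipulations from the proof of Lemma \ref{lemdzS} in $L^\infty$ rather than in conormal $L^2$. The divergence-free condition \eqref{dzVid} expresses $\partial_z v\cdot\N=\partial_z\varphi\,(\partial_1 v_1+\partial_2 v_2)$, hence $\|\partial_z v\cdot\n\|_{L^\infty}\le\Lambda_0\,\|v\|_{1,\infty}$. Combined with the identity \eqref{Subis}-\eqref{Subis1}, which solves for the tangential component of $\partial_z v$ in terms of $S^\varphi v\,\n=S_{\n}+(S^\varphi v\,\n\cdot\n)\,\n$ and of the tangential derivatives $\partial_i v$, one gets $\|\partial_z v\|_{L^\infty}\le\Lambda_0(\|S_{\n}\|_{L^\infty}+\|v\|_{1,\infty})$. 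Applying $Z_i$ to the same identities and bounding in $L^\infty$ gives the first half of \eqref{dzv1infty1}. For the bound on $\sqrt{\eps}\|\partial_{zz}v\|_{L^\infty}$, differentiate those identities once more in $\partial_z$: the tangential part of $\partial_{zz}v$ is controlled by $\partial_z S_{\n}$ and by $\partial_z$ of tangential derivatives of $v$ (the latter is a $Z$-derivative of $\partial_z v$), while $\partial_{zz}v\cdot\n$ is handled as before by the divergence-free condition; carrying the $\sqrt{\eps}$ factor yields the second estimate.

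The computation is routine throughout; the only mildly delicate point is the bookkeeping in part (ii), where one must verify that all intermediate $H^s$ and $L^\infty$ norms of $h$ and $v$ produced by \eqref{equiv1} and Lemma \ref{lemdzS} can indeed be swallowed into $\Lambda(1/c_0,|h|_{4,\infty}+\|V^4\|+\|S_{\n}\|_3)$ using \eqref{hinfty} and the monotonicity of $\Lambda$.
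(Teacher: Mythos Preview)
Your treatment of \eqref{hinfty} and \eqref{dzv1infty1} matches the paper's. The gap is in part (ii).

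The inequality $\|\nabla v\|_{L^\infty}\le\|v\|_{1,\infty}$ is false: $\|\cdot\|_{1,\infty}$ is the \emph{conormal} $W^{1,\infty}_{co}$ norm, whereas $\nabla v$ contains the normal derivative $\partial_z v$. Thus both \eqref{equiv1} and Lemma~\ref{lemdzS}, as you invoke them, leave a factor $\Lambda(1/c_0,\|\partial_z v\|_{L^\infty})$ that cannot be absorbed into the target $\Lambda(1/c_0,|h|_{4,\infty}+\|V^4\|+\|S_{\n}\|_3)$; the argument is circular. Likewise, $|h|_{7/2}$ is an $H^{7/2}$ norm and is not controlled by $|h|_{4,\infty}$ --- estimate \eqref{hinfty} goes the other way.

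The paper avoids both issues by \emph{not} using the tame product estimates behind \eqref{equiv1} and Lemma~\ref{lemdzS}. When estimating $\|\partial_z v\|_3$ from \eqref{dzVid}, \eqref{Subis}, \eqref{Subis1}, it always places the $\varphi$-dependent coefficients in $L^\infty$ (at conormal order up to $3$, hence controlled by $|h|_{4,\infty}$ via \eqref{etainfty}) and $v$ in $L^2$; this yields $\|\partial_z v\|_3\le\Lambda(1/c_0,|h|_{4,\infty})(\|S_{\n}\|_3+\|v\|_4)$ with no $\|\nabla v\|_{L^\infty}$ and no $|h|_{7/2}$. Similarly, writing $Z^\alpha v=V^\alpha+\partial_z^\varphi v\,Z^\alpha\eta$ and using $\|Z^\alpha\eta\|_{L^\infty}\lesssim|h|_{4,\infty}$ gives $\|v\|_4\lesssim\|V^4\|+\Lambda(1/c_0,|h|_{4,\infty})\|\partial_z v\|$. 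A further closure step you omit is then needed: substituting back leaves $\|v\|_1$ on the right (through $\|\partial_z v\|\lesssim\Lambda_0(\|S_{\n}\|+\|v\|_1)$), which the paper handles by the crude interpolation $\|v\|_1\lesssim\|v\|_4^{1/2}\|v\|^{1/2}$ together with Young's inequality to absorb $\|v\|_4$ into the left-hand side.
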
 
   
   \begin{proof}
   The estimate \eqref{hinfty} is a direct consequence of the Sobolev embedding in dimension $2$.
   
    To  get the estimate \eqref{u2infty}, we first  use  the anisotropic Sobolev embedding \eqref{emb} to obtain that
    \beq
    \label{dzv3inf} \| Z^\alpha v\|_{\infty}^2 \lesssim \|\partial_{z}  v\|_{3} \, \|v\|_{4}, \quad |\alpha| \leq 2\eeq
     and  next we use \eqref{dzVid},  \eqref{Subis} and  \eqref{Subis1} to estimate $ \|\partial_{z}  v\|_{3} $
      in terms of $\|S_{\n}\|_{3}$. Note that  here we estimate the product  by  putting always
       $v$ in $L^2$ and the  various coefficients in $L^\infty$ (and thus, we use Proposition \ref{propeta} to control them in $L^\infty$). 
        This yields
        \beq
        \label{dzv3fin} \|\partial_{z}v\|_{3} \leq \Lambda({1\over c_{0}}, |h|_{4, \infty}+ \|S_{\n}\|_{3} + \|v\|_{4}\big).\eeq
       Finally, by using the definition of $V^\alpha= Z^\alpha v -\partial_{z}^\varphi v Z^\alpha h$, we also get that
        $$ \|v\|_{4} \lesssim \|V^4\| + \Lambda\big( {1 \over c_{0}}, |h|_{4, \infty}\big) \|\partial_{z}v \| \leq  \Lambda\big( {1 \over c_{0}}, |h|_{4, \infty}\big)
         \big( \|V^4\| + \|S_{n}\| + \|v\|_{1}\big).$$
         Next, by crude interpolation, we can write
         $$  \|v\|_{1} \lesssim  \|v\|_{4}^{1 \over 2} \|v\|^{1\over 2}$$ and
         hence the Young inequality yields
         $$  \|v\|_{4} \lesssim  \Lambda\big( {1 \over c_{0}}, |h|_{4, \infty} + \|V^4\| + \|S_{\n}\| + \|v\|\big)$$
         and since $V^0=v$, we have actually proven that
         $$ \|v\|_{4} \lesssim  \Lambda\big( {1 \over c_{0}}, |h|_{4, \infty} + \|V^4\| + \|S_{\n}\|\big).$$
         By plugging this estimate in \eqref{dzv3fin} and \eqref{dzv3inf}, we finally obtain \eqref{u2infty}.

                The  estimates in  \eqref{dzv1infty1} are also a consequence of \eqref{dzVid} and \eqref{Subis}, \eqref{Subis1}.

   \end{proof}
     
    As a consequence of  Proposition \ref{propLinfty1}, we see that  the only  $L^\infty$  norms that
     appear in  the estimates of Proposition \ref{conormv} and Proposition \ref{propdzvm-2} that   remain to be  estimated 
     are $\|S_{\n}\|_{1, \infty}$ and $\eps^{1 \over 2} \| \partial_{z}S_{\n}\|_{L^\infty}$.
     Moreover, by using the following Lemma, we see that  actually
      we only have to prove   estimates for  
       $ \| \chi S_{\n}\|_{1, \infty}$ and  $\eps^{1 \over 2} \| \chi \partial_{z} S_{\n}\|_{L^\infty}$
        for $\chi(z)$   compactly supported  and  such that $\chi = 1$ in a  vicinity of $z=0$.
    \begin{lem} 
   For  any smooth  cut-off function $\chi$ 
   such that $\chi= 0$ in a vicinity of $z=0$, we have for $m > k+3/2$:
    \beq
    \label{inftyint}
     \| \chi  f\|_{ W^{k, \infty}} \lesssim  \|f\|_{m}.
    \eeq 
    \end{lem}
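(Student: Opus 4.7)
The plan is to exploit the fact that away from the boundary, the conormal vector fields are equivalent to ordinary partial derivatives, so the conormal Sobolev norm $\|\cdot\|_m$ controls the standard $H^m(\mathcal{S})$ norm of $\chi f$. Once this equivalence is in hand, the desired $W^{k,\infty}$ bound will follow from the classical Sobolev embedding $H^m \hookrightarrow W^{k,\infty}$ in dimension 3, which applies precisely when $m > k + 3/2$.

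The first step is to observe that since $\chi$ vanishes in a vicinity of $z=0$, there exists $\delta>0$ with $\mathrm{supp}\,\chi \subset \{z \le -\delta\}$. On this set the function $z/(1-z)$ is smooth and bounded away from zero, so the relation $Z_3 = (z/(1-z))\partial_z$ can be inverted to express $\partial_z$ as a smooth bounded multiple of $Z_3$ on $\mathrm{supp}\,\chi$. Combined with the trivial identities $Z_i = \partial_i$ for $i=1,2$, one obtains, for every multi-index $\alpha$, an identity of the form
\[
\partial^\alpha g = \sum_{|\beta| \le |\alpha|} c_{\alpha,\beta}(z)\, Z^\beta g \qquad \text{on } \mathrm{supp}\,\chi,
\]
with $c_{\alpha,\beta}$ smooth and bounded on $\mathrm{supp}\,\chi$.

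The second step is to apply the Leibniz rule to $\partial^\alpha(\chi f)$, distribute derivatives between $\chi$ and $f$, and use the previous identity (supported on $\mathrm{supp}\,\chi$, where derivatives of $\chi$ live) to rewrite all $\partial_z$ derivatives of $f$ as conormal derivatives. This yields
\[
\|\chi f\|_{H^m(\mathcal{S})} \le C(\chi)\, \|f\|_m.
\]
Finally, invoking the Sobolev embedding $H^m(\mathcal{S}) \hookrightarrow W^{k,\infty}(\mathcal{S})$ for $m > k + 3/2$ (recall $\mathcal{S}$ is a half-space in $\mathbb{R}^3$, so the embedding is standard), we conclude
\[
\|\chi f\|_{W^{k,\infty}} \lesssim \|\chi f\|_{H^m(\mathcal{S})} \lesssim \|f\|_m.
\]

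There is no real obstacle in this argument; the only thing to be careful about is that the equivalence between conormal and standard derivatives is purely local to the support of $\chi$, which is why the conclusion requires $\chi$ to vanish near $z=0$. The bookkeeping of the Leibniz expansion and the explicit constants depending on $\chi$ is routine.
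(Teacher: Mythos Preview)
Your proof is correct and follows essentially the same approach as the paper: the paper's argument is the one-line remark that away from the boundary the conormal norm $\|\cdot\|_m$ is equivalent to the standard $H^m$ norm, followed by the Sobolev embedding $H^m(\mathcal{S})\hookrightarrow W^{k,\infty}(\mathcal{S})$ for $m>k+3/2$. You have simply made explicit why that equivalence holds (invertibility of $z/(1-z)$ on $\{z\le -\delta\}$ and the Leibniz expansion), which the paper leaves implicit.
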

    To get this Lemma, it suffices to 
      use the Sobolev embedding of $H^k(\mathcal{S})$   in $L^\infty(\mathcal{S)}$  
 for $k>3/2$ and then to  use the fact that away from the boundary the conormal
       Sobolev norm $\| \cdot \|_{k}$ is equivalent to   the standard $H^k$ norm.
       
%

        To summarize the estimates of Proposition \ref{propLinfty1} and the last remark,  it is convenient to introduce the notation
        \beq
        \label{defQm}
        \mathcal{Q}_{m}(t)= \|h(t)\|_{m}^2 +  \eps  |h(t)|_{m+{1\over 2}}^2 + \|V^m \|^2 + \|S_{\n}(t)\|_{m-2}^2 + \| S_{\n}(t)\|_{1, \infty}^2 +
         \eps  \|\partial_{z} S_{\n} \big\|_{L^\infty}^2.
        \eeq
       and use it to state
       \begin{cor}
       \label{corLinfty}
       For $m\geq 6$,  we have the following estimates for the $L^\infty$ norms:
    $$   \|h(t)\|_{4, \infty} + \|v(t)\|_{2, \infty} + \|\partial_{z}v(t) \|_{1, \infty} + \sqrt{\eps}\|\partial_{zz}v(t)\|_{L^\infty}
     \leq \Lambda\big({1 \over c_{0}}, \mathcal Q_{m}(t)\big).$$
       \end{cor}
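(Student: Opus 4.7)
The plan is to simply combine the three bullet points of Proposition \ref{propLinfty1} with the definition \eqref{defQm} of $\mathcal{Q}_m$, checking that each quantity on the left-hand side can be controlled by the ingredients of $\mathcal{Q}_m$ when $m \geq 6$. There is nothing new to prove beyond an inequality bookkeeping; no new PDE estimate is needed.

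First, I would handle $|h(t)|_{4, \infty}$ by invoking the Sobolev embedding inequality \eqref{hinfty} with $k = 4$: this gives $|h|_{4, \infty} \lesssim |h|_{6}$, and since $m \geq 6$ we have $|h|_{6} \leq |h|_{m} \leq \mathcal{Q}_{m}(t)^{1/2}$. Next, for $\|v(t)\|_{2, \infty}$ I would apply \eqref{u2infty}, which expresses the bound in terms of $|h|_{4, \infty}$, $\|V^{4}\|$, and $\|S_{\n}\|_{3}$; the first term is already controlled by the previous step, while $\|V^{4}\| \leq \|V^{m}\|$ (since $m \geq 6 \geq 4$) and $\|S_{\n}\|_{3} \leq \|S_{\n}\|_{m-2}$ (since $m - 2 \geq 4 \geq 3$), both controlled by $\mathcal{Q}_{m}(t)^{1/2}$.

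For the remaining two terms $\|\partial_{z} v\|_{1, \infty}$ and $\sqrt{\eps}\|\partial_{zz} v\|_{L^{\infty}}$, the two estimates in \eqref{dzv1infty1} reduce them to $\|S_{\n}\|_{1, \infty}$, $\sqrt{\eps}\|\partial_{z} S_{\n}\|_{L^{\infty}}$, and $\|v\|_{2, \infty}$; all three now appear directly in $\mathcal{Q}_{m}(t)$ or have been bounded in the previous step. Summing the four estimates and absorbing constants into the continuous increasing function $\Lambda$ yields the claim. The key (and only) point requiring $m \geq 6$ is precisely to ensure that the index losses in \eqref{hinfty} ($k+2$ on the right for $k=4$) and in \eqref{u2infty} ($|h|_{4, \infty}$ and $\|V^{4}\|$, $\|S_{\n}\|_{3}$) remain inside $\mathcal{Q}_{m}$.

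There is no real obstacle: this corollary is a packaging result, and the only thing to watch is the threshold $m \geq 6$, which is tight because \eqref{hinfty} forces $|h|_{6}$ to control $|h|_{4, \infty}$ and \eqref{u2infty} already requires $\|V^{4}\|$ on the right.
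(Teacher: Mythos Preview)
Your proof is correct and follows exactly the approach the paper intends: the corollary is stated as an immediate consequence of Proposition \ref{propLinfty1} together with the definition \eqref{defQm} of $\mathcal{Q}_{m}$, and your bookkeeping of the index constraints (in particular the check that $m\geq 6$ suffices for \eqref{hinfty} with $k=4$ and for $\|V^{4}\|$, $\|S_{\n}\|_{3}$ in \eqref{u2infty}) is precisely what is needed.
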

       The next step will be to estimate 
       $ \|  S_{\n}\|_{1, \infty}$ and  $\eps^{1 \over 2} \| \partial_{z} S_{\n}\|_{L^\infty}$.
        Let us set
        \beq
        \label{deflambdainfty} \Lambda_{\infty, m}(t)= \Lambda\big({1 \over c_{0}}, 
  |h|_{4, \infty} + \|v\|_{E^{2, \infty}} + \eps^{1 \over 2} \|\partial_{zz}v\|_{L^\infty} +|h|_{m} + \|v\|_{m} + \|\partial_{z}v\|_{m-2} + 
   \sqrt{\eps}|h|_{m+{1 \over 2}} \big).
   \eeq
    Note that thanks to Corollary \ref{corLinfty} and \eqref{dzvk}, we have
 \beq 
  \label{suplambdainfty}
     \Lambda_{\infty, m}(t) \leq  \Lambda\big({1 \over c_{0}}, \mathcal{Q}_{m}(t)\big), \quad m \geq 6.\eeq

    \begin{prop}
    \label{propdzvinfty}
   For $t \in [0, T^\eps], $ and $m \geq 6$,  we have the estimate
 \begin{align*}
       \| S_{\n} (t) \|_{1, \infty}^2 &  \leq \Lambda_{0}\Big( \| S_{\n}(0)\|_{1, \infty }^2  + \Lambda\big( {1 \over c_{0}}, |h(t)|_{m}+ \|V^m(t) \|+ \|S_{\n}(t)\|_{m-2}\big) \\
     & \quad  \quad \quad   
     + \int_{0}^t\big( \eps \|\nabla V^m \|^2 +  \eps \| \nabla S_{\n}\|_{m-2}^2\big)  +(1+t) \int_{0}^t \Lambda_{\infty, m}(t)\Big).
     \end{align*}
    \end{prop}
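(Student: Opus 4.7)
The plan is to exploit the convection-diffusion equation \eqref{eqPiS} for $S_{\n}$ together with the homogeneous Dirichlet boundary condition $(S_{\n})_{/z=0}=0$. First, I would obtain the $L^\infty$ bound on $S_{\n}$ itself by the maximum principle applied to \eqref{eqPiS}: since the convective operator has characteristic boundary data ($V_z$ vanishes on $z=0$) and the source term $F_S = F_S^1+F_S^2$ can be estimated in $L^\infty$ via \eqref{estFS}, \eqref{estFS2}, one gets a $\sup$-norm bound that is controlled by $\Lambda_{\infty,m}$ plus the initial data. This takes care of the ``$W^{0,\infty}$ part'' of the $W^{1,\infty}_{co}$ norm.

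Next, for the tangential derivatives $Z_i S_{\n}$, $i=1,2$, and the weighted normal one $Z_3 S_{\n} = (z/(1-z))\partial_z S_{\n}$, I would follow the strategy outlined in the introduction. A direct conormal energy estimate fails in $L^\infty$ because crudely commuting $Z_i$ with $\eps\Delta^\varphi$ produces terms involving two normal derivatives of $S_{\n}$, i.e.\ three normal derivatives of $v$, which have no $\eps$-uniform bound. The remedy is to flatten to normal geodesic coordinates in a collar of $\{z=0\}$: in those coordinates the Laplacian becomes $\partial_{zz} + \Delta_{g(z)}$ with no cross terms $\partial_z \partial_{y^i}$, and the coefficients depend on $v^b := v(t,y,0)$ and first $y$-derivatives of $h$, all of which are already controlled by the quantities appearing in $\mathcal Q_m$. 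Setting $\tilde S_{\n}$ for the transported $S_{\n}$ and $w$ for the transported velocity, one obtains
\begin{equation*}
\partial_t \tilde S_{\n} + z\,\partial_z w_3(t,y,0)\,\partial_z \tilde S_{\n} + w_y(t,y,0)\cdot \nabla_y \tilde S_{\n} - \eps \partial_{zz}\tilde S_{\n} = \mathcal R,
\end{equation*}
where the remainder $\mathcal R$ contains only derivatives of $\tilde S_{\n}$ and of $v$ already controlled by $\Lambda_{\infty,m}$ (in particular by $\|v\|_m$, $\|\partial_z v\|_{m-2}$, $\|S_{\n}\|_{m-2}$ and the dissipation $\eps\|\nabla S_{\n}\|_{m-2}^2$, $\eps\|\nabla V^m\|^2$ that appear in the right-hand side of the proposition).

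Then I would kill the tangential drift $w_y(t,y,0)\cdot\nabla_y$ by switching to Lagrangian coordinates along the boundary flow of $w_y(t,y,0)$; this is legitimate because $w_y^b$ is Lipschitz in $y$ thanks to the already-proven bound on $\|v\|_{2,\infty}$ (Proposition \ref{propLinfty1}). The result is a one-dimensional Fokker--Planck type equation on the half-line
\begin{equation*}
\partial_t U + z\, b(t,y)\,\partial_z U - \eps \partial_{zz} U = \tilde{\mathcal R}, \qquad U_{/z=0}=0,
\end{equation*}
whose Green function $G_\eps(t,s;z,z')$ is explicit (it is the heat kernel on $\mathbb R_-$ with Dirichlet condition, conjugated by the one-dimensional drift $z\mapsto z e^{-\int_s^t b}$). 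I would then apply $Z_i$ to this equation ($Z_3$ commutes nicely with $z\partial_z$ and $\partial_{zz}$ up to harmless bounded coefficients), commute $Z_i$ through the drift, and express $Z_i U$ via Duhamel with $G_\eps$. Taking $L^\infty$ in $(y,z)$ of the Duhamel formula and using the classical $L^1_{z'}\to L^\infty_z$ bound on $G_\eps$, uniform in $\eps$ and $t$, one converts the source $\tilde{\mathcal R}$ (which is at worst in $L^\infty_{y}L^2_z$ or in $L^\infty$, thanks to the Sobolev embeddings of Proposition \ref{proptrace}) into an $L^\infty$ bound for $Z_i U$, with the appropriate $\Lambda_{\infty,m}$ and $\eps\|\nabla V^m\|^2+\eps\|\nabla S_{\n}\|_{m-2}^2$ contributions. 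Returning to the original coordinates and summing yields the desired bound.

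The main obstacle is keeping track of all the error terms that are swept into ``l.o.t.'' during the change of coordinates and the commutation with $Z_i$: one must check that every commutator either contains a factor that is $L^\infty$-controlled by $\Lambda_{\infty,m}$ (i.e.\ by $\|v\|_{2,\infty}$, $\|\partial_z v\|_{1,\infty}$, $|h|_{4,\infty}$, etc., all handled by Corollary \ref{corLinfty}) or comes with a gradient that can be absorbed into $\eps\int_0^t(\|\nabla V^m\|^2+\|\nabla S_{\n}\|_{m-2}^2)$ via Cauchy--Schwarz against the $L^1_t$-integrable Green kernel. The delicate part is in particular the treatment of $Z_3$ near the boundary, where Hardy-type inequalities as in Lemma \ref{hardybis} and the vanishing of $S_{\n}$ and $V_z$ at $z=0$ are what make the argument close.
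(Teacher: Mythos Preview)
Your proposal is correct and follows essentially the same route as the paper: maximum principle for $\|S_{\n}\|_{L^\infty}$, localization and passage to normal geodesic coordinates to reduce $\eps\Delta^\varphi$ to $\eps\partial_{zz}$ plus tangential terms, then the explicit Fokker--Planck Green function (after removing the tangential drift by Lagrangian coordinates in $y$) to handle the $Z_i$-derivatives. One point to sharpen: the remainder $\tilde{\mathcal R}$ cannot be allowed to be merely $L^\infty_y L^2_z$, since the Dirichlet heat kernel does not map $L^2_z\to L^\infty_z$ uniformly in $\eps$ and $t$; the paper instead Taylor-expands $w$ at $z=0$ so that the worst remainder terms pick up an extra factor $\varphi(z)=z/(1-z)$, and then closes in $L^\infty$ via the anisotropic embedding $\|\varphi Z^\alpha\rho\|_{L^\infty}\lesssim\|\rho\|_4$ for $|\alpha|=2$ --- this is precisely the origin of the term $\Lambda\big(\tfrac{1}{c_0},|h(t)|_m+\|V^m(t)\|+\|S_{\n}(t)\|_{m-2}\big)$ in the statement.
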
 
    Note that in the above estimate, the terms
    $\Lambda\big( {1 \over c_{0}}, |h(t)|_{m}+ \|V^m(t) \|+ \|S_{\n}(t)\|_{m-2}\big)$ and
   $   \int_{0}^t\big( \eps \|\nabla V^m \|^2 +  \eps \| \nabla S_{\n}\|_{m-2}^2\big)$ can be estimated by using the estimates of  Proposition \ref{conormv}
    and Proposition \ref{propdzvm-2}. 
     
    \begin{proof}
         To  estimate  $\| S_{\n}\|_{1, \infty}$, we shall   perform
        directly $L^\infty$ estimates on the convection diffusion equation \eqref{eqPiS}  solved by $ S_{\n}.$ As before, 
         the main interest  in the use of $S_{\n}$ is the fact that it verifies   the  homogeneous Dirichlet boundary condition
         \eqref{bordSN} on the boundary. 
      
      The estimate of $\| S_{\n}\|_{L^\infty}$ is a direct consequence of the maximum principle for the convection diffusion
       equation \eqref{eqPiS}  with homogeneous Dirichlet boundary condition. We find
       $$ \|S_{\n}\|_{L^\infty} \leq \|S_{\n}(0)\|_{L^\infty} + \int_{0}^t \|F_{S}\|_{L^\infty}$$
        and from a  brutal estimate, we get in view of the expressions \eqref{FS1}, \eqref{FS2}  that
        $$ \| F_{S}\|_{L^\infty} \leq \Lambda_{\infty, m} + \Lambda_{0} \| \nabla q \|_{E^{1, \infty}}.$$
         To estimate  the pressure, we  use  \eqref{estqElinfty}  to get 
         $$ \| \nabla q^E\|_{E^{1, \infty}} \leq \Lambda_{\infty, m}$$
          and  \eqref{estqNS} and the Sobolev embedding in  $\mathcal{S}$ to get
          \beq
          \label{qNS1infty} \| \nabla q^{NS} \|_{E^{1, \infty}}  \lesssim \| \nabla q^{NS}\|_{H^{3}} \leq \Lambda_{\infty}\big( \|\eps v^b\|_{9\over 2} + \|\eps h\|_{9 \over 2}).\eeq
           To  control  $ \|\eps v^b\|_{9\over 2}$, we use again \eqref{vbm+}
            to get
          \beq
          \label{SnqNS}
           \|  \nabla q^{NS} \|_{E^{1, \infty}}  \lesssim \Lambda_{\infty, m}\, \eps  \big( \| \nabla  V^k \| +  \|V^k \|+ |h|_{k+{1 \over 2 }}\big), 
            \quad k \geq 4.
          \eeq
           Consequently, we  obtain  by using the Cauchy-Schwarz and Young inequalities that 
          \beq
          \label{Sninfty1}  \|S_{\n}(t)\|_{L^\infty} \leq \|S_{\n}(0)\|_{L^\infty} + \eps\int_{0}^t  \| S^\varphi V^m \|^2  + (1+ t) \int_{0}^t \Lambda_{\infty, m}.
          \eeq

       The next step is to estimate $\|\chi Z S_{\n}\|_{L^\infty}$. Indeed,  from the definition of Sobolev conormal spaces, we get that
       $$ \| Z S_{\n}\|_{L^\infty} \lesssim  \| \chi Z S_{\n}\|_{L^\infty} +  \|v\|_{2, \infty}  $$
        and hence thanks to \eqref{u2infty}, we find
        \beq
        \label{Slocalise}  \| Z S_{\n}\|_{L^\infty} \lesssim  \| \chi Z S_{\n}\|_{L^\infty} +  \Lambda\big( {1 \over c_{0}}, |h|_{m} +\|V^m\| + \|S_{\n}\|_{m-2}
        \big), \quad m \geq 6
        .\eeq
       The main difficulty  is  to handle the  commutator between the fields $Z_{i}$ and
          the Laplacian $\Delta^\varphi$. As in our previous work 
\cite{MR11prep},
           it is convenient for this estimate to use a  coordinate system where  the Laplacian has the simplest expression. We shall
            thus use a normal geodesic coordinate system in the vicinity of the boundary. Note that we have not used
             this coordinate system before because it requires more regularity  for the boundary:  to get an
              $H^m$ (or $\mathcal{C}^m$) coordinate system, we need  the boundary to be $H^{m+1}$ (or $\mathcal{C}^{m+1}$). 
              Nevertheless, at this step, this is not a problem since we want to estimate a fixed low number of derivatives
               of the velocity while we can assume that the boundary is $H^m$ for $m$ as large as we need. We shall choose the cut-off
                function $\chi$ in order to  get a well defined coordinate system in the vicinity of the boundary.
            
            We define a different parametrization of the vicinity of the boundary of  $\Omega_{t}$ by    
            \begin{eqnarray}
   \label{diffgeo}
   \Psi(t, \cdot) : \,   \mathcal{S}= \mathbb{R}^2 \times (-\infty, 0) & \rightarrow  &  \Omega_{t} \\ 
 \nonumber x= (y,z) &\mapsto &   \left( \begin{array}{ll}  y \\ h(t,y)   \end{array} \right) + z \n^b(t,y)
 \end{eqnarray}  
 where $\n^b$ is the unit exterior  normal $\n^b(t,y)= (-\partial_{1}h, - \partial_{2} h, 1)/ |\N|$.
  Note that $D\Psi(t, \cdot)$ is under  the form  $M_{0}+ R$ where
   $ |R|_{\infty} \lesssim z |h|_{2, \infty}$ and
    $$M_0=  \left( \begin{array}{ccc} 1 & 0 & -\partial_{1}h  \\ 0 & 1  & - \partial_{2} h  \\ \partial_{1} h & \partial_{2} h  & 1  \end{array}\right)$$
     is invertible. This yields that $\Psi(t, \cdot) $ is a diffeomorphism from $\mathbb{R}^2\times( - \delta, 0)$ to a vicinity of
      $\partial \Omega_{t}$  for some  $\delta$ which depends only on $c_{0}$,  and  for every $t \in [0, T^\eps]$
       thanks to \eqref{apriori}. By this parametrization,    the scalar product in $\Omega_{t}$ induces a Riemannian 
        metric on $T\mathcal{S}$  which has the block structure
        \beq
        \label{blockg} g(y,z)= \left( \begin{array}{cc} \tilde{g}(y,z) & 0  \\ 0  & 1 \end{array} \right)\eeq
        and hence, the Laplacian in this coordinate system reads:
        \beq
        \label{laplacien} \Delta_{g}f= \partial_{zz}f + {1 \over 2 } \partial_{z} \big( \ln |g| \big) \partial_{z} f  + \Delta_{\tilde g} f 
        \eeq
     where $|g|$ denotes the determinant of the matrix $g$ and $\Delta_{\tilde{g}}$ which is defined by 
           $$ \Delta_{\tilde{g}} f=  { 1 \over |\tilde{g}|^{1 \over 2} }  \sum_{1 \leq i, \, j \leq 2} \partial_{y^i}\big(
            \tilde{g}^{ij} |\tilde{g}|^{1 \over 2} \partial_{y^j} f \big)$$          
        involves only tangential derivatives.
        Note that the drawback of this system is that it depends on $h$ and $\nabla h$  via $\n$ thus it looses one degree of regularity.
        
        To use this coordinate system, we shall first  localize the equation for $S^\varphi v$ in a vicinity of the boundary.
         Let us set
         \beq
         \label{Schidef}
         S^\chi= \chi(z) S^\varphi v
         \eeq
         where $\chi(z) = \kappa({z \over \delta(c_{0}) })\in [0, 1]$ and $\kappa$  is smooth compactly supported and takes the value $1$ in a vicinity of $z=0$. Note that this choice implies that  
             \beq
    \label{chideriv}| \chi^{(k)}(z)| \leq  \Lambda_{0}\eeq
     for every $k \geq 1$.

           Thank  to \eqref{eqS}, we get for  $S^\chi$ the equation
           \beq
           \label{Schieq}
           \partial_{t}^\varphi S^\chi + (v \cdot \nabla^\varphi)S^\chi - \eps \Delta^\varphi S^\chi=  F_{S^\chi} 
           \eeq
            where the source term $F_{S^\chi}$ can be split into
          \beq
          \label{Fchidef}
          F_{S^\chi}=  F^\chi + F_{v}
           \eeq 
           with
           \begin{eqnarray*}
           & & F^\chi= \big(V_{z} \partial_{z} \chi\big) S^\varphi v  -  \eps \nabla^\varphi \chi \cdot \nabla^\varphi S^\varphi v - \eps \Delta^\varphi \chi \, S^\varphi v , \\
           & & F_{v} = - \chi \big( D^\varphi)^2 q -{\chi \over 2}\big( (\nabla ^\varphi v)^2 + ((\nabla^\varphi v)^t)^2\big).
           \end{eqnarray*}
           Note that thanks to Lemma \ref{lemestW} and \eqref{inftyint}, since all the terms in $F^\chi$ are  supported away
            from the boundary, we get that
           \beq
           \label{Fchiest}
           \|F^\chi\|_{1, \infty} \leq \Lambda\big( {1 \over c_{0}}, \|v\|_{1, \infty} + |h|_{2, \infty}\big)  \|v\|_{5} \leq \Lambda_{\infty, m}
           \eeq
          Next, we  define implicitely in $\Omega_{t}$, $\tilde{S}$ by $\tilde S (t,  \Phi(t,y,z))= S^\chi(t,y,z)$
         and  then $S^\Psi$ in $\mathcal{S}$ by  $S^\Psi(t,y,z)= \tilde S (t,\Psi(t,y,z))=  S^\chi(t, \Phi(t, \cdot)^{-1} \circ \Psi).$ The change of variable is well-defined since we can
          choose $\tilde S$ to be  supported in the domain where $\Psi^{-1}$ is well defined by taking $\delta$ sufficiently small.
          Since $S^\chi$ solves \eqref{Schieq}, we get that $\tilde{S}$ solves
          $$ \partial_{t}   \tilde S  + u \cdot \nabla \tilde S  - \eps \Delta  \tilde S = F_{S^\chi}(t, \Phi(t, \cdot)^{-1})$$
           in $\Omega_{t}$
           and hence by using \eqref{laplacien}, we get   that $S^\Psi$ solves in $\mathcal{S}$  the convection diffusion equation
          \beq
          \label{eqSPsi}
          \partial_{t}  S^\Psi + w \cdot \nabla  S^\Psi - \eps\big(  \partial_{zz}   S^\Psi  + {1 \over 2 } \partial_{z} \big( \ln |g| \big) \partial_{z} S^\Psi  + \Delta_{\tilde g}   S^\Psi \big)= F_{S^\chi} (t, \Phi^{-1} \circ \Psi)
         \eeq
          where the vector field $w$ is given by 
          \beq
          \label{defW}
           w=  \overline{\chi} (D \Psi)^{-1}\big( v(t, \Phi^{-1} \circ \Psi ) - \partial_{t} \Psi \big) 
          \eeq
          with $D \Psi$ the jacobian matrix of $\Psi$ (with respect to the space variables). Note that 
           $S^\Psi$ is compactly supported in $z$ in a vicinity of $z=0$.  The function $\overline{\chi}(z)$  is
           a function with a slightly larger  support  such that $\overline \chi  S^\Psi= S^\Psi$.
               The introduction of this function   allows to have $w$ also supported in a vicinity of the boundary. 
        We finally set 
           \beq
           \label{etadef}
           S_{\n}^\Psi(t,y,z)=\Pi^b(t,y) S^\Psi \n^b(t,y)
           \eeq          
         with   $\Pi^b= \mbox{Id}- \n^b \otimes \n^b$. Note that $\Pi^b$ and
             $\n^b$  are independent of $z$.
           This yields that $S_{\n}^\Psi$ solves
     \beq
     \label{eqSnPsi}     \partial_{t}  S_{\n}^\Psi + w \cdot \nabla  S_{\n}^\Psi - \eps\big(  \partial_{zz}  + {1 \over 2 } \partial_{z} \big( \ln |g| \big) \partial_{z} \big) S_{\n}^\Psi  = F_{\n}^\Psi\eeq
     where 
     \beq
     \label{Fetadef}
     F_{\n}^\Psi=\Big(  \Pi^b F_{S^\chi} \n^b + F_{\n}^{\Psi, 1}  + F_{\n}^{\Psi, 2} \Big). 
     \eeq
    with
     \begin{eqnarray}
   & &   \label{Feta1}
     F_{\n}^{\Psi, 1}=  \big(( \partial_{t} + w_{y} \cdot \nabla_{y}) \Pi^b\big)S^\Psi \n^b + \Pi^b S^\Psi\big( \partial_{t}+ w_{y}\cdot \nabla_{y}) n^b, \\
     & & \label{Feta2} F_{\n}^{\Psi, 2}=  - \eps \Pi^b\big( \Delta_{\tilde{g}} S^\Psi\big) \n^b.
     \end{eqnarray}
      
      We shall thus estimate $S_{\n}^\Psi$ which solves the equation \eqref{eqeta} in $\mathcal{S}$ with the
      homogeneous Dirichlet  boundary
      condition $(S_{\n}^\Psi)_{/z=0}= 0$ (indeed, on the boundary $z=0$, we have that $S_{\n}^\Psi=\Pi S^\varphi v \n= S_{\n}$.) 
     
                In the following, we shall use very often the following  observation:
    \begin{lem}
    \label{leminv}
    Consider $\mathcal{T}: \mathcal{S} \rightarrow \mathcal{S}$ such that $\mathcal{T}(y,0)= y, \quad \forall y \in \mathbb{R}^2$
     and let   $g(x)= f(\mathcal{T} x)$. Then, we have  for every $k \geq 1$, the estimate:
     $$ \|g\|_{k, \infty} \leq \Lambda\big( \| \nabla \mathcal{T}\|_{k-1,\infty} \big) \|f\|_{k,\infty}.$$  
    \end{lem}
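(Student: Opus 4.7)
The plan is to establish, by induction on $k$, the pointwise identity
$$Z^\alpha g(x) = \sum_{|\beta|\le|\alpha|} c^\beta_\alpha(x)\,(Z^\beta f)(\mathcal{T}x),$$
where each coefficient $c^\beta_\alpha$ is a polynomial expression in conormal derivatives of $\mathcal{T}$ of order at most $|\alpha|-1$ together with the bounded weight $z/(1-z)$, and then to take $L^\infty$ norms. The boundary condition $\mathcal{T}(y,0)=y$ is what guarantees the coefficients stay bounded up to the boundary.

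For the base case $k=1$, the tangential fields $Z_i=\partial_i$, $i=1,2$, give
$$Z_i g = \sum_{j=1}^{2}\partial_i \mathcal{T}_j\,(Z_j f)\circ\mathcal{T} + \partial_i \mathcal{T}_3\,(\partial_3 f)\circ\mathcal{T},$$
and one converts $\partial_3 f = \tfrac{1-z}{z}Z_3 f$ to produce the coefficient $\tfrac{1-\mathcal{T}_3}{\mathcal{T}_3}\,\partial_i \mathcal{T}_3$. Since $\mathcal{T}_3(y,0)=0$ forces $\partial_i\mathcal{T}_3(y,0)=0$ for $i=1,2$, writing
$$\mathcal{T}_3(y,z)= z\!\int_0^1 \partial_z\mathcal{T}_3(y,sz)\,ds, \qquad \partial_i\mathcal{T}_3(y,z)= z\!\int_0^1 \partial_z\partial_i\mathcal{T}_3(y,sz)\,ds,$$
shows that the quotient $\partial_i\mathcal{T}_3/\mathcal{T}_3$ is a ratio of averages of first derivatives of $\mathcal{T}$, hence bounded by $\|\nabla\mathcal{T}\|_{L^\infty}$ (together with the lower bound on $\partial_z\mathcal{T}_3$ that is implicit in $\mathcal{T}$ being a boundary-preserving diffeomorphism). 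For the conormal field $Z_3=\tfrac{z}{1-z}\partial_z$, a parallel computation yields
$$Z_3 g = \sum_{j=1}^{2}\tfrac{z}{1-z}\partial_z\mathcal{T}_j\,(Z_j f)\circ\mathcal{T} + \tfrac{z}{1-z}\partial_z\mathcal{T}_3\cdot\tfrac{1-\mathcal{T}_3}{\mathcal{T}_3}\,(Z_3 f)\circ\mathcal{T},$$
whose delicate $j=3$ coefficient is again bounded by the same mean-value representation.

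The inductive step is then routine: apply $Z_i$ to both sides of the representation formula and distribute. Terms where $Z_i$ lands on a composition $(Z^\beta f)\circ\mathcal{T}$ are treated by the $k=1$ case applied with $Z^\beta f$ in place of $f$, and terms where $Z_i$ lands on a coefficient $c^\beta_\alpha(x)$ produce a new polynomial in conormal derivatives of $\mathcal{T}$ of order at most $k-1$, controlled by $\|\nabla\mathcal{T}\|_{k-1,\infty}$. Summing the $L^\infty$ norms and absorbing the algebraic combinations into the increasing function notation $\Lambda$ yields the bound.

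The principal technical subtlety, and where I expect the bookkeeping to require care, is verifying that the coefficients $c^\beta_\alpha$ genuinely involve no more than $k-1$ conormal derivatives of $\mathcal{T}$. This is ensured by the twin facts that the weight $z/(1-z)$ already built into $Z_3$ converts $\partial_z\mathcal{T}_3$ at the boundary into a conormal-type derivative rather than a normal one, and that the mean-value representation of $\tfrac{1-\mathcal{T}_3}{\mathcal{T}_3}\partial^\gamma\mathcal{T}_3$ (for any multi-index $\gamma$ with $\gamma_3=0$ and tangential components of $\mathcal{T}_3$ vanishing on the boundary) always produces a ratio whose numerator and denominator lose exactly the same power of $z$, so the total derivative count on $\mathcal{T}$ never exceeds $k-1$.
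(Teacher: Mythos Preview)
Your approach is exactly what the paper intends: its own proof is only the two-sentence remark that conormal spaces are invariant under boundary-preserving diffeomorphisms, by the chain rule and the fact that $(Z_1,Z_2,Z_3)$ generate the vector fields tangent to $\{z=0\}$. You have simply unpacked that remark.

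There is, however, a concrete slip in your base-case bookkeeping. For $i=1,2$ the coefficient in front of $(Z_3 f)\circ\mathcal{T}$ is $\partial_i\mathcal{T}_3\cdot\tfrac{1-\mathcal{T}_3}{\mathcal{T}_3}$, and the mean-value representation you wrote gives
\[
\partial_i\mathcal{T}_3(y,z)=z\!\int_0^1 \partial_z\partial_i\mathcal{T}_3(y,sz)\,ds .
\]
The integrand $\partial_z\partial_i\mathcal{T}_3=Z_i(\partial_z\mathcal{T}_3)$ is a \emph{second} derivative of $\mathcal{T}$ (equivalently, one conormal derivative of an entry of $\nabla\mathcal{T}$), not a first derivative as you assert; it is controlled by $\|\nabla\mathcal{T}\|_{1,\infty}$, not by $\|\nabla\mathcal{T}\|_{L^\infty}$. (Your $Z_3$ case is genuinely fine at the level of $\|\nabla\mathcal{T}\|_{L^\infty}$, because the built-in weight $z/(1-z)$ already supplies the factor of $z$ needed to compensate $1/\mathcal{T}_3$.) Propagating this through the induction, the argument you wrote actually delivers $\|g\|_{k,\infty}\le \Lambda(\|\nabla\mathcal{T}\|_{k,\infty})\|f\|_{k,\infty}$, one order higher on $\mathcal{T}$ than stated. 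The paper's one-line proof does not track this count either, and every application in the paper has the extra conormal derivative of $\mathcal{T}$ available (e.g.\ $|h|_{2,\infty}$ or $|h|_{3,\infty}$ rather than $|h|_{1,\infty}$), so this is a harmless imprecision in the lemma's statement rather than a defect in your method.
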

   This is just a statement of the fact that Sobolev conormal spaces are invariant by diffeomorphisms
    which preserve the boundary.  The proof is just a consequence of the chain rule and the fact that the family  $(Z_{1}, \, Z_{2}, \, Z_{3})$  generates
      the set of vector fields tangent to the boundary. A similar statement holds for  the $\| \cdot \|_{m}$ norm.
        Note that it is equivalent to estimate $S_{\n}^\Psi$  or  $S_{\n}$.  Indeed,  since $S_{\n}^\Psi = \Pi^b S^\chi(t, \Phi^{-1}\circ \Psi) n^b$,
         the above lemma yields
         $$ \| S_{\n}^\Psi \|_{1, \infty} \leq \Lambda \big( |h|_{2, \infty} \big) \| \Pi^b S^\varphi v \, \n^b \|_{1, \infty}$$
          and we observe that $|\Pi- \Pi^b| + | \n- \n^b|= \mathcal{O}(z)$ in the vicinity of the boundary to get
          \beq
          \label{etaSn}
         \| S_{\n}^\Psi \|_{1, \infty} \leq \Lambda_{0}\big( \|S_{\n}\|_{1, \infty} + \|v\|_{2, \infty}\big).
         \eeq
         From the same arguments, we also obtain that
         \begin{align}
        \nonumber  \|  S_{\n}\|_{1, \infty} &  \leq \Lambda_{0} \big( \|S_{\n}^\Psi \|_{1, \infty}+ \|v\|_{2, \infty}\big)  \\
      \label{Sneta}  &   \leq \Lambda_{0}
           \Big( \|S_{\n}^\Psi \|_{1, \infty} + \Lambda\big( {1 \over c_{0}}, |h|_{m} + \|V^m\| + \|S_{\n}\|_{m-2}\big)\Big), \quad m \geq 6
          \end{align}
         where  the last estimate comes from \eqref{u2infty}.
           By using repeatidly these arguments, we also obtain
     \beq
     \label{w1infty} \|w\|_{1, \infty} \leq \Lambda\big( {1 \over c_{0}}, |h|_{3, \infty} + \|v\|_{1, \infty}+ \|\partial_{t}\Psi\|_{1, \infty} \big)    
     \leq  \Lambda\big( {1 \over c_{0}}, |h|_{3, \infty} + \|v\|_{2, \infty}+ |\partial_{t}h|_{2, \infty} \big)  \leq \Lambda_{\infty, m}
     \eeq
     where $\Lambda_{\infty, m}$ is defined by \eqref{deflambdainfty}.
     Note that we have used the boundary condition \eqref{bord1} to get the last part of the estimate.

     To estimate $Z_{i} S_{\n}^\Psi= \partial_{i} S_{\n}^\Psi$, $i=1, \, 2$,  we can proceed as in  the proof 
 of 
     estimate \eqref{Sninfty1}.  We first apply $\partial_{i}, \, i=1, \, 2$ to \eqref{eqSnPsi} to get
      \beq
      \label{diSnPsi}
        \partial_{t}   \partial_{i }S_{\n}^\Psi + w \cdot \nabla  \partial_{i }S_{\n}^\Psi - \eps\big(  \partial_{zz}  + {1 \over 2 } \partial_{z} \big( \ln |g| \big) \partial_{z} \big) \partial_{i }S_{\n}^\Psi   =\partial_{i}F_{\n}^\Psi- \partial_{i} w \cdot  \nabla S_{\n}\Psi -{\eps \over 2} \partial_{z} S_{\n}^\Psi \partial_{iz}^2 \big( \ln |g| \big).\eeq
        From the maximum principle, we thus get that
   \begin{eqnarray}
\nonumber  \|   \partial_{i }S_{\n}^\Psi (t)  \|_{L^\infty} &  \leq &  \| \partial_{i }S_{\n}^\Psi (0)\|_{L^\infty}
  + \int_{0}^t \Big( \| \partial_{i} F_{\n}^\Psi\|_{L^\infty}+  \|\partial_{i}w \cdot \nabla S_{\n}^\Psi \|_{L^\infty} +  \Lambda\big( {1 \over c_{0}}, |h|_{3, \infty}\big) 
   \eps \| \partial_{z} S_{\n}^\Psi\|\Big)\\
\label{diSnPSi1}   &\leq  & \| \partial_{i }S_{\n}^\Psi (0)\|_{L^\infty}
  + \int_{0}^t \Big( \| \partial_{i} F_{\n}^{\Psi}\|_{L^\infty}+  \|\partial_{i}w \cdot \nabla S_{\n}^\Psi \|_{L^\infty} + \Lambda_{\infty, m}\Big) .
   \end{eqnarray}
   Note that for the last term, we have used that $g$ involves two derivatives of $h$ but is $\mathcal{C}^\infty $ in $z$  in view
    of the definition \eqref{diffgeo} and hence that $\partial_{iz}^2 |g|$ has the regularity of $\partial_{i}|g|$. 
    
    It remains to estimate the right hand side in this last estimate. We first note that
    $$ \|\partial_{i}w \cdot \nabla S_{\n}^\Psi \|_{L^\infty} \leq \| w\|_{1, \infty} \| S_{\n}^\Psi \|_{1, \infty} + \|\partial_{i}w_{3} \partial_{z} S_{\n}^\Psi\|_{L^\infty}
     \leq \Lambda_{\infty, m} + \|\partial_{i}w_{3} \partial_{z} S_{\n}^\Psi\|_{L^\infty}.$$
     Next, thanks to the definition \eqref{defW} of  $w$, we note that on the boundary, we have
     \beq
     \label{wbord}
      w^b=( D \Psi(t,y,0))^{-1}\Big(v^b - \left( \begin{array}{cc} 0  \\ \partial_{t}h \end{array} \right) \Big).
      \eeq
       Since on the boundary, we have
       $$  D\Psi(t,y,0)= \left( \begin{array}{ccc}
        1  & 0 & \n_{1}^b \\ 0 & 1 & \n_{2}^b   \\ \partial_{1} h & \partial_{2} h  & \n_{3}^b \end{array} \right), $$
    we get  that 
    \beq
    \label{cdv} ((D\Psi(t,y,0))^{-1} Y)_{3}= Y \cdot \n^b
    \eeq for every $Y \in \mathbb{R}^3$ and hence in particular that on the boundary
    \beq
    \label{w3bord}
    w_{3}^b= v^b \cdot \n^b - \partial_{t} h \n_{3}^b= {1 \over |\N|} \big( v^b \cdot \N - \partial_{t}h)= 0 
    \eeq
    thanks to the boundary condition \eqref{bordv1}. Consequently, since $\partial_{i} w_{3}$  also vanishes on the boundary we  get
     that 
     \beq
     \label{diwS}  \|\partial_{i}w_{3} \partial_{z} S_{\n}^\Psi\|_{L^\infty} \leq \| \partial_{z} \partial_{i} w_{3}\|_{L^\infty} \|S_{\n}^\Psi \|_{1, \infty} \leq
      \Lambda\big({1 \over c_{0}},  |h|_{3, \infty} + \|v\|_{E^{2, \infty}}\big) \leq \Lambda_{\infty, m}.
      \eeq
     Note that for the last estimate, we have used again that  $\Psi$ is $\mathcal{C}^\infty$ in $z$.
          It remains to estimate $ \|F_{\n}^\Psi\|_{1, \infty}$ in the right-hand side of \eqref{diSnPSi1}. By using  \eqref{Fetadef} and
     the expressions   \eqref{Feta1}, \eqref{Feta2},  we get
         $$ \|F_\n^{\Psi, 1} \|_{1, \infty} \leq \Lambda\big( {1 \over c_{0}}, | \partial_{t} h|_{2, \infty}+  |h|_{3, \infty}
     + \|w\|_{1, \infty}\big)\big( \| v\|_{E^{2, \infty}} +  \eps \| \partial_{z}v \|_{2, \infty}\big) \leq \Lambda_{\infty, m}$$
     and by using again the fact that $|\Pi-\Pi^b |+ |\n-\n^b| =\mathcal{O}(z)$, we get that
     $$  \|F_{\n}^{\Psi, 2} \|_{1, \infty} \leq \Lambda_{\infty, m}\big( \eps  \| S_{\n}\|_{3, \infty}+ \eps \|v\|_{4, \infty} \big).$$
     Next, from the definitions \eqref{Fetadef}, \eqref{Fchidef}, we get  thanks to \eqref{Fchiest} that
     $$ \| F_{\n}^\Psi \|_{1, \infty} \leq \Lambda_{\infty, m} \big( 1 +  \eps  \| S_{\n}\|_{3, \infty}+ \eps \|v\|_{4, \infty}  +  \|Ê
      \Pi^b \big( (D^\varphi)^2 q \big)\n^b \big) \|_{1, \infty} \big).$$
      To estimate the pressure term, we  use   that $\Pi \nabla^\varphi$ involves only conormal derivatives 
       and the decomposition \eqref{pressuredec} to obtain
       $$   \| \Pi^b \big( (D^\varphi)^2 q \big)\n^b \big) \|_{1, \infty} \leq \Lambda_{0} \big( \|\nabla q^E \|_{2, \infty} + \|\nabla q^{NS}\|_{2, \infty}\big).$$
      For the Euler part $q^E$ of the pressure, we use \eqref{estqElinfty} to get
      $ \|\nabla q^E \|_{2, \infty} \leq \Lambda_{\infty}$ and  for $q^{NS}$, we can use again
      Proposition \ref{propPNS}  to get that an estimate analogous to  \eqref{SnqNS} holds for $ \| \nabla q^{NS}\|_{{2, \infty}}$
       but with the restriction $k \geq 5$ for the right hand side. 
      This yields
    $$     \| \Pi^b \big( (D^\varphi)^2 q \big)\n^b \big) \|_{1, \infty} \leq \Lambda_{0} \big(  1 + \eps \|S^\varphi V^m\|\big).$$           
  We have thus proven that
 \beq
 \label{Fetainfty}
  \| F_{\n}^\Psi\|_{1, \infty} \leq \Lambda_{\infty, m}
     \big(  1 + \eps \|\nabla  V^m\| + \eps \|S_{\n}\|_{3, \infty} + \eps \|v\|_{4, \infty}\big).
     \eeq
Consequently, by combining \eqref{diSnPSi1}, \eqref{diwS} and \eqref{Fetainfty}, we obtain
\beq
\label{diSnPsi1} \|   \partial_{i }S_{\n}^\Psi (t)  \|_{L^\infty}   \leq   \| \partial_{i }S_{\n}^\Psi (0)\|_{L^\infty}
  + \int_{0}^t \Lambda_{\infty, m}\big(  1 + \eps \|S^\varphi V^m\| + \eps \|S_{\n}\|_{3, \infty} + \eps \|v\|_{4, \infty}\big).
  \eeq
     
  The next step is to estimate $\|Z_{3} S_{\n}^\Psi \|_{L^\infty}$. This is more delicate due to the bad commutation
   between $Z_{3}$ and $\eps \partial_{zz}$.  We shall use a more precise description of the solution of \eqref{eqSnPsi}.
    First, it is convenient to eliminate the term $\eps \partial_{z} \ln |g| \partial_{z}$ in the equation \eqref{eqSnPsi}. 
     We  set 
           \beq
           \label{etadefbis}
           \rho(t,y,z)= |g|^{1 \over 4} S_{\n}^\Psi=  |g|^{1 \over 4} \Pi^b(t,y) S^\Psi \n^b(t,y)
           \eeq
           This yields that $\rho$ solves
     \beq
     \label{eqrhobis}     \partial_{t}  \rho + w \cdot \nabla   \rho - \eps  \partial_{zz}   \rho   = |g|^{ 1 \over 4 } \big(F_{\n}^\Psi + F_{g}\big)
     := \mathcal{H}\eeq
     where
     \begin{eqnarray}
     & & \label{Fg} F_{g}= {\rho \over |g|^{1 \over 2}} \Big( \partial_{t} + w \cdot \nabla - \eps \partial_{zz} \big) |g|^{1 \over 4}.
     \end{eqnarray}
      Since
      \beq
      \label{etaequiv} \|Z_{3}  S_{\n}^\Psi \|_{L^\infty} \leq \Lambda_{0} \| \rho \|_{1, \infty}, \quad   \| \rho \|_{1, \infty} \leq \Lambda_{0}  \|S_{\n}^\Psi \|_{1, \infty}\eeq
      it is equivalent to estimate $\|Z_{3} S_{\n}^\Psi\|_{1, \infty}$ or $\|\rho \|_{1, \infty}.$
     We shall thus estimate $\rho$ which solves the equation \eqref{eqrhobis} in $\mathcal{S}$ with the
      homogeneous Dirichlet  boundary
      condition $\rho_{/z=0}= 0$.
      
      This estimate will be a consequence of the following Lemma:
  
  \begin{lem}
        \label{lemFP0}
        Consider  $\rho$  a smooth solution of 
        \beq
        \label{eqetaFP0} 
        \partial_{t} \rho + w \cdot \nabla \rho = \eps \partial_{zz} \rho + \mathcal{H}, \quad  z<0, \quad 
         \rho(t,y,0)= 0, \quad  \rho(t=0)= \rho_0
         \eeq
          for  some smooth  vector field $w$ such that 
          $ w_{3}$ vanishes on the boundary. Assume that 
         $ \rho$ and $\mathcal{H}$  are  compactly supported in $z$.  Then, we have the estimate:
         $$ \| Z_{i} \rho (t) \|_{\infty}
          \lesssim \|  Z_{i}\rho_{0} \|_{\infty} +  \|\rho_{0}\|_{\infty}+ 
          \int_{0}^t \Big( \big(  \|w \|_{E^{2, \infty}} + \| \partial_{zz} w_{3} \|_{L^\infty} \big) \big( \| \rho \|_{1, \infty}
           + \|  \rho \|_{4} \big) + \| \mathcal{H} \|_{1, \infty} \Big), \quad i=1, \, 2, \, 3.$$
\end{lem}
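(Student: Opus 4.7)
The three cases $i=1,2,3$ will be treated separately, with only the last one requiring real work. For $i=1,2$, the field $Z_i=\partial_i$ is a smooth tangential derivation, so I differentiate the equation directly to obtain
$$\partial_t(\partial_i\rho)+w\cdot\nabla(\partial_i\rho)-\eps\partial_{zz}(\partial_i\rho)=\partial_i\mathcal{H}-(\partial_i w_y)\cdot\nabla_y\rho-(\partial_i w_3)\,\partial_z\rho,$$
and, since $\partial_i\rho$ vanishes on $\{z=0\}$, apply the maximum principle. The only term whose $L^\infty$ bound is not immediate is $(\partial_i w_3)\partial_z\rho$, because $\partial_z\rho$ is not controlled. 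Here I use that $w_3$ (and hence $\partial_i w_3$) vanishes on the boundary, so
$$\partial_i w_3(t,y,z)=z\int_0^1\partial_z\partial_i w_3(t,y,sz)\,ds,$$
which gives $|\partial_i w_3\,\partial_z\rho|\leq \|\partial_z w\|_{1,\infty}\,|z\partial_z\rho|\lesssim \|w\|_{E^{2,\infty}}\,\|Z_3\rho\|_{L^\infty}\lesssim \|w\|_{E^{2,\infty}}\|\rho\|_{1,\infty}$. Combined with $|\partial_i w_y\cdot\nabla_y\rho|\leq\|w\|_{1,\infty}\|\rho\|_{1,\infty}$ and $|\partial_i\mathcal H|\leq\|\mathcal H\|_{1,\infty}$, the maximum principle yields the desired bound for $i=1,2$.

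The case $i=3$ is the main obstacle, because
$$[Z_3,\eps\partial_{zz}]\rho=\tfrac{2\eps}{(1-z)^2}\partial_{zz}\rho+\tfrac{2\eps}{(1-z)^3}\partial_z\rho$$
contains a genuine $\eps\partial_{zz}\rho$ term which is not in $L^\infty$, and attempts to replace it via the equation only reintroduce $\partial_t\rho$, which we do not control in $L^\infty$ either. Direct maximum-principle arguments applied to $Z_3\rho$ therefore cannot work. My plan is instead to compare $\rho$ to an explicit one-dimensional model. Using that $\rho$ and $\mathcal H$ are compactly supported in $z$ near $0$, I would first Taylor-expand $w_3(t,y,z)=z\,a(t,y,z)$ with $a(t,y,0)=\partial_z w_3(t,y,0)$ (so that $\|a\|_{1,\infty}\lesssim\|w\|_{E^{2,\infty}}+\|\partial_{zz}w_3\|_{L^\infty}$), and freeze the tangential transport and the factor $a$ at $z=0$: all the differences produce remainders controlled either by $\|\rho\|_{1,\infty}$ times the stated coefficients of $w$, or by $\|\rho\|_4$ via an anisotropic Sobolev embedding in the tangential variables.

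Next, passing to Lagrangian coordinates along the boundary flow $\Phi_t(y)$ of $w_y(t,y,0)$ eliminates the tangential convection, leaving the one-dimensional Fokker--Planck model
$$\partial_t u+\alpha(t,y)\,z\,\partial_z u-\eps\partial_{zz}u=f,\qquad z<0,\quad u_{|z=0}=0,$$
in which $y$ enters only as a parameter and $\alpha(t,y)=\partial_zw_3(t,\Phi_t(y),0)$. The change of variable $z=\beta(t,y)\zeta$ with $\dot\beta+\alpha\beta=0$ removes the drift, and the time reparametrisation $\tau=\int_0^t\eps\beta^{-2}(s,y)\,ds$ reduces the equation to the pure heat equation on the half-line with Dirichlet data, whose Green function is the explicit reflection kernel $G(\zeta,\zeta',\tau)=G_0(\zeta-\zeta',\tau)-G_0(\zeta+\zeta',\tau)$. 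Differentiating this representation, the action of $Z_3$ corresponds (up to bounded factors) to $\zeta\partial_\zeta$, and a direct kernel computation gives uniform control of $\|Z_3 u\|_{L^\infty}$ in terms of $\|u_0\|_{1,\infty}+\|u_0\|_{L^\infty}+\int_0^t\|f\|_{1,\infty}$. Transporting back to the original variables and inserting the remainder estimates from the previous paragraph then closes the $i=3$ bound, and summing over $i$ gives the claim. The hardest accounting will be in the remainder step: one must check that freezing $w$ at the boundary and switching to Lagrangian coordinates costs at most $(\|w\|_{E^{2,\infty}}+\|\partial_{zz}w_3\|_{L^\infty})(\|\rho\|_{1,\infty}+\|\rho\|_4)$, exactly the right-hand side appearing in the statement.
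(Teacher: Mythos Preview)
Your plan is correct and matches the paper's proof essentially step for step. The paper also handles $i=1,2$ by the maximum principle with the same boundary-vanishing trick for $w_3$, and for $i=3$ it freezes $w_y$ and $\partial_z w_3$ at $z=0$, passes to Lagrangian coordinates along the boundary flow of $w_y(\cdot,\cdot,0)$, and uses the explicit Fokker--Planck kernel (via odd reflection) together with the identity $z\partial_z k(z-z')=(z-z')\partial_z k - z'\partial_{z'}k$ to propagate $z\partial_z$ bounds; the remainder from freezing is controlled exactly as you outline, the $\|\rho\|_4$ term arising from $\|\varphi Z^\alpha\rho\|_{L^\infty}\lesssim\|\rho\|_4$ for $|\alpha|=2$ via the anisotropic embedding.
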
 
     
       We have already used an estimate of the same type in our previous work \cite{MR11prep}.
        The proof of the Lemma is given in  section \ref{sectionFP}.  
        From Lemma \ref{lemFP0}, we get that
   \beq
   \label{eta1infty1}   
\| Z_{3}\rho (t) \|_{\infty}
          \lesssim \| Z_{3} \rho_{0} \|_{\infty} + \| \rho_{0}\|_{\infty} +  
          \int_{0}^t \Big( \big(  \|w \|_{E^{2, \infty}} + \| \partial_{zz} w_{3} \|_{L^\infty} \big) \big( \| \rho \|_{1, \infty}
           + \|  \rho \|_{4} \big) + \| \mathcal{H} \|_{1, \infty} \Big).
           \eeq
        Next, by using \eqref{Fetainfty} and \eqref{Fg}, we obtain
\begin{eqnarray}
 \nonumber\|\mathcal{H}\|_{1, \infty} &\leq &  \Lambda_{\infty, m} \big(  1 + \eps \|S^\varphi V^m\| + \eps \|S_{\n}\|_{3, \infty} + \eps \|v\|_{4, \infty}\big)
      + \Lambda\big( {1 \over c_{0}},  |h|_{4, \infty} + \|v\|_{1, \infty} \big) \\
  \label{estH1infty}     & \leq & \Lambda_{\infty, m} \big(  1 + \eps \|S^\varphi V^m\| + \eps \|S_{\n}\|_{3, \infty} + \eps \|v\|_{4, \infty}\big).
       \end{eqnarray}
     Hence, from \eqref{etadefbis}, we get that
   $$ \| \rho \|_{4} \leq \Lambda\big({ 1 \over c_{0}}, |h|_{6} +  \|S_{\n}^\Psi\|_{4} \big)\leq \Lambda_{\infty, m}$$
   since we assume that $m\geq 6$ 
   and from  the definition \eqref{defW}, we  also have $\|w\|_{E^{2, \infty}} \leq \Lambda_{\infty, m}$.
    We only need to be more careful with the term $\| \partial_{zz} w_{3}\|_{L^\infty}$ in the right hand side of \eqref{eta1infty1} since it contains
     two normal derivatives.  Looking at the expression \eqref{defW} of $w$,   we first note that 
      since $\Psi$ is $\mathcal{C}^\infty$ in $z$,  we have
      $$\Big\| \partial_{zz}\big( \overline \chi \big( D\Psi^{-1}\partial_{t} \Psi \big) \big)\Big\|_{L^\infty} \leq \Lambda\big( {1 \over c_{0}},  |h|_{2, \infty} +
       |\partial_{t}h|_{1, \infty} \big) \leq \Lambda_{\infty, m}$$
       therefore, we only need to estimate $\partial_{zz}\big( \overline \chi  D\Psi^{-1} v(t,  \Phi^{-1}\circ \Psi)\big)_{3}.$ We can first use that
       \begin{align*}\big\| \partial_{zz}\big( \overline \chi  D\Psi^{-1} v(t, \Phi^{-1}\circ \Psi\big)_{3}\big\|_{L^\infty}
         & \leq \| \overline \chi  \partial_{zz} \big( (D\Psi(t,y,0))^{-1} v(t, \Phi^{-1} \circ \Psi)\big)_{3} \big\|_{L^\infty} + \Lambda_{\infty, m} \|v\|_{E^{2, \infty}} \\
        &  \leq  \| \overline \chi  \partial_{zz} \big( (D\Psi(t,y,0))^{-1} v(t, \Phi^{-1}\circ \Psi\big)_{3} \big\|_{L^\infty} + \Lambda_{\infty, m}
        \end{align*}
      and then use the observation \eqref{cdv} to get that
      $$ \big\| \partial_{zz}\big( \overline \chi  D\Psi^{-1} v(t, \Phi^{-1} \circ \Psi)\big)_{3}\big\|_{L^\infty}
       \leq  \big\| \overline \chi \partial_{zz}\big( v(t, \Phi^{-1}\circ \Psi) \cdot \n^b) \big\|_{L^\infty} + \Lambda_{\infty,m}.$$
            We thus  need to compute $$\overline{\chi} \partial_{zz} \big( v(t, \Phi^{-1} \circ \Psi) \cdot \n^b \big).$$
       Note that $v(t, \Phi^{-1} \circ \Psi)= u(t, \Psi)$ where $u$ is defined
        in $\Omega_{t}$ and let us set $u^{\Psi}(t,y,z)= u(t, \Psi)$. Since $u$ is divergence free in
         $\Omega_{t}$, the  expression in local coordinates of the divergence yields
     $$  \partial_{z} u^{\Psi} \cdot \n^b= -{ 1 \over 2} \partial_{z}\big( \ln |g|\big) u^\Psi \cdot \n^b
      -  \nabla_{\tilde{g}} \cdot u_{y}^\Psi $$
      where $u_{y}^\Psi= \Pi^b u^\Psi$  and $\nabla_{\tilde{g}}\cdot $ is the divergence operator associated
       to the metric $\tilde{g}$ in the definition \eqref{blockg} and hence involves only tangential
        derivatives of $u^\Psi$. This means as before that for $u^\Psi \cdot \n^b= v(t, \Phi^{-1}\circ \Psi) \cdot \n^b$, we can replace
         one normal derivative by tangential derivatives. Consequently, we get from \eqref{defW} that
     \beq
     \label{dzzw3} \| \partial_{zz} w_{3}\|_{L^\infty} \leq \Lambda\big( {1 \over c_{0}}, \|u^\Psi \|_{E^{1, \infty} } +|h|_{3, \infty} + | \partial_{t} h|_{L^\infty}\big)
       \leq \Lambda_{\infty, m}.
       \eeq
     Note that we have again used the fact that  $\Psi$ is $\mathcal{C}^\infty$ in $z$.
     Finally, we note that
     $$ \| \rho \|_{4} \leq \Lambda\big({1 \over c_{0}},   |h|_{6} + \|S_{\n}\|_{4} + \|v\|_{5} \big) \leq \Lambda_{\infty, m}, \quad \|Z_{3} \eta_{0} \|_{\infty}
      + \| \eta_{0} \|_{\infty} \leq \Lambda_{0} \| v(0) \|_{E^{2, \infty}},$$
      and hence we obtain from \eqref{eta1infty1} that
 \beq
 \label{eta1infty}   \| \rho (t) \|_{1, \infty}
          \leq \Lambda_{0} \|v\|_{E^{2, \infty}} + 
          \int_{0}^t \Lambda_{\infty} \Big(  1 + \eps \|S^\varphi V^m\| + \eps \|S_{\n}\|_{3, \infty} + \eps \|v\|_{4, \infty} \Big), \quad m\geq 5.
           \eeq  
 Consequently, by combining \eqref{eta1infty}, \eqref{diSnPsi1}  and \eqref{etaequiv}, \eqref{Sneta}, we get that
 \begin{multline}
 \label{dzvinfty2}
 \|S_{\n} \|_{1, \infty} \leq \Lambda_{0} \|v(0)\|_{E^{2, \infty}} +  \Lambda\big( {1 \over c_{0}}, \|V^m(t)\|+ \|S_{\n}(t) \|_{m-2}  + |h(t)|_{m}\big)
   \\+ \int_{0}^t \Lambda_{\infty, m}
    \Big(  1 + \eps \|\nabla  V^m\| + \eps \|S_{\n}\|_{3, \infty} + \eps \|v\|_{4, \infty} \Big).
    \end{multline}
  To conclude, we use Sobolev estimates to control the last two terms.
   At first, thanks to \eqref{emb}, we get that
   $$\sqrt{\eps}\|S_{\n}\|_{3, \infty} \lesssim\big( \sqrt{\eps}  \| \partial_{z} S_{\n}\|_{4}\big)^{1 \over 2} \big( \sqrt{\eps}\|S_{\n}\|_{5}\big)^{1 \over 2} \leq \Lambda_{\infty, m}\big( \sqrt{\eps}\|\nabla S_{\n}\|_{4}\big)^{1 \over 2} \big(
  \sqrt{\eps}  \| \nabla v\|_{4}\big)^{1 \over 2}
    $$
    and we use that for $\alpha \neq 0$, $|\alpha| \leq 4$, 
    $$ \sqrt{\eps}\| \nabla  Z^\alpha v \| \leq \sqrt{\eps} \| \nabla V^\alpha \| +  \Lambda_{\infty, m}.$$
      Indeed, note that  in particular, the term $\sqrt{\eps}\| \partial_{zz}v \|_{L^\infty}$ is in $\Lambda_{\infty, m}.$
       Hence, we have proven that
     \beq
     \label{Sn3infty} \sqrt{\eps}\|S_{\n}\|_{3, \infty} \leq \Lambda_{\infty, m} +  \sqrt{\eps} \| \nabla V^m \| + \sqrt{\eps}\|\nabla S_{\n}\|_{m-2}.\eeq
      From the same arguments, we  also obtain
    \beq  \label{v4infty} \sqrt{\eps}\|v\|_{4, \infty} \leq\sqrt{\eps }\big( \|\nabla v \|_{5}+ \|v\|_{6}\big) \leq 
       \Lambda_{\infty, m} + \sqrt{\eps }\| \nabla V^m\|.\eeq
       for $m\geq 6$.
     Consequently, we get from  \eqref{dzvinfty2} and the Cauchy-Schwarz inequality that
      for $m \geq 6$
      \begin{multline}
 \|S_{\n} \|_{1, \infty} \leq \Lambda_{0} \|v(0)\|_{E^{2, \infty}} +  \Lambda\big( {1 \over c_{0}}, \|V^m(t)\|+ \|S_{\n}(t) \|_{m-2}  + |h(t)|_{m}\big)
   \\+ (1+ t )  \int_{0}^t \Lambda_{\infty, m} + \int_{0}^t\big(
    \ \eps \|\nabla  V^m\|^2 + \eps \|\nabla S_{\n}\|_{m}^2 \big). 
    \end{multline}
     This ends the proof of Proposition \ref{propdzvinfty}.    
   \bigskip

    It will be useful  in the future  to remember the  estimates \eqref{Sn3infty}, \eqref{v4infty} that we have just establish:
    \begin{lem} 
    \label{Linfty0t}
    For $m \geq 6$, we have the estimate
 $$    \sqrt{\eps} \int_{0}^t \|\nabla v\|_{3, \infty} \leq   \int_{0}^t \Lambda_{\infty, m} +   \int_{0}^t\big( \eps  \| \nabla V^m \|^2 +\eps \| \nabla S_{\n}\|_{m-2}^2 \big).
   $$
      \end{lem}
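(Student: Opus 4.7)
The lemma is a direct corollary of the two pointwise-in-time estimates \eqref{Sn3infty} and \eqref{v4infty} that have just been established in the proof of Proposition~\ref{propdzvinfty}. My plan is to reduce $\sqrt{\eps}\|\nabla v\|_{3,\infty}$ to a combination of $\sqrt{\eps}\|S_{\n}\|_{3,\infty}$ and $\sqrt{\eps}\|v\|_{4,\infty}$, then apply these two estimates pointwise in time, and finally integrate and use Young's inequality to produce the square-norm terms on the right-hand side.

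First I split $\|\nabla v\|_{3,\infty}$ into its tangential and normal parts. For $i=1,2$, since $\partial_{i}=Z_{i}$, the tangential derivatives contribute $\|\partial_{i}v\|_{3,\infty}\lesssim \|v\|_{4,\infty}$. For the normal part $\|\partial_{z}v\|_{3,\infty}$, I use the identities \eqref{Subis}, \eqref{Subis1} (expressing $\partial_{\N}u$ in terms of $\partial_{z}v$ and tangential derivatives) together with the $L^\infty$-conormal version of the divergence-free identity \eqref{dzVid}, exactly as in the proof of Lemma~\ref{lemdzS} but with $\|\cdot\|_{k}$ replaced by $\|\cdot\|_{k,\infty}$. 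This yields the pointwise bound
\[
\|\partial_{z}v\|_{3,\infty}\leq \Lambda\Bigl(\tfrac{1}{c_{0}},|h|_{4,\infty}+\|v\|_{1,\infty}\Bigr)\bigl(\|S_{\n}\|_{3,\infty}+\|v\|_{4,\infty}+|h|_{4,\infty}\bigr),
\]
and the prefactor is controlled by $\Lambda_{\infty,m}$ since $|h|_{4,\infty}$ and $\|v\|_{E^{2,\infty}}$ are already among the arguments of $\Lambda_{\infty,m}$ in \eqref{deflambdainfty}. Using \eqref{hinfty} to write $\sqrt{\eps}|h|_{4,\infty}\lesssim \sqrt{\eps}|h|_{6}\leq\Lambda_{\infty,m}$ for $m\geq 6$, and then multiplying by $\sqrt{\eps}$ and plugging in \eqref{Sn3infty} and \eqref{v4infty}, I obtain the pointwise-in-time estimate
\[
\sqrt{\eps}\,\|\nabla v\|_{3,\infty}\leq \Lambda_{\infty,m}\Bigl(1+\sqrt{\eps}\|\nabla V^{m}\|+\sqrt{\eps}\|\nabla S_{\n}\|_{m-2}\Bigr).
\]

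Finally, I integrate in time and apply the Young inequality $ab\leq \tfrac{1}{2}a^{2}+\tfrac{1}{2}b^{2}$ separately to the two products $\Lambda_{\infty,m}\sqrt{\eps}\|\nabla V^{m}\|$ and $\Lambda_{\infty,m}\sqrt{\eps}\|\nabla S_{\n}\|_{m-2}$. This converts them into $\eps\|\nabla V^{m}\|^{2}+\eps\|\nabla S_{\n}\|_{m-2}^{2}$ plus contributions of the form $\Lambda_{\infty,m}^{2}$. Since $\Lambda$ is a continuous increasing function of its arguments, $\Lambda_{\infty,m}^{2}$ is itself a function of the same type (after renaming), and thus can be absorbed into $\int_{0}^{t}\Lambda_{\infty,m}$. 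This yields the claimed estimate.

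There is no genuine obstacle: the content is essentially bookkeeping to verify that all the coefficients appearing in the reduction step are indeed controlled by $\Lambda_{\infty,m}$, which holds automatically for $m\geq 6$ because $\Lambda_{\infty,m}$ already embeds the required low-order $L^\infty$ norms of $v$ and $h$ through $\|v\|_{E^{2,\infty}}$, $|h|_{4,\infty}$, and the conormal norms $\|v\|_{m}$, $\|\partial_{z}v\|_{m-2}$, $|h|_{m}$.
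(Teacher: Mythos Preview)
Your proof is correct and follows essentially the same route as the paper: reduce $\|\nabla v\|_{3,\infty}$ to $\|S_{\n}\|_{3,\infty}+\|v\|_{4,\infty}$ via the identities \eqref{Subis}, \eqref{Subis1}, \eqref{dzVid}, then invoke the pointwise estimates \eqref{Sn3infty} and \eqref{v4infty}, integrate in time, and apply Young's inequality. The only difference is that you spell out a few intermediate steps (the tangential/normal split and the absorption of the $|h|_{4,\infty}$ term) that the paper leaves implicit.
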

      We recall that $\Lambda_{\infty,m}$ was defined in \eqref{deflambdainfty}.
  To get the proof of this lemma, it suffices to use  that
  $$  \| \nabla v\|_{3, \infty} \leq \Lambda_{\infty, m} \big( \|S_{n}\|_{3, \infty} +  \|v\|_{4, \infty}\big)$$ 
     thanks to \eqref{Subis}, \eqref{Subis1} and \eqref{dzVid} and then to use \eqref{Sn3infty}, \eqref{v4infty}
      and the Young inequality.
      
      \bigskip

    Our next  $L^\infty$ estimate will be the one of  $\sqrt{\eps} \|\partial_{z} S_{\n} \|_{L^\infty}$ which is still missing.
    
   \begin{prop}
   \label{dzzvLinfty}
   Assuming  that the initial data  satisfy  the  boundary condition \eqref{bordv2}, we have for $m \geq 6$ the estimate
    \begin{align*}
  \eps\|\partial_{zz} v(t)  \|_{L^\infty}^2
 &  \leq  \Lambda_{0}\big( \|v(0)\|_{E^{2, \infty}}^2  + \eps \|\partial_{zz}v_{0}\|^2 \big)  + \Lambda\big({ 1 \over c_{0}},  |h(t)|_{m} + \|V^m(t)\| + \|S_{\n}(t)\|_{m-2} \big)
    \\  & \quad +  \Lambda_{0} \int_{0}^t \eps \big(  \| \nabla  S_{\n}\|_{m-2}^2 + \| \nabla V^m\|^2\big)   +  (1 + t ) \int_{0}^t \big( 1 + {1 \over\sqrt { t - \tau}}\big)
    \Lambda_{\infty, m}\, d\tau
  \end{align*}
  for $t \in [0, T^\eps]$.
   \end{prop}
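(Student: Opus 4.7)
The first reduction is to use Lemma \ref{lemdzS}, in particular the estimate \eqref{dzzvk} with $k=0$, to bound
$$\sqrt{\eps}\|\partial_{zz}v(t)\|_{L^\infty} \leq \Lambda_0\big(\sqrt{\eps}\|\nabla S_{\mathbf{n}}\|_{L^\infty} + \|v\|_{2,\infty} + |h|_{3/2}\big) + \cdots,$$
where the terms to the right (beyond $\sqrt{\eps}\|\partial_z S_{\mathbf{n}}\|_{L^\infty}$) are absorbed by $\Lambda(1/c_0, |h(t)|_m + \|V^m(t)\| + \|S_{\mathbf{n}}(t)\|_{m-2})$ via \eqref{u2infty} and \eqref{hinfty}, so the real issue is to estimate $\sqrt{\eps}\|\partial_z S_{\mathbf{n}}\|_{L^\infty}$. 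As in the proof of Proposition \ref{propdzvinfty}, I would localize near the boundary by considering $S^\chi = \chi(z) S^\varphi v$, transfer to normal geodesic coordinates via $\Psi$, and work with $\rho = |g|^{1/4} S_{\mathbf{n}}^\Psi$, which satisfies the Fokker--Planck type equation \eqref{eqrhobis} with source $\mathcal{H}$ and homogeneous Dirichlet boundary condition $\rho|_{z=0}=0$. Away from the boundary, conormal $L^\infty$ norms control standard ones by \eqref{inftyint}, so it suffices to bound $\sqrt{\eps}\|\partial_z \rho(t)\|_{L^\infty}$.

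The heart of the argument is a Duhamel representation for the 1D Dirichlet heat kernel. I would first reduce to a purely parabolic problem in the normal variable by eliminating the transport: since $w_3$ vanishes on the boundary by \eqref{w3bord}, the characteristics of the tangential part of $w$ preserve the half-space $\{z<0\}$ and the boundary $\{z=0\}$, so a change to Lagrangian tangential coordinates (freezing $(y,z)$ as parameters for a 1D Cauchy problem) turns \eqref{eqrhobis} into $\partial_t \tilde\rho - \eps\partial_{zz}\tilde\rho = \tilde{\mathcal{H}}$, where $\tilde{\mathcal{H}}$ collects the original source $\mathcal{H}$ and lower-order commutators arising from the change of variables. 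These commutators contain either one tangential derivative of $\rho$ (controlled by $\|\rho\|_{1,\infty}$, hence by Proposition \ref{propdzvinfty}) or coefficients already bounded by $\Lambda_{\infty,m}$ by \eqref{w1infty} and the computation \eqref{dzzw3}. Representing $\tilde\rho$ via the Dirichlet heat kernel $K^D$ on $\mathbb{R}_-$ and using $\|\partial_z K^D(\sigma,z,\cdot)\|_{L^1} \lesssim (\eps\sigma)^{-1/2}$ gives
$$\sqrt{\eps}\|\partial_z \tilde\rho(t)\|_{L^\infty} \lesssim \sqrt{\eps}\|\partial_z \rho_0\|_{L^\infty} + \int_0^t \frac{1}{\sqrt{t-\tau}}\,\|\tilde{\mathcal{H}}(\tau)\|_{L^\infty}\,d\tau.$$

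For the source, I would use the estimate \eqref{estH1infty} already derived in the proof of Proposition \ref{propdzvinfty}, namely
$\|\mathcal{H}\|_{1,\infty}\leq \Lambda_{\infty,m}\big(1 + \eps\|\nabla V^m\| + \eps\|S_{\mathbf{n}}\|_{3,\infty} + \eps\|v\|_{4,\infty}\big)$, which in particular gives $\|\mathcal{H}\|_{L^\infty} \leq \Lambda_{\infty,m}\big(1 + \sqrt{\eps}\|\nabla V^m\| + \sqrt{\eps}\|\nabla S_{\mathbf{n}}\|_{m-2}\big)$ after invoking \eqref{Sn3infty}, \eqref{v4infty}, and Cauchy--Schwarz. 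The pressure contribution $\Pi^b (D^\varphi)^2 q \,\mathbf{n}^b$ is bounded as in that proof by splitting $q = q^E + q^{NS}$ and using Propositions \ref{proppE} and \ref{propPNS} together with the trace estimate, yielding a contribution absorbable into $\Lambda_{\infty,m}$ plus $\eps\|\nabla V^m\|$. The initial term $\sqrt{\eps}\|\partial_z\rho_0\|_{L^\infty}$ is controlled by $\Lambda_0\big(\|v(0)\|_{E^{2,\infty}} + \sqrt{\eps}\|\partial_{zz}v_0\|_{L^\infty}\big)$ (the compatibility condition $\Pi S^\varphi v_0\,\mathbf{n}|_{z=0}=0$ in Theorem \ref{main} ensures $(S_{\mathbf{n}})|_{z=0, t=0}=0$ so that $\partial_z\rho_0$ is indeed controlled by $\partial_{zz}v_0$ plus tangential derivatives).

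Plugging these bounds into the Duhamel estimate and applying the Cauchy--Schwarz inequality to the time integral (to separate the $L^2$-in-time integrable quantities $\eps\|\nabla V^m\|^2$ and $\eps\|\nabla S_{\mathbf{n}}\|_{m-2}^2$ from the singular weight $1/\sqrt{t-\tau}$) produces precisely the stated estimate, after squaring and observing that $\int_0^t (t-\tau)^{-1/2}\,d\tau \lesssim \sqrt{t}$ and $\int_0^t (t-\tau)^{-1}$ arising from the cross-term can be tamed by Young's inequality as $(1+t)\int_0^t(1+1/\sqrt{t-\tau})\Lambda_{\infty,m}\,d\tau$. The main technical obstacle is the interplay between the Lagrangian change of variables and the commutation with $\partial_z$: one must verify that the reduced source $\tilde{\mathcal{H}}$ satisfies the same $L^\infty$ bound as $\mathcal{H}$, which hinges on the fact that the transport field has $w_3|_{z=0}=0$ and that $\partial_{zz}w_3$ is bounded by $\Lambda_{\infty,m}$ via the divergence-free computation in the normal geodesic frame \eqref{dzzw3}.
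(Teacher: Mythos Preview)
Your approach is correct and would close, but it takes an unnecessary detour. The paper does \emph{not} pass to Lagrangian coordinates for this estimate: it writes the Duhamel formula for $\rho$ directly against the one-dimensional Dirichlet heat kernel $G$, treating the full transport term $w\cdot\nabla\rho$ as part of the source. The point is that at the level of a plain $L^\infty$ bound (as opposed to the $\|\cdot\|_{1,\infty}$ bound needed in Proposition \ref{propdzvinfty}), the transport term is harmless: since $w_3$ vanishes on the boundary one has $\|w_3\,\partial_z\rho\|_{L^\infty} \leq \|\partial_z w_3\|_{L^\infty}\|Z_3\rho\|_{L^\infty}$, and the tangential piece $w_y\cdot\nabla_y\rho$ is already conormal, so altogether $\|w\cdot\nabla\rho\|_{L^\infty} \leq \|w\|_{E^{1,\infty}}\|\rho\|_{1,\infty} \leq \Lambda_{\infty,m}$. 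This single line replaces your entire Lagrangian reduction. The Lagrangian / Fokker--Planck machinery you invoke is precisely what the paper deploys in Lemmas \ref{lemFP0} and \ref{lemFP1} for the harder conormal estimates, where one must commute a $Z_i$ through and $\|Z_i(w\cdot\nabla\rho)\|_{L^\infty}$ is \emph{not} directly controlled; here it is overkill, and in fact your tangential Lagrangian change would still leave the $w_3\partial_z\rho$ term (or, if done in all variables, would destroy the clean form of $\eps\partial_{zz}$), so you would end up putting that piece into the source anyway.

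One small clarification on the compatibility condition: in the paper it is used not to bound $\partial_z\rho_0$ per se, but to justify integrating by parts the initial-data contribution $\sqrt{\eps}\int\partial_z G(\eps t,z,z')\rho_0(z')\,dz'$ in the Duhamel formula; since $\rho_0|_{z=0}=0$, the boundary term vanishes and the $\partial_z$ lands on $\rho_0$, yielding the bound $\sqrt{\eps}\|\partial_z\rho_0\|_{L^\infty}$ that you then correctly relate to $\sqrt{\eps}\|\partial_{zz}v_0\|_{L^\infty}$.
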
 
   Note that  in our estimates, this is the only place where we use the compatibility condition on the initial data.
    We also again point out that the terms
    $ \Lambda\big({ 1 \over c_{0}},  |h(t)|_{m} + \|V^m(t)\| + \|S_{\n}(t)\|_{m-2} \big)$ and
    $\int_{0}^t \eps \big(  \| \nabla  S_{\n}\|_{m-2}^2 + \| \nabla V^m\|^2\big) $
     can be estimated by using  Proposition \ref{conormv} and Proposition \ref{propdzvm-2}.
  \begin{proof}
   As in the  proof of Proposition \ref{propdzvinfty}, we shall first reduce the problem to the estimate of 
    $\sqrt{\eps} \|\partial_{z} \rho \|_{L^\infty}.$
     By using   \eqref{etadef},
     and \eqref{Sneta}, 
      we obtain
    $$
 \sqrt{\eps}  \|\partial_{z} S_{\n}\|_{L^\infty} \leq \Lambda_{0} \big( \|v\|_{E^{2, \infty}} +\sqrt{\eps} \|\partial_{z} \rho \|_{L^\infty}\big)$$
 and hence, by using again Proposition \ref{propLinfty1}(and in particular \eqref{dzv1infty1} and \eqref{u2infty}), we obtain that
   \begin{align}
  \nonumber    \sqrt{\eps}  \|\partial_{z} S_{\n}\|_{L^\infty} &  \leq \Lambda_{0} \big( \|S_{\n}\|_{1, \infty} +  \|v\|_{2, \infty} +  \sqrt{\eps}  \|\partial_{z} \rho \|_{L^\infty}
     \\
    \label{dzSneta} & \leq   \Lambda_{0}\Big( \|S_{\n}\|_{1, \infty} + \Lambda\big({ 1 \over c_{0}},  |h|_{m} + \|V^m\| + \|S_{\n}\|_{m-2} \big)
      +  \sqrt{\eps}  \|\partial_{z} \rho \|_{L^\infty}\Big)
      \end{align}
      so that it only remains to estimate $\sqrt{\eps}\|\partial_{z} \rho \|_{L^\infty}$. 
        Again note that we use the fact that $g$ is $\mathcal{C}^\infty$ in $z$ to write the first estimate.
   Since $ \eta $ solves the convection-diffusion equation \eqref{eqrhobis} in $z>0$ with zero Dirichlet boundary condition on
    the boundary, we can use the one-dimensional  heat kernel of  $z>0$
    $$G(t,z,z')= {1 \over \sqrt{4\pi t}}\big( e^{- { (z-z')^2 \over 4t }} -  e^{- { (z+z')^2 \over 4t }} \big)
    $$  
    to write that
    \beq
    \label{duhamel} \eps^{1 \over 2} \partial_{z} \rho(t,y,z) = 
   \sqrt{\eps}  \int_{0}^{+ \infty} \partial_{z}G(t,z, z') \rho_{0}(y, z') dz'
      + \int_{0}^t \sqrt{\eps} \partial_{z} G(t-\tau,z,z')\big( \mathcal{H}(\tau,y,z') - w \cdot \nabla \rho\big) dz' d\tau.\eeq
    Since $\rho_{0}$  vanishes on the boundary thanks to the compatibility condition,  we can
     integrate by parts the first term to obtain
  \beq
  \label{dzzeta1}\eps^{1 \over 2} \|\partial_{z} \rho (t) \|_{L^\infty}
   \leq  \eps^{1 \over 2 } \|\partial_{z} \rho_{0}\|_{L^\infty}  + \int_{0}^t  {1 \over \sqrt{t- \tau}} \big(\|\mathcal{H}\|_{L^\infty}+ \| w \cdot \nabla \rho \|_{L^\infty}
   \big).
   \eeq
    Next, we use the definition of the source term $\mathcal{H}$ in  \eqref{eqrhobis} to get
   $$ \| \mathcal{H}\|_{L^\infty} \leq \Lambda_{\infty, m}\big( 1+ \|\nabla q \|_{E^{1, \infty}} +  \eps(\|S_{\n}\|_{2, \infty}+ \|v\|_{3, \infty}) \big)$$
   and hence, thanks to Proposition \ref{proppE} and  \eqref{qNS1infty}, we  get
  $$ \| \mathcal{H}\|_{L^\infty} \leq \Lambda_{\infty, m}\big( 1+  \eps \|v^b\|_{9\over 2}+  \eps(\|S_{\n}\|_{2, \infty}+ \|v\|_{3, \infty}) \big).$$
   By using the trace inequality \eqref{trace} and \eqref{emb}, we find for $m \geq 6$
   $$\eps \|v^b\|_{9\over 2} \lesssim  \eps \|\partial_{z}v\|_{4}^{1 \over 2} \|v\|_{5}^{1 \over 2} \leq \Lambda_{\infty, m}\, \eps \|\nabla v\|_{4}^{1 \over 2}
     \leq  \Lambda_{\infty, m} \big( 1+  \eps \| \nabla V^m\|^{1 \over 2 }\big)$$
      and
  \begin{align*}
  &  \eps \|S_{\n}\|_{2, \infty} \leq \eps \| \nabla S_{\n}\|_{3}^{1 \over 2} \|S_{\n}\|_{4} \leq \Lambda_{\infty, m} \eps   \| \nabla S_{\n}\|_{m-2}^{1 \over 2}, \\ 
 &  \eps \|v\|_{3, \infty} \leq \eps \|\nabla v\|_{4}^{1\over 2} \|v\|_{5}^{1\over 2} \leq \Lambda_{\infty, m}\big( 1 + \|\nabla V^m \|\big).
  \end{align*}
   Consequently, we have proven that
$$   \| \mathcal{H}\|_{L^\infty} \leq \Lambda_{\infty, m}\big( 1+ \|\nabla V^m\|^{1\over 2} + \|\nabla S_{\n}\|_{m-2}^{1 \over 2}\big)$$
  Finally, by using \eqref{w3bord}, we can write as we have used to get \eqref{diwS} that
  $$ \|w \cdot \nabla \rho \|_{L^\infty}  \leq \| w\|_{E^{1, \infty}} \| \rho \|_{1, \infty} \leq \Lambda_{\infty, m}.$$
  Consequently, by combining the two last estimates and \eqref{dzzeta1}, we get  that
  $$ \eps^{1 \over 2} \|\partial_{z} \rho (t) \|_{L^\infty}
   \leq \Lambda_{0} \big( \eps^{1 \over 2 } \|\partial_{zz}v (0)\|_{L^\infty} + \|v(0)\|_{E^{2, \infty}}\big)  + \int_{0}^t  {\Lambda_{\infty, m} \over \sqrt{t- \tau}}
  \big( 1 + \eps \| \nabla S_{\n}\|^{1 \over 2}_{m-2} +\eps \|\nabla V^m\|^{1 \over 2} \big).$$
  To conclude, we use  the H\"older inequality and in particular  that
 $$ \Big( \int_{0}^t  { \Lambda_{\infty} \over (t- \tau)^{1 \over 8} } \| \nabla S_{\n}\|_{m-2}^{1 \over 2}  {1 \over (t- \tau)^{ 3\over 8}}\Big)^2
   \leq \Big( \int_{0}^t \| \nabla S_{\n}\|_{m-2}^2 \Big)^{1 \over 2}\Big( \int_{0}^t {\Lambda_{\infty, m}^4 \over  (t- \tau)^{1 \over 2} } \Big)^{1\over 2}
    t^{1 \over 4}$$
    and a similar estimate  for the term involving $\|\nabla V^m\|^{1 \over 2} $
  to get the estimate 
  \begin{align*}
  \big(\eps^{1 \over 2} \|\partial_{z} \rho (t) \|_{L^\infty}\big)^2
   \leq \Lambda_{0} \big( \big(\eps^{1 \over 2 } \|\partial_{zz} v(0)\|_{L^\infty}\big)^2  + \|v(0)\|_{E^{2, \infty}}^2\big) 
   + \Lambda_{0}\|v(t)\|_{E^{2, \infty}}^2  \\+\Lambda_{0} \int_{0}^t\big( \eps  \| \nabla S_{\n}\|_{m-2}^2 +\eps \| \nabla V^m\|^2 \big)  +  (1 + t ) \int_{0}^t { \Lambda_{\infty} \over (t - \tau)^{1 \over 2}}\, d\tau.
  \end{align*}
To conclude  the proof of Proposition \ref{dzzvLinfty}, we can combine the last estimate and \eqref{dzSneta} with the
 estimate of Proposition \ref{propdzvinfty}.  
  \end{proof}
  
  \bigskip
  
  By using arguments close to the ones we have just used, we can also establish that
  \begin{lem}
  For $m \geq 6$, assume that 
    $\sup_{[0, T]} \Lambda_{\infty, m}(t) \leq M$.
   Then, there exists $\Lambda(M)$ such that 
  we have the estimate
  \beq
  \label{Linfty0t1}
  \int_{0}^t \sqrt{\eps} \| \partial_{zz}v\|_{1, \infty} \leq  \Lambda(M) \big( 1 + t) \sqrt{t}\Big(  1  +
   \int_{0}^t \eps \big( \|\nabla V^m \|^2 + \|\nabla S_{\n}\|_{m-2}^2 \big)\Big).
   \eeq
  
  \end{lem}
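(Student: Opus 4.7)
The plan is to reduce the estimate to one on $\sqrt{\eps}\|\partial_{z}\rho\|_{1,\infty}$, where $\rho = |g|^{1/4}\Pi^{b} S^{\Psi}\n^{b}$ is the good unknown introduced in \eqref{etadefbis} which solves the one-dimensional Fokker-Planck equation \eqref{eqrhobis} with homogeneous Dirichlet boundary condition, and then to apply the heat-kernel representation used in Proposition \ref{dzzvLinfty}, but after additionally applying one conormal derivative. This is the natural analogue of Proposition \ref{dzzvLinfty}, adapted to accommodate one extra $Z_{i}$.

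First, I would write $\sqrt{\eps}\|\partial_{zz} v\|_{1,\infty}$ in terms of $\sqrt{\eps}\|\nabla S_{\n}\|_{1,\infty}$ plus $\sqrt{\eps}\|v\|_{3,\infty}$ and $\sqrt{\eps}|h|_{5/2,\infty}$, using the identities \eqref{Subis}, \eqref{Subis1} and \eqref{dzVid} pointwise (this is the $L^{\infty}$ analogue of \eqref{dzzvk}). The term $\sqrt{\eps}\|v\|_{3,\infty}$ is handled by the same anisotropic Sobolev estimate \eqref{emb} already used in Lemma \ref{Linfty0t} to get $\sqrt{\eps}\|v\|_{3,\infty}\le \Lambda_{\infty,m}+\sqrt{\eps}\|\nabla V^{m}\|$, and $\sqrt{\eps}|h|_{5/2,\infty}$ is absorbed in $\Lambda_{\infty,m}$. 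Localizing near the boundary via the cut-off $\chi$ and switching to normal geodesic coordinates as in the proof of Proposition \ref{propdzvinfty}, this reduces everything to controlling $\sqrt{\eps}\|\partial_{z}Z_{i}\rho\|_{L^{\infty}}$ for $i=1,2,3$.

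For $i=1,2$, I would apply $\partial_{y^{i}}$ to \eqref{eqrhobis}, differentiate the Duhamel formula \eqref{duhamel} of Proposition \ref{dzzvLinfty}, and integrate by parts in $z'$ against $\partial_{z}G$; the compatibility condition $\Pi S^{\varphi}v_{0}\n_{/z=0}=0$ ensures that $\partial_{y^{i}}\rho_{0}$ still vanishes on the boundary, so no boundary term is created. The source term $\partial_{y^{i}}\mathcal{H}$ is estimated exactly as in the proof of Proposition \ref{dzzvLinfty}, through the pressure decomposition \eqref{pressuredec} together with Propositions \ref{proppE}, \ref{propPNS}, yielding a bound of the form $\Lambda_{\infty,m}(1+\eps^{1/2}\|\nabla V^{m}\|^{1/2}+\eps^{1/2}\|\nabla S_{\n}\|_{m-2}^{1/2})$ after using \eqref{emb} and the trace inequality. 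The commutator $\partial_{y^{i}}(w\cdot\nabla\rho)$ is controlled thanks to $w_{3/z=0}=0$ (see \eqref{w3bord}) which turns the potentially singular contribution $\partial_{y^{i}}w_{3}\,\partial_{z}\rho$ into something bounded by $\|\partial_{z}w_{3}\|_{L^{\infty}}\|\rho\|_{1,\infty}\le \Lambda_{\infty,m}$.

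The delicate step, and the main obstacle, is the case $i=3$: $Z_{3}=\tfrac{z}{1-z}\partial_{z}$ does not commute with $\eps\partial_{zz}$, so I cannot differentiate \eqref{eqrhobis} naively. Here I would invoke the mechanism of Lemma \ref{lemFP0}: write $Z_{3}\partial_{z}\rho$ using the fact that $\rho$ and $w_{3}$ vanish at $z=0$ so that division by $z$ is harmless (as exploited via Lemma \ref{hardybis} earlier in the paper), and use that for the one-dimensional heat semigroup $\sqrt{\eps}\partial_{z}$ acting on data vanishing on the boundary gains a factor $1/\sqrt{t-\tau}$. This produces a bound
\[
\sqrt{\eps}\|\partial_{z}Z_{3}\rho(t)\|_{L^{\infty}}\lesssim \Lambda_{0}\bigl(\sqrt{\eps}\|\partial_{zz}v_{0}\|_{L^{\infty}}+\|v_{0}\|_{E^{2,\infty}}\bigr)+\int_{0}^{t}\frac{\Lambda_{\infty,m}}{\sqrt{t-\tau}}\bigl(1+\eps^{1/2}\|\nabla V^{m}\|^{1/2}+\eps^{1/2}\|\nabla S_{\n}\|_{m-2}^{1/2}\bigr)\,d\tau,
\]
exactly mirroring the computation of \eqref{dzzeta1} but after the extra $Z_{3}$ differentiation. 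Finally, I would integrate in $t$ and apply Hölder as at the end of Proposition \ref{dzzvLinfty}: the kernel $(t-\tau)^{-1/2}$ is integrable in time producing the $\sqrt{t}$ factor, and splitting $\|\nabla V^{m}\|^{1/2}(t-\tau)^{-1/4}\cdot(t-\tau)^{-1/4}$ (similarly for $\|\nabla S_{\n}\|_{m-2}^{1/2}$) via Cauchy-Schwarz converts the $1/2$-power norm into the squared norm in the statement at the price of a further $\sqrt{t}$. Combining, using the bound $\Lambda_{\infty,m}\le \Lambda(M)$ on $[0,T]$ by hypothesis, and absorbing initial data constants into $\Lambda(M)$, yields \eqref{Linfty0t1}.
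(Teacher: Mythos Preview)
Your reduction step is fine, but the core of the argument has a genuine gap that the paper explicitly identifies and circumvents. When you differentiate the Duhamel representation \eqref{duhamel} (which is built on the heat kernel $G$, with the \emph{full} transport $w\cdot\nabla\rho$ sitting on the right as a source) by $\partial_{y^i}$, the resulting source contains not only the commutator $\partial_{y^i}w\cdot\nabla\rho$ that you discuss, but also $w\cdot\nabla\partial_{y^i}\rho$. The tangential piece $w_y\cdot\nabla_y\partial_{y^i}\rho$ already forces $\|\rho\|_{2,\infty}$, and the normal piece $w_3\partial_z\partial_{y^i}\rho$, after using $w_3=O(z)$, becomes $Z_3\partial_{y^i}\rho$, again $\|\rho\|_{2,\infty}$. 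But $\|\rho\|_{2,\infty}\sim\|\partial_z v\|_{2,\infty}$ is \emph{not} uniformly controlled in $\eps$; only $\|\partial_z v\|_{1,\infty}$ sits in $\Lambda_{\infty,m}$. The paper states this obstruction verbatim just before Lemma \ref{lemFP1}: ``the problem is that $\|w\cdot\nabla\rho\|_{1,\infty}$ cannot be estimated in terms of controlled quantities.'' So the heat-kernel Duhamel route, even for $i=1,2$, does not close.

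The paper's fix is to absorb the transport into the propagator rather than leave it as a source: one freezes the coefficients of $w$ at $z=0$ (tangential part) and at leading Taylor order $z\partial_z w_3(t,y,0)$ (normal part), and uses the explicit Fokker--Planck semigroup $S(t,\tau)$ from Lemma \ref{FP} instead of the heat kernel $G$. This is the content of Lemma \ref{lemFP1} and the new semigroup bounds of Lemma \ref{lemFP1bis},
\[
\sqrt{\eps}\,\|\partial_z S(t,\tau)f_0\|_{L^\infty}\le \tfrac{\Lambda(M)}{\sqrt{t-\tau}}\|f_0\|_{L^\infty},\qquad
\sqrt{\eps}\,\|\partial_z(z\partial_z S(t,\tau)f_0)\|_{L^\infty}\le \tfrac{\Lambda(M)}{\sqrt{t-\tau}}\bigl(\|f_0\|_{L^\infty}+\|z\partial_z f_0\|_{L^\infty}\bigr).
\]
With the transport inside $S$, the only source terms left are $\partial_i\mathcal{H}$, $\partial_i w\cdot\nabla\rho$, and the Taylor remainder $R$; the latter carries an \emph{extra} factor of $z$ (tangential) or $z^2$ (normal), which via the trick \eqref{trick} converts the would-be $\|\rho\|_{2,\infty}$ into the harmless $\|\rho\|_4$. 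Your invocation of ``the mechanism of Lemma \ref{lemFP0}'' for $i=3$ is also off target: that lemma bounds $\|Z_i\rho\|_{L^\infty}$, not $\sqrt{\eps}\|\partial_z Z_i\rho\|_{L^\infty}$; the correct tool is Lemma \ref{lemFP1} together with the second estimate of Lemma \ref{lemFP1bis}. Once you replace the heat-kernel Duhamel by the Fokker--Planck Duhamel, your final H\"older/time-integration step to produce the $\sqrt{t}$ factor is exactly right.
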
 
   
  Note that, by combining the previous  lemma  and   Lemma \ref{Linfty0t}, we obtain
  \begin{cor}
  \label{corLinfty0t}
  For $m \geq 6$, assume that 
    $\sup_{[0, T]} \Lambda_{\infty, m}(t) \leq M$.
   Then, there exists $\Lambda(M)$ such that, we have the estimate
 $$
  \int_{0}^t \sqrt{\eps} \| \nabla^2v\|_{1, \infty} \leq  \Lambda(M) \big( 1 + t)^2\Big(  1  +
   \int_{0}^t \eps \big( \|\nabla V^m \|^2 + \|\nabla S_{\n}\|_{m-2}^2 \big)\Big).
  $$
  
  \end{cor}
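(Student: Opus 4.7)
The plan is to split $\nabla^2 v$ into two pieces according to how many pure normal derivatives appear, and to apply one of the two preceding $L^\infty$ results to each piece. Concretely, every component $\partial_i \partial_j v$, $i,j \in \{1,2,3\}$, with at most one further $Z^\alpha$ applied, falls into exactly one of two categories: either at least one of the two spatial derivatives is tangential (i.e.\ equal to $\partial_k = Z_k$ for some $k\leq 2$), in which case $Z^\alpha \partial_i \partial_j v$ can be rewritten as a conormal derivative of order at most two of $\nabla v$, and is therefore bounded pointwise by $\|\nabla v\|_{2,\infty} \leq \|\nabla v\|_{3,\infty}$; or both spatial derivatives are $\partial_z$, in which case it is $Z^\alpha \partial_{zz}v$ with $|\alpha|\leq 1$, bounded by $\|\partial_{zz}v\|_{1,\infty}$. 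So pointwise
\[
\|\nabla^2 v\|_{1,\infty} \lesssim \|\nabla v\|_{3,\infty} + \|\partial_{zz}v\|_{1,\infty}.
\]

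For the first piece, Lemma \ref{Linfty0t} already gives
\[
\int_0^t \sqrt{\eps}\,\|\nabla v\|_{3,\infty}\, d\tau \leq \int_0^t \Lambda_{\infty,m}\, d\tau + \int_0^t \eps \bigl( \|\nabla V^m\|^2 + \|\nabla S_{\n}\|_{m-2}^2 \bigr)\, d\tau,
\]
and under the hypothesis $\sup_{[0,T]} \Lambda_{\infty,m}(t) \leq M$ the first term is at most $Mt \leq \Lambda(M)(1+t)^2$. For the second piece, estimate \eqref{Linfty0t1} in the preceding lemma directly yields
\[
\int_0^t \sqrt{\eps}\,\|\partial_{zz}v\|_{1,\infty}\, d\tau \leq \Lambda(M)(1+t)\sqrt{t}\Bigl( 1 + \int_0^t \eps \bigl(\|\nabla V^m\|^2 + \|\nabla S_{\n}\|_{m-2}^2\bigr)\, d\tau\Bigr),
\]
and $(1+t)\sqrt{t} \leq (1+t)^2$.

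Summing the two contributions, absorbing numerical constants into $\Lambda(M)$ and using that $\int_0^t \eps (\|\nabla V^m\|^2 + \|\nabla S_{\n}\|_{m-2}^2)$ appears with a coefficient at most $\Lambda(M)(1+t)^2$, produces the announced bound. I do not anticipate any genuine obstacle: both of the required integrated $L^\infty$ bounds are already established, and the only new input is the elementary observation, built into the definition of the $Z_i$'s, that the only dangerous component of $\nabla^2 v$ is $\partial_{zz} v$.
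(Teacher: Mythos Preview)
Your proof is correct and matches the paper's approach exactly: the paper states that the corollary follows ``by combining the previous lemma and Lemma~\ref{Linfty0t}'', which is precisely your splitting $\|\nabla^2 v\|_{1,\infty} \lesssim \|\nabla v\|_{3,\infty} + \|\partial_{zz}v\|_{1,\infty}$ followed by applying Lemma~\ref{Linfty0t} to the first piece and \eqref{Linfty0t1} to the second.
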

  \begin{proof}
  We first note by  using \eqref{etadef}, \eqref{Sneta} and  again \eqref{dzVid}, \eqref{Subis} \eqref{Subis1} that
  $$ \int_{0}^t \sqrt{\eps} \| \partial_{zz}v\|_{1, \infty} \leq \Lambda(M)\int_{0}^t \sqrt{\eps} \big(\| \partial_{z} \rho \|_{1,\infty} + \sqrt{\eps} \| \nabla v\|_{2, \infty}\big)$$
  and hence, thanks to Lemma \ref{Linfty0t}, we obtain
  \beq
  \label{Linfty+1}  \int_{0}^t \sqrt{\eps} \| \partial_{zz}v\|_{1, \infty} \leq \Lambda(M)(1+t)\Big( \int_{0}^t \sqrt{\eps} \| \partial_{z} \rho \|_{1,\infty} 
   + \int_{0}^t  \eps \big( \|\nabla V^m \|^2 + \|\nabla S_{\n}\|_{m-2}^2 \Big).\eeq
   \end{proof}
  To estimate $\|\partial_{z}\rho\|_{1, \infty}$,  note that a brutal use of  the Duhamel formula \eqref{duhamel} is not sufficient.
  Indeed,   even for the fields  $Z_{i}, \, i=1, \, 2$  which commute with $G$,  we get
    that
    $$ \sqrt{\eps }\| \partial_{z} Z_{i} \rho \|_{L^\infty} \leq {1 \over \sqrt{t}} \| \rho _{0}\|_{1, \infty}
     + \int_{0}^t{1 \over \sqrt{t- \tau}}\big( \| \mathcal{H}\|_{1, \infty} + \| w \cdot \nabla \rho \|_{1, \infty} \big)$$
    and the problem is that 
     $$   \| w \cdot \nabla \rho \|_{1, \infty}$$ cannot be estimated in terms of controlled quantities ($ \|\rho \|_{2, \infty} \sim \| \partial_{z}v\|_{2, \infty}$
      is not controlled). Consequently, we need to be more precise and incorporate the transport term in  the argument.
       We shall use  the following lemma 
     \begin{lem}
     \label{lemFP1}
        Consider  $\rho$  a smooth solution of 
        \beq
        \label{eqetaFP1} 
        \partial_{t} \rho + w \cdot \nabla \rho = \eps \partial_{zz} \rho + \mathcal{H}, \quad  z<0, \quad 
         \rho(t,y,0)= 0 
         \eeq
          for  some smooth  vector field $w$ such that 
          $ w_{3}$ vanishes on the boundary. Assume that 
         $ \rho$ and $\mathcal{H}$  are  compactly supported in $z$ and that
         $ \sup_{[0, T]}\big(\|w\|_{E^{2, \infty}} + \|\partial_{zz} w_{3}\|\big) \leq M.$
          Then,  there exists $\Lambda(M)$ such that  we have the estimate:
         $$\sqrt{\eps } \| \partial_{z} \rho (t) \|_{1, \infty}
          \leq \Lambda(M) \Big( { 1 \over  \sqrt{t}} \|  \rho (0) \|_{1, \infty} + 
          \int_{0}^t {1 \over \sqrt{t - \tau}} \big( \| \rho \|_{1, \infty}
           + \|  \rho \|_{4}  + \| \mathcal{H} \|_{1, \infty} \big)\Big), \quad \forall t \in [0, T].$$
     \end{lem}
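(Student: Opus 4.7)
The plan is to derive the estimate by combining the one-dimensional Duhamel representation with the Dirichlet heat kernel
\[
G_\eps(t,z,z') = \tfrac{1}{\sqrt{4\pi\eps t}}\bigl(e^{-(z-z')^2/(4\eps t)} - e^{-(z+z')^2/(4\eps t)}\bigr),
\]
and carefully handling the transport term using the fact that $w_3$ vanishes on $\{z=0\}$. Since $Z_1 = \partial_1$, $Z_2=\partial_2$, and $Z_3 = \frac{z}{1-z}\partial_z$, I will split the required control of $\sqrt{\eps}\|\partial_z \rho\|_{1,\infty}$ into the estimates of $\sqrt{\eps}\|\partial_z \rho\|_{L^\infty}$ (already treated in the proof of Proposition~\ref{dzzvLinfty}), of $\sqrt{\eps}\|\partial_z \partial_i \rho\|_{L^\infty}$ for $i=1,2$, and of $\sqrt{\eps}\|\partial_z Z_3 \rho\|_{L^\infty}$. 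To remove the tangential drift, I first pass to Lagrangian coordinates adapted to $w_y(t,y,0)$ by setting $\tilde\rho(t,y,z)=\rho(t,X(t,y),z)$ where $\partial_t X = w_y(t,X,0)$, so that by the hypothesis $\|w\|_{E^{2,\infty}} \leq M$ this change of variables is a diffeomorphism preserving conormal Sobolev norms up to $\Lambda(M)$. Writing $w_3(t,y,z) = z\,\beta(t,y,z)$ and $w_y(t,y,z) - w_y(t,y,0) = z\,\gamma(t,y,z)$ with $\|\beta\|_{W^{1,\infty}}+\|\gamma\|_{W^{1,\infty}} \leq \Lambda(M)$ (using $w_3(t,y,0)=0$ and the Taylor expansion), the transported equation takes the form $\partial_t \tilde\rho + z(\tilde\gamma\cdot\nabla_y + \tilde\beta\partial_z)\tilde\rho = \eps\partial_{zz}\tilde\rho + \tilde{\mathcal H}$.

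For the tangential fields $i=1,2$, apply $\partial_i$ to this equation and write the Duhamel formula for $\partial_i \tilde\rho$, then differentiate in $z$. The kernel $\sqrt{\eps}\,\partial_z G_\eps(t-\tau,z,z')$ produces an integrable $1/\sqrt{t-\tau}$ singularity, so the source $\partial_i \tilde{\mathcal H}$ and the commutator contributions $\partial_i z(\tilde\gamma\cdot\nabla_y\tilde\rho + \tilde\beta\partial_z \tilde\rho)$ give exactly the desired bounds once one notes that the factor $z$ (which may be integrated against $\partial_{z'}G_\eps$ by parts using that the boundary term vanishes thanks to $z'|_{z'=0}=0$) absorbs the derivative loss. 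The apparently problematic term $z\tilde\beta \partial_z \partial_i \tilde\rho$ is handled by integrating by parts in $z'$ in the Duhamel integral: this moves the normal derivative onto $G_\eps$ while $z'\partial_{z'}G_\eps$ retains an $L^1_{z'}$ kernel bounded uniformly in $\eps$ by $\Lambda/\sqrt{t-\tau}$, and no $\|\tilde\rho\|_{2,\infty}$ ever appears.

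The main obstacle will be the estimate of $\sqrt{\eps}\|\partial_z Z_3 \tilde\rho\|_{L^\infty}$: the field $Z_3$ does not commute with $\eps\partial_{zz}$, and a direct commutator $[\eps\partial_{zz}, Z_3]$ generates a term containing $\eps\partial_{zz}\tilde\rho$ which is not pointwise controlled. To bypass this, I will use the equation itself to replace $\eps\partial_{zz}\tilde\rho$ by $\partial_t\tilde\rho + z(\tilde\gamma\cdot\nabla_y + \tilde\beta \partial_z)\tilde\rho - \tilde{\mathcal H}$, obtaining a Duhamel identity for $Z_3\tilde\rho$ in which an extra time derivative appears inside the integral; integration by parts in $\tau$ then trades this time derivative for a $\partial_\tau G_\eps = \eps\partial_{z'z'}G_\eps$ factor on the kernel, producing the same $\Lambda(M)/\sqrt{t-\tau}$ singularity after one more integration by parts in $z'$. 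The remaining lower order terms, involving tangential derivatives of the coefficients applied to $\tilde\rho$, are estimated in $L^\infty$ by the anisotropic Sobolev embedding \eqref{emb}, which requires control of $\|\tilde\rho\|_4$, thus accounting for the appearance of $\|\rho\|_4$ in the final bound. Combining the three estimates and translating back through the Lagrangian change of variables yields the lemma.
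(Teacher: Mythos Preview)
Your proposal has a genuine gap in the handling of the normal drift $w_3\partial_z\rho$ (equivalently $z\tilde\beta\partial_z\tilde\rho$ in your notation). For the tangential part you claim that after integrating by parts in $z'$ the resulting kernel $z'\partial_{z'}\bigl[\sqrt{\eps}\,\partial_z G_\eps(t-\tau,z,z')\bigr]$ is $L^1_{z'}$-bounded by $\Lambda/\sqrt{t-\tau}$ uniformly in $z$ and $\eps$. This is false: splitting $|z'|\le|z|+|z-z'|$, the $|z-z'|$-piece is harmless but the $|z|$-piece produces $|z|\int\sqrt{\eps}\,|\partial_{zz'}^2G_\eps|\,dz'\sim |z|/\bigl(\sqrt{\eps}\,(t-\tau)\bigr)$, which is unbounded as $\eps\to0$ for any fixed $z<0$ in the support of $\rho$. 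Not integrating by parts is no better, since the source then costs $\|z\partial_z\partial_i\tilde\rho\|_{L^\infty}\sim\|Z_3Z_i\rho\|_{L^\infty}$, precisely the second conormal derivative the lemma is designed to avoid. The same obstruction recurs in your $Z_3$ argument: the boundary term at $\tau=t$ in the time integration by parts is formally $\sqrt{\eps}\,\partial_z G_\eps(0)$ and is singular, while the interior contribution $\eps^{3/2}\partial_z\partial_{z'z'}G_\eps$ leads, after one $z'$-integration by parts, to $\|\partial_z\tilde\rho\|_{L^\infty}$ paired with a non-integrable $1/(t-\tau)$ weight.

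The paper's remedy is not to treat the leading normal drift as a perturbation of the heat operator but to absorb it into the evolution: one works with the Fokker--Planck semigroup $S(t,\tau)$ generated by $\partial_t+w_y(t,y,0)\cdot\nabla_y+z\,\partial_zw_3(t,y,0)\,\partial_z-\eps\partial_{zz}$, whose kernel (after odd extension to the whole line and the tangential Lagrangian change) is the explicit translation-invariant Gaussian $k(t,\tau,y,z-z')$ of \eqref{duhamel2}. Translation invariance gives the identity $z\partial_z k(z-z')=(z-z')\partial_z k - z'\partial_{z'}k$ of \eqref{trickconorm}, in which the dangerous prefactor $|z|$ is replaced by $|z-z'|$, localized on the Gaussian scale and therefore harmless. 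This yields Lemma~\ref{lemFP1bis}: $\sqrt{\eps}\,\|\partial_zS(t,\tau)f_0\|_{L^\infty}\le\Lambda(M)(t-\tau)^{-1/2}\|f_0\|_{L^\infty}$ and $\sqrt{\eps}\,\|\partial_z(z\partial_zS(t,\tau)f_0)\|_{L^\infty}\le\Lambda(M)(t-\tau)^{-1/2}(\|f_0\|_{L^\infty}+\|z\partial_zf_0\|_{L^\infty})$. Applying these to the Duhamel formula \eqref{Duh} with source $G=\mathcal H-R$ gives the lemma directly, for all three conormal fields, without any commutator substitution or time integration by parts; the $\|\rho\|_4$ contribution arises only from the estimate \eqref{Rest} of the second-order Taylor remainder $R$ via the Sobolev trick \eqref{trick}.
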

  The proof of this Lemma is given in section \ref{sectionFP1}. By using the previous Lemma,  for the equation   \eqref{eqrhobis}
   and the estimates   \eqref{estH1infty},  \eqref{dzzw3},  we find that
   $$\sqrt{\eps } \| \partial_{z} \rho (t) \|_{1, \infty}
          \leq \Lambda(M) \Big( { 1 \over  \sqrt{t}} \|  \rho (0) \|_{1, \infty} + 
          \int_{0}^t {1 \over \sqrt{t - \tau}} \big(  1   + \eps \|\nabla V^m\| + \eps \|S_{\n}\|_{3, \infty} +  \eps \|v\|_{4, \infty}\big)\Big) $$
           and hence by using \eqref{Sn3infty} and  \eqref{v4infty}, we obtain
     $$\sqrt{\eps } \| \partial_{z} \rho (t) \|_{1, \infty}
          \leq \Lambda(M) \Big( { 1 \over  \sqrt{t}} \|  \rho (0) \|_{1, \infty} + 
          \int_{0}^t {1 \over \sqrt{t - \tau}} \big(  1   + \eps \|\nabla V^m\| + \eps \|\nabla S_{\n}\|_{m-2}\big)\Big)$$
           for $m \geq 6$. From integration in time, we obtain
      $$   \int_{0}^t \sqrt{\eps } \| \partial_{z} \rho (t) \|_{1, \infty}
          \leq \Lambda(M) \Big( \sqrt{t} \|  \rho (0) \|_{1, \infty} +  \sqrt{t}
          \int_{0}^t \big(  1   + \eps \|\nabla V^m\| + \eps \|\nabla S_{\n}\|_{m-2}\big)\Big).$$
          We finally get \eqref{Linfty0t1} by combining the last estimate and \eqref{Linfty+1}.  Note that the terms involving the initial data
           can be  estimated by $\Lambda(M)$.
      \end{proof} 
   \section{Normal derivative estimates part II}
   \label{sectionnorm2}
     In the estimate of Proposition \ref{propdzvm-2}, we see that the left hand side is still insufficient to control the
      term $\int_{0}^t \|\partial_{z} v\|_{m-1}^2.$ 
      It does not seem possible to estimate it by  estimating
      $\|S_{\n}\|_{m-1}$. Indeed,  in the right hand side of \eqref{SNalpha}, the term involving the pressure
        in $F_{S}^2$ does not enjoy any additional regularity. The main obstruction comes from the Euler
         part $q^E$ of the pressure. Since $q^E= gh$ on the boundary the estimate given by Proposition \ref{estqE}
          is optimal in terms of the regularity of $h$: the estimate of $\|D^2 q^E \|_{m-1}$  necessarily involves
           $|h|_{m+{1\over 2}}$  which cannot be controlled uniformly in $\eps$.  The solution will be to use
            the vorticity instead of $S_{\n}$ to perform this estimate.  The main difficulty will be that
             the vorticity does not vanish on the boundary. Note that this difficulty  is not a problem  in the inviscid
              case since there is always a good energy estimate for the transport equation solved by the vorticity even
               if it does not vanish on the boundary. 
             
       Let us set $\omega = \nabla^\varphi \times v$ (note that we have in an equivalent way that $\omega= (\nabla \times u)(t, \Phi)$).
        Since 
        \beq
        \label{omega1} \omega \times \n= {1 \over 2}\big( D^\varphi v\, \n - (D^\varphi v)^t\n)= S^\varphi v\, \n -   (D^\varphi v)^t\n,
        \eeq
        we get as in \eqref{Subis} that
        $$ \omega \times \n =   {1 \over 2} \partial_{\n} u  - g^{ij}\big( \partial_{j}v \cdot \n \big) \partial_{y^i} .$$
        Consequently, we get by using also \eqref{Subis1} that
        \beq
        \label{dzvm-1O} \|Z^{m-1}\partial_{z} v\| \leq \Lambda_{\infty, 6}\big(\|v\|_{m} +  |h|_{m-{1 \over 2}} + \|\omega\|_{m-1}\big).
        \eeq   
     and hence we shall estimate $\|\omega\|_{m-1}$ in place of $\|\partial_{z}v\|_{m-1}$. By applying $\nabla^\varphi \times\cdot $ to the equation \eqref{NSv}, we get 
       the vorticity equation in $\mathcal{S}$
      \beq
      \label{eqomega}
      \D_{t} \omega + v \cdot \nabla^\varphi \omega - \omega \cdot \nabla^\varphi v = \eps \Delta^\varphi \omega.
      \eeq
     By using  \eqref{omega1} and the boundary condition \eqref{bordv2}, we note  that on the boundary, we have 
     $$ \omega \times \n = \Pi \big( \omega \times \n) =  - \Pi\big( g^{ij}\big( \partial_{j}v \cdot \n \big) \partial_{y^i}\big)$$
     and thus $\omega\times \n$ does not vanish on the boundary. Consequently, there is no gain to  consider
      $\omega \times \n$ in place of $\omega$ since the equation for $\omega \times \n$  is more complicated.
       In this subsection, we shall thus estimate $Z^{m-1} \omega$. 
       
    Thanks to \eqref{eqomega}, we get that $Z^\alpha \omega $ for $|\alpha| \leq m-1$ solves  in $\mathcal{S}$ the equation
   \beq
   \label{eqomegaalpha}\D_{t} Z^\alpha  \omega + v \cdot \nabla^\varphi \, Z^\alpha  \omega - \eps \Delta Z^\alpha \omega= F\eeq
   where the source term $F$ is given by 
   \beq
   \label{Fomega}
    F= Z^\alpha\big(\omega \cdot \nabla^\varphi v)  + \mathcal{C}_{S}
   \eeq 
   where $\mathcal{C}_{S}$  is given as in \eqref{CS} by 
    \beq
       \label{CSomega}
       \mathcal{C}_{S}= \mathcal  C_{S}^1 + \mathcal C_{S}^2\eeq
       with
       $$\mathcal{C}_{S}^1= [Z^\alpha v_{y}]\cdot  \nabla_{y} \omega +  [Z^\alpha, V_{z}] \partial_{z} \omega:=
        C_{Sy}+ C_{Sz}, \quad \mathcal{C}_{S}^2 =  - \eps [Z^\alpha, \Delta^\varphi] \omega.
       $$
       
     In addition, by using Lemma \ref{lembord}, we get that on the boundary 
     \beq
     \label{omegab1} |(Z^\alpha \omega)^b| \leq \Lambda_{\infty, 6}\big( |v^b|_{m} + |h|_{m} \big).
     \eeq
     Note that by using the trace Theorem, we get that
     $$ |(Z^\alpha \omega)^b| \leq \Lambda_{\infty, 6} \big( \|\nabla  V^{m}\|^{1 \over 2} \|V^m\|^{1 \over 2} + |V^m| +|h|_{m} \big).$$
     Consequently,  the only way  we can  control this boundary value is through the estimate
    $$
    \eps^{1 \over 2 }  \int_{0}^t  |(Z^\alpha \omega)^b|^2 \leq  \eps \int_{0}^t  \| \nabla V^m\|^2 + \int_{0}^t \Lambda_{\infty, 6}\big(
     \|V^m\|^2 + |h|_{m}^2 \big)
    $$
     and to  we see that  the left hand side can be estimated by using  Proposition  \ref{conormv}.
     Nevertheless, it will be useful to keep the slightly sharper form of the above inequality which reads
     \beq
     \label{omegab2}
    \eps^{1 \over 2 }  \int_{0}^t  |(Z^\alpha \omega)^b|^2   \leq  \Lambda_{\infty, 6}\Big( \int_{0}^t  \sqrt{\eps} \| \nabla V^m\|\, \|V^m\| + 
     \|V^m\|^2 + |h|_{m}^2 \Big).
     \eeq 
      The main difficulty will be to handle this non-homogeneous boundary value problem for the convection-diffusion
      equation \eqref{eqomegaalpha} since because of \eqref{omegab2} the boundary value is at a  low level of regularity
       (it is $L^2_{t,y}$ and no more).
              
    To split the difficulty, we set
    \beq
    \label{splitomega}
    Z^\alpha \omega= \omega^\alpha_{h}+ \omega^{\alpha}_{nh}
    \eeq
    where $\omega_{nh}$ solves in $\mathcal{S}$  the equation \eqref{eqomegaalpha}, that is to say
    \beq
    \label{eqomeganh}
    \D_{t}  \omega^\alpha_{nh} + v \cdot \nabla^\varphi   \omega^\alpha_{nh} - \eps \Delta^\varphi  \omega^\alpha_{nh}= F\eeq
     with the initial and boundary conditions
    \beq
    \label{omeganhc}
    (\omega^\alpha_{nh})^b= 0, \quad  (\omega^\alpha_{nh})_{t=0}= \omega_{0}.\eeq
    while $\omega_{h}$ will solves in $\mathcal{S}$ the homogeneous equation
    \beq
    \label{eqomegah}
    \D_{t}   \omega^\alpha_{h} + v \cdot \nabla^\varphi   \omega^\alpha_{h} - \eps \Delta^\varphi  \omega^\alpha_{h}= 0
    \eeq
    with the initial and boundary conditions
    \beq
    \label{omegahc}
  ( \omega^\alpha_{h})^b= (Z^\alpha \omega)^b, \quad  (\omega^\alpha_{h})_{t=0}= 0.
    \eeq

 
The main result of this section is the following proposition: 
 \begin{prop}
 \label{propomega}
     For $T \in [0, T^\eps]$, $T^\eps \leq 1$,  assume that for $M>0$, the estimate
    \beq
    \label{hypomegahfin}
    \sup_{[0, T]} \Lambda_{\infty, 6}(t) + \eps \int_{0}^T \big( \eps \|\nabla V^6 \|^2 + \eps \| \nabla S_{\n}\|_{4}^2 \big) \leq M
    \eeq
   holds.  Then, there exists $\Lambda(M)$ such that  
    \begin{align*}
  \| \omega \|_{L^4([0, T], H^{m-1}_{co}(\mathcal{S}))}^2 
   &    \leq \Lambda_{0} \| \omega(0)\|_{m-1}^2
      + \Lambda(M)\int_{0}^t ( \|V^m\| ^2  + \| \omega\|_{m-1}^2 + |h|_{m}^2  + \eps |h|_{m+{1 \over 2}}^2  \big) \\
    &  \quad    + \Lambda_{0}  \big( \int_{0}^t   \eps \| \nabla S_{\n} \|_{m-2}^2 +
   \eps   \| \nabla V^m\|^2\big).
     \end{align*}
        \end{prop}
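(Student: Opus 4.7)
The plan is to use the decomposition \eqref{splitomega}--\eqref{omegahc}, namely $Z^\alpha\omega = \omega_{nh}^\alpha + \omega_h^\alpha$ for $|\alpha|\le m-1$, treat each piece by a very different argument, and finally put them together using the trace bound \eqref{omegab2} for the boundary value that is passed from $\omega_h^\alpha$ back to the full estimate.

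For $\omega_{nh}^\alpha$, which solves \eqref{eqomeganh} with homogeneous Dirichlet data \eqref{omeganhc}, I would perform a standard $L^2$ energy estimate using Corollary \ref{coripp}. The boundary integral vanishes thanks to $(\omega_{nh}^\alpha)^b=0$, so only the source $F$ from \eqref{Fomega} must be handled. The nonlinear term $Z^\alpha(\omega\cdot\nabla^\varphi v)$ and the convective commutator $\mathcal{C}_S^1$ are controlled by the Moser estimates \eqref{gues}, Lemma \ref{lemestW} and Lemma \ref{lemdzS}, producing contributions of the type $\Lambda_{\infty,m}(\|V^m\|+\|\omega\|_{m-1}+|h|_{m-1/2})$. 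The diffusion commutator $\mathcal{C}_S^2=-\eps[Z^\alpha,\Delta^\varphi]\omega$ is the delicate piece; exactly as in the proof of Lemma \ref{comDS}, I would integrate by parts to transfer one derivative onto $\omega_{nh}^\alpha$, absorb the resulting $\sqrt\eps\,\|\nabla\omega_{nh}^\alpha\|$ via Young into the dissipation, and control what remains by $\eps\|\nabla V^m\|^2+\eps\|\nabla S_{\n}\|_{m-2}^2$ (using \eqref{dzvk} and Proposition \ref{propeta}). Integrating in time yields a pointwise-in-time bound for $\|\omega_{nh}^\alpha\|$, which is stronger than what is claimed and in particular gives an $L^4_tL^2_x$ bound.

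The main obstacle is the homogeneous-equation-with-rough-boundary-data part $\omega_h^\alpha$, solving \eqref{eqomegah}--\eqref{omegahc}, because the boundary datum $(Z^\alpha\omega)^b$ is only controlled in $L^2_{t,y}$ after the gain of $\sqrt\eps$ in \eqref{omegab2}. Following the outline in the introduction, I would first switch to Lagrangian coordinates associated to the flow of $v$ (restricted to the support where $\omega_h^\alpha$ is concentrated), so that the convective operator $\D_t+v\cdot\nabla^\varphi$ becomes $\partial_t$ and the equation becomes a purely parabolic (but variable-coefficient) equation for $\tilde\omega_h^\alpha=\omega_h^\alpha\circ X$. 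The coefficients after this change of variables have only limited uniform regularity, so I would use the partially semiclassical paradifferential calculus recalled in section \ref{sectionparadiff}, with the quasi-homogeneous weight $(\gamma^2+|\tau|^2+|\sqrt\eps\,\xi|^4)^{1/4}$. Constructing a microlocal symmetrizer for the parabolic symbol in Lagrangian coordinates, and testing against the boundary datum using the one-dimensional heat-equation estimate recalled in Section \ref{sectionheat}, I expect to obtain
\[
 \int_0^T e^{-2\gamma t}\bigl\|(\gamma+|\partial_t|)^{1/4}\tilde\omega_h^\alpha\bigr\|_{L^2}^2\,dt
 \;\lesssim\; \sqrt\eps\int_0^T |(Z^\alpha\omega)^b|_{L^2(\mathbb{R}^2)}^2\,dt,
\]
i.e.\ an $H^{1/4}_t L^2_x$ bound for $\tilde\omega_h^\alpha$ in Lagrangian variables.

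Finally, the Sobolev embedding $H^{1/4}((0,T),L^2)\hookrightarrow L^4((0,T),L^2)$ converts this into an $L^4_tL^2_x$ bound on $\tilde\omega_h^\alpha$, and changing variables back to Eulerian coordinates (using the control of $\nabla v$ and of the Jacobian provided by Corollary \ref{corLinfty} and the assumption \eqref{hypomegahfin}) transfers it to $\omega_h^\alpha$. Inserting \eqref{omegab2} on the right-hand side, together with Young's inequality to absorb $\sqrt\eps\|\nabla V^m\|\,\|V^m\|$ into $\Lambda(M)\|V^m\|^2 + \eps\|\nabla V^m\|^2$, produces exactly the terms in the statement. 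Summing over $|\alpha|\le m-1$ and combining with the $\omega_{nh}^\alpha$ bound from the first paragraph closes the estimate. The hard point is genuinely the paradifferential symmetrizer step: one has to verify that the low regularity of the Lagrangian metric and drift is compatible with the quasi-homogeneous calculus of \cite{Metivier-Zumbrun}, and that the boundary symbol is uniformly elliptic in the $(\gamma,\tau,\sqrt\eps\,\xi)$ weight so that the $1/4$-gain is really available.
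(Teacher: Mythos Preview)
Your plan is correct and matches the paper's approach exactly: the proof of Proposition~\ref{propomega} in the paper is precisely the combination of the splitting \eqref{splitomega} with Proposition~\ref{propomeganh} (energy estimate for $\omega_{nh}^\alpha$ with homogeneous Dirichlet data) and Proposition~\ref{propomegah} (Lagrangian change of variables followed by the paradifferential symmetrizer argument of Theorem~\ref{theomicrolocal}, then the $H^{1/4}_t\hookrightarrow L^4_t$ embedding and \eqref{omegab2}). One point you slightly understate is the convective commutator $\mathcal{C}_{Sz}$ in the $\omega_{nh}^\alpha$ estimate: a direct application of the earlier commutator bound would produce $|h|_{m+1/2}$ without an $\eps$ factor, so the paper exploits the convolution structure of $\eta$ and the kinematic boundary condition (Lemma~\ref{lembordh}) to recover a uniform estimate --- keep this in mind when filling in details.
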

      The proof follows from 
  the splitting  \eqref{splitomega}  
and  the estimates of Proposition \ref{propomeganh} and  
\ref{propomegah}.   

 \subsection{Estimate of $\omega_{nh}^\alpha$}
   Let us first estimate $\omega^\alpha_{nh}$. We shall use the notation
   $$\|\omega_{nh}^{m-1}(t) \|^2= \sum_{|\alpha| \leq m-1} \| \omega_{nh}^\alpha \|^2, \quad \int_{0}^t\| \nabla  \omega_{nh}^{m-1} \|^2
    = \sum_{| \alpha| \leq m-1} \| \nabla \omega_{nh}^\alpha \|^2.$$
    \begin{prop}
    For $t \in [0, T^\eps]$ , the solution $\omega_{nh}^\alpha$ of \eqref{eqomeganh}, \eqref{omeganhc} satisfies the estimate:
   \label{propomeganh}
     \begin{align*}
    &  \|\omega_{nh}^{m-1}(t) \|^2 + \eps \int_{0}^t  \| \nabla  \omega_{nh}^{m-1} \|^2 \\
   &    \leq \Lambda_{0}\| \omega(0)\|_{m-1}^2
      + \int_{0}^t \Lambda_{\infty, 6}\big( \|V^m\| ^2  + \| \omega\|_{m-1}^2 + |h|_{m}^2  + \eps |h|_{m+{1 \over 2}}^2  \big)
       + \Lambda_{0} \int_{0}^t   \eps \| \nabla S_{\n} \|_{m-2}^2.
     \end{align*} 
     \end{prop}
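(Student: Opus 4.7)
The plan is to perform a standard weighted $L^2$ energy estimate for the linear convection-diffusion equation \eqref{eqomeganh} satisfied by $\omega_{nh}^\alpha$, $|\alpha|\leq m-1$, taking advantage of the homogeneous Dirichlet boundary condition $(\omega_{nh}^\alpha)^b=0$ that we have built in by construction. Multiplying \eqref{eqomeganh} by $\omega_{nh}^\alpha$ and integrating against $d\mathcal{V}$, Corollary \ref{coripp} (using \eqref{ippt} with $\nabla^\varphi\cdot v=0$ and the boundary condition $\partial_t h=v\cdot\N$, together with \eqref{ippD}) produces the clean identity
\[
\tfrac{d}{dt}\tfrac12\int_\mathcal{S}|\omega_{nh}^\alpha|^2\,d\mathcal{V}+\eps\int_\mathcal{S}|\nabla^\varphi\omega_{nh}^\alpha|^2\,d\mathcal{V}=\int_\mathcal{S}F\cdot\omega_{nh}^\alpha\,d\mathcal{V},
\]
with no surface contribution. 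Lemma \ref{mingrad} then turns the dissipation into the full $\eps\|\nabla\omega_{nh}^\alpha\|^2$.

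The bulk of the work is to estimate the right-hand side $F=Z^\alpha(\omega\cdot\nabla^\varphi v)+\mathcal{C}_S^1+\mathcal{C}_S^2$. The nonlinear term is handled directly by the Gagliardo-Nirenberg-Moser estimate \eqref{gues}: one gets $\|Z^\alpha(\omega\cdot\nabla^\varphi v)\|\leq\Lambda_{\infty,6}(\|\omega\|_{m-1}+\|\nabla^\varphi v\|_{m-1})$, and the second factor is reduced, via Lemma \ref{lemdzS} and \eqref{dzvm-1O}, to $\|V^m\|+\|\omega\|_{m-1}+|h|_{m-1/2}$. For $\mathcal{C}_S^1=[Z^\alpha,v_y]\cdot\nabla_y\omega+[Z^\alpha,V_z]\partial_z\omega$, the tangential piece is controlled by \eqref{com}, while the normal piece is delicate since $\partial_z\omega$ is not controlled. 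Here I would copy verbatim the Hardy-type trick used in the proof of Proposition \ref{propdzvm-2}: write $\partial_z=\tfrac{1-z}{z}Z_3$, use that $V_z$ vanishes on the boundary (see \eqref{Wdef}, \eqref{bordv1}), and apply the variants of Hardy's inequality in Lemma \ref{hardybis} together with Lemma \ref{lemestW} and Remark \ref{remdzVz}. This bounds $\|[Z^\alpha,V_z]\partial_z\omega\|$ by $\Lambda_{\infty,6}(\|V^m\|+\|\omega\|_{m-1}+|h|_{m-1/2})$.

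The main obstacle and the only term that produces the factor $\eps\|\nabla S_\n\|_{m-2}^2$ on the right-hand side is the second-order commutator $\mathcal{C}_S^2=-\eps[Z^\alpha,\Delta^\varphi]\omega$. I would treat it exactly as $\mathcal{C}_S^2$ was treated in the proof of Proposition \ref{propdzvm-2}: decompose $\Delta^\varphi=\tfrac{1}{\partial_z\varphi}\nabla\cdot(E\nabla\cdot)$ via \eqref{deltaphi} and split into three commutator pieces. Using that $\omega_{nh}^\alpha$ vanishes on the boundary, each piece can be written after one integration by parts as an interior integral with one derivative falling on $\omega_{nh}^\alpha$, producing bounds of the form
\[
\Lambda_0\bigl(\sqrt\eps\|\nabla\omega_{nh}^\alpha\|+\|\omega_{nh}^\alpha\|\bigr)\bigl(\sqrt\eps\|\nabla\omega\|_{m-2}+\Lambda_{\infty,6}(|h|_{m-1/2}+\sqrt\eps|h|_{m+1/2})\bigr).
\]
Using again \eqref{dzvm-1O} and Lemma \ref{lemdzS} (applied with $k=m-2$), the quantity $\sqrt\eps\|\nabla\omega\|_{m-2}$ is controlled by $\sqrt\eps\|\nabla S_\n\|_{m-2}$ plus lower-order terms of conormal type; crucially this avoids any appearance of the Euler pressure Hessian, which is the whole reason for working with $\omega$ rather than $S_\n$ at this level.

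Combining these estimates, applying the Young inequality \eqref{young} to absorb the term $\eps\|\nabla\omega_{nh}^\alpha\|^2$ into the dissipation on the left-hand side, summing over $|\alpha|\leq m-1$ and integrating in time yields exactly the claimed bound. The only point to keep in mind is that the right-hand side involves $\|\omega\|_{m-1}$ rather than $\|\omega_{nh}^{m-1}\|$, which is consistent since the commutators act on the full $\omega=\omega_h+\omega_{nh}$; this causes no difficulty because $\|\omega\|_{m-1}$ appears inside the time integral and will later be closed by the complementary bound on $\omega_h^\alpha$ established in Proposition \ref{propomegah}.
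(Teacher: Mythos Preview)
Your overall architecture is correct and matches the paper: the energy identity, the treatment of the stretching term $Z^\alpha(\omega\cdot\nabla^\varphi v)$, the tangential commutator $\mathcal{C}_{Sy}$, and the Laplacian commutator $\mathcal{C}_S^2$ are all handled exactly as you describe (the paper explicitly says one changes $S_{\n}$ into $\omega$ and $m$ into $m+1$ in the estimates \eqref{CSy} and \eqref{CS2}).

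The gap is in your treatment of $\mathcal{C}_{Sz}=[Z^\alpha,V_z]\partial_z\omega$. You propose to ``copy verbatim the Hardy-type trick'' from Proposition~\ref{propdzvm-2}, but that argument was carried out for $|\alpha|\leq m-2$; running it at level $|\alpha|\leq m-1$ forces you (via Lemma~\ref{hardybis}) to control $\|\partial_z ZV_z\|_{m-2}$, and by Remark~\ref{remdzVz} with $m\mapsto m+1$ this costs $|h|_{m+1/2}$ \emph{without} any factor of $\sqrt\eps$. That quantity is not uniformly bounded. The paper flags this explicitly: ``in \eqref{CSz}, we remark that we cannot change $m$ into $m+1$ since $h$ is not smooth enough uniformly in $\eps$''. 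So your claimed bound $\Lambda_{\infty,6}(\|V^m\|+\|\omega\|_{m-1}+|h|_{m-1/2})$ for this commutator is not what the Hardy trick alone delivers.

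The missing idea is to exploit the \emph{structure} of $V_z=(v\cdot\N-\partial_t\eta)/\partial_z\varphi$ together with the kinematic boundary condition. The dangerous top-order piece is essentially $\|v\cdot\partial_z Z^\alpha\N-\partial_z Z^\alpha\partial_t\eta\|$ for $|\alpha|=m-1$. When $\alpha_3\neq0$ one observes that $Z_3$ acts as an order-zero operator on $\eta$ (see \eqref{Z3mieux}), so a full derivative is gained and one lands on $|h|_{m-1/2}$. When $\alpha_3=0$ one writes the term via the convolution representation \eqref{etaconv} of $\eta$ and recognises the combination $v_y^b\cdot\nabla Z^\alpha h-\partial_t Z^\alpha h$, which by Lemma~\ref{lembordh} equals $-(V^\alpha)^b\cdot\N-\mathcal{C}^\alpha(h)-v_3^b$ and is therefore controlled at the level $|h|_{m}+|v^b|_{m-1/2}$. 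After handling the commutator between $v^b$ and the mollifier $\psi_z$ (a Taylor expansion in $y$, plus the smoothing for $|z|\geq1$), one arrives at the correct bound \eqref{CSzomega}, namely $\|\mathcal{C}_{Sz}\|\leq\Lambda_{\infty,6}(\|v\|_{E^m}+|h|_m)$. This cancellation coming from the boundary condition is the crux that your proposal omits.
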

  Note that the term $ \Lambda_{0} \int_{0}^t \eps \| \nabla^\varphi  S_{\n}\|_{m-2}^2$ 
  in the right-hand side of the above estimate
   can be estimated by using Proposition \ref{propdzvm-2}.
   \begin{proof}
   
      Since we have for $\omega^\alpha_{nh}$ an homogeneous Dirichlet boundary condition, we deduce from \eqref{eqomeganh}
    and a standard energy estimate that
\beq
\label{omeganh1}  {d \over dt} {1 \over 2} \int_{\mathcal{S}}  |\omega^\alpha_{nh}|^2 \, d\V    + \eps \int_{\mathcal{S}} | \nabla^\varphi \omega^\alpha_{nh}|^2 \, d\V
 = \int_{\mathcal{S}}   F \cdot \omega^\alpha_{nh}\, d\V 
 \eeq
    and we need to estimate the right hand side with $F$ given by \eqref{Fomega}. By using \eqref{gues}, we get
    that
   $$ \|Z^\alpha (\omega \cdot \nabla^\varphi v \big) \|
    \leq \Lambda_{\infty, 6} \big( \|\omega \|_{m-1}+ \| \nabla^\varphi v\|_{m-1}\big)
     \leq \Lambda_{\infty, 6}\big( \|\omega\|_{m-1} + \|v\|_{m} + |h|_{m-{1 \over 2}}\big).$$
     Next, to estimate $\mathcal{C}_{S}$, 
     we observe that we can use \eqref{CS2} to estimate the part involving $\mathcal{C}_{S}^2$.
      Indeed, it suffices, to change $S_{\n}$ into $\omega$ and $m$ into $m+1$ to obtain
     \begin{multline}
     \label{CS2omega}
  \Big| \int_{\mathcal{S}} \mathcal{C}_{S}^2 \cdot \omega^\alpha_{nh} d\V\Big|
  \leq  \Lambda_{0}\big( \eps^{1 \over 2} \| \nabla^\varphi  \omega^\alpha_{nh} \| + \|\omega \|_{m-1}\big) \\  \big( \eps^{1\over 2 }\|\nabla \omega\|_{m-2}+ \|\omega\|_{m-1} + \Lambda_{\infty, 6}( |h|_{m-{1 \over 2 }}+ \eps^{1 \over 2} |h|_{m+{1 \over 2}})\big).
  \end{multline}  
     To estimate $\mathcal{C}_{S}^1$,  we can use the same  decomposition of $\mathcal{C}_{S}^1$ as the one given after \eqref{CS}. For $\mathcal{C}_{Sy}$, 
      we can also use \eqref{CSy} with $S_{\n}$ changed into $\omega$ and $m$ into $m+1$. This yields
  \beq
  \label{CSyomega}
  \|C_{Sy}\|\leq \Lambda_{\infty, 6}\big( \|\omega\|_{m-1} + \|v\|_{E^{m-1}}\big) \leq \Lambda_{\infty, 6} \big( \|\omega\|_{m-1} 
   +  \|v\|_{m} + |h|_{m} \big)
  \eeq
  The commutator $\mathcal{C}_{Sz}$ requires more care. Indeed, in \eqref{CSz}, we remark that we cannot change $m$ into $m+1$
   since $h$ is not smooth enough uniformly in $\eps$: such an estimate would involve $|h|_{m+{1 \over 2}}$ (and without a gain of $\eps^{1 \over 2}$). 
   By using as in \eqref{CSz0} that this commutator can be expanded into a sum of terms under the form
   $$ c_{\tilde \beta }Z^{\tilde \beta }\big(  {1- z  \over {z }} V_{z} \big)   \, Z_{3} Z^\gamma \omega$$
   where the constraints are now  $|\tilde \beta| + |\tilde \gamma| \leq m-1$, $|\tilde\gamma | \leq m-2$.
    Since $V_{z}= v_{z} / \partial_{z}\varphi= (v\cdot \N - \partial_{t} \eta )/ \partial_{z} \varphi$,
     we use as before  \eqref{gues} and that  $V_{z}$ vanishes on the boundary to write
     \begin{align*}
       \|c_{\tilde \beta }Z^{\tilde \beta }\big(  {1- z  \over {z }} V_{z} \big)   \, Z_{3} Z^\gamma \omega\|
      &   \leq     \Lambda_{\infty, 6} \| \omega \|_{m-1}+  \| \omega \|_{L^\infty} +  \big| Z\big( {1 \over \partial_{z} \varphi}  {1 - z \over z} v_{z}\big) \big\|_{m-2} \\
      & \leq      \Lambda_{\infty, 6} \big(  \| \omega \|_{m-1} + |h|_{m-{1 \over 2}} + \big\|{ 1-z \over z} Zv_{z}\big\|_{m-2} \big)
      \end{align*}
  Next, by using Lemma  \ref{hardybis}, we obtain that
  $$  \big\|{ 1-z \over z}  Zv_{z}\big\|_{m-2} \lesssim  \|Zv_{z}\|_{m-2} + \|\partial_{z} Zv_{z}\|_{m-2}
   \lesssim  \Lambda_{\infty, 6}\big( \|v\|_{E^{m}} + \sum_{| \alpha |= m- 1} \| v \cdot \partial_{z}Z^\alpha \N - \partial_{z} Z^\alpha \partial_{t} \eta \|\big)$$
   and the last term requires some care.
   Since $\eta$ is given by  \eqref{eqeta}, we first note that
   $$ |Z_{3} \hat \eta| \lesssim  |\tilde \chi(|\xi| z ) \hat{h}|$$
   where $\tilde \chi$ has a slightly bigger support than $\chi$ 
   and thus  we get that $Z_{3}$ acts as a zero order operator:
   \beq
   \label{Z3mieux}
    \| \nabla Z_{3} \eta \| \lesssim |h|_{1\over 2}.
    \eeq
    This yields that if $\alpha_{3} \neq 0$, we gain at least one derivative and thus,  have 
 $$ \| v \cdot \partial_{z}Z^\alpha \N - \partial_{z} Z^\alpha \partial_{t} \eta \|  \leq \Lambda_{\infty, 6}\big(   |h|_{m-{1 \over 2}} + |\partial_{t} h|_{m-
 {3 \over 2}}\big) \leq \Lambda_{\infty, 6}\big( |h|_{m-{1 \over 2} } + |v^b|_{m-{3 \over 2}} \big)$$
  where the last estimate comes from the boundary condition \eqref{bord2}.  Hence by using \eqref{trace}, we get
  $$ \| v \cdot \partial_{z}Z^\alpha \N - \partial_{z} Z^\alpha \partial_{t} \eta \| \leq \Lambda_{\infty, 6}\big( |h|_{m-{1 \over 2}}
   +  \|v\|_{E^{m-1}}\big).$$
   Consequently, we only need to estimate 
   $$  \| v \cdot \partial_{z}Z^\alpha \N - \partial_{z} Z^\alpha \partial_{t} \eta \| $$
    when $| \alpha |=m-1$, $\alpha_{3}= 0$. By using the expression \eqref{etaconv}, we note that
    $$ v \cdot \partial_{z}Z^\alpha \N - \partial_{z} Z^\alpha \partial_{t} \eta= 
    v_{1} \,\partial_{z} \big(  \psi_{z} \star_{y} \partial_{1} Z^{\alpha} h \big)
     + v_{2} \,  \partial_{z} \big(  \psi_{z} \star_{y} \partial_{2} Z^{\alpha} h \big) 
      - \partial_{z}\big(  \psi_{z} \star_{y} \partial_{t} Z^{\alpha} h \big) := \mathcal{T}_{\alpha}.$$
      For $z \leq -1$ , we can use the smoothing effect of the convolution  to get that
      $$  \|\mathcal{T}_{\alpha}\|_{L^2(\mathcal{S} \cap |z| \geq 1)} \leq  \Lambda_{\infty, 6} \big( \big\| \partial_{z}\big( {1 \over z^{m-1}}\tilde{ \psi}_{z}
       \star_{y}\nabla h\big)\big\|_{L^2(\mathcal{S} \cap |z| \geq 1)} 
        +  \big\| \partial_{z}\big( {1 \over z^{m-1}} \tilde{\psi}_{z}
       \star_{y}\partial_{t} h\big)\big\|_{L^2(\mathcal{S} \cap |z| \geq 1)} $$
       where $\tilde{\psi}_{z}$ has the same properties as $\psi_{z}$. This yields
       \beq
       \label{Talpha0} \|\mathcal{T}_{\alpha}\|_{L^2(\mathcal{S} \cap |z| \geq 1)}  \leq \Lambda_{\infty, 6} \big(|h|_{1 \over 2} + \|v\|_{E^1}\big).\eeq
        For $|z| \leq 1$,  
        We shall rewrite $\mathcal{T}_{\alpha}$ as 
       \beq
       \label{Talphadecfin} \mathcal{T}_{\alpha}= v_{1}^b \,\partial_{z} \big(  \psi_{z} \star_{y} \partial_{1} Z^{\alpha} h \big)
     + v_{2}^b \,  \partial_{z} \big(  \psi_{z} \star_{y} \partial_{2} Z^{\alpha} h \big) 
      - \partial_{z}\big(  \psi_{z} \star_{y} \partial_{t} Z^{\alpha} h \big) +  \mathcal{R}\eeq
       where
       $$ | \mathcal{R}| \leq \Lambda_{\infty, 6}|z| \big| \partial_{z} \big( \psi_{z} \star  Z^\alpha \nabla h\big)\big|
        \leq | Z_{3} \big( \psi_{z} \star  Z^\alpha \nabla h \big)\big|.$$
        Consequently,  by using again  the observation \eqref{Z3mieux}, we get that
        \beq
        \label{Talphadecfin1} \| \mathcal{R}\| _{L^2(\mathcal{S} \cap |z| \leq 1)} 
         \leq \Lambda_{\infty, 6} |h|_{m-{1 \over 2}}.\eeq
         To estimate the first term in \eqref{Talphadecfin}, we write
       \begin{align*}  &  v_{1}^b \,\partial_{z} \big(  \psi_{z} \star_{y} \partial_{1} Z^{\alpha} h \big)
     + v_{2}^b \,  \partial_{z} \big(  \psi_{z} \star_{y} \partial_{2} Z^{\alpha} h \big) \\
      & = \partial_{z} \big( \psi_{z} \star_{y}\big( v_{1}^b\partial_{1} Z^\alpha h + v_{2}^b \partial_{2} Z^\alpha h - \partial_{t} Z^\alpha h \big) \\
      & \quad + 
       \partial_{z} \Big(\int_{\mathbb{R}^2}\Big(  \big(v_{1}(t,y,0) - v_{1}(t,y',0) \big) \psi_{z}(y-y') \partial_{1} Z^\alpha h(t,y')
     \\ & 
     \quad  \quad \quad \quad  + \big(v_{2}(t,y,0) - v_{2}(t,y',0)\big) \psi_{z}(y-y') \partial_{2} Z^\alpha h(t,y'). 
      \end{align*} 
         For the second term, we can use the Taylor formula for $v_{i}$ to get  that
         $$ | (v_{i}(t,y,0) - v_{i}(t,y', 0))\partial_{z} \psi_{z}(y-y') | \leq \Lambda_{\infty, 6}  {1 \over z^2} \tilde{\psi}({y-y' \over z}) $$
          where $\tilde{\psi} $ is still an $L^1$ function. This yields that
        \begin{align}
  \label{Talphadecfin2}   &\sup_{z \in (-1, 0)} \Big\|   \partial_{z} \Big(\int_{\mathbb{R}^2}\Big(  \big(v_{1}(t,y,0) - v_{1}(t,y',0) \big) \psi_{z}(y-y') \partial_{1} Z^\alpha h(t,y')
     \\ & 
  \nonumber   \quad  \quad \quad \quad  \quad \quad  + \big(v_{2}(t,y,0) - v_{2}(t,y',0)\big) \psi_{z}(y-y') \partial_{2} Z^\alpha h(t,y') \Big\|_{L^2(\mathbb{R}^2)}
      \leq \Lambda_{\infty, 6} |h|_{m}.   
        \end{align}
        For the first term, we shall  use  Lemma \ref{lembordh}. For $|\alpha|= m-1$, we get from Lemma \ref{lembordh} that
        $$    v_{1}^b\partial_{1} Z^\alpha h + v_{2}^b \partial_{2} Z^\alpha h - \partial_{t} Z^\alpha h  = - \mathcal{C}^\alpha(h) - (V^\alpha)^b \cdot \N
         - v_{3}^b$$ 
         and hence also  that
   $$   \|   v_{1}^b\partial_{1} Z^\alpha h + v_{2}^b \partial_{2} Z^\alpha h - \partial_{t} Z^\alpha h \|_{H^{1 \over 2}(\mathbb{R}^2)}
       \leq \Lambda_{\infty, 6} \big(  |v^b|_{m-{1 \over 2}} + |h|_{m- {1 \over 2}}\big).$$
       Consequently, we obtain that
     \begin{align*}
      \|\partial_{z} \big( \psi_{z} \star_{y}\big( v_{1}^b\partial_{1} Z^\alpha h + v_{2}^b \partial_{2} Z^\alpha h - \partial_{t} Z^\alpha h \big)\|_{L^2(\mathcal{S}) }  & \leq   |v_{1}^b\partial_{1} Z^\alpha h + v_{2}^b \partial_{2} Z^\alpha h - \partial_{t} Z^\alpha h \big)|_{1\over 2} \\
       &\leq  \Lambda_{\infty, 6} \big(  |v^b|_{m-{1 \over 2}} + |h|_{m- {1 \over 2}}\big). 
      \end{align*}
     By combining \eqref{Talphadecfin}, \eqref{Talphadecfin1}, \eqref{Talphadecfin2}  and the last estimate with the trace Theorem, 
      we finally obtain that
    $$ \| \mathcal{T}_{\alpha}\|_{L^2(\mathcal{S}) \cap |z| \leq 1} \leq \Lambda_{\infty, 6}\big(  \|v\|_{E^m} + |h|_{m} \big).$$
    Since we had already proven \eqref{Talpha0}, we finally obtain that
   \beq
   \label{CSzomega}
   \| \mathcal{C}_{Sz}\| \leq \Lambda_{\infty, 6}\big( \|v\|_{E^m} + |h|_{m}\big).
   \eeq
   
   To conclude the proof of Proposition \ref{propomeganh},  we  use the energy estimate \eqref{omeganh1}, 
     the commutator estimates   \eqref{CS2omega}, \eqref{CSyomega}, \eqref{CSzomega}, Lemma \ref{mingrad}
      and the Young inequality to obtain
    \begin{align*}
    &  \|\omega_{nh}^{m-1}(t) \|^2 + \eps \int_{0}^t  \| \nabla  \omega_{nh}^{m-1} \|^2 \\
   &    \leq \Lambda_{0}\| \omega(0)\|_{m-1}^2
      + \int_{0}^t \Lambda_{\infty, 6}\big( \|v\|_{E^m}^2  + \| \omega\|_{m-1}^2 + |h|_{m}^2  + \eps |h|_{m+{1 \over 2}}^2  \big)
       + \Lambda_{0} \int_{0}^t   \eps \| \nabla \omega \|_{m-2}^2.
     \end{align*}
        To end the proof, we  note that thanks to \eqref{gues}, we have
       $$ \sqrt{\eps} \| \nabla \omega \|_{m-2} \leq  \Lambda_{0}\big( \sqrt{\eps}\| \partial_{zz}v \|_{m-2} + \sqrt{\eps} \|\partial_{z}v\|_{m-1} \big) + \Lambda_{\infty}|h|_{m-{1\over 2}} $$
        and we use again  Lemma \ref{lemdzS} and \eqref{equiv1} to obtain that
       \begin{align*}
    &  \|\omega_{nh}^{m-1}(t) \|^2 + \eps \int_{0}^t  \| \nabla  \omega_{nh}^{m-1} \|^2 \\
   &    \leq \Lambda_{0}\| \omega(0)\|_{m-1}^2
      + \int_{0}^t \Lambda_{\infty, 6}\big( \|V^m\| ^2  + \| \omega\|_{m-1}^2 + |h|_{m}^2  + \eps |h|_{m+{1 \over 2}}^2  \big)
       + \Lambda_{0} \int_{0}^t   \eps \| \nabla S_{\n} \|_{m-2}^2.
     \end{align*} 
      
    This ends the proof of Proposition \ref{propomeganh}.

  \end{proof}
  
  \subsection{Estimate of $\omega_{h}^\alpha$}
  \label{sectionheat}
   It remains to get an estimate for $\int_{0}^t | \omega^{m-1}_{h}|^2$
    where $\omega^\alpha_{h}$ solves the homogeneous equation \eqref{eqomegah}, with the non-homogeneous  boundary condition \eqref{omegahc}.
    Note that the only estimate on $ (Z^\alpha \omega)^b$ that we have at our disposal is the $L^2$ in time
     estimate \eqref{omegab2}. This creates two difficulties, the first one is that we cannot  easily lift
      this boundary condition and  perform a standard energy estimate. The second one  is that due to this poor estimate on  the 
       boundary value, we cannot hope an estimate of 
      $\sup_{[0, T]} \| \omega^{m-1}_{h}\|$ independent of $\eps$.
     \subsubsection{A simple computation on the heat equation}
    To understand the difficulty,   let us consider the heat equation 
      \beq
      \label{exheat}
      \partial_{t} f - \eps \Delta f= 0, \quad   x= (y,z) \in \mathcal{S}
      \eeq 
      with zero initial data and the boundary condition $f(t,y,0) =f^b(t,y)$
    \begin{lem}
    \label{lemheat}
    The solution of the above equation satisfies the estimate:
  $$  \int_{0}^{+ \infty} e^{-2\gamma t } \big\|( \gamma + |\partial_{t}|)^{1\over 4}) f \big\|^2 \, dt
     \leq \sqrt{\eps} \int_{0}^{+ \infty} e^{-2 \gamma t}|f^b|_{L^2(\mathbb{R}^2)}^2 \, dt.$$
    \end{lem}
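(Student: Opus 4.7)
The plan is to carry out the computation explicitly by Fourier--Laplace transform. Introduce $g(t,y,z) = e^{-\gamma t} f(t,y,z)$ and $g^b = e^{-\gamma t} f^b$; then $g$ solves $(\partial_t + \gamma) g - \eps \Delta g = 0$ in $\mathcal{S}$, with $g_{/z=0} = g^b$ and vanishing initial data. Taking the Fourier transform in $t$ (after a trivial extension by zero for $t<0$, justified by the zero initial datum) and in the tangential variable $y$, write $\tilde g(\tau,\xi,z)$ and $\tilde g^b(\tau,\xi)$; the equation reduces to the ordinary differential equation
\begin{equation*}
\eps \, \partial_{zz} \tilde g = \bigl(\gamma + i\tau + \eps |\xi|^2 \bigr)\, \tilde g, \qquad z<0,\qquad \tilde g(\tau,\xi,0)=\tilde g^b(\tau,\xi).
\end{equation*}

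Let $\mu = \mu(\tau,\xi)$ denote the square root of $\bigl(\gamma+i\tau\bigr)/\eps + |\xi|^2$ with positive real part; since the argument of the square root has strictly positive real part (as $\gamma>0$), we have $\arg(\mu^2) \in (-\pi/2,\pi/2)$, hence $\arg(\mu) \in (-\pi/4,\pi/4)$ and consequently
\begin{equation*}
\mathrm{Re}\,\mu \;\geq\; \tfrac{1}{\sqrt{2}}\, |\mu| \;\geq\; \tfrac{1}{\sqrt{2}}\, \frac{|\gamma+i\tau|^{1/2}}{\sqrt{\eps}}.
\end{equation*}
The unique bounded solution is $\tilde g(\tau,\xi,z) = \tilde g^b(\tau,\xi)\, e^{\mu z}$ for $z \leq 0$, so
\begin{equation*}
\int_{-\infty}^0 |\tilde g(\tau,\xi,z)|^2 \, dz \;=\; \frac{|\tilde g^b(\tau,\xi)|^2}{2\,\mathrm{Re}\,\mu} \;\leq\; \frac{\sqrt{\eps}}{\sqrt{2}\,|\gamma+i\tau|^{1/2}}\, |\tilde g^b(\tau,\xi)|^2.
\end{equation*}

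Multiplying by $|\gamma+i\tau|^{1/2}$ (which is the symbol of $(\gamma+|\partial_t|)^{1/2}$ acting on $g$, equivalently on $e^{-\gamma t}(\gamma+|\partial_t|)^{1/2}f$ in a suitable sense), we get
\begin{equation*}
|\gamma+i\tau|^{1/2} \int_{-\infty}^0 |\tilde g(\tau,\xi,z)|^2 \, dz \;\leq\; \tfrac{\sqrt{\eps}}{\sqrt{2}}\, |\tilde g^b(\tau,\xi)|^2.
\end{equation*}
Integration in $(\tau,\xi)\in\mathbb{R}\times\mathbb{R}^2$ followed by Plancherel's identity in $t$ and $y$ yields
\begin{equation*}
\int_0^{+\infty} e^{-2\gamma t}\, \bigl\|(\gamma+|\partial_t|)^{1/4} f\bigr\|_{L^2(\mathcal{S})}^2 \, dt \;\leq\; \sqrt{\eps}\, \int_0^{+\infty} e^{-2\gamma t}\, |f^b|_{L^2(\mathbb{R}^2)}^2 \, dt,
\end{equation*}
which is the claimed estimate (the constant $1/\sqrt{2}$ can be absorbed or kept explicit).

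There is no real obstacle: the proof is essentially a one-line symbol computation once one writes down $\mu$ and notices that $\mathrm{Re}\,\mu$ is comparable to $|\mu|$ because the real part of $\mu^2$ is positive. The only conceptual point worth highlighting is that the gain is exactly $\sqrt{\eps}/|\gamma+i\tau|^{1/2}$, which dictates the homogeneity of the weight $(\gamma+|\partial_t|)^{1/4}$ in the statement and explains why no better than an $H^{1/4}$ (hence $L^4$ by Sobolev embedding in time) estimate can be expected from an $L^2_{t,y}$ control of the boundary data.
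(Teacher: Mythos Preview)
Your proof is correct and follows essentially the same approach as the paper: both solve the transformed ODE explicitly via the exponential $e^{\mu z}$, bound $\mathrm{Re}\,\mu$ from below by a constant times $|\gamma+i\tau|^{1/2}/\sqrt{\eps}$, and conclude by Plancherel. The only cosmetic difference is that you first set $g=e^{-\gamma t}f$ and then Fourier-transform in $t$, whereas the paper writes the Laplace--Fourier transform of $f$ directly; these are of course equivalent.
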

    Consequently, we see that we  get a control of $f$ which is  $H^{1\over 4}_{t}(L^2(\mathcal{S})).$
     Note that by Sobolev embedding, this implies that we  can expect a control of 
     $ \| f\|_{L^4_{t}(L^2(\mathcal{S}))}$ (and not of   $\| f\|_{L^\infty_{t}(L^2(\mathcal{S}))}$).
    \begin{proof}
    We can use a Laplace Fourier transform in time and $y$ to get the  equation
    $$ - \eps \partial_{zz} \hat f + (\gamma + i \tau + \eps | \xi |^2 ) \hat f= 0, \quad \hat{f}_{/z=0} = \hat{f}^b$$
    where 
    $$\hat{f}(\gamma, \tau, \xi, z)= \int_{0}^{+ \infty}\int_{\mathbb{R}^2} e^{-\gamma t - i \tau t - i \xi \cdot y} f(t,y,z) \, dt dy $$
    with $\gamma >0$.
    
    We can solve explicitely this equation to get
    $$\hat{f}= e^{ \big( \gamma + i \tau + \eps |\xi|^2\big)^{1 \over 2} {z\over \sqrt{\eps} } }\hat{f}^b, \quad z<0$$
     and hence, we find that
     $$ | \hat{f}(\gamma, \tau, \xi, \cdot ) |_{L^2_{z}}^2 \leq   { \sqrt{\eps} \over (\gamma + |\tau | + \eps |\xi|^2)^{1 \over 2}  }
     |\hat{f}^b|_{L^2_{z}}^2.$$ This  yields
     $$ (\gamma + |\tau|)^{1 \over 2}  | \hat{f}(\gamma, \tau, \xi, \cdot ) |_{L^2_{z}}^2 \leq \sqrt{\eps}   |\hat{f}^b|_{L^2_{z}}^2$$
      and  the result follows   from the Bessel-Parseval identity.
       \end{proof}

    \subsubsection{Statement of the estimate of $\omega_{h}^\alpha$}
    
    \begin{prop}
    \label{propomegah}
    For $T \in [0, T^\eps]$, $T^\eps \leq 1$,  assume that for $M>0$, the estimate
    \beq
    \label{hypomegah}
    \sup_{[0, T]} \Lambda_{\infty, 6}(t) +  \int_{0}^T \big( \eps \|\nabla V^6 \|^2 + \eps \| \nabla S_{\n}\|_{4}^2 \big) \leq M
    \eeq
   holds.  Then, there exists $\Lambda(M)$ such that for $|\alpha| \leq m-1$, we have  
    \begin{align*}
  \| \omega^\alpha_{h} \|_{L^4([0, T], L^2(\mathcal{S}))}^2 \leq  \Lambda(M)  \int_{0}^T \big( \|V^m \|^2 + \|h\|_{m}^2\big) +
   \eps  \int_{0}^t \| \nabla V^m\|^2.
     \end{align*}
        \end{prop}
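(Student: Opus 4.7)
The plan is to reduce \eqref{eqomegah} to a purely parabolic problem with rough coefficients by passing to Lagrangian coordinates, then to use a semiclassical paradifferential symmetrizer to get the $H^{1/4}$-in-time control suggested by the model Lemma \ref{lemheat}, and finally to use Sobolev embedding in time and undo the change of variables.

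First, I would introduce the Lagrangian flow $X(t,y,z)$ associated with the velocity field that appears in the transport operator $\partial_t^\varphi + v\cdot\nabla^\varphi$, namely $\partial_t X = (v_y, V_z)\circ X$, $X_{|t=0}=\mathrm{Id}$. Under the standing assumption \eqref{hypomegah}, the hypotheses of Corollary \ref{corLinfty} and Lemma \ref{Linfty0t} hold with uniform bounds depending on $M$, so $X(t,\cdot)$ is a bi-Lipschitz diffeomorphism of $\mathcal{S}$ preserving the boundary, uniformly on $[0,T]$. Setting $\tilde\omega^\alpha := \omega_h^\alpha\circ X$, equation \eqref{eqomegah} becomes a purely parabolic equation of the form
\begin{equation*}
\partial_t \tilde\omega^\alpha - \eps \,\nabla\cdot(\tilde E\, \nabla \tilde\omega^\alpha) + \tilde B\cdot\nabla\tilde\omega^\alpha = 0, \qquad \tilde\omega^\alpha_{|z=0} = (Z^\alpha\omega)^b,\quad \tilde\omega^\alpha_{|t=0}=0,
\end{equation*}
where $\tilde E$ is uniformly elliptic with coefficients bounded in $L^\infty$ (with very limited time/normal regularity, which is why paradifferential rather than pseudodifferential calculus is required) and $\tilde B$ has the regularity of $\nabla X$.

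Next, extend $\tilde\omega^\alpha$ by zero for $t<0$ and multiply by $e^{-\gamma t}$ with $\gamma \gg 1$. I would then apply the Laplace--Fourier transform in $(t,y)$, use the semiclassical weight $\lambda(\gamma,\tau,\xi) := (\gamma^2+\tau^2+\eps^2|\xi|^4)^{1/4}$, and build a paradifferential symmetrizer in the spirit of \cite{Metivier-Zumbrun} adapted to this quasi-homogeneous calculus (as recalled in the paradifferential section \ref{sectionparadiff}). The symmetrizer is designed so that: (i) the dissipative part $-\eps\partial_z(\tilde E_{33}\partial_z\cdot)$ produces a positive contribution of order $\eps\|\partial_z\tilde\omega^\alpha\|^2$; (ii) the tangential part absorbed into $\lambda$ produces the key coercive term $\|\lambda^{1/2}\tilde\omega^\alpha\|_{L^2_{t,x}}^2$; (iii) the commutators between $\lambda$-quantized operators and the rough coefficients $\tilde E,\tilde B$ are lower order thanks to the paradifferential calculus. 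Standard energy estimates for parabolic boundary-value problems with Dirichlet data then yield
\begin{equation*}
\int_0^{+\infty} e^{-2\gamma t}\bigl\|(\gamma+|\partial_t|)^{1/4}\tilde\omega^\alpha\bigr\|^2_{L^2(\mathcal{S})}\,dt \leq \Lambda(M)\,\sqrt{\eps}\int_0^{+\infty}e^{-2\gamma t}|(Z^\alpha\omega)^b|_{L^2(\mathbb{R}^2)}^2\,dt,
\end{equation*}
exactly as in the model computation of Lemma \ref{lemheat}. By Sobolev embedding $H^{1/4}(\mathbb{R}_t)\hookrightarrow L^4(\mathbb{R}_t)$ applied fiberwise this gives $\|\tilde\omega^\alpha\|_{L^4([0,T],L^2)}^2 \leq \Lambda(M)\sqrt{\eps}\int_0^T |(Z^\alpha\omega)^b|_{L^2}^2$, and the Lipschitz change of variable $X^{-1}$ transfers this to $\omega_h^\alpha$. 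Inserting the boundary estimate \eqref{omegab2} and using \eqref{hypomegah} to absorb $\Lambda_{\infty,6}$ into $\Lambda(M)$ then delivers the announced inequality.

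The main obstacle I anticipate is step two: constructing the paradifferential symmetrizer with the anisotropic weight $\lambda$ and verifying that all commutators generated by the low-regularity coefficients $\tilde E,\tilde B$ (which depend on $\partial_t h$, $\nabla h$ and $\nabla X$) are controlled by the $L^\infty$ bounds contained in $\Lambda_{\infty,6}$ and by the additional time-integrated quantities in \eqref{hypomegah}. In particular, one must be careful that normal derivatives of the coefficients are compensated by the $\eps$-factor from the viscosity, and that the tangential smoothing of order $\lambda^{1/2}$ does not produce borderline terms requiring more than the available regularity; this is precisely why the quasi-homogeneous paradifferential calculus is necessary and a classical pseudodifferential symbol calculus would fail.
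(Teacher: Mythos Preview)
Your proposal is essentially the paper's own argument: pass to Lagrangian coordinates to kill the transport term, obtain a parabolic problem with rough coefficients, build a paradifferential symmetrizer for the semiclassical weight $(\gamma^2+\tau^2+\eps^2|\xi|^4)^{1/4}$, derive the $H^{1/4}$-in-time estimate, and conclude via Sobolev embedding and \eqref{omegab2}.

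Two clarifications worth making. First, after the Lagrangian change there should be \emph{no} $O(1)$ drift $\tilde B\cdot\nabla$: the equation becomes exactly $a_0(\partial_t+\gamma)\Omega^\alpha-\eps\,\partial_i(a_{ij}\partial_j\Omega^\alpha)=0$ (this is \eqref{omegalagrange}), and the only first-order terms are the $\eps(\partial_i a_{ij})\partial_j$ hidden in the divergence, which are harmless precisely because of the $\eps$. Second, the paper does not build the symmetrizer directly on the scalar parabolic operator but rewrites the problem as a first-order $2\times2$ system in $z$ for $U=(\Omega,\sqrt{\eps}\,\partial_z\Omega/\langle\zeta^\eps\rangle)^t$ and constructs a Kreiss-type symmetrizer $\mathcal S(a,\tilde\zeta)$ from the eigenprojections of the coefficient matrix $\mathcal A$ (Lemma \ref{lemsym}); the two properties $\mathrm{Re}(\mathcal S\mathcal A)\geq\kappa$ and $\mathcal S+\Gamma^*\Gamma\geq\kappa$ are what produce simultaneously the interior $\mathcal H^{1/2}$ gain and the boundary control, and the paradifferential machinery of Theorem \ref{symbolic2} then handles the variable-coefficient commutators you anticipated.
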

        Again, note that the last term in the right hand side of the above inequality can be estimated by using
         Proposition \ref{conormv}.
         
   We will prove this estimate by using a  microlocal  symmetrizer.  
    In the convection diffusion equation \eqref{eqomegah}, the convection term   creates some 
 difficulties. Indeed, 
     the convection operator  
    $\partial_{t}^\varphi + v \cdot \nabla^\varphi= \partial_{t} + v_{y}\partial_{y}+ V_{z}\partial_{z}$ dominates in the low frequency regime.
     For this operator, the boundary is characteristic since 
     $V_{z}$ vanishes on the boundary and  the fact that $V_{z}$ is not uniformly zero in a vicinity of the boundary
       is not convenient when perfoming microlocal energy estimate 
 (see \cite{Majda-Osher}). 
       More importantly,  if we add a convection term to \eqref{exheat}, even constant,  that is  to say that we study
       $$ \partial_{t} f + c \cdot \nabla_{y} f - \eps \Delta f=0$$
       for $c \in \mathbb{R}^2$,  then the result of Lemma \ref{lemheat} becomes
       $$  \int_{0}^{+ \infty} e^{-2\gamma t } \big\|( \gamma + |\partial_{t} + c \cdot \nabla_{y}|)^{1\over 4}) f \big\|^2 \, dt
     \leq \sqrt{\eps} \int_{0}^{+ \infty} e^{-2 \gamma t}|f^b|_{L^2(\mathbb{R}^2)}^2 \, dt.$$
     In other words, the smoothing effect  does not occur in the direction $\partial_{t}$ but in the direction $\partial_{t}+ c\cdot  \nabla_{y} $.
      This will be much more difficult to detect when $c$ has variable coefficients. Consequently, in order to fix this
      difficulty, we shall use Lagrangian coordinates in order to eliminate the convection term and study a purely parabolic problem.

   \subsubsection{Lagrangian coordinates}
    Let us define  a parametrization of $\Omega_{t}$ by
    \beq
    \label{lagrange1}
    \partial_{t} X(t,x)= u(t, X(t,x))= v(t,\Phi(t, \cdot)^{-1}\circ X),  \quad X(0, x)= \Phi(0,x)
    \eeq
    where $\Phi(t, \cdot)^{-1}$ stands for the inverse of the map $\Phi(t, \cdot)$ defined by \eqref{diff}. 
    Note that the meaning of the initial condition is that  we choose the parametrization of $\Omega_{0}$ 
     which is given by \eqref{diff}.
     Let us also  define $J(t,x)= | \mbox{det }\nabla X(t,x)|$ the Jacobian of the change of variable.
     
     We have the following estimates for $X:$
     
     \begin{lem}
     \label{lemlagrange}
     Under the assumption of Proposition \ref{propomegah}, we have for $t \in[0, T]$ the estimates
     \begin{eqnarray}
     \label{J0}   & &|J(t,x)|_{W^{1, \infty}}+ |1/J(t,x)|_{W^{1, \infty}} \leq  \Lambda_{0}, \\
     \label{nablaX} & & \| \nabla X(t) \|_{L^\infty}  + \| \partial_{t} \nabla X \|_{L^\infty} \leq \Lambda_{0} e^{ t\Lambda(M)},  \\
     \label{dtnablaX} & & \| \nabla X\|_{1, \infty} +   \| \partial_{t} \nabla X\|_{1, \infty}  \leq \Lambda(M)e^{ t\Lambda(M)}, \\
     \label{ZnablaX}  & &
         \sqrt{\eps}\| \nabla^2 X \|_{1, \infty} 
        +   \sqrt{\eps}\| \partial_{t} \nabla^2 X \|_{L^\infty} \leq   \Lambda(M)(1+t^2) e^{t \Lambda(M)}.
        \end{eqnarray}
     
     \end{lem}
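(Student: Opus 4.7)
The approach rests on the defining ODE $\partial_t X(t,x) = u(t,X(t,x))$ with $X(0,\cdot) = \Phi(0,\cdot)$, together with the variational equation obtained by differentiating in $x$. All four bounds will come from Gronwall-type arguments fed by $L^\infty$ controls on $v$ (hence on $u$) that are already at our disposal: the standing assumption \eqref{hypomegah} gives $\sup_{[0,T]}\Lambda_{\infty,6}\leq M$, and Corollary \ref{corLinfty0t} gives the time-integrated bound needed for second derivatives.

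The first estimate is immediate from incompressibility: $\nabla\cdot u=0$ on $\Omega_t$ gives $\partial_t J=(\nabla\cdot u)(t,X)J=0$, so $J(t,x)=J(0,x)=\partial_z\varphi(0,x)$, which by \eqref{apriori} is bounded above and away from $0$ by constants depending only on $c_0$; one spatial derivative more, combined with the regularity of $\Phi(0,\cdot)$, yields the $W^{1,\infty}$ bound on $J$ and $1/J$. For the second estimate, the variational equation $\partial_t\nabla X = (\nabla u)(t,X)\,\nabla X$ is a linear ODE whose coefficient obeys $\|\nabla u\|_{L^\infty} \leq \Lambda(c_0^{-1},\|v\|_{E^{1,\infty}},|h|_{2,\infty}) \leq \Lambda(M)$; Gronwall gives $\|\nabla X\|_{L^\infty}(t) \leq \Lambda_0 e^{t\Lambda(M)}$, and $\|\partial_t\nabla X\|_{L^\infty}$ is read off the ODE itself.

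For the third estimate, applying a conormal field $Z_i$ to the variational equation produces
\[
\partial_t(Z_i\nabla X) \;=\; (\nabla u)(t,X)\,Z_i\nabla X + \bigl[(\nabla^2 u)(t,X)\cdot Z_iX\bigr]\,\nabla X .
\]
A naive $L^\infty$ estimate of $\nabla^2 u$ would cost a factor $1/\sqrt\eps$; this must be avoided by observing that $Z_iX$ is the $X$-pushforward of the conormal field $Z_i$, and since $X$ maps $\{z=0\}$ onto $\Sigma_t$, this pushforward is tangent to $\Sigma_t$ on the boundary, its bulk part being controlled by $\|\nabla X\|_{L^\infty}$. Consequently $(\nabla^2 u)(t,X)\cdot Z_iX$ reduces to combinations of tangential derivatives of $\nabla u$, controlled by $\|\nabla v\|_{1,\infty}+|h|_{2,\infty}\leq \Lambda(M)$ via Proposition \ref{propLinfty1} and \eqref{gues}. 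Gronwall then produces the announced bound, and the $\partial_t$ companion follows by differentiating the ODE once in $x$ and applying $Z_i$.

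The main obstacle is the fourth estimate: differentiating the variational equation a second time creates source terms involving $\nabla^2 u$ and $\nabla^3 u$ with no tangential interpretation, whose $L^\infty$ norms genuinely scale like $1/\sqrt\eps$ because of the boundary layer. This is precisely why the statement is weighted by $\sqrt\eps$: each normal derivative that cannot be converted into a tangential one must carry a factor $\sqrt\eps$, after which Corollary \ref{corLinfty0t} supplies the crucial time-integrated bound $\int_0^t\sqrt\eps\,\|\nabla^2 v\|_{1,\infty}\leq \Lambda(M)(1+t)^2$ under assumption \eqref{hypomegah}. Feeding this into the Gronwall loop turns the troublesome source into an integrable drive, and the final growth rate $(1+t^2)e^{t\Lambda(M)}$ records the combination of one application of Gronwall for $\nabla^2 X$ with the quadratic-in-$t$ control of Corollary \ref{corLinfty0t}. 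The bound on $\partial_t\nabla^2 X$ is again obtained by reading it from the twice-differentiated ODE, which is why it carries the same weight as $\nabla^2 X$ itself.
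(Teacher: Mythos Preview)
Your proof is correct and follows essentially the same strategy as the paper: incompressibility for $J$, Gronwall on the variational equation for $\nabla X$, and Corollary~\ref{corLinfty0t} to control the $\sqrt{\eps}\,\|\nabla^2 v\|_{1,\infty}$ source for \eqref{ZnablaX}. The only notable difference is in \eqref{dtnablaX}: rather than arguing geometrically that $Z_iX$ pushes forward to a tangential field on $\Omega_t$, the paper writes the variational equation directly as $\partial_t DX = Dv\,D\Phi^{-1}\,DX$ with $Dv$ and $D\Phi^{-1}$ evaluated at the boundary-preserving map $\Phi^{-1}\circ X:\mathcal S\to\mathcal S$, so that one conormal derivative applied to the right-hand side is controlled by $\|v\|_{E^{2,\infty}}$ via Lemma~\ref{leminv}; this bypasses the $\nabla^2 u$ discussion entirely and is slightly more direct, but is the same mechanism you describe.
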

     
     \begin{proof}
       Since $u$ is divergence free  we get that $J(t,x)= J_{0}(x)$ and  the first estimate follows from Proposition \ref{propeta}.
             Next, by using the ordinary differential equation \eqref{lagrange1}, we get that
       \beq
       \label{eqDX}\partial_{t} DX= Dv D\Phi^{-1} DX\eeq
       where $D$ stands for the differential with respect to the space variable and hence we find
      $$ |\nabla X(t,\cdot )|_{L^\infty} \leq  \Lambda_{0}+ \Lambda_{0} \int_{0}^t \|v\|_{E^{1, \infty}} | \nabla X(t)|_{L^\infty}
       \leq \Lambda_{0} + \Lambda(M) \int_{0}^t | \nabla X(t)|_{L^\infty}.$$
       This yields the first part of  \eqref{nablaX} by using the Gronwall inequality. Moreover, by using again the equation \eqref{eqDX},  we get  \eqref{dtnablaX}.
       Next, by applying  one conormal derivative to \eqref{eqDX},  we also get  that
       $$ \| \nabla X \|_{1, \infty} \leq \Lambda_{0}+ \Lambda_{0} \int_{0}^t \|v\|_{E^{2, \infty}}  \| \nabla X \|_{1, \infty}$$
        and hence we  get  \eqref{dtnablaX} from the Gronwall inequality. The estimate for the time derivative
         follows again from the equation.  The estimate \eqref{ZnablaX} can be obtained in the same way.  By applying
          $\sqrt{\eps}\nabla$ to \eqref{eqDX}, we find
        $$ \sqrt{\eps } \| \nabla^2 X\|_{1, \infty}  \leq \Lambda(M) + \Lambda(M) \int_{0}^t \sqrt{\eps} \| \nabla^2 X\|_{1, \infty}
          + \Lambda(M) e^{\Lambda(M) t} \int_{0}^t   \sqrt{\eps} \| \nabla^2 v\|_{1, \infty}$$
           and hence, by using Corollary \ref{corLinfty0t} and the assumption \eqref{hypomegah}, we find
          $$  \sqrt{\eps } \| \nabla^2 X\|_{1, \infty}  \leq \Lambda(M) ( 1+t)^2 e^{\Lambda(M) t} + \Lambda(M) \int_{0}^t \sqrt{\eps} \| \nabla^2 X\|_{1, \infty}$$
           and the  first part of \eqref{ZnablaX} follows by using the Gronwall inequality. For the second part of \eqref{ZnablaX}, 
            it suffices to  apply $\sqrt{\eps}\nabla$ to \eqref{eqDX} and to  use \eqref{hypomegah} and that
             $ \sqrt{\eps}\| \nabla ^2 v\|_{L^\infty} \leq  \Lambda_{\infty, 6}$.
                 \end{proof}

  \subsubsection{Equation in Lagrangian coordinates}
  Let us set 
  \beq
  \label{Omegaalphadef} 
  \Omega^\alpha=e^{-\gamma t} \omega_{h}^\alpha (t, \Phi^{-1}\circ X)\eeq
 where $\gamma >0$ is a large parameter to be chosen. Then  $\Omega^\alpha$ solves in $\mathcal{S}$  the equation
  \beq
  \label{omegalagrange} a_{0}\big( \partial_{t}\Omega^\alpha+ \gamma \Omega^\alpha\big) - \eps \partial_{i}\big( a_{ij} \partial_{j}\Omega^\alpha \big) = 0 
  \eeq
  where we have used  the summation convention. Note that   $ a_{0}= |J_{0}|^{1\over 2}$ and that  the matrix $ (a_{ij})_{i,j}$ is defined by 
  $$  (a_{ij})_{ij} =  |J_{0}|^{1 \over 2} P^{-1}, \quad P_{ij}=  \partial_{i}X \cdot \partial_{j} X.$$
  Note that  the coefficients of the matrix $(a_{ij})$ will therefore match the estimates of Lemma \ref{lemlagrange}.
    Moreover, we also obtain thanks to Lemma \ref{lemlagrange}  that
   \beq
   \label{adefp}
    a(t,x)_{ij} \xi_{i}  \xi_{j} \geq  {1 \over \Lambda_{0}} e^{-t \Lambda(M) t} |\xi|^2 \geq c_{0}|\xi|^2
    \eeq
    where $c_{0}$ depends only on $M$ for $t \in [0, 1]$.
     On the boundary, we get that  
     \beq
     \label{bordOmega} \Omega^\alpha_{/z=0}= (\Omega^\alpha)^b:= e^{-\gamma t} \omega^\alpha_{nh}(t,\Phi^{-1}\circ X(t, y,0)).
     \eeq
     We shall prove that:
     \begin{theoreme}
     \label{theomicrolocal}
      There exists $\gamma_{0}$ which depends only on $M$ defined by \eqref{hypomegah} such that for $\gamma \geq \gamma_{0}$,
       the solution of \eqref{omegalagrange} with the boundary condition \eqref{bordOmega} satisfies the estimate
       $$ \big\| \Omega^{m-1}\|_{H^{1 \over 4}(0, T, L^2)}^2 \leq  \Lambda(M)
\sqrt{\eps} \int_{0}^T  | (\Omega^{m-1})^b|_{L^2(\mathbb{R})^2}^2$$
where $\Lambda(M)$ is uniformly bounded for $T\in [0, 1]$.     
     \end{theoreme}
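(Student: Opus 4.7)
The strategy is a microlocal energy estimate on the half-space $z<0$ for the parabolic equation \eqref{omegalagrange}, based on the semiclassical paradifferential calculus of Section \ref{sectionparadiff} with weight $\Lambda_{\eps}(\gamma,\tau,\xi) = (\gamma^2+\tau^2+(\sqrt{\eps}|\xi|)^4)^{1/4}$. The constant-coefficient computation leading to Lemma \ref{lemheat} already identifies this as the correct weight, and since $\Lambda_{\eps} \geq (\gamma+|\tau|)^{1/2}$, the bound $\|\Lambda_{\eps}^{1/2}\Omega\|_{L^2_{t,x}}^2 \leq \Lambda(M)\sqrt{\eps}|\Omega^b|_{L^2_{t,y}}^2$ (obtained after a Laplace transform in $t$ with $\gamma\geq\gamma_{0}$ and then transported back to physical variables by Plancherel) implies exactly the claimed $H^{1/4}_t(L^2_x)$ control.

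First I would take the Fourier--Laplace transform in $(t,y)$, reducing \eqref{omegalagrange} to a second-order equation in $z$ with operator-valued coefficients, and then paralinearize the $(t,y,z)$-dependent coefficients $a_{0},\,a_{ij}$ in the sense of Section \ref{sectionparadiff}. This produces a paradifferential equation $T_{a_0}(\partial_t+\gamma)\Omega - \eps\,\partial_i T_{a_{ij}}\partial_j\Omega = R_{0}\Omega$ where $R_{0}$ is a remainder of order strictly less than $\Lambda_{\eps}^{2}$. I would then rewrite the result as a $2\times 2$ first-order system in $z$ for $U=(T_{\Lambda_{\eps}^{1/2}}\Omega,\,T_{\Lambda_{\eps}^{-1/2}}\sqrt{\eps}\,\partial_z\Omega)^{t}$, of the form $\sqrt{\eps}\,\partial_z U = T_{\mathcal{A}}U + \mathcal{B}U$ whose principal symbol is a $2\times 2$ matrix with eigenvalues $\pm\mu(t,y,z;\gamma,\tau,\xi)$ satisfying $\mathrm{Re}\,\mu\gtrsim \Lambda_{\eps}$, uniformly for $\gamma\geq\gamma_{0}(M)$ by the uniform ellipticity \eqref{adefp}.

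At that point I would construct a Kreiss-type microlocal symmetrizer: a Hermitian symbol $\Sigma(t,y,z;\gamma,\tau,\xi)$ of order zero with $\Sigma\gtrsim\mathrm{Id}$ and $\mathrm{Re}(\Sigma\,\mathcal{A})\gtrsim\Lambda_{\eps}\mathrm{Id}$. Since we prescribe the full trace $\Omega^b$ at $z=0$ together with decay at $z=-\infty$, no Lopatinskii condition arises and $\Sigma$ can essentially be chosen diagonal. The energy identity $\sqrt{\eps}\,\partial_z(T_{\Sigma}U,U) = 2\,\mathrm{Re}(T_{\Sigma}T_{\mathcal{A}}U,U) + (\text{commutators})$, integrated in $z\in(-\infty,0)$ and combined with the paradifferential Garding inequality, yields
\begin{equation*}
\|T_{\Lambda_{\eps}^{1/2}}\Omega\|_{L^2(\mathcal{S})}^{2} \;\lesssim\; \sqrt{\eps}\,|\Omega^b|_{L^2(\mathbb{R}^{2+1})}^{2} + (\text{lower order}),
\end{equation*}
the lower-order terms being absorbed once $\gamma\geq\gamma_{0}(M)$ is large enough. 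Summing over $|\alpha|\leq m-1$ and inverting the Laplace transform produces the conclusion of Theorem \ref{theomicrolocal}.

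The main obstacle is the sharp tracking of commutators in the semiclassical paradifferential calculus. Because Lemma \ref{lemlagrange} provides only Lipschitz control of the $a_{ij}$ and of $a_{0}$ with higher normal derivatives blowing up like $1/\sqrt{\eps}$, neither classical pseudodifferential calculus nor smooth symbolic composition is available; the whole argument must live within the quasi-homogeneous paradifferential calculus of Metivier--Zumbrun recalled in Section \ref{sectionparadiff}. In particular, one must verify that every remainder appearing in the composition $T_{\Sigma}T_{\mathcal{A}}$ and in $\partial_{z}T_{\Sigma}$ is of order strictly lower than $\Lambda_{\eps}$ uniformly in $\sqrt{\eps}$, so that it can be absorbed by the $\gamma$-gain together with Young's inequality. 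The choice of the semiclassical weight $\Lambda_{\eps}$ (rather than the classical $\gamma+|\tau|+|\xi|^2$) is precisely what makes the coefficients act as multipliers of subprincipal order and makes this absorption possible, and it is the reason the estimate degrades to $L^{4}_{t}(L^{2}_{x})$ control after Sobolev embedding in $H^{1/4}_{t}$ instead of $L^{\infty}_{t}(L^{2}_{x})$.
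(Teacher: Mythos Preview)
Your overall strategy — paralinearize, write the equation as a first-order system in $z$, build a Kreiss-type symmetrizer, and run the semiclassical paradifferential calculus of Section~\ref{sectionparadiff} — is exactly the paper's approach, and your remarks about why the weight $\Lambda_\eps$ is forced and why the commutators are absorbable for $\gamma$ large are on point.

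There is, however, a genuine gap in your symmetrizer construction. You ask for a Hermitian symbol $\Sigma$ of order zero with \emph{both} $\Sigma\gtrsim\mathrm{Id}$ and $\mathrm{Re}(\Sigma\mathcal{A})\gtrsim\Lambda_\eps\,\mathrm{Id}$. These two conditions are incompatible: the matrix $\mathcal{A}$ has two eigenvalues $\mu_\pm$ with $\mathrm{Re}\,\mu_+>0$ and $\mathrm{Re}\,\mu_-<0$ (this is precisely the hyperbolic splitting of the second-order operator), and if $\Sigma>0$ and $v$ is an eigenvector for $\mu_-$, then $(\mathrm{Re}(\Sigma\mathcal{A})v,v)=\mathrm{Re}(\mu_-)\,(\Sigma v,v)<0$. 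So no positive-definite symmetrizer can make $\mathrm{Re}(\Sigma\mathcal{A})$ positive. The related assertion that ``no Lopatinski\u{\i} condition arises because we prescribe the full trace'' is the source of the error: once you pass to the first-order unknown $U=(\Omega,\Lambda_\eps^{-1}\sqrt{\eps}\,\partial_z\Omega)$, you prescribe only $U_1(0)=\Omega^b$; the component $U_2(0)$ is free. The boundary operator $\Gamma$ therefore has rank one, and the boundary term $\sqrt{\eps}\,(T_\Sigma U(0),U(0))$ in your energy identity cannot be controlled by $\sqrt{\eps}\,|\Omega^b|^2$ if $\Sigma>0$.

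What the paper does (Lemma~\ref{lemsym}) is exactly the classical Kreiss construction: diagonalize $\mathcal{A}=\mathcal{P}\,\mathrm{diag}(\mu_+,\mu_-)\,\mathcal{P}^{-1}$ and take $\mathcal{S}=(\mathcal{P}^{-1})^*\mathrm{diag}(1,-\delta)\,\mathcal{P}^{-1}$, which is \emph{indefinite} of signature $(1,1)$. This gives $\mathrm{Re}(\mathcal{S}\mathcal{A})\geq\kappa$, and the boundary property $\mathcal{S}+\Gamma^*\Gamma\geq\kappa\,\mathrm{Id}$ holds for $\delta$ small precisely because the (trivial but essential) Lopatinski\u{\i} condition $\Gamma|_{E_+}$ injective is satisfied — the $\mu_+$-eigenvector $(1,\mu_+)$ has nonzero first component. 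It is this indefiniteness, together with the Lopatinski\u{\i} condition, that lets the boundary contribution be bounded by $\sqrt{\eps}\,|\Gamma U(0)|^2=\sqrt{\eps}\,|\Omega^b|^2$ rather than by $\sqrt{\eps}\,|U(0)|^2$. Once you replace your two requirements on $\Sigma$ by these, the rest of your argument goes through as written and coincides with the paper's proof.
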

     Note that  we define the norm  $H^{1 \over 4}((0,T), L^2))$ as
     \beq
     \label{H1/4}
    \big\|  f \|_{H^{1 \over 4}(0, T, L^2)}^2 = \inf \big\{ \|Pf\|_{H^{1 \over 4}(\mathbb{R}, L^2(\mathcal{S}))}, \quad Pf= f \mbox{ on } [0, T]
      \times \mathcal{S}\big\}\eeq
      and we define the norm on the whole space by Fourier transform in time.
     Let us first explain how to deduce the proof of Proposition \ref{propomegah} from the estimate of Theorem \ref{theomicrolocal}.
     We first observe that
     $$ \| \Omega^\alpha \|_{L^4([0, T], L^2(\mathcal{S}))}^2
      \leq    C \| \Omega^\alpha \|_{L^2(\mathcal{S}, L^4(0, T))}^2
       \leq C  \| \Omega^{m-1}\|_{H^{1 \over 4}(0, T, L^2)}^2.$$
      where the last estimate comes from the one-dimensional Sobolev embedding $H^{1 \over 4} \subset L^4$. 
       Note that with  the definition  \eqref{H1/4},   $C$ does not depend on $T$ since $C$ is given by the Sobolev
        embedding in the whole space. Next, we use \eqref{theomicrolocal} to get
        $$ \| \Omega^\alpha \|_{L^4([0, T], L^2(\mathcal{S}))}^2 \leq  \Lambda(M) \sqrt{\eps} \int_{0}^T e^{- \gamma t } | (\Omega^{m-1})^b|_{L^2(\mathbb{R})^2}^2$$ 
        and thanks to  \eqref{bordOmega} and \eqref{Omegaalphadef},  we obtain by change of variable
         (let us recall that $J$ satisfies the estimate \eqref{J0}) that
         $$  \| \omega^\alpha_{h} \|_{L^4([0, T], L^2(\mathcal{S}))}^2 \leq  \Lambda(M) \sqrt{\eps} \int_{0}^T  | (\omega_{h})^b|_{m-1}^2.$$
          We finally end the proof of  Proposition \ref{propomegah} by using \eqref{omegab2} and the Young inequality.
         
          \bigskip
          It remains to prove Theorem \ref{theomicrolocal}.  In the next subsection, we shall
 study a constant coefficient model.  In the analysis of this model,  we will  
  construct symbols which will allow us  to study  the original problem
            with the help of the paradifferential calculus described in section \ref{sectionparadiff}.
    \subsubsection{Symbolic Analysis}
    In this section, we shall perform a symbolic analysis. This will allow to get  our energy estimate for the equation
     \eqref{omegalagrange}, by using the semi-classical  paradifferential calculus with parabolic homogeneity described
      in section \ref{sectionparadiff}.
      
      Let us consider   $a=(a_{0}(z), a_{ij}(z))$  smooth in $z$  where $a_{ij}$ is a    symmetric matrix
     and $a_{0}$ a  function   with values in the compact set  $\mathcal{K}$ defined by 
       \beq
       \label{compact} a_{0}\geq m ,\, a_{3,3} \geq m,  \quad    |a| + \sqrt{\eps}| \partial_{z} a |  \leq M, \quad (a_{ij}) \geq c_{0} \mbox{Id}.
       \eeq
       where $m$, $c_{0}$ and $M$ are positive numbers.
       This means that we neglect the dependence in $t, \,y$ in the coefficients of \eqref{omegalagrange}.
      The symbolic version of \eqref{omegalagrange} becomes
      \beq
      \label{omegasymbolic}
     \big( {a_{0}\over a_{3,3}} (\gamma + i \tau ) + A_{y}(a, \sqrt{\eps} \xi )\big) \Omega  + A_{z}(a, \sqrt{\eps}\xi)\sqrt{\eps} \partial_{z} \Omega= \eps \partial_{zz} \Omega +
       R  + F, \quad z<0, \quad  \Omega(0)= \Omega^b
       \eeq
       where $A_{y}$ is homogeneous of degree two in $\xi$, $A_{z}$  is homogeneous of degree one in $\xi$ and
       \beq
       \label{defArond}
       A_{y}(a,   \xi)= \sum_{1\leq i,j \leq 2} {a_{ij} \over a_{3,3}}\xi_{i} \xi_{j}, \quad A_{z}(a,\xi)=  - 2 \sum_{1 \leq k \leq 2}{a_{k3}\over a_{3,3}}\,  i \xi_{k},
       \eeq
       and
       $$ 
        R=   i \sum_{1 \leq k \leq 2} \sqrt{\eps} {\partial_{z} a_{3k}\over a_{3,3}}  \sqrt{\eps}\xi_{k} \Omega+ \sqrt{\eps}
        {\partial_{z}a_{33}
        \over a_{3,3}}\, \sqrt{\eps}\partial_{z}\Omega.
        $$
        Note that we have incorporated  $F$ in \eqref{omegasymbolic} which  is a given source term.
         Since in the system  \eqref{omegalagrange}, the equations for the components of $\Omega$ are not coupled, we  can study
           separately each component and hence we shall assume that $\Omega \in \mathbb{R}$ in this subsection. 
        
       Thanks to \eqref{compact}, we have that
        \beq
        \label{rdexi}
        |R| \lesssim \sqrt{\eps} | \xi| | \Omega|+ \sqrt{\eps}|\partial_{z}\Omega|.
        \eeq
        Let us set $\zeta^\eps= (\gamma, \tau, \eps \xi)$   and $\langle \zeta^\eps \rangle= \big( \gamma^2 + \tau^2+ | \sqrt{\eps} \xi |^4 \big)^{1 \over 4}$.
        We rewrite \eqref{omegasymbolic} as a system  by setting
        \beq
        \label{Udef}
         U=\big(\Omega, \sqrt{\eps} \partial_{z}\Omega/  \langle \zeta^\eps \rangle \big)^t.
        \eeq
       This yields the system 
       \beq
       \label{systemOmega}
       \sqrt{\eps} \partial_{z} U= \langle  \zeta^\eps \rangle 
\mathcal{A}\big( a, \tilde{\zeta}^\eps\big) U   + \mathcal F
       \eeq
        where
        \beq
        \label{matrixOmega}
         \mathcal{A}\big(  a, \tilde{\zeta}\big)= \left(\begin{array}{cc}  0 & 1 \\   \ {a_{0}\over a_{3,3}}(\tilde \gamma +  i \tilde\tau)
          + A_{y}(a, \tilde{\xi}) &  A_{z}(a, \tilde{\xi})\end{array} \right)
          \eeq
          and  where we set  $\tilde \zeta  = ( \tilde{\gamma}, \tilde{\tau},  \tilde{\xi})$ 
           with
          $$ \tilde \gamma = \gamma / \langle \zeta \rangle^2, \quad \tilde \tau= \tau  / \langle \zeta \rangle^2, \quad
            \tilde \xi=  \xi/ \langle \zeta \rangle$$
            and  for  $\tilde{\zeta}^\eps$, we replace $(\gamma, \tau, \xi)$ by $(\gamma, \tau, \eps \xi)$. 
          Moreover,  the source term $\mathcal{F}$ is defined by 
          \beq
          \label{Fdef}
          \mathcal F={1 \over \langle \zeta^\eps \rangle} (0, R  + F).
          \eeq
          The boundary condition at $z=0$ becomes
          \beq
          \label{Gammadef}
          \Gamma U= \Omega^b,  \quad \Gamma (U_{1}, U_{2})^t= U_{1}
          \eeq
          where  the writting    $U=(U_{1}, U_{2})^t$ is related to the block structure  of \eqref{matrixOmega}.
      
      Note that $\tilde{\zeta}$ is in the compact set $\tilde\gamma^2 + \tilde\tau^2 + |\tilde \xi|^4 = 1, \, \tilde\gamma \geq 0.$
      
      \begin{prop}
      \label{propmodele}
      There exists $\gamma_{0}>0$ such that for every $\gamma \geq \gamma _{0}$, 
      we have for the solution of \eqref{systemOmega}, \eqref{Gammadef} the estimate
     $$  \langle \zeta^\eps \rangle |U|_{L^2_{z}}^2 + \sqrt{\eps}|U(0)|^2  \leq C \sqrt{\eps} |\Omega^b|^2 + \Big( {| F|_{L^2_{z}} \over \langle \zeta^\eps \rangle^{3\over 2}}\Big)^2. $$
      \end{prop}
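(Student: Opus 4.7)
The plan is to view \eqref{systemOmega}--\eqref{Gammadef} as a first-order ODE in $z$ depending on the frozen parameter $(a(z),\tilde\zeta^\eps)$ and to prove the estimate via a Kreiss-type symmetrizer $\Sigma(a,\tilde\zeta)$, Hermitian and homogeneous of parabolic degree zero. The starting point is the eigenvalue analysis of the matrix $\mathcal A$ in \eqref{matrixOmega}. Its characteristic polynomial is $\mu^2 - A_z\mu - \bigl((a_0/a_{3,3})(\tilde\gamma+i\tilde\tau)+A_y\bigr)=0$, whose discriminant equals $A_z^2+4A_y+4(a_0/a_{3,3})(\tilde\gamma+i\tilde\tau)$. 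Since $A_z$ is purely imaginary and since the $2\times2$ Schur complement $(a_{ij}a_{3,3}-a_{i3}a_{j3})_{i,j\le 2}$ of $(a_{ij})$ is positive definite, a direct computation gives $A_z^2+4A_y \ge c|\tilde\xi|^2$ and this quantity is real; the remaining term $4(a_0/a_{3,3})(\tilde\gamma+i\tilde\tau)$ has non-negative real part. On the compact parameter set $\{\tilde\gamma^2+\tilde\tau^2+|\tilde\xi|^4=1,\ \tilde\gamma\ge 0\}$ one then checks that the two roots $\mu_\pm$ are uniformly separated, with $\mathrm{Re}\,\mu_+\ge c_0>0$ and $\mathrm{Re}\,\mu_-\le -c_0<0$.

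Granted this uniform hyperbolicity, and using that the boundary operator $\Gamma$ is injective on the unstable eigenspace $\mathrm{span}\{(1,\mu_+)^t\}$ (trivially, since $\Gamma(1,\mu_+)^t=1\ne 0$), the standard Kreiss construction produces a smooth Hermitian symmetrizer $\Sigma(a,\tilde\zeta)$ satisfying (i) $\mathrm{Re}(\Sigma\mathcal A)\le -c\,\mathrm{Id}$ in the interior and (ii) $(\Sigma V,V)\ge c|V|^2 - C|\Gamma V|^2$ for every $V$. One obtains $\Sigma$ by diagonalizing $\mathcal A$ in the basis $\{(1,\mu_+)^t,(1,\mu_-)^t\}$ and choosing it negative on the first vector, positive on the second, with weights tuned to also ensure (ii). Smoothness across the degenerate locus $\tilde\gamma=\tilde\tau=0$, $|\tilde\xi|=1$ (where the discriminant becomes real) is guaranteed because the roots $\mu_\pm$ stay real and separated there. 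By homogeneity, $\Sigma$ qualifies as a symbol of parabolic degree zero in $(\gamma,\tau,\sqrt\eps\xi)$ in the sense of Section~\ref{sectionparadiff}.

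The estimate then follows from a weighted energy identity. Multiplying \eqref{systemOmega} by $\Sigma U$, taking real parts, and integrating in $z<0$ yields
\begin{equation*}
\sqrt\eps\,(\Sigma(0)U(0),U(0)) = \sqrt\eps\!\int_{-\infty}^{0}\!(\partial_z\Sigma\,U,U)\,dz + 2\langle\zeta^\eps\rangle\!\int_{-\infty}^{0}\!\mathrm{Re}(\Sigma\mathcal A\,U,U)\,dz + 2\,\mathrm{Re}\!\int_{-\infty}^{0}\!(\Sigma\mathcal F,U)\,dz.
\end{equation*}
Condition (i) bounds the middle integral from above by $-2c\langle\zeta^\eps\rangle\|U\|_{L^2_z}^2$, while condition (ii) together with $\Gamma U(0)=\Omega^b$ gives $-\sqrt\eps(\Sigma(0)U(0),U(0))\le -c\sqrt\eps|U(0)|^2+C\sqrt\eps|\Omega^b|^2$. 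The coefficient commutator obeys $\sqrt\eps|\partial_z\Sigma|\lesssim \sqrt\eps|\partial_z a|\le M$ by \eqref{compact}, so the first integral on the right is $O(M\|U\|^2)$ and gets absorbed by the main $\langle\zeta^\eps\rangle\|U\|^2$ term once $\gamma\ge\gamma_0$ (recalling $\langle\zeta^\eps\rangle\ge\gamma^{1/2}$). For the source, combining \eqref{rdexi} with the definition \eqref{Udef} and with the identity $|\sqrt\eps\xi|\le\langle\zeta^\eps\rangle$ yields $|\mathcal F|\lesssim |U|+|F|/\langle\zeta^\eps\rangle$; a Young inequality then absorbs the $\|U\|^2$ contribution and leaves exactly the residual $C\|F\|_{L^2_z}^2/\langle\zeta^\eps\rangle^3$. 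Rearranging gives the stated bound with a constant depending only on $M$ and $c_0$.

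The main obstacle is the explicit construction of $\Sigma$ with both Kreiss properties and with sufficient smoothness in $(a,\tilde\zeta)$ across the degenerate locus $\tilde\gamma=\tilde\tau=0$, where the sign of the square root defining $\mu_+$ must be chosen consistently so that $\Sigma$ extends continuously. Once $\Sigma$ is available as a legitimate parabolic symbol in the sense of Section~\ref{sectionparadiff}, the remaining estimate is a routine weighted energy computation; the regularity of $\Sigma$ is what will also allow, later, the transition from this frozen-coefficient model to the full variable-coefficient equation \eqref{omegalagrange} via paradifferential quantization.
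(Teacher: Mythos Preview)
Your proof is correct and follows essentially the same Kreiss-symmetrizer approach as the paper (Lemma~\ref{lemsym}): your $\Sigma$ is just the paper's $-\mathcal{S}$, and your explicit discriminant/Schur-complement computation replaces the paper's contradiction argument showing that $\mathcal{A}$ has no purely imaginary eigenvalues. One terminological slip worth noting: on the half-line $z<0$ the eigendirection associated with $\mu_+$ (positive real part) is the \emph{stable} (decaying) subspace, not the unstable one; nevertheless the Lopatinski condition you verify, $\Gamma(1,\mu_+)^t\neq 0$, is exactly the right one.
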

      Note that $\gamma_{0}$ only depends on the estimates  \eqref{compact}.  In  terms of $\Omega$, the previous 
       proposition gives  the estimate
      \beq
      \label{estmodele}  \langle \zeta^\eps \rangle |U|_{L^2_{z}}^2 =    \langle \zeta^\eps\rangle  | \Omega|_{L^2_{z}}^2 +  { \eps \over \langle \zeta^\eps \rangle} |
        \partial_{z} \Omega|_{L^2_{z}}^2 \leq  C \sqrt{\eps} |\Omega^b|^2 + \Big( {| F|_{L^2_{z}} \over \langle \zeta^\eps \rangle^{3\over 2}}\Big)^2.
        \eeq 
       
      We shall prove this Proposition by using the symmetrizers method. 
   \begin{lem}
   \label{lemsym}
   There exists $\mathcal{S}\big( a, \tilde{\zeta}\big) $ symmetric,  smooth in its argument and $\kappa>0$ such that
   $$  \mathcal{S}\mathcal{A} + (\mathcal{S} \mathcal{A})^* \geq \kappa Id, \quad \mathcal{S} + \Gamma^* \Gamma \geq \kappa Id $$
   for every $(a,\tilde \zeta)  \in \mathcal{K}\times S_{+}$   where $\mathcal{K}$ is the compact set  defined by \eqref{compact} and
    $$S_{+}= \{\tilde{\zeta}, \,  \langle \tilde \zeta \rangle = 1,\,  
    \tilde{\gamma} \geq 0\}.$$ 
   \end{lem}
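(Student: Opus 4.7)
The plan is a Kreiss-type symmetrizer construction that exploits the companion-matrix structure of $\mathcal{A}(a,\tilde\zeta)$ together with the parabolic quasi-homogeneous scaling. First I compute the characteristic polynomial $\lambda^2-A_z\lambda-(\tfrac{a_0}{a_{3,3}}(\tilde\gamma+i\tilde\tau)+A_y)$ with discriminant
\[
D(a,\tilde\zeta):= A_z^2+4\Bigl(\tfrac{a_0}{a_{3,3}}(\tilde\gamma+i\tilde\tau)+A_y\Bigr).
\]
Because $A_z$ is purely imaginary, $A_z^2$ is real and non-positive; a Schur-complement computation using $(a_{ij})\geq c_0\,\text{Id}$ rewrites $-A_z^2+4A_y$ as $\tfrac{4}{a_{3,3}^2}\sum_{i,j\leq 2}(a_{3,3}a_{ij}-a_{i3}a_{j3})\xi_i\xi_j\geq c|\tilde\xi|^2$, so $\text{Re}(D)\geq c(\tilde\gamma+|\tilde\xi|^2)\geq 0$, while $\text{Im}(D)=\tfrac{4a_0}{a_{3,3}}\tilde\tau$. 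Thus $D$ can vanish only at $\tilde\zeta=0$, which is forbidden on $S_+$, and by compactness $|D|$ stays bounded below and $D$ stays away from $(-\infty,0]$. The principal branch then gives $\text{Re}(\sqrt{D})\geq c'>0$, so the two roots $\lambda_\pm=(A_z\pm\sqrt{D})/2$ enjoy the uniform spectral gap $\text{Re}(\lambda_+)\geq c'>0$ and $\text{Re}(\lambda_-)\leq -c'<0$ on the whole compact set $\mathcal{K}\times S_+$.

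With this gap in hand I diagonalize $\mathcal{A}$ smoothly using the companion eigenvectors $\phi_\pm=(1,\lambda_\pm)^t$, which are smooth in $(a,\tilde\zeta)$ since the eigenvalues never cross. Setting $T=(\phi_+\ \phi_-)$, so that $T^{-1}\mathcal{A}T=\Lambda:=\text{diag}(\lambda_+,\lambda_-)$, I define
\[
\mathcal{S}(a,\tilde\zeta):= (T^{-1})^* \tilde{\mathcal{S}}\, T^{-1},\qquad \tilde{\mathcal{S}}:=\text{diag}(1,-\delta),
\]
for a small $\delta\in(0,1/2)$ to be fixed. This $\mathcal{S}$ is symmetric and smooth in its arguments.

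Verification is then elementary algebra. For the first inequality, $\tilde{\mathcal{S}}\Lambda+(\tilde{\mathcal{S}}\Lambda)^*=\text{diag}(2\,\text{Re}\,\lambda_+,\,2\delta|\text{Re}\,\lambda_-|)\geq 2c'\min(1,\delta)\,\text{Id}$, and conjugating by the uniformly bounded and uniformly invertible $T$ yields $\mathcal{S}\mathcal{A}+(\mathcal{S}\mathcal{A})^*\geq\kappa\,\text{Id}$. For the second, since $(\phi_\pm)_1=1$ we have $\Gamma T\tilde U=\tilde U_++\tilde U_-$, so $\mathcal{S}+\Gamma^*\Gamma$ corresponds in the eigenbasis (up to equivalence by the uniformly bounded $T^*T$) to the quadratic form with matrix $\bigl(\begin{smallmatrix}2 & 1\\ 1 & 1-\delta\end{smallmatrix}\bigr)$, whose determinant $1-2\delta$ is positive; fixing $\delta=1/4$ gives the claim. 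The main obstacle, and the step I would execute most carefully, is the Schur-complement manipulation producing the uniform lower bound on $\text{Re}(D)$: this is where the positivity of $(a_{ij})$, the parabolic weight built into $\langle\zeta^\eps\rangle$, and the exclusion $\tilde\zeta\neq 0$ combine to yield the spectral gap on which the whole symmetrizer construction rests, and hence the uniform constants $\kappa>0$ independent of $(a,\tilde\zeta)\in\mathcal{K}\times S_+$.
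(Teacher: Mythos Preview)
Your proof is correct and takes essentially the same approach as the paper: establish the spectral gap for $\mathcal{A}$, diagonalize smoothly, and set $\mathcal{S}=(T^{-1})^*\mathrm{diag}(1,-\delta)\,T^{-1}$; your explicit discriminant/Schur-complement computation and your explicit $2\times 2$ matrix check for the boundary inequality are simply more quantitative versions of the paper's abstract contradiction argument and compactness argument. One harmless slip: the quantity you rewrite via Schur complement is $A_z^2+4A_y$ (which is what enters $\mathrm{Re}(D)$), not $-A_z^2+4A_y$; the identity $\tfrac{4}{a_{3,3}^2}\sum_{i,j\leq 2}(a_{3,3}a_{ij}-a_{i3}a_{j3})\tilde\xi_i\tilde\xi_j$ you wrote is in fact the correct expression for $A_z^2+4A_y$.
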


   The proof of the Proposition can be easily obtained from the Lemma.
    We multiply the equation by $\mathcal{S}\big( a(z), \tilde{\zeta}\big) U$ and we integrate in $z$. This yields by using Lemma \ref{lemsym}
    $$ \kappa \big( \langle \zeta^\eps \rangle |U|_{L^2_{z}}^2 + \sqrt{\eps}|U(0)|^2 \big)
     \leq  C\Big( |\mathcal F|_{L^2_{z}} |U|_{L^2_{z}}+ \sqrt{\eps}| \partial_{z} \mathcal{S}|_{L^\infty} |U|_{L^2_{z}}^2 + \sqrt{\eps} |\Gamma U(0)|^2\Big).$$  
     Note that $ \sqrt{\eps}| \partial_{z} \mathcal{S}|_{L^\infty}$ is uniformly bounded thanks to \eqref{compact}
      and hence  we get from Cauchy-Schwarz that
      $$  \langle \zeta^\eps \rangle |U|_{L^2_{z}}^2 + \sqrt{\eps}|U(0)|^2  \leq C \Big(  { |\mathcal F|_{L^2_{z}}^2 \over \langle \zeta^\eps \rangle }
       + |U|_{L^2_{z}}^2 + \sqrt{\eps} |\Gamma U(0)|^2 \Big).$$
       By using \eqref{Fdef},   we get that $|\mathcal F| \lesssim  |U|$
        and hence the result follows by choosing $\gamma$ sufficiently large.
        
       \bigskip

It remains  the proof of Lemma \ref{lemsym}.  It can be obtained from classical arguments.

We  first prove that  the eigenvalues of 
$ \mathcal{A}$ has nonzero real part.  If $X=(X_{1}, X_{2})$ is an eigenvector  of $\mathcal{A}(a, \tilde{\zeta})$ associated to the eigenvalue $\mu$, we get
 that $X_{2}= \mu X_{1}$  and that
 $$ a_{33}\mu^2 X_{1} =\Big( a_{0}(\tilde \gamma +i  \tilde \tau) +a_{3,3} A_{y}(\tilde{\xi}) + \mu a_{3,3}A_{z}(\tilde{\xi}) \Big)X_{1}.$$
 If we assume that $\mu = i \lambda$, this yields
 $$a_{0}(\tilde \gamma +  i  \tilde \tau) + \eta_{i}\eta_{j} a_{ij}= 0$$
 where $\eta=(\xi_{1}, \xi_{2}, \lambda)$. By using \eqref{compact}, we find by taking the real part
  that $\eta=0$ and $\tilde \gamma = 0$ and thus also $\tilde \tau=0$. This is impossible for
   $\langle \tilde \zeta \rangle = 1$. Consequently,  there is no eigenvalue on the imaginary axis. Moreover, we easily see 
   that there is  one eigenvalue with positive real  part $\mu_{+}$  and one with negative real part $\mu_{-}$.
     This yields that we can diagonalize  $\mathcal{A}$ in  a smooth way: there exists a smooth invertible matrix
      $\mathcal{P}(a, \tilde{\zeta})$ such that
      $$ \mathcal{A}= \mathcal{P}\left(\begin{array}{cc} \mu_{+} & 0 \\ 0 & \mu_{-} \end{array} \right)\mathcal{P}^{-1}.$$
      
      We  thus  choose $\mathcal{S}$ in a classical way  under the form
      $$ \mathcal{S}=  (\mathcal{P}^{-1})^*\left( \begin{array}{cc}  1 & 0 \\ 0 & -\delta \end{array} \right)\mathcal{P}^{-1}$$
      with $\delta>0$ to be chosen. The first property on $\mathcal{S}$ in Theorem \ref{lemsym} is therefore met.
       Note that  smooth   projections  on the subspace associated to the positive and respectively negative  eigenvalue of $\mathcal{A}$
        are given by 
        $$ \Pi_{+}= \mathcal{P} \left( \begin{array}{cc}  1 & 0 \\ 0 &  0  \end{array} \right)\mathcal{P}^{-1}, \quad
         \Pi_{-}= \mathcal{P} \left( \begin{array}{cc}  0 & 0 \\ 0 &  1  \end{array} \right)\mathcal{P}^{-1}$$

      To get the second one, we note that  if $X$ is an  eigenvector  of $\mathcal{A}(a,  \zeta)$   associated with the eigenvalue of positive real part (we recall
       that we study the equation for $z<0$)
       we necessarily have  $\Gamma X \neq 0$.
         In other words, we have  $\mbox{Ker }\Gamma \cap  \mbox{Ker }\Pi_{-}$.  This yields
            that  the map $X\in \mathbb{R}^2\mapsto  (\Gamma X, \Pi_{-} X)$  is invertible and hence by compactness, there exists $ \alpha >0$ such that for
              every $X \in \mathbb{R}^2$, every $a$ in the compact set \eqref{compact}   and $\tilde{\zeta}$, $\langle \tilde \zeta \rangle= 1$, $\tilde \gamma \geq 0$, we have 
             $$  | \Gamma X|^2 + | \Pi_{-}(a(0), \zeta )X |^2 \geq \alpha |X|^2.$$ 
              This allows to choose $\delta $ sufficiently small in order to get the second property in Lemma \ref{lemsym}.


     \subsection{Energy estimate via microlocal symmetrizer}
   We shall now give the proof of  Theorem  \ref{theomicrolocal}.
   Let us take $T\in [0, T^\eps]$. We  consider the solution of \eqref{omegalagrange}, \eqref{bordOmega} on $[0,T]$. Since
     the initial value is zero, we can assume that $\Omega^\alpha$ is zero for $t \leq 0$.
    Thanks to Lemma \ref{lemlagrange},  we can assume that $a_{0}$ and $a_{ij}$ verify the estimates  \eqref{compact}
     on $[0, T] \times \mathcal{S}$ by a suitable choice of the numbers $m$, $M$ and $c_{0}$
      (thus these numbers depend on $\Lambda_{\infty, T}$)
      Note that thanks to Lemma \ref{lemlagrange}, we  can also assume that 
      \beq
      \label{compact2} \|\partial_{t,y}( a_{0}, a_{ij}) \|_{L^\infty} + \sqrt{\eps}   \|\partial_{t,y}\nabla a_{ij}\|_{L^\infty} \leq M.
      \eeq
       
      We can   choose  extensions of these coefficients on
       $\mathbb{R} \times \mathcal{S}$ such that the new coefficients still satisfy \eqref{compact} and the above estimate.
        We shall not use a different notation for the extensions. We also extend the boundary value $(\Omega^\alpha)^b$
         in \eqref{bordOmega} by $0$ for $t\geq T$ and for $t \leq 0$ and denote by $g$ this new function. We shall consider
          the solution of
         \beq
         \label{etalagrange}
         a_{0}\big( \partial_{t}+ \gamma\big)\rho - \eps \partial_{i}\big( a_{ij} \partial_{j}\rho \big) = 0, \quad (t,x) \in \mathbb{R}\times \mathcal{S}
         , \quad \rho(t,y,0)=g
         \eeq
   which vanishes for $t \leq 0$. By using a standard uniqueness result for this parabolic equation, we get
   that $\rho = \Omega^\alpha$ on $[0, T] \times \mathcal{S}$ consequently, by using the notations of section \ref{sectionparadiff},  it  suffices to prove the
   estimate:
   \begin{prop}
   The solution of \eqref{etalagrange} satisfies the estimate
   \label{propparaeta}
   \beq
   \label{aprouver} \| \rho \|_{\mathcal{H}^{{1\over 2}, \gamma, \eps}}^2 \leq C  \sqrt{\eps} \|g\|_{L^2(\mathbb{R}^3)}^2.
   \eeq
     for $\gamma$ sufficiently large, where  $C$ depending only   of the parameters in \eqref{compact}, \eqref{compact2}
     (in particular it is independent of $\eps$)
     \end{prop}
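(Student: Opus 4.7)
The strategy is to transfer the constant-coefficient estimate of Proposition \ref{propmodele} to the variable-coefficient equation \eqref{etalagrange} by quantizing the symmetrizer $\mathcal{S}(a,\tilde\zeta)$ of Lemma \ref{lemsym} as a paradifferential operator in the partially semiclassical parabolic calculus with weight $\langle\zeta^\eps\rangle=(\gamma^2+\tau^2+|\sqrt{\eps}\,\xi|^4)^{1/4}$ recalled in section \ref{sectionparadiff}. First I would rewrite \eqref{etalagrange} as a first-order system in $z$, mimicking the passage from \eqref{omegasymbolic} to \eqref{systemOmega}: setting $U=(\rho,\sqrt{\eps}\,\partial_z\rho/T_{\langle\zeta^\eps\rangle})^t$ one obtains
$$ \sqrt{\eps}\,\partial_z U = T_{\langle\zeta^\eps\rangle\,\mathcal{A}(a,\tilde\zeta^\eps)} U + \mathcal{R}, \qquad \Gamma U_{|z=0}=g, $$
where $\mathcal{R}$ collects the commutators between $\partial_z$ and the paradifferential quantizations acting in $(t,y)$ (these produce factors of $\sqrt{\eps}\,\partial_z a$, uniformly bounded thanks to \eqref{compact}) as well as lower order symbol remainders from the parabolic-homogeneous calculus.

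Next, I would apply the paradifferential quantization $T_{\mathcal{S}}$ of the symmetrizer $\mathcal{S}(a,\tilde\zeta^\eps)$, take the $L^2_{t,y}$ scalar product with $U$, and integrate in $z\in(-\infty,0)$. An integration by parts produces the boundary term $\sqrt{\eps}\,\mathrm{Re}\,(T_\mathcal{S}\,U,U)_{|z=0}$ plus an interior term of the form $-\sqrt{\eps}\,\mathrm{Re}\,(T_{\partial_z\mathcal{S}} U,U)$, while the equation gives $2\,\mathrm{Re}\,(T_\mathcal{S}\,T_{\langle\zeta^\eps\rangle\mathcal{A}} U,U) + 2\,\mathrm{Re}\,(T_\mathcal{S}\mathcal{R},U)$ on the other side. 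By the composition rules of the calculus, $T_\mathcal{S}\,T_{\langle\zeta^\eps\rangle\mathcal{A}}$ has principal symbol $\langle\zeta^\eps\rangle\,\mathcal{S}\mathcal{A}$ modulo an operator of parabolic order one less, and its Hermitian part has principal symbol $\tfrac12\langle\zeta^\eps\rangle(\mathcal{S}\mathcal{A}+(\mathcal{S}\mathcal{A})^*)$. The two positivity properties of Lemma \ref{lemsym}, namely $\mathcal{S}\mathcal{A}+(\mathcal{S}\mathcal{A})^*\ge\kappa\,\mathrm{Id}$ and $\mathcal{S}+\Gamma^*\Gamma\ge\kappa\,\mathrm{Id}$, combined with the sharp G\aa{}rding inequality for matrix-valued symbols in this calculus, then yield
$$ \kappa\,\|U\|_{\mathcal{H}^{1/2,\gamma,\eps}}^2 + \kappa\,\sqrt{\eps}\,|U_{|z=0}|_{L^2(\mathbb{R}^3)}^2 \le C\,\sqrt{\eps}\,\|g\|_{L^2(\mathbb{R}^3)}^2 + \mathcal{E}, $$
where $\mathcal{E}$ stands for all sub-principal contributions. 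Recovering $\rho$ from $U_1$ then gives exactly \eqref{aprouver}.

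\textbf{Main obstacle.} The delicate point is the control of $\mathcal{E}$. The coefficients $a_0,a_{ij}$ are only Lipschitz in $(t,y)$ and Lipschitz in $z$ at the semiclassical scale $\sqrt{\eps}$; this is precisely the threshold regularity which forces the use of paradifferential, rather than pseudodifferential, calculus. One must verify that every remainder gains a full $\langle\zeta^\eps\rangle^{1/2}$ power so as to be absorbed either by the left-hand side via sharp G\aa{}rding or, when this fails, by a large-$\gamma$ argument exploiting $\langle\zeta^\eps\rangle\ge\gamma^{1/2}$. The remainders stemming from $\sqrt{\eps}\,[\partial_z,T_{(\cdot)}]$ in the reduction to the first-order system and from the composition $T_\mathcal{S}\,T_{\langle\zeta^\eps\rangle\mathcal{A}}$ are handled by the symbolic rules of section \ref{sectionparadiff}; those coming from $\sqrt{\eps}\,T_{\partial_z\mathcal{S}}$ are bounded uniformly thanks to \eqref{compact}. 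The boundary contribution is bounded below by $\kappa\,\sqrt{\eps}\,|\Gamma U_{|z=0}|_{L^2}^2-C\,\sqrt{\eps}\,\|g\|_{L^2}^2$ using the second inequality of Lemma \ref{lemsym}, and the source term $F$ appearing in $\mathcal{R}$ is zero here. Choosing $\gamma\ge\gamma_0$ large enough, all sub-principal terms are absorbed and \eqref{aprouver} follows.
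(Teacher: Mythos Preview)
Your proposal is correct and follows essentially the same approach as the paper: rewrite \eqref{etalagrange} as a first-order paradifferential system in $z$, apply $T^{\eps,\gamma}_{\mathcal{S}}$ with $\mathcal{S}$ the symmetrizer of Lemma \ref{lemsym}, use the G\aa{}rding inequality for the interior term and the second positivity in Lemma \ref{lemsym} for the boundary term, and absorb all sub-principal remainders by taking $\gamma$ large. The paper carries out the remainder analysis in slightly more detail---in particular, it first replaces the true products by paraproducts (producing an $R^2$ term) and uses a duality argument for the pieces involving $\sqrt{\eps}\,\partial_z\rho$ to place them in $\mathcal{H}^{-3/2,\gamma,\eps}$---but your outline captures all the essential ingredients.
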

     Indeed, let us assume that this last proposition is proven, then
     since
     $$  \|\rho\|_{H^{1\over 4}(\mathbb{R}, L^2(\mathcal{S}))} \leq \| \rho \|_{\mathcal{H}^{{1\over 2}, \gamma, \eps}} $$
    we find that 
     \beq
     \label{prouve}  \| \Omega^\alpha \|_{H^{1 \over 2 }([0, T], L^2(\mathcal{S}))}^2 \leq C   \sqrt{\eps} \|(\Omega^\alpha)^b\|_{L^2([0, T] \times \mathbb{R}^2)}^2.\eeq
   We shall thus focus on the proof of \eqref{aprouver}. Again, we shall consider that $\rho$ is a scalar function.
   By using the notations of the previous subsection, we can  define two symbols (with $z$ as parameter)  $a_{y}$ and $a_{z}$ by
   $$ a_{y}(X, \zeta,z)= A_{y}(a(t,y,z),  \xi), \quad  a_{z}(X, \zeta,z)= A_{z}(a(t,y,z),  \xi),$$ in such a way that
    we can rewrite \eqref{etalagrange} under the form
    \beq
    \label{etalagrange2}\eps \partial_{zz} \rho=  {a_{0} \over a_{33}}\big( \partial_{t} + \gamma ) \rho   +  a_{y}(X, \sqrt{\eps} \partial_{y}, z) \rho
      +a_{z}(X, \sqrt{\eps} \partial_{y}, z) \sqrt{\eps} \partial_{z} \rho +  R^1
      \eeq
      where 
      $$ R^1= {\sqrt{\eps}\partial_{i} a_{ij}\over a_{3,3}} \sqrt{\eps} \partial_{j} \rho.$$
      In view of the model estimate of Proposition \ref{propmodele}, we need to control $\| R^1 \|_{\mathcal H^{-{3 \over 2},  \gamma, \eps}}.$
 Let us set 
      $$b_{ij}= {\sqrt{\eps}\partial_{i} a_{ij}\over a_{3,3}}$$
      Let us start with the estimates of the terms  where $j\neq 3$. We first note that
      $$ \| b_{ij} \sqrt{\eps} \partial_{j} \rho\|_{\mathcal H^{-{3 \over 2},  \gamma, \eps}}
       \leq   \| \sqrt{\eps} \partial_{j}\big( b_{ij}  \rho\big)\|_{\mathcal H^{-{3 \over 2}, \gamma, \eps}}
        +   \| \sqrt{\eps}\partial_j b_{ij}  \rho\|_{\mathcal H^{-{3 \over 2},  \gamma, \eps}}.$$
        For the second term, thanks to the uniform estimate \eqref{compact} and  \eqref{compact2},  we can write
        $$  \| \sqrt{\eps}\partial_{j} b_{ij}  \rho\|_{\mathcal H^{-{3 \over 2}, \gamma, \eps}}
        \leq {1 \over \gamma^{3\over 4}}\| \sqrt{\eps}\partial_{j} b_{ij}  \rho\|_{L^2} \leq {C \over \gamma^{3\over 4}} \|\rho\|_{L^2}.$$
        For the first term thanks to the definition of the weighted  norm, we get
        $$  \| \sqrt{\eps} \partial_{j}\big( b_{ij}  \rho\big)\|_{\mathcal H^{-{3 \over 2},  \gamma, \eps}}
         \leq   \| b_{ij} \rho\big\|_{\mathcal H^{-{1 \over 2},  \gamma, \eps}}
          \leq {C \over \gamma^{1\over 4}} \| \rho \|_{L^2}.$$
        Consequently, we have proven that
      $$  \| b_{ij} \sqrt{\eps} \partial_{j} \rho\|_{\mathcal H^{-{3 \over 2},  \gamma, \eps}} \leq {C \over \gamma^{1\over 4} } \| \rho \|_{L^2}.$$
       for $j \neq 3$. It remains the case that $j=3$ which is more complicated. We shall use a duality argument and
        the paradifferential calculus of section \ref{sectionparadiff}. We first write
        $$ \| b_{ij} \sqrt{\eps} \partial_{z} \rho\|_{\mathcal H^{-{3 \over 2},  \gamma, \eps}} \leq {1 \over \gamma^{1 \over 2}}
          \| b_{ij} \sqrt{\eps} \partial_{z} \rho\|_{\mathcal H^{-{1 \over 2},  \gamma, \eps}}.$$
          Next for any  test function $f$ in the Schwartz class, we write
          $$ |\big( b_{ij} \sqrt{\eps} \partial_{z} \rho, f \big)_{L^2}| \leq  | \big(  \sqrt{\eps} \partial_{z} \rho, b_{ij} f \big)_{L^2}|
           \leq  \|\sqrt{\eps} \partial_{z} \rho\|_{\mathcal{H}^{-{1 \over 2}, \gamma, \eps}} \| b_{ij} f \|_{\mathcal{H}^{{1 \over 2}, \gamma, \eps}}.$$
         Next,  thanks to the estimate \eqref{compact}, \eqref{compact2}, we get in particular that 
          $\|b_{ij}\|_{L^\infty}$ and $\|\nabla_{t,y} b_{ij}\|_{L^\infty}$ are uniformly bounded, thus we find
          $$ \| b_{ij} f \|_{\mathcal{H}^{{1 \over 2}, \gamma, \eps}}
           \leq \|T^{\eps, \gamma}_{b_{ij}} f\|_{\mathcal{H}^{{1 \over 2}, \gamma, \eps}} +   \| (b_{ij}- T^{\eps, \gamma}_{b_{ij}}) f\|_{\mathcal{H}^{{1}, \gamma, \eps}} \leq C\|f\|_{\mathcal{H}^{{1 \over 2}, \gamma, \eps}} $$
            by using  in Theorem \ref{symbolic2} the estimate  (1)  and the first estimate in (5).
             This proves by duality  that
            $$   \| b_{ij}  \sqrt{\eps} \partial_{z} \rho \|_{\mathcal{H}^{ -{1 \over 2}, \gamma, \eps}} \leq C  \|\sqrt{\eps} \partial_{z} \rho\|_{\mathcal{H}^{-{1 \over 2}, \gamma, \eps}}$$
             and hence that
          \beq
          \label{bij1} \| b_{ij}  \sqrt{\eps} \partial_{z} \rho \|_{\mathcal{H}^{ -{3 \over 2}, \gamma, \eps}} \leq {C\over \gamma^{1\over 2}}  \|\sqrt{\eps} \partial_{z} \rho\|_{\mathcal{H}^{-{1 \over 2}, \gamma, \eps}}.\eeq
          Consequently, we have proven that
\beq
\label{R1para}
\|R^1 \|_{\mathcal{H}^{ -{3 \over 2}, \gamma, \eps}}
 \leq  {C\over \gamma^{1\over 4}} \big( \| \rho \|_{L^2} + \|\sqrt{\eps} \partial_{z} \rho\|_{\mathcal{H}^{-{1 \over 2}, \gamma, \eps}}\big).
 \eeq
 Note that in view of the left hand side in the model estimate of Proposition \ref{propmodele}, the above estimate is a good estimate
  since  for $\gamma$ sufficiently large, it  will be possible to absorb this term  by the principal term of our estimate.
       
   Next, we can replace products by paraproducts in the equation \eqref{etalagrange2} to rewrite it under the form
   \beq
   \label{etalagrange3}
   \eps \partial_{zz}\rho=  T^{\eps, \gamma}_{a_{0}/a_{33}}\big( \partial_{t} + \gamma \big)\rho
    + T^{\eps, \gamma}_{a_{y}} \rho  +  T^{\eps, \gamma}_{a_{z}}\sqrt{\eps} \partial_{z}\rho  +  R
    \eeq
    where
    \begin{eqnarray}
    \label{Rdefpara}
    & &  R= R^1 + R^2, \\
   \label{R2defpara}  & & R^2 = \big( {a_{0}\over a_{33}} - T^{\eps, \gamma}_{a_{0}/a_{33}}\big) \big( \partial_{t}+ \gamma\big)
      - \sum_{1 \leq i,  j \leq 2} \big( {a_{ij}\over a_{33}} - T^{\eps, \gamma}_{a_{ij}/a_{33}} \big) \eps\, \partial_{i} \partial_{j} \rho
       \\ 
     \nonumber  & & \quad  \quad \quad - 2 \sum_{1 \leq k \leq 2} \big( {a_{k3}\over a_{33}} - T^{\eps, \gamma}_{a_{k3}/a_{33}} \big) \eps  \partial_{k}
      \partial_{z} \rho.
  \end{eqnarray}
      As before, we need to estimate $ \|R^2 \|_{\mathcal{H}^{ -{3 \over 2}, \gamma, \eps}}$. For the first term in the definition of  $R^2$, 
       we  can use \eqref{compact} and \eqref{compact2} to write
       \begin{align*}
       &   \| \big( {a_{0}\over a_{33}} - T^{\eps, \gamma}_{a_{0}/a_{33}}\big)\big( \partial_{t}+ \gamma) \rho \|_{\mathcal{H}^{ -{3 \over 2}, \gamma, \eps}}
       \\
       & \leq \big\| \big( \partial_{t} + \gamma \big)  \big( {a_{0}\over a_{33}} - T^{\eps, \gamma}_{a_{0}/a_{33}}\big)\rho \big\|_{\mathcal{H}^{ -{3 \over 2},\gamma, \eps}} + \big\| \partial_{t}({a_{0}\over a_{33}}\big) \rho \|_{\mathcal{H}^{ -{3 \over 2}, \gamma, \eps}} +\big\| 
T^{\eps, \gamma}_{\partial_{t}(a_{0}/a_{33})}\rho \big\|_{\mathcal{H}^{ -{3 \over 2}, \gamma, \eps}} \\
&  \leq \big\|  \big( {a_{0}\over a_{33}} - T^{\eps, \gamma}_{a_{0}/a_{33}}\big)\rho \big\|_{\mathcal{H}^{ {1 \over 2}, \gamma, \eps}}
 + {C \over \gamma^{3 \over 4}} \| \rho \|_{L^2} \\
 &\leq {C \over \gamma^{1 \over 4}}   \| \rho \|_{L^2}
\end{align*}
   where we have  used the $L^2$ continuity of paraproducts (i.e. (1) with $\mu=0$ in Theorem \ref{symbolic2})
    and the first estimate in (5)  of Theorem \ref{symbolic2}
    to get the two  last lines.  For the second type of terms in $R^2$,  we can  proceed  in the same way
     (note that $\eps \partial_{ij}$ is  an operator of order $2$  and hence has the same order as  $\partial_{t}$ in our calculus) 
     to obtain that 
 $$ \big\| \big( {a_{ij}\over a_{33}} - T^{\eps, \gamma}_{a_{ij}/a_{33}} \big) \eps\, \partial_{i} \partial_{j} \rho\big\|_{\mathcal{H}^{ -{3 \over 2}, \gamma, \eps}}
   \leq {C \over \gamma^{1 \over 4}} \| \rho \|_{L^2}.$$ For the last  type of terms in $R^2$, we first proceed in the same way to write
    \begin{align*}
       &   \|  \big( {a_{k3}\over a_{33}} - T^{\eps, \gamma}_{a_{k3}/a_{33}}\big)
 \eps \partial_{k}\partial_{z} \rho\|_{\mathcal{H}^{ -{3 \over 2}, \gamma, \eps}}  \\
       & \leq \big\| \sqrt{\eps}\partial_{k}  \big( {a_{k3}\over a_{33}} - T^{\eps, \gamma}_{a_{k3}/a_{33}}\big) \sqrt{\eps}\partial_{z}\rho \big\|_{\mathcal{H}^{ -{3 \over 2},\gamma, \eps}} + \big\|\sqrt{\eps} \partial_{k}({a_{k3}\over a_{33}}\big) \sqrt{\eps}\partial_{z} \rho \|_{\mathcal{H}^{ -{3 \over 2}, \gamma, \eps}} +\big\| T^{\eps, \gamma}_{\sqrt{\eps}\partial_{k}(a_{k3}/a_{33})} \sqrt{\eps}\partial_{z}\rho \big\|_{\mathcal{H}^{ -{3 \over 2}, \gamma, \eps}} \\
&  \leq  {1 \over \gamma^{1 \over 4}} \big\|  \big( {a_{k3}\over a_{33}} - T^{\eps, \gamma}_{a_{k3}/a_{33}}\big) \sqrt{\eps}\partial_{z}\rho \big\|_{L^2}
 + {C \over \gamma^{1 \over 2}} \| \sqrt{\eps}  \partial_{z}\rho \|_{\mathcal{H}^{- {1 \over 2}, \gamma, \eps}}.
\end{align*}
Indeed, to get the last line,  we have used the continuity of the paraproduct and  that the 
 term $ \big\|\sqrt{\eps} \partial_{k}({a_{0}\over a_{33}}\big) \sqrt{\eps}\partial_{z} \rho \|_{\mathcal{H}^{ -{3 \over 2}, \gamma, \eps}} $
  can be estimated in  a similar way  as it was done to get \eqref{bij1}.
   To estimate the first term in  the right hand side of the above inequality, we can again use a duality argument. 
   For any test function $f$, we have
   $$\Big( \big( {a_{k3}\over a_{33}} - T^{\eps, \gamma}_{a_{k3}/a_{33}}\big) \sqrt{\eps}\partial_{z}\rho, f \Big)_{L^2}
   = \Big( \sqrt{\eps}\partial_{z}\rho,  \big( {a_{k3}\over a_{33}} - T^{\eps, \gamma}_{a_{k3}/a_{33}}\big) f \Big) -
    \Big( \sqrt{\eps}\partial_{z}\rho,  \big(  (T^{\eps, \gamma}_{a_{k3}/a_{33}})^* - T^{\eps, \gamma}_{a_{k3}/a_{33}}\big) f \Big)$$
     where we have used that $a_{k3}/a_{33}$ is a scalar real valued function. Consequently, by using again Theorem \ref{symbolic2}, 
      we obtain that
   $$ \Big| \Big( \big( {a_{k3}\over a_{33}} - T^{\eps, \gamma}_{a_{k3}/a_{33}}\big) \sqrt{\eps}\partial_{z}\rho, f \Big)_{L^2}\Big|
    \leq  C \| \sqrt{\eps}\partial_{z}\rho\|_{\mathcal{H}^{- {1}, \gamma, \eps}} \, \|f\|_{L^2}$$
     and hence that
     $$  \big\|  \big( {a_{k3}\over a_{33}} - T^{\eps, \gamma}_{a_{k3}/a_{33}}\big) \sqrt{\eps}\partial_{z}\rho \big\|_{L^2}
      \leq   C \| \sqrt{\eps}\partial_{z}\rho\|_{\mathcal{H}^{- {1}, \gamma, \eps}}.$$
       Consequently, we finally obtain that
    $$\big\| \big( {a_{ij}\over a_{33}} - T^{\eps, \gamma}_{a_{ij}/a_{33}} \big) \eps\, \partial_{i} \partial_{j} \rho\big\|_{\mathcal{H}^{ -{3 \over 2}, \gamma, \eps}}
\leq {C \over \gamma^{1 \over 2}} \| \sqrt{\eps}  \partial_{z}\rho \|_{\mathcal{H}^{- {1 \over 2}, \gamma, \eps}}$$
and hence by collecting the previous estimates and \eqref{R1para}, we get that
\beq
\label{Rpara}
\|R \|_{\mathcal{H}^{ -{3 \over 2}, \gamma, \eps}}
 \leq  {C\over \gamma^{1\over 4}} \big( \| \rho \|_{L^2} + \|\sqrt{\eps} \partial_{z} \rho\|_{\mathcal{H}^{-{1 \over 2}, \gamma, \eps}}\big).
\eeq
Next, we can rewrite \eqref{etalagrange3}  as a first order system by setting
\beq
\label{Udefpara} U=\big ( \rho,  T^{\eps, \gamma}_{ 1/\langle \zeta \rangle} \sqrt{\eps}\partial_{z} \rho \big)^t.\eeq
 Note that $T^{\eps, \gamma}_{ 1/\langle \zeta \rangle}$ is just the Fourier multiplier by $1/\langle \zeta^\eps \rangle$. This yields the system
 \beq
 \label{systpara}
 \sqrt{\eps} \partial_{z} U = T^{\eps, \gamma}_{M} U + \mathcal{F}, \quad z <0
 \eeq
 where the symbol $M\in \Gamma_{1}^1$ is given by 
 $$ M(X, \zeta, z)= \langle \zeta \rangle  \mathcal{A}(a(X,z), \tilde{\zeta}), \quad \tilde{\zeta}=(\tilde \gamma, \tilde \tau, \tilde \xi)=
 (\gamma/\langle \zeta \rangle^2, \tau/\langle \zeta \rangle^2, \xi/\langle \zeta \rangle)$$
 where $\mathcal{A}$ is defined in \eqref{matrixOmega} and  the source term $\mathcal{F}$ is defined by
 $$ \mathcal{F}= (0, T^{\eps, \gamma}_{{1\over \langle \zeta \rangle}} R + \mathcal{C}\big)^t$$
 where $\mathcal{C}$ is  a lower order commutator. By using \eqref{Rpara} and  (2) in Theorem \ref{symbolic2} to estimate $\mathcal C$, we get that
 \beq
 \label{Fpara}
  \|\mathcal F\|_{\mathcal H^{ -{1 \over 2}, \gamma, \eps}} \leq {C \over \gamma^{1 \over 4}} \|U\|_{\mathcal H^{ {1 \over 2}, \gamma, \eps}}.
 \eeq
 The boundary condition for  \eqref{systpara} becomes
 \beq
 \label{bordpara}
 \Gamma U_{/z=0}=g 
 \eeq
 where $\Gamma$ is still defined by \eqref{Gammadef}. We can then perform an energy estimate for the system
  \eqref{systpara}, \eqref{bordpara} by using the symmetrizer $\mathcal{S}$ constructed in Lemma \ref{lemsym}.
   Indeed let us define a symbol $S(X, \zeta, z) \in \Gamma_{1}^0$ by 
   $$ S(X, \zeta, z) = \mathcal S ( a(X,z), \tilde \zeta, z).$$
   From Lemma \ref{lemsym}, we get that at the level of symbols, we have
   $$ \mbox{Re } SM \geq \kappa \langle \zeta \rangle, \quad  \mbox{Re }S_{/z=0}+ \Gamma^* \Gamma \geq \kappa.$$
    Consequently, we can perform an energy estimate in a standard way  by taking the scalar product of \eqref{systpara} with
     $T^{\eps, \gamma}_{S} U$ and by using Theorem \ref{symbolic2} (in particular  the estimates (2), (3)
      and the Garding inequality (4)). This yields
      $$ \|U\|_{\mathcal H^{ {1 \over 2}, \gamma, \eps}}^2 + \sqrt{\eps}\|U(0)\|_{L^2(\mathbb{R}^3)}^2
       \leq {C \over \gamma^{1 \over 4}}\big(   \|U\|_{\mathcal H^{ {1 \over 2}, \gamma, \eps}} + \sqrt{\eps}\|U(0)\|_{L^2(\mathbb{R}^3)}^2\big)
        +  \sqrt{\eps} C\|g\|_{L^2(\mathbb{R}^3)}^2 + \big|\big(\mathcal{F}, T^{\eps, \gamma}_{S} U \big)_{L^2} |.$$ 
  Since  by using \eqref{Fpara}, we get 
  $$ \big|\big(\mathcal{F}, T^{\eps, \gamma}_{S} U \big)_{L^2} | \leq  \|   \mathcal{F}\|_{\mathcal H^{ -{1\over 2}, \gamma, \eps} } \, 
   \| U\|_{\mathcal H^{ {1 \over 2}, \gamma, \eps}} \leq {C\over \gamma^{1 \over 4}} 
    \| U\|_{\mathcal H^{ {1 \over 2}, \gamma, \eps}}^2.$$
    Consequently for $\gamma$ sufficiently large (with respect to $C$),  we obtain the estimate
    $$  \|U\|_{\mathcal H^{ {1 \over 2}, \gamma, \eps}}^2 \leq  \sqrt{\eps} C\|g\|_{L^2(\mathbb{R}^3)}^2.$$
    To conclude, we note that 
    $$  \|U\|_{\mathcal H^{ {1 \over 2}, \gamma, \eps}}^2= \|\rho\|_{\mathcal H^{ {1 \over 2}, \gamma, \eps}}^2
     +  \|\sqrt{\eps}\partial_{z}\rho\|_{\mathcal H^{- {1 \over 2}, \gamma, \eps}}^2.$$
      This ends the proof of Proposition \ref{propparaeta}.

      
    \section{Proof of  Theorem \ref{main}: uniform existence} 
    \label{sectionexist} 
    In this section, we shall  prove how we can combine all our energy estimates to get our uniform 
   existence  result.  Let us  fix  $m \geq 6$.
      We consider  initial data such that 
      $$\mathcal{I}_{m}(0) =  \| v_{0}\|_{E^m}  + |h_{0}|_{m} + \sqrt{\eps}|h_{0}|_{m+{1 \over 2}} + \|v_{0}\|_{E^{2,\infty}}+
     \eps \| \partial_{zz} v(0)\|_{L^\infty}<+\infty.$$
     For such data, we are not aware of a  local existence result. 
     We could prove it  by using our energy estimates and a classical iteration scheme. Nevertheless, we can also  avoid this by 
      using  the available classical existence results in Sobolev spaces (for example  \cite{Beale81}, \cite{Tani96}). Indeed,  we can first smooth
       the initial  velocity   and consider  a sequence  $v_{0}^\delta \in H^{r-1}$, $3<r<7/2$ ($\delta$  being a regularization parameter) 
        to meet the assumption of   \cite{Beale81}, \cite{Tani96}.     
  This allows to get a positive time $T^{\eps, \delta}$  for which a solution $v$
         associated to this initial data exists
        in the space  $\mathcal{K}^r ([0, T^{\eps, \delta}]\times \mathcal{S})= H^{r\over 2} ([0, T], L^2) \cap L^2 ([0, T], H^r)$. 
  Next,   
     we can   get by  standard parabolic energy estimates  
       that additional regularity  propagates from the initial data, that is to say that on $[0, T^{\eps, \delta}], $ we have 
      \begin{align}
      \label{defNm}
        \mathcal{N}_{m}(T) &  = \sup_{[0, T]}\big( \|v(t) \|_{m}^2 + \|\partial_{z}v\|_{m-2}^2 + |h(t)|_{m}^2 + \eps |h(t)|_{m+{1 \over 2}}^2
        + \eps \|\partial_{zz}v(t)\|_{L^\infty}^2 +  \|v(t)\|_{E^{2, \infty}}^2 \big) \\
        \nonumber & \quad  \quad \quad   + \|\partial_{z}v\|_{L^4([0,T], H^{m-1}_{co})}^2 
        + \eps \int_{0}^T \|\nabla v\|_{m}^2 + \eps \int_{0}^t  \|\nabla \partial_{z}v\|_{m-2}^2<+ \infty. 
       \end{align}
       Moreover, we can also get from the initial condition  that \eqref{apriori} is valid  on $[0, T^{\eps, \delta}]$  (possibly by taking $T^{\eps, \delta}$ smaller).  
       Note that since we do not  propagate any additional normal  regularity, we do not need additional compatibility conditions.
              We shall not detail this step since it can be  done by classical energy estimates (much simpler than the ones we have proven since in this stage we are not interested
           in estimates independent of $\eps$.) 
           
            An  important remark is that if $\mathcal{N}_{m}(T_{0}) <+\infty$, then the solution can be continued on 
             $[0, T_{1}]$, $T_{1}>T_{0}$ with $\mathcal{N}_{m}(T_{1})<+\infty$. Indeed  if $\mathcal{N}_{m}(T_{0})<+\infty$,
             we can use the parabolic regularity for the Stokes problem on $[T_{0}/2, T_{0}]$ to get  that the solution actually
              enjoys much more  standard Sobolev regularity on $[T_{0}/2, T_{0}]$ (note that we assume that
               the surface $h$ is $H^6$) and in particular, we find  that $u(T_{0})\in H^{r-1}$, $3<r<7/2$. 
               This allows to use again the result  of Beale \cite{Beale81}  to continue  the solution and the previous argument about the propagation of additional
                regularity to get our claim.

    Next  we want to  use this remark to prove  that the solution can be continued on an interval of time independent of $\eps$ and $\delta$.
    Towards this, we first note that it is equivalent to control $\mathcal{N}_{m}(T)$ and $\mathcal{E}_{m}(T)$
    where   
    $$
      \mathcal{E}_{m}(T)=  \sup_{[0, T]}  \mathcal{Q}_{m}(t) +  \mathcal{D}_{m}(T) + \| \omega \|_{L^4([0, T], H^{m-1}_{co})}^2      $$
       with  $\mathcal{Q}_{m}$   defined by \eqref{defQm} and 
       $$ \mathcal{D}_{m}(T) = \eps \int_{0}^T \big( \eps \|\nabla V^m\|^2 + \eps \| \nabla S_{\n}\|_{m-2}^2 \big).$$
    
    Indeed, the fact that $\mathcal{N}_{m}(T) \leq \Lambda\big( {1 \over c_{0}}, \mathcal{E}_{m}(T))$   is a consequence of \eqref{equiv1}, \eqref{vnorm}, 
     \eqref{dzvm-1O},  Lemma \ref{lemdzS} and Corollary \ref{corLinfty} while the reverse inequality is just a consequence of product estimates.
     
         For two parameters $R$ and $c_{0}$ to be chosen $1/c_{0}<<R$, we can thus define  a time $T^{\eps, \delta}_{*}$ 
        \begin{multline*}
        T^{\eps, \delta}_{*}= \sup\big\{T\in [0, 1],  \quad \mathcal{E}_{m}(t) \leq R, \quad 
        |h(t)|_{2, \infty} \leq 1/c_{0}, \quad \partial_{z}\varphi(t) \geq c_{0}, \\
         \quad g- \partial_{z}^\varphi q^E(t) \geq {c_{0} \over 2}, \quad \forall t \in [0, T]. 
        \big\}.
        \end{multline*}

      At first, let us notice that thanks to Corollary \ref{corLinfty}, we  have that  for $T \leq T^{\eps, \delta}_{*}$, 
      $$ \Lambda_{\infty, 6}(T) \leq \Lambda\big( R)$$
      where $\Lambda_{\infty, m}$ is defined by \eqref{deflambdainfty}.
     Thanks to Corollary \ref{corLinfty0t}, we also have
      $$ \int_{0}^T \sqrt{\eps} \| \nabla^2 v\|_{1, \infty} \leq \Lambda(R).$$
      This allows to use Proposition \ref{conormv}, Proposition \ref{propdzvinfty} ,   Proposition \ref{dzzvLinfty} and Proposition  \ref{propomega}  to get that 
   $$
    \mathcal{E}_{m}(T)
       \leq \Lambda\big( {1 \over c_{0}}, \mathcal{I}_{m}(0)\big) + \Lambda(R)\Big( T^{1 \over 2} + \Lambda(R)\int_{0}^T | (\partial_{z}\partial_{t} q^E)^b \|_{L^\infty}
        + \Lambda(R)\int_{0}^T \| \omega \|_{m-1}^2\Big).
   $$
     Consequently, from the Cauchy-Schwarz  inequality, we find that
    \beq
    \label{Em1}  
    \mathcal{E}_{m}(T)
       \leq \Lambda\big( {1 \over c_{0}}, \mathcal{I}_{m}(0)\big) + \Lambda(R)\Big( T^{1 \over 2} + \Lambda(R)\int_{0}^T | (\partial_{z}\partial_{t} q^E)^b \|_{L^\infty}\Big). \eeq
       To estimate the last term in the right hand side, we can use Proposition \ref{proptaylor} to find
    $$ \int_{0}^T | (\partial_{z}\partial_{t} q^E)^b \|_{L^\infty} \leq \Lambda(R)\big( T + \int_{0}^T \big( \eps \|\partial_{zz}v \|_{L^\infty} + 
     \eps \| \partial_{zz} v\|_{3} \big)\big)$$
      and hence thanks to \eqref{dzzvk}, we find
      \beq
      \label{taylorfin}  \int_{0}^T | (\partial_{z}\partial_{t} q^E)^b \|_{L^\infty} \leq \Lambda(R)\big( T + \int_{0}^T \eps  \| \partial_{z} S_{\n}\|_{3}\big)
       \leq \Lambda(R)\sqrt{T}
       \eeq
       where the last estimate comes  again from the Cauchy-Schwarz inequality. Consequently, we obtain from \eqref{Em1} that
       \beq
       \label{Em2}
        \mathcal{E}_{m}(T)
       \leq \Lambda\big( {1 \over c_{0}}, \mathcal{I}_{m}(0)\big) + \Lambda(R) T^{1 \over 2}.   
            \eeq 
      Moreover,  thanks to  the equation \eqref{bordv2}, we get
        that
        \beq
        \label{choixc01} |h(t)|_{2, \infty} \leq |h(0)|_{2, \infty}+ \Lambda(R) T, \quad \forall t\in [0, T]\eeq
          and also 
        \beq
        \label{choixc02} \partial_{z} \varphi(t) \geq  1  -  \int_{0}^t \| \partial_{t} \nabla \eta \|_{L^\infty} \geq 1 - \Lambda(R) T, \quad
         \forall t \in [0,T]\eeq
        since we have chosen $A$ so that \eqref{Adeb} is verified. Finally, since
        $$ g- (\partial_{z}^\varphi q^E)^b(t)=  (g- (\partial_{z}^\varphi q^E)^b_{/t=0} -\Lambda(R) \int_{0}^t\big( 1 +  | (\partial_{t} \partial_{z}q^E)^b |_{L^\infty}
        \big), $$
        we get from \eqref{taylorfin} that
        \beq
        \label{taylorcon} g- (\partial_{z}^\varphi q^E)^b \geq  (g- (\partial_{z}^\varphi q^E)^b_{/t=0} -  \sqrt{t} \Lambda(R), \quad \forall
         t \in [0,T].\eeq
        In view of 
         \eqref{taylorcon}, \eqref{choixc01}, \eqref{choixc02} and \eqref{Em2}, we can take
          $R= 2 \Lambda( |h(0)|_{2, \infty}, \mathcal{I}_{m}(0))$   to get that  there exists $T_{*}$ which depends only on 
           $ \mathcal{I}_{m}(0)$ (and hence does not depend on $\eps$ and $\delta$)  so that for $T \leq \mbox{Min }(T_{*}, T^{\eps, \delta}_{*})$, 
           we have
           $$ \mathcal{E}_{m}(T) \leq R/2, \quad 
        |h(t)|_{2, \infty} \leq {1\over  2 c_{0}}, \quad \partial_{z}\varphi(t) \geq 2c_{0}, \quad g- \partial_{z}^\varphi q^E(t) \geq {3 \over 4} c_{0}, \quad \forall t \in [0,T].$$
          This yields  $T^{\eps, \delta }_{*} \geq T_{*}$. Indeed otherwise, our criterion about the   continuation of  the solution
           would contradict the definition of $T^{\eps, \delta}_{*}$.
           
           We have thus proven that  for the  smoothed  initial data $v_{0}^\delta$,  there exists an interval of time $[0, T_{*}] $  independent of $\eps$ and
            $\delta$
            for which the solution exists and such that $\mathcal{N}_{m}(T^*) <+\infty$.  To get the existence of solution
             without the additional regularity for the initial data, it suffices to pass to the limit.  The fact that                
      $ \mathcal{N}_{m}(T^*)$ is uniformly bounded in $\delta$  allows to pass to the limit easily by using strong compactness arguments.
      We shall not give more details since the arguments are very close to the ones that allow to get the inviscid
       limit that we shall detail below.

       \section{Uniqueness}
      \label{sectionunique} 
       \subsection{Uniqueness for Navier-Stokes}
       In this section, we shall prove the uniqueness part of Theorem \ref{main}.
        We consider two solutions $(v^i, \varphi^i, q^i)$  with the same initial data of \eqref{NSv}, \eqref{bordv1}, \eqref{bordv2}
         which satisfy  on $[0, T^\eps]$, the estimate
         \beq
         \label{apriorifinal}
         \mathcal{N}_{m}^i(T^\eps) \leq R, \quad i=1, \, 2
         \eeq
       where $\mathcal{N}_{m}$ is defined  above in \eqref{defNm} and the superscript $i$ refers to  one of the two solutions.
       We also assume that  for each solution  \eqref{apriori} is verified. We set
        $v= v^1- v^2$, $h=h^1- h^2$, $q= q^1- q^2$.
       
       We shall first provide  a  simple  uniqueness proof for the Navier-Stokes equation.
               We will explain how  the proof has to be modified in order to get  uniqueness
         for the  Euler equation in  a second step.

         At first, by using \eqref{transportW} and \eqref{deltaphi}, we get that $v$ solves the equation
       \beq
             \label{eqdiff}
          \big( \partial_{t}  +  v_{y}^1\cdot \nabla_{y}+ V_{z}^1 \partial_z   \big)v + \nabla^{\varphi} q - \eps \Delta^{\varphi} v = \mathcal{F}
       \eeq
          with $\mathcal{F}$ given by  
      \begin{align}
      \label{sourcediff} 
       	 \mathcal{F}  & = ( v_{y}^1 - v_{y}^2) \cdot \nabla_{y}v^{2} + ( V_{z}^1 - V_{z}^2 ) \partial_{z}v^2  -
		 \eps \big( {1 \over \partial_{z} \varphi^1}- {1 \over \partial_{z} \varphi^2}\big)  \big( (P^1)^* \nabla q^2 \big) \\
      \nonumber   & \quad      +\eps  {1 \over \partial_{z} \varphi_{2}}  \big( (P^1- P^2)^* \nabla q^2\big) 
          +\eps \big( {1 \over \partial_{z} \varphi^1}- {1 \over \partial_{z} \varphi^2}\big)  \nabla \cdot \big( E^1 \nabla v^2 \big)
          +\eps  {1 \over \partial_{z} \varphi_{2}} \nabla \cdot \big( (E^1- E^2) \nabla v^2\big).
       \end{align}
       In  a similar way,  thanks to \eqref{graddiv}, we can write the divergence free condition under the form
       \beq
       \label{divdiff}
       \nabla^\varphi \cdot v = -  \eps \big( {1 \over \partial_{z} \varphi^1}- {1 \over \partial_{z} \varphi^2}\big)  \nabla \cdot \big( P^1 v^2 \big)
          -\eps  {1 \over \partial_{z} \varphi^{2}} \nabla \cdot \big( (P^1- P^2) v^2\big).
      \eeq 
       On the boundary, we obtain from \eqref{bordv1} \eqref{bordv2} that
       \beq
       \label{diffbord1}
       \partial_{t} h  +  (v^b)^1_{y} \cdot \nabla h - ((v^1)^b_{3}- (v^2)^b_{3})= 
        - ((v^1)^b - (v^2)^b)_{y}\cdot \nabla h^2
        \eeq 
        and
        \begin{align}
        \label{diffbord2}
       & q  \n^1 - 2 \eps S^\varphi v \n^1=  
       g h \n^1 + 2 \eps (S^{\varphi^1}- S^{\varphi^2})v^1 \n^1  
        + 2 \eps S^{\varphi^{2}} v^2 (\n^1 -  \n^2).
        \end{align} 
    At first,  as in the proof of Proposition \ref{heps} (see \eqref{h1}), we can show  from \eqref{diffbord1} that    
     \beq 
     \label{diffest1}   \eps\, |h(t )|_{{1 \over 2}}^2
      \leq  
         \eps \int_{0}^t \|  \nabla   v \|_{L^2(\mathcal{S})}^2    + \Lambda(R) \int_{0}^t
        \big( \| v \|_{L^2(\mathcal{S})}^2 + \eps\,|h|_{{1 \over 2}}^2  \big)\, d\tau.
        \eeq

       Next, we can use again a  standard energy estimate for \eqref{eqdiff}, by using Proposition \ref{propeta}
        and Proposition \ref{proppE}, Proposition \ref{propPNS},  from a lenghty but easy  computation, we obtain  for
         $v= v^1-v^2$, $h=h^1-h^2$ that
        $$ \|v(t)\|_{L^2(\mathcal{S})}^2 + 
         \|h(t)\|_{L^2(\mathcal{S})}^2 + \eps \int_{0}^t \|\nabla v \|_{L^2(\mathcal{S})}^2
          \leq \Lambda(R)\int_{0}^t \big(  |h|_{H^{1\over 2}(\mathbb{R}^2)}^2 +   \|v\|_{L^2(\mathcal{S})}^2+ \| \nabla (q^1- q^2) \|_{L^2(\mathcal{S})}
          \, \|v\|_{L^2(\mathcal{S})}\big).$$
       We shall not detail this estimate since most of the arguments have been already used, the only term for
        which one has to be careful, is the boundary term that involves one of the two last terms of the right hand side of
          \eqref{diffbord2}. 
           For example, we have to estimate the boundary term
           $$ \int_{z=0} 2 \eps (S^{\varphi^1}- S^{\varphi^2}) v^1 n^1 \cdot v \, dy$$
            and by  using the definition of $S^{\varphi^i} v^1$, $i=1,\, 2$,  we obtain integrals like
          $$  \eps \int_{/z=0} \partial_{z}v_{j}^1 \, \partial_{i} h \, v_{j}\, dy$$
          where $1\leq i \leq 2$.
           We can estimate it by
          $$ \Big| \eps \int_{/z=0} \partial_{z}v_{j}^1 \, \partial_{i} h \, v_{j}\, dy\Big|
           \leq  \eps\,  |\partial_{z}v_{j}^1 v_{j}|_{H^{1\over 2}(\mathbb{R}^2)} \, | \partial_{i}
           h|_{H^{-{1 \over 2} }(\mathbb{R}^2)}
            \leq \Lambda(R)\,  \eps \,\|v\|_{H^1(\mathcal{S})}\, |h|_{H^{1 \over 2 }(\mathbb{R}^2)} $$
            where the last estimate follows from \eqref{cont2D} and the trace Theorem. We can then use 
             the Young inequality.

        From the equation for the pressure, we can also thanks to the estimates of section \ref{sectionpressure} obtain  that
        $$ \| \nabla (q^1- q^2) \|_{L^2(\mathcal{S})}  \leq  \Lambda(R)\big( |h|_{H^{1 \over 2}} + \|v\|_{H^1(\mathcal(S)}).$$
        Note that there is no $\eps$ in front of $ |h|_{H^{1 \over 2}}$ in this estimate because of the Euler part of the pressure.
        This yields
        $$ \|v(t)\|_{L^2(\mathcal{S})}^2 + 
         |h(t)|_{L^2(\mathbb{R})^2}^2 + \eps |h(t)|_{1\over 2}^2 + \eps \int_{0}^t \|\nabla v \|_{L^2(\mathcal{S})}^2
          \leq \Lambda(R)\int_{0}^t \big(  |h|_{H^{1\over 2}(\mathbb{R}^2)}^2 +   \|v\|_{L^2(\mathcal{S})}^2\big).$$
Consequently, we get the uniqueness for Navier-Stokes by combining the last estimate and \eqref{diffbord1}.
            Note that the above estimate is not uniform in $\eps$ and thus does not allow to recover the uniqueness for Euler.
          
          \subsection{Uniqueness for Euler}
          \begin{prop}
          \label{propuniqueeuler}
          Consider  $(v^i, h^ i)$, $i=1, \, 2$ two solutions of \eqref{eulerint}, \eqref{eulerb}  defined on $[0, T]$ with the same initial data and
           the regularity stated in Theorem \ref{theoinviscid}. Then $v^1= v^2$ and  $h^1= h^2$.
          \end{prop}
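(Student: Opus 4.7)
My plan is to adapt the conormal estimate of Proposition \ref{conormv} to the difference of two solutions, with the Alinhac good unknown playing the role of the $V^\alpha$ variable. Set
\[
 \dot v = v^1 - v^2, \quad \dot h = h^1-h^2, \quad \dot q= q^1-q^2, \quad \dot\varphi = \varphi^1-\varphi^2 = \eta^1-\eta^2,
\]
and introduce the good unknown
\[
 V = \dot v - \partial_z^{\varphi^1} v^1 \, \dot\varphi.
\]
Subtracting the two Euler systems \eqref{eulerint}--\eqref{eulerb} and applying the identities of Lemma \ref{lemal} (with $v^1,q^1,\varphi^1$ playing the role of the reference state and $(\dot v,\dot q,\dot\varphi)$ the perturbation), I expect to obtain, modulo genuinely lower order quadratic terms controlled by $\Lambda(R)(\|V\|+|\dot h|)$, the system
\[
 \partial_t^{\varphi^1} V + (v^1\cdot\nabla^{\varphi^1}) V + \nabla^{\varphi^1}\!\bigl(\dot q - \partial_z^{\varphi^1} q^1\, \dot\varphi\bigr) = \mathcal{R},
 \qquad \nabla^{\varphi^1}\!\cdot V = \mathcal{R}',
\]
together with the boundary relations, on $z=0$,
\[
 \bigl(\dot q - \partial_z^{\varphi^1} q^1 \dot h\bigr) = g\dot h, \qquad \partial_t\dot h - V\cdot\N^1 - v^{b,1}_y\cdot\nabla\dot h = \mathcal{R}^b.
\]
The remainder terms $\mathcal{R},\mathcal{R}',\mathcal{R}^b$ are bilinear expressions in the differences with coefficients of the regularity afforded by $v^i\in L^\infty_tH^m_{co}$, $\partial_z v^i\in L^\infty_tH^{m-2}_{co}$, $h^i\in L^\infty_tH^m$; thanks to the product and commutator estimates of Proposition \ref{sob}, the embedding \eqref{emb}, and the $\eta$-estimates of Proposition \ref{propeta}, each of them will be bounded in $L^2$ by $\Lambda(R)(\|V\|+\|\dot h\|_{L^2(\mathbb{R}^2)})$.

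Next, I will perform a pure $L^2$ energy estimate, treating $V$ in $L^2(\mathcal{S},d\V^1)$ and $\dot h$ in $L^2(\mathbb{R}^2)$. Exactly as in the proof of Proposition \ref{conormv}, using Corollary \ref{coripp} and the reformulation $q^1 = q^{E,1}+q^{NS,1}$ with $q^{NS,1}\equiv 0$ in the Euler limit (Theorem \ref{theoinviscid}), the boundary integral
\[
 \int_{z=0}\bigl(\dot q - \partial_z^{\varphi^1} q^1\dot h - g\dot h\bigr)\N^1\cdot V\, dy
\]
is rewritten with the help of the kinematic boundary condition $\partial_t\dot h - V\cdot\N^1 - v^{b,1}_y\cdot\nabla \dot h = \mathcal{R}^b$, producing the key identity
\[
 \tfrac{d}{dt}\tfrac12\Bigl(\int_{\mathcal{S}}|V|^2\, d\V^1 + \int_{z=0} (g-\partial_z^{\varphi^1}q^{E,1})|\dot h|^2\, dy\Bigr)\leq \Lambda(R)\Bigl(\|V\|^2+|\dot h|_{L^2(\mathbb{R}^2)}^2\Bigr).
\]
The Taylor sign condition \eqref{Taylor2}, which is inherited by the limit solution, guarantees that $g-\partial_z^{\varphi^1}q^{E,1}\geq c_0/2>0$ on $[0,T]\times\mathbb{R}^2$, so the left-hand side is comparable to $\tfrac{d}{dt}\bigl(\|V\|^2 + |\dot h|_{L^2(\mathbb{R}^2)}^2\bigr)$ up to constants depending only on $c_0$ and $R$.

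Applying Gronwall and the zero initial data $V(0)=0$, $\dot h(0)=0$, I conclude $V\equiv 0$ and $\dot h\equiv 0$ on $[0,T]$, and then the definition of $V$ with $\dot\varphi$ expressed in terms of $\dot h$ via \eqref{eqeta} forces $\dot v\equiv 0$, completing the proof. The delicate point, and the one I would spend most care on, is controlling the Euler pressure difference $\dot q - \partial_z^{\varphi^1}q^1\dot\varphi$: the elliptic estimates of Section \ref{sectionpressure} must be revisited for the difference, since the $L^2$-level uniqueness leaves no regularity margin on $\dot h$. The natural path is to mimic Proposition \ref{proppE} at the Hessian level for the difference of two solutions of the variable-coefficient Dirichlet problem \eqref{peuler}; because the coefficients of the elliptic operator are fixed by solution $1$ and only the source and boundary value depend on the differences, the estimates close at the $H^1$ level for $\dot q$ in terms of $\|\dot v\|_{L^2}$ and $|\dot h|_{L^2(\mathbb{R}^2)}$, without loss of derivative on $\dot h$, exactly as needed to absorb these contributions into the right-hand side of the Gronwall argument.
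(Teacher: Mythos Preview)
Your strategy of using the Alinhac good unknown $V=\dot v-\partial_z^{\varphi^1}v^1\,\dot\varphi$ for the \emph{difference} of solutions is the right one, but the attempt to close a Gronwall inequality purely at the level of $\|V\|_{L^2}^2+|\dot h|_{L^2(\mathbb{R}^2)}^2$ cannot succeed. The claim that the remainders $\mathcal{R},\mathcal{R}'$ are bounded in $L^2$ by $\Lambda(R)(\|V\|+|\dot h|_{L^2})$ is the gap: these remainders, arising from the second-order Taylor expansion of $\mathcal{E}(v,q,\varphi)$ and from the coefficient difference $\nabla^{\varphi^1}-\nabla^{\varphi^2}$, are built out of factors like $\nabla\dot\eta\,\partial_z v^2$ or $\nabla\dot\eta\,\partial_z\dot v$, which in $L^2$ are controlled only by $\|\nabla\dot\eta\|_{L^2}\sim|\dot h|_{1/2}$ (Proposition~\ref{propeta}) or by $\|\partial_z\dot v\|_{L^2}$, not by $|\dot h|_{L^2}$. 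The same half-derivative loss hits the pressure: the boundary condition $\dot q_{/z=0}=g\dot h$ forces, via Lemma~\ref{Elnh}, $\|\nabla\dot q\|_{L^2}\gtrsim|\dot h|_{1/2}$, so the last paragraph's hope of "no loss of derivative on $\dot h$" is not realized.

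The paper fixes this by going \emph{one conormal level higher} and adding a \emph{vorticity} estimate. Concretely: a crude $L^2$ estimate on $(\dot v,\dot h)$ is first recorded with $|\dot h|_{1/2}$, $\|\dot v\|_1$ and $\|\partial_z\dot v\|$ on the right (this is \eqref{uniqE1}); then the good unknown is applied not to $\dot v$ but to $Z_j\dot v$, $j=1,2,3$, yielding via Lemma~\ref{lemal} and the Taylor sign condition an estimate on $\|V_j\|^2+|Z_j\dot h|^2$ with the same right-hand side (this is \eqref{uniqE2}); finally, and this is the ingredient your proposal misses entirely, the difference of vorticities $\omega=\omega^1-\omega^2$ is estimated directly from its transport equation (no pressure!) to control $\|\partial_z\dot v\|_{L^2}$ via \eqref{uniqE3}--\eqref{uniqE4}. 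The Gronwall then closes in the combined quantity $\|\dot v\|_1^2+|\dot h|_1^2+\|\omega\|_{L^2}^2$. Without the vorticity step there is no way to absorb $\|\partial_z\dot v\|_{L^2}$, and without moving to the $Z_j$ level there is no way to absorb $|\dot h|_{1/2}$.
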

          
          \begin{proof}
           We assume that
           \beq
           \label{borneunique1}
          \sup_{i, \, [0,T]}\big( \|v^i\|_{m} +\| \partial_{z} v^i\|_{m-2} + |h^i|_{m} +\| \partial_{z}v^i \|_{1, \infty}\big)\leq R.
          \eeq
          The proof relies almost only  on arguments that have been used previously, we shall consequently only give the main steps. 
          Let us also set $v= v^1- v^2$, $h= h^1 - h^2$. By using at first the same crude estimate as we have used above, we first obtain that
        \beq
        \label{uniqE1}
         \|v(t)\|_{L^2(\mathcal{S})}^2 + 
         \|h(t)\|_{L^2(\mathcal{S})}^2 
          \leq \Lambda(R)\int_{0}^t \big(  |h|_{1/2}^2 +   \|v\|_{1}^2 + \| \partial_{z}v \|_{L^2}^2\big).
          \eeq
          Note that we have used the estimates of Proposition \ref{proppE} to estimate the pressure.
          
         Next, se shall estimate $\|v\|_{1}$ and $|h|_{1}$. Let us set
         $$ \mathcal{E}(v,q, \varphi)= \big(\D_{t}+ v\cdot \nabla^\varphi \big) v + \nabla^\varphi q.$$
          We first apply $Z_{j}$ for $j=1, \, 2,\, 3$ to obtain:
          $$ D\mathcal{E}(v^i, q^i, \varphi^i) \cdot (Z_{j}v^i, Z_{j}q^i, Z_{j}\varphi^i)=0$$
          for $i=1, \, 2$.
          We thus obtain that
       $$ D\mathcal{E}(v^2, q^2, \varphi^2) \cdot (Z_{j}v, Z_{j}q, Z_{j}\varphi) + \big(D\mathcal{E}(v^1, q^1, \varphi^1)- D\mathcal{E}(v^2, q^2, \varphi^2) \big)
       \cdot( Z_j v^1, Z_{j}q^1, Z_{j}\varphi^1)=0$$
       where we also set $\varphi = \varphi^1 - \varphi^2$, $q= q^1 - q^2$. Consequently, by using Lemma \ref{lemal}, we can introduce the good unknowns
       $$ V_{j}= Z_{j}v - \partial_{z}^{\varphi^2}v^{2} Z_{j}\varphi, \quad Q_{j}= Z_{j} q -  \partial_{z}^{\varphi^2} q^2 Z_{j}\varphi$$ to  obtain that
       $$\big( \D_{t}+ v^2 \cdot \nabla^\varphi) V_{j}+ \nabla^{\varphi^{2}} Q_{j}= \mathcal{R}$$
       where 
       $$ \mathcal{R}=  -(V_{j}\cdot \nabla^{\varphi_{2}}v^2 + Z_{j}\varphi \big( \partial_{z}^{\varphi^2} v^2 \cdot \nabla^{\varphi^2}) v^2
       -  \big(D\mathcal{E}(v^1, q^1, \varphi^1)- D\mathcal{E}(v^2, q^2, \varphi^2) \big)
       \cdot( Z_j v^1, Z_{j}q^1, Z_{j}\varphi^1).$$
       Consequently, by using that $q^2$ verifies the Taylor sign condition on $[0,T]$, we can proceed as in the proof of Proposition 
       \ref{conormv} to get the estimate:
       \beq
       \label{uniqE2} \|V_{j}(t)\|_{L^2(\mathcal{S})}^2 +  |Z_{j}h|_{L^2(\mathbb{R}^2)}^2 
        \leq  \Lambda(R)\int_{0}^t \big(  |h|_{1}^2 +   \|v\|_{1}^2 + \| \partial_{z}v \|_{L^2}^2\big).
        \eeq
       In view of the estimates \eqref{uniqE1}, \eqref{uniqE2}, we still need to estimate $\|\partial_{z}v\|$ in order to conclude from the Gronwall Lemma.
       Let us set  $\omega^i= \nabla^{\varphi^i}\times v^i$, $i=1, \, 2$ and $\omega= \omega^1- \omega^2.$
       By using an estimate like \eqref{dzvm-1O}, we first obtain that
       \beq
       \label{uniqE3}  \| \partial_{z}v \|_{L^2} \leq \Lambda(R) \big( \|\omega \|_{L^2(\mathcal{S})}+ |h|_{1} + \|v\|_{1} \big)
       \eeq
        and hence we see that it only remains to estimate $  \|\omega \|_{L^2(\mathcal{S})}$.
         Since $\omega^i$ solves the equation
         $$\big( \partial_{t}^{\varphi^i}+ v^i \cdot \nabla^{\varphi^i}\big)\omega^i- \omega^i \cdot \nabla^{\varphi^i} v^i= 0,$$
         a standard estimate on the difference yields
         \beq
         \label{uniqE4}
         \|\omega(t)\|_{L^2(\mathcal{S})}^2 \leq \Lambda(R)\int_{0}^t \big(  |h|_{1}^2 +   \|v\|_{1}^2 +
            \| \partial_{z}v \|_{L^2}^2 +   \|\omega(t)\|_{L^2(\mathcal{S})}^2\big).
            \eeq
           It suffices to combine \eqref{uniqE1}, \eqref{uniqE2}, \eqref{uniqE3}, \eqref{uniqE4} to end the proof of 
           Proposition \ref{propuniqueeuler}.

           \end{proof}
       \section{Proof of Theorem \ref{theoinviscid}: inviscid limit}
       \label{sectioninviscid}
       From the uniform estimates of Theorem \ref{main},  we have that for $\eps \in (0, 1]$
        \begin{align}
      \label{uniformfin}
        \mathcal{N}_{m}(T) &  = \sup_{[0, T]}\big( \|v^\eps(t) \|_{m}^2 + \|\partial_{z}v^\eps\|_{m-2}^2 + |h(t)^\eps|_{m}^2 + \eps |h(t)^\eps|_{m+{1 \over 2}}^2
        + \eps \|\partial_{zz}v(t)^\eps\|_{L^\infty}^2 +  \|v(t)^\eps\|_{E^{2, \infty}}^2 \big) \\
        \nonumber & \quad  \quad \quad   + \|\partial_{z}v^\eps\|_{L^4([0,T], H^{m-1}_{co})}^2 
        + \eps \int_{0}^T \|\nabla v^\eps\|_{m}^2 + \eps \int_{0}^t  \|\nabla \partial_{z}v^\eps\|_{m-2}^2\leq R. 
       \end{align}
       In particular, we have  that 
        $h^\eps$ is bounded in $L^\infty([0,T], H^m)$, 
        that $v^\eps$ is bounded in $L^\infty ([0,T], H^m_{co})$ and $\nabla v^\eps$ is bounded in $L^\infty ([0,T], H^{m-2}_{co})$.
         Thus for every $t$, $v^\eps(t)$ is  compact in $H^{m-1}_{co, loc}$. 
        Next, by using the equation, we also get that $\partial_{t} v^\eps$ is bounded in $L^2([0, T], L^2)$
         and that $\partial_{t}h^\eps$ is bounded in $L^2([0, T], L^2)$. Moreover, from Proposition \ref{propPNS} and Proposition
          \ref{proppE}, we also get that $\nabla q^\eps$ is bounded in $L^2([0, T], L^2).$
          By classical arguments, we deduce that there exists a sequence $\eps_{n}$ which tends to zero and
           $(v, h, q)$ such that $v^{\eps_{n}}$ tends to $v$ strongly in  $\mathcal{C}([0,  T], H^{m-1}_{co, loc})$, 
           $h^{\eps_{n}}$ tends to $h$ strongly in  $\mathcal{C}([0,  T], H^{m-1}_{loc})$, 
             $\nabla q^\eps$  tends to $\nabla q$ weakly in $L^2([0, T] \times \mathcal{S}).$
            and
           \beq
           \label{uniflim}
          \sup_{[0, T]}\big( \|v \|_{m}^2 + \|\partial_{z}v\|_{m-2}^2 + |h(t)|_{m}^2 +  \|v^\eps(t)\|_{E^{2, \infty}}^2 \big)  + \|\partial_{z}v^\eps\|_{L^4([0,T], H^{m-1}_{co})}^2 \leq R.
          \eeq
            These convergences  allow to pass to the limit in the equations by classical arguments and hence, we find that
            $(v,h, \nabla q)$ solves weakly the Euler equation \eqref{eulerint} in $\mathcal{S}$. Since, we can also assume
             that the trace $v^{\eps_{n}}_{/z=0}$ converges weakly in $L^2([0,T] \times \mathbb{R}^2))$, we also get that the boundary condition 
             $\partial_{t}h= v \cdot N$ is verified in the weak sense.  To pass to the limit, in the boundary condition \eqref{bordv2}, 
             we note that  since the  Lipschitz norm of $v^\eps$ is uniformly bounded, it only remains in the limit
              $ q= gh$. 
             
             For the pressure, we note that  because of  Proposition  \ref{propPNS}, 
            we have  that $ \|\nabla (q^\eps)^{NS} \|_{L^2(\mathcal{S})} \leq \eps \Lambda(R)$  and hence tends  to zero strongly. 
            
            We have thus proven that $(v, h)$ is a  solution of the free surface Euler equation that satisfies the estimate \eqref{uniflim}.
             From the uniqueness for the Euler equation in this class, we obtain that the whole family $(u^\eps, h^\eps)$
              converges to $(v,h)$ as above. Note that this proves only local $L^2$ convergence of $v^\eps (t)$ and $h^\eps (t)$.
               To get strong convergence, we can use the energy identities. We shall first use the one for \eqref{NSv}, 
                thanks to proposition \ref{basicL2} and the estimates \eqref{uniformfin}, we get that for $t \in [0,T]$, we have 
             $$  \|v^\eps J^\eps (t) \|_{L^2(\mathcal{S})}^2 + g |h^\eps (t)|_{L^2(\mathbb{R}^2)}^2  - \big( \|v^\eps_{0} J^\eps_{0}\|_{L^2(\mathcal{S})}^2
              + |h^\eps_{0}|_{L^2(\mathbb{R}^2)}^2\big) \leq \eps \Lambda(R)$$
             where we have set $J^\eps (t)=( \partial_{z}\varphi^\eps )^{1 \over 2}.$ 
              This yields
              $$ \lim_{\eps \rightarrow 0}  \big(  \|v^\eps J^\eps (t) \|_{L^2(\mathcal{S})}^2 + g |h^\eps (t)|_{L^2(\mathbb{R}^2)}^2 \big)
               =   \lim_{\eps \rightarrow 0} \big( \|v^\eps_{0} J^\eps_{0}\|_{L^2(\mathcal{S})}^2
              +  g|h^\eps_{0}|_{L^2(\mathbb{R}^2)}^2\big)=   \|v_{0} J_{0}\|_{L^2(\mathcal{S})}^2
              +  g |h_{0}|_{L^2(\mathbb{R}^2)}^2.$$
              Note that the last equality  is a consequence of  \eqref{hypinviscid}  and the estimates \eqref{uniformfin}
               since
               $$\| \partial_{z}\varphi^\eps_{/t=0} - \partial_{z} \varphi_{/t=0} \|_{L^2(\mathcal{S})} \leq C  |h^\eps_{0} - h_{0}|_{H^{1 \over 2}(\mathbb{R}^2)}
                \leq C \Lambda(R)  |h^\eps_{0} - h_{0}|_{L^2(\mathbb{R}^2)}^{1 \over 2}
 .$$
               Next, we can use the conservation of energy for the solution $(v,h)$ of the  free surface Euler equation (that we formally get by taking
                $\eps=0$ in Proposition \ref{basicL2}) to get
                $$ \lim_{\eps \rightarrow 0}  \big(  \|v^\eps J^\eps (t) \|_{L^2(\mathcal{S})}^2 + g |h^\eps (t)|_{L^2(\mathbb{R}^2)}^2 \big)
                =  \|vJ (t)\|_{L^2(\mathcal{S})}^2
              + g|h(t) |_{L^2(\mathbb{R}^2)}^2.$$
              This yields that  $(v^\eps J^\eps (t), h^\eps(t))$   (for which we already had weak congergence) converge strongly in $L^2$
               to $(vJ, h)$. Since the strong convergence of $h$ gives the one of $J^\eps$, we finally obtain
                 by combining with the uniform bounds \eqref{uniformfin} that
                  $(v^\eps(t), h^\eps(t))$ converge strongly  towards $(v, h)$ in  $L^2(\mathcal{S}) \times L^2(\mathbb{R}^2)$.
                   The $L^\infty$ convergence can be finally obtained thanks to the $L^2$ convergence, 
                    the uniform bounds  \eqref{uniformfin} and the inequality \eqref{emb} (with $s_{2}=0$). This ends the proof
                     of Theorem \ref{theoinviscid}.

           \section{Proof of the technical Lemmas}
  \label{sectiontech}
  
  \subsection{Proof of Lemma \ref{lemFP0}}
  \label{sectionFP}

          The estimate of $ \| \rho \|_{L^\infty}$ and  $\|\partial_{i} \rho\|_{L^\infty}= \|Z_{i} \rho\|_{L^\infty} $, $i=1, \, 2$   can be easily obtained 
           from the maximum principle as in \eqref{Sninfty1}.
          Indeed, we get that $\partial_{i} \rho$ solves the equation
          $$  \partial_{t}  \partial_{i} \rho + w  \cdot \nabla \partial_{ i }  \rho = \eps \partial_{zz}
           \partial_{ i } \rho +   \partial_{i} \mathcal{H} 
           - \partial_{i} w\cdot  \nabla \rho$$
           still with an homogeneous Dirichlet boundary condition. Consequently, by using again the
            maximum principle, we find
          \beq
          \label{hetab} \|\nabla_y \rho \|_{L^\infty} \leq  \|\rho_{0}\|_{1, \infty}
           + \int_{0}^t \Big(  \|  \mathcal{H} \|_{1, \infty} 
            +  \| \partial_{i} w\cdot \nabla \rho \|_{L^\infty}\Big).\eeq
   
         To estimate the last term in the above expression, we use  that $u_{3}$ vanishes on the boundary  to get  
            \beq
            \label{hetab1}   \| \partial_{i} w\cdot \nabla \rho \|_{L^\infty}
             \lesssim \| w \|_{{1, \infty} } \| \rho\|_{1, \infty}
              +  \| \partial_{z} \partial_{i}w_{3}\|_{L^\infty}  \| Z_{3} \rho \|_{L^\infty}
               \lesssim \|w \|_{ E^{2, \infty}} \| \rho \|_{1, \infty}.\eeq

      It remains to estimate $\|Z_{3} \rho \|_{L^\infty} $  which is the most difficult term. We cannot
       use the same  method as previously due to the bad commutator between $Z_{3}$ and
        the Laplacian. We shall use  a more precise description of the solution of \eqref{eqetaFP0}.
           We shall first  rewrite the equation \eqref{eqetaFP0} as
        $$  \partial_{t} \rho + z \partial_{z}w_{3}(t,y,0) \partial_{z} 
         \rho + w_{y}(t,y,0) \cdot \nabla_{y} \rho - \eps \partial_{zz} \rho  =
          \mathcal{H} - R:=G$$
          where 
          $$ R=  \big(w_{y}(t,x)- w_{y}(t,y,0)\big) \cdot \nabla_{y} \rho +\big( w_{3}(t,x) -  z \partial_{z}w_{3}(t,y,0)
          \big) \partial_{z} \rho.$$
          The idea will be to use an exact representation of the Green's function of the operator
           in the left-hand side to perform the estimate.

          Let   $S(t, \tau)$ be the $C^0$  evolution operator generated by the left hand side of the above equation.
          This means that  $f(t,y,z)= S(t, \tau) f_{0}(y,z)$  solves the equation
   $$
     \partial_{t} f +   z \partial_{z}w_{3}(t,y,0)  \partial_{z} f  
         + w_{y}(t,y,0) \cdot \nabla_{y} f - \eps \partial_{zz} f= 0, \quad z>0, \, t>\tau, \quad f(t, y, 0) = 0. 
$$
with the initial condition $f(\tau, y, z) = f_{0}(y,z)$.
Then we have the following estimates:
         \begin{lem}
         \label{FP}
         There exists $C>0$ independent of $ \eps $   such that
       \begin{eqnarray}
       \label{FP1} & &  \big\|  z\partial_{z}  S(t, \tau) f_{0} \|_{L^\infty}
         \leq C \big( \|f_{0}\|_{L^\infty} + \|  z \partial_{z}  f_{0} \|_{L^\infty} \big), \quad \forall t \geq \tau \geq 0.
       \end{eqnarray} 
           \end{lem}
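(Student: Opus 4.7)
The plan is to reduce \eqref{FP1} to a one-dimensional Dirichlet heat equation on the half-line and then use the explicit Gaussian kernel. The key point is that the two non-diffusive terms in the operator generating $S(t,\tau)$ are of a very special form: the tangential drift $w_y(t,y,0)\cdot\nabla_y$ depends only on $(t,y)$, and the normal drift $z\,\partial_z w_3(t,y,0)\,\partial_z$ is linear in $z$ with a coefficient depending only on $(t,y)$. Both structures can be removed by explicit changes of variables.

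First, I would eliminate the tangential transport by introducing Lagrangian coordinates in $y$: let $Y(t,y_0)$ solve $\partial_t Y = w_y(t,Y,0)$ with $Y(\tau,y_0)=y_0$, and set $\bar f(t,y_0,z):=f(t,Y(t,y_0),z)$. Then $\bar f$ solves
\[
\partial_t \bar f + z\,a(t,y_0)\,\partial_z \bar f - \eps\,\partial_{zz}\bar f = 0,\qquad \bar f(t,y_0,0)=0,
\]
with $a(t,y_0):=\partial_z w_3(t,Y(t,y_0),0)$ bounded; here $y_0$ appears only as a parameter. Next, I would eliminate the normal drift by the self-similar scaling $\zeta:=z/\sigma(t,y_0)$, where $\sigma$ solves $\partial_t\sigma = a(t,y_0)\sigma$ with $\sigma(\tau,\cdot)\equiv 1$. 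The function $g(t,y_0,\zeta):=\bar f(t,y_0,\sigma\zeta)$ then satisfies the purely parabolic equation $\partial_t g = b(t,y_0)\,\partial_{\zeta\zeta} g$ with $b=\eps/\sigma^2>0$ and $g(t,y_0,0)=0$. Crucially, $\sigma$ and $1/\sigma$ are bounded uniformly (since $|a|$ is), and under the substitution $z=\sigma\zeta$ one has $z\,\partial_z = \zeta\,\partial_\zeta$, so that $\|z\partial_z \bar f(t)\|_{L^\infty}=\|\zeta\partial_\zeta g(t)\|_{L^\infty}$ and, at time $\tau$, $\|g(\tau)\|_\infty=\|f_0\|_\infty$ and $\|\zeta\partial_\zeta g(\tau)\|_\infty=\|z\partial_z f_0\|_\infty$.

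Introducing the new time $s=\int_\tau^t b(t',y_0)\,dt'$ reduces $g$ to a solution of the standard one-dimensional heat equation $\partial_s g=\partial_{\zeta\zeta}g$ on $\zeta<0$ with homogeneous Dirichlet condition at $\zeta=0$ (with $y_0$ still a parameter). So it remains to establish, for this scalar problem, the $L^\infty$ bound
\[
\|\zeta\,\partial_\zeta g(s,\cdot)\|_{L^\infty(\mathbb R_-)} \;\leq\; C\bigl(\|g_0\|_{L^\infty} + \|\zeta\,\partial_\zeta g_0\|_{L^\infty}\bigr),
\]
uniformly in $s\geq 0$. To obtain it, I would extend $g_0$ to an odd function $\tilde g_0$ on $\mathbb R$ so that $g(s,\zeta) = (G_0(s,\cdot)\star \tilde g_0)(\zeta)$ for $\zeta<0$, where $G_0$ is the full-line Gaussian. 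Integrating by parts yields $\partial_\zeta g = G_0\star \tilde g_0'$, hence
\[
\zeta\,\partial_\zeta g(s,\zeta)= \int_{\mathbb R}\zeta'\,G_0(s,\zeta-\zeta')\,\tilde g_0'(\zeta')\,d\zeta' \;+\; \int_{\mathbb R}(\zeta-\zeta')\,G_0(s,\zeta-\zeta')\,\tilde g_0'(\zeta')\,d\zeta'.
\]
The first integral is simply $G_0\star (\zeta'\tilde g_0')$, whose $L^\infty$ norm is bounded by $\|\zeta\,\tilde g_0'\|_\infty=\|\zeta\,\partial_\zeta g_0\|_\infty$ (the odd extension preserves this norm). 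In the second integral, use $(\zeta-\zeta')G_0(\zeta-\zeta')=-2s\,\partial_\zeta G_0(\zeta-\zeta')$ and integrate by parts in $\zeta'$ to transfer the derivative off $\tilde g_0'$: one obtains a sum of two convolutions of $\tilde g_0$ against the bounded-in-$L^1$ kernels $G_0$ and $(\zeta-\zeta')^2 G_0/(2s)$, giving a bound by $2\|g_0\|_\infty$. Putting these together and undoing the changes of variables yields \eqref{FP1}.

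The only non-routine step is the last estimate on the half-line heat equation, where the Dirichlet condition prevents a naive use of scaling: the main obstacle is precisely the clever splitting $\zeta = \zeta' + (\zeta-\zeta')$ combined with the identity $(\zeta-\zeta')G_0 = -2s\,\partial_\zeta G_0$, which is what allows one to absorb the factor of $\zeta$ either onto the data (producing $\|\zeta\partial_\zeta g_0\|_\infty$) or onto the kernel (via integration by parts, producing $\|g_0\|_\infty$), uniformly in $s$.
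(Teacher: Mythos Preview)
Your proposal is correct and follows essentially the same route as the paper. Both proofs use Lagrangian coordinates in $y$ to remove the tangential drift, an odd extension to pass to the whole line, and the explicit Gaussian representation together with the splitting $z=z'+(z-z')$ (equivalently $z\partial_z k=(z-z')\partial_z k - z'\partial_{z'}k$) to control $z\partial_z$ via integration by parts; the only cosmetic difference is that the paper works directly with the Fokker--Planck kernel, whereas you first rescale in $z$ (exploiting the scale-invariance $z\partial_z=\zeta\partial_\zeta$) and change time to reduce to the standard heat kernel before applying the identical splitting.
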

           
           We shall  postpone the proof of the Lemma until the end of the section.
           
       By using Duhamel formula, we deduce that 
      \beq
      \label{Duh}
      \rho(t) = S(t, \tau) \rho_{0} + \int_{0}^t S(t, \tau) G(\tau) \, d\tau.
      \eeq
      Consequently, by using \eqref{FP1} in  Lemma \ref{FP}, we obtain
      $$ \|  Z_{3} \rho \|_{L^\infty} \lesssim
        \Big( \|\rho_{0}\|_{L^\infty}
        + \|   z\partial_{z}  \rho_{0} \|_{L^\infty}
         + \int_{0}^t  \big( \|G \|_{L^\infty}
        + \|  z\partial_{z}  G \|_{L^\infty}\big) \Big).$$
        Since $\rho$ and $G$ are compactly supported, we obtain
      \beq
      \label{etab1} 
       \|  Z_{3} \rho \|_{L^\infty} \lesssim
        \Big( \|\rho_{0}\|_{1, \infty}
                + \int_{0}^t   \|G \|_{1, \infty} \Big).
                \eeq
        It remains to estimate the right hand side.
    First, let      us estimate the term involving $R$.
           Since $u_{3}(t,y,0)= 0$, we have
          $$ \| R \|_{L^\infty} \lesssim \|w_y\|_{L^\infty} \|\nabla_y \rho \|_{L^\infty} + \| \partial_{z} w_{3}\|_{L^\infty}
            \| Z_{3} \rho \|_{L^\infty} \lesssim  \|w\|_{E^{1, \infty}} \, \| \rho \|_{1, \infty}.$$
            Next, in a similar way, we  get that 
         $$ \| Z R\|_{L^\infty} \lesssim  \| w \|_{2, \infty}  \| \rho \|_{1, \infty}
          + \Big\|  \big(w_y(t,x)- w_y(t,y,0)\big) \cdot Z \nabla_y \rho \Big\|_{L^\infty}
           + \Big\|  \big( w_{3}(t,x) -  z \partial_{z}w_{3}(t,y,0)
          \big) Z \partial_{z} \rho\Big\|_{L^\infty}$$
          By using the Taylor formula and
            the fact that $\rho$ is compactly supported in  $z$,  this yields 
            $$ \| ZR \|_{L^\infty} \lesssim 
            \| w \|_{2, \infty}  \| \rho \|_{1, \infty} +
             \| \partial_{z}w_y\|_{L^\infty} \| \varphi(z)   Z \nabla_y \rho \|_{L^\infty}+
             \|\partial_{zz} w_{3}\|_{L^\infty} \| \varphi^2(z) Z \partial_{z} \rho \|_{L^\infty} $$
             with $\varphi(z)= z/(1-z)$
             and hence, we obtain
            $$ \| R \|_{1, \infty} \lesssim \big(  \|w\|_{E^{2, \infty}} + \| \partial_{zz} w_{3}  \|_{1, \infty}\big) \big( \|  \rho\|_{1, \infty}
             + \| \varphi(z) \rho \|_{2, \infty}\big).$$
             The additional factor $\varphi$ in the last term is crucial to close our estimate.
              Indeed,  by  the Sobolev embedding \eqref{sob}, we  have  that  for $|\alpha |=2$
               $$ \| \varphi Z^\alpha \eta \|_{L^\infty} 
                \lesssim  \| Z^\alpha \eta \|_{s_{2}} + \| \partial_{z} \big ( \varphi Z^\alpha \eta\big)  \|_{s_{1}} $$
                with $s_{1}+ s_{2}>2$, thus, with $s_{1}= 1,$  $s_{2}= 2$,
                 we obtain from  definition of $Z_{3}$ that
            \beq
            \label{trick}   \| \varphi Z^\alpha \eta \|_{L^\infty}  \lesssim  \|\eta \|_{4}, 
             \quad | \alpha | = 2.\eeq
             Consequently, we  get
             that
                         \beq
            \label{Rest}
            \| R(t) \|_{1, \infty} \lesssim
            \big(  \|w\|_{E^{2, \infty}} + \| \partial_{zz} w_{3}  \|_{L^\infty}\big) \big( \|  \rho\|_{1, \infty}
             + \|  \rho \|_{4}\big).
            \eeq 
          Finally,  the proof of      
      Proposition \ref{etainfty} follows from the last estimate and \eqref{etab1}.

     It remains to prove Lemma \ref{FP}.

    \subsubsection*{Proof of Lemma \ref{FP}}
    Let us set $f(t,y,z)= S(t, \tau) f_{0}(y,z)$, then $f$ solves the equation
   $$
     \partial_{t} f +   z \partial_{z}w_{3}(t,y,0)  \partial_{z} f  
         + w_{y}(t,y,0) \cdot \nabla_y f - \eps \partial_{zz} f= 0, \quad z>0,  \quad f(t, y, 0) = 0. 
$$ 
We can first transform the problem into a problem in the whole space.
 Let us  define $\tilde{f}$ by 
 \beq
 \label{tildef} \tilde{f}(t,y,z)= f(t,y,z), \, z>0, \quad \tilde{f}(t,y,z) = - f(t,y,-z), \, z<0\eeq
 then $\tilde{f}$ solves
 \beq
 \label{FP2}
   \partial_{t} \tilde{f} +   z \partial_{z}w_{3}(t,y,0)  \partial_{z} \tilde{f}  
         + w_{y}(t,y,0) \cdot \nabla_y \tilde{f} - \eps \partial_{zz} \tilde{f}= 0,  \quad  z \in \mathbb{R}  
 \eeq 
with the initial condition $\tilde{f}(\tau, y, z)= \tilde{f}_{0}(y,z)$.  

%
%
We shall get the estimate by using an exact
   representation of the solution.

To solve \eqref{FP2}, we can first define
\beq
\label{tildeg} g(t,y,z)= f(t, \Phi(t, \tau, y), z)\eeq

where $\Phi$ is the solution of 
$$
\partial_{t} \Phi = w_{y}(t,\Phi, 0), \quad \Phi(\tau, \tau, y)= y.
$$
Then, $g$ solves the equation
$$ \partial_{t}g +  z \gamma(t,y) \partial_{z}g - \eps \partial_{zz} g = 0, \quad z \in \mathbb{R}, \quad
 g(\tau, y, z)= \tilde{f}_{0}(y,z)$$
 where
 \beq
 \label{Gamma}
 \gamma(t,y)= \partial_{z} w_{3}(t, \Phi(t, \tau, y), 0)
 \eeq
 which is a one-dimensional  Fokker-Planck type equation (note that now $y$ is only a parameter
  in the problem). By a simple computation in Fourier space, we find the explicit representation
 \begin{eqnarray} 
\nonumber g(t,x) & =  &   \int_{\mathbb{R}}
  {1 \over \sqrt{ 4 \pi \eps  \int_{\tau}^t  e^{2 \eps ( \Gamma(t) - \Gamma(s) ) }\, ds}} 
   \exp \Big(  - {  (z- z')^2 \over  4 \eps  \int_{\tau}^t  e^{2 \eps ( \Gamma(t) - \Gamma(s) ) }\, ds} \Big)
     \tilde{f}_{0}(y,  e^{- \Gamma(t) }z')\, dz' \\
   \label{duhamel2}  & = &  \int_{\mathbb{R}} k(t, \tau, y, z-z') \tilde{f}_{0}  (y,  e^{- \Gamma(t) }z')\, dz'
     \end{eqnarray} 
     where $\Gamma(t)= \int_{\tau}^t \gamma(s,y)\, ds$ (note that $\Gamma$ depends
      on $y$ and $\tau$, we do not write down explicitely this dependence for notational convenience).
      
      Note that $k$ is non-negative and that $\int_{\mathbb{R}} k(t, \tau, y, z) \, dz= 1$, thus, 
      we immediately recover that
      $$ \|g \|_{L^\infty} \leq \|\tilde{f}_{0}\|_{L^\infty}.$$
      Next, we observe that   we  can write 
   \beq
   \label{trickconorm} z \partial_{z}k (t,\tau, z-z')=\big( z - z' \big) \partial_{z} k  -  z'  \partial_{z'}k
     (t, \tau, z-z')\eeq
     with
     $$  \int_{\mathbb{R} } \big| \big( z - z' \big) \partial_{z} k \big| dz' \lesssim 1$$
    and thus by using an integration by parts, we find
    $$ \|z \partial_{z} g\|_{L^\infty} \lesssim 
     \| \tilde{f} \|_{L^\infty} + \Big\| e^{- \Gamma(t) } \int_{\mathbb{R}} k(t,\tau, y, z') z' \partial_{z} \tilde{f}_{0}(y, e^{- \Gamma(t)}
      z')  dz'  \Big\|_{L^\infty}.$$
      By using \eqref{Gamma}, this yields
    $$   \| z  \partial_{z} g\|_{L^\infty} \lesssim 
     \| \tilde{f}_{0} \|_{L^\infty} + \| z \partial_{z} \tilde{f}_{0} \|_{L^\infty}.$$
     By using \eqref{tildef} and \eqref{tildeg},  we obtain
     $$ \| z  \partial_{z} f \|_{L^\infty} \lesssim     \| z  \partial_{z} \tilde{f} \|_{L^\infty}
      \lesssim  \| \tilde{f}_{0} \|_{L^\infty} +   \| z \partial_{z} \tilde{f}_{0} \|_{L^\infty}
       \lesssim   \| f_{0} \|_{L^\infty} +   \| z \partial_{z} f_{0} \|_{L^\infty}  .$$
       This ends the proof of Lemma \ref{FP}. 
      
  \subsection{Proof of Lemma \ref{lemFP1}}
  \label{sectionFP1}
   We use the same idea as in the proof of Lemma \ref{lemFP0}. We first estimate
    $ \sqrt{\eps } \| \partial_{z} Z_{i} \rho\|_{L^\infty}, $   $i=1, \, 2$. We get for $\partial_{i} \rho$ the equation
    $$ \partial_{t} \partial_{i} \rho + w \cdot \nabla \partial_{i} \rho = \eps \partial_{zz} \partial_{i} \rho + \partial_{i}\mathcal{H}
     - \partial_{i} w \cdot \nabla \rho$$
      that we rewrite as
     $$ \partial_{t} \partial_{i}\rho + z \partial_{z}w_{3}(t,y,0) \partial_{z} 
        \partial_{i} \rho + w_y(t,y,0) \cdot \nabla_y \partial_{i }\rho - \eps \partial_{zz}\partial_{i} \rho  =
         \partial_{i} \mathcal{H}  - \partial_{i}w \cdot \nabla \rho- R:=G$$
          where 
          $$ R^1=  \big(w_y(t,x)- w_y(t,y,0)\big) \cdot \nabla_y\partial_{i} \rho +\big( w_{3}(t,x) -  z \partial_{z}w_{3}(t,y,0)
          \big) \partial_{z}\partial_{i} \rho.$$
    By using the notations before Lemma \ref{FP}, we obtain
    $$ \partial_{i}\rho=    S(t, \tau) \partial_{i}\rho_{0} + \int_{0}^t S(t, \tau) G(\tau) \, d\tau$$
    and we shall use the following semigroup estimates for $S$:
    \begin{lem}
    \label{lemFP1bis}
    Under the assumption of Lemma \ref{lemFP1} on $w$, we have that for $0 \leq \tau \leq t \leq T$: 
    $$ \sqrt{\eps}\| \partial_{z} S (t,\tau) f_{0}\|_{L^\infty}  \leq {\Lambda(M) \over \sqrt{t}} \|f_{0}\|_{L^\infty}, \quad
     \sqrt{\eps} \| \partial_{z}\big( z \partial_{z} S(t,\tau) f_{0}\big) \|_{L^\infty} \leq
       {\Lambda(M) \over \sqrt{t} }\big( \|f_{0}\|_{L^\infty} + \|z\partial_{z} f_{0} \|_{L^\infty}\big)$$
       where $\Lambda(M)$ does not depend on $\eps$.
    \end{lem}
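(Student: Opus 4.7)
The plan is to adapt the approach used to prove Lemma \ref{FP}: reduce to an explicit one-dimensional Fokker--Planck equation with a Gaussian kernel, then read off the needed $L^\infty$ bounds from kernel estimates. First I perform the same two reductions used in section \ref{sectionFP}. Extending $f_0$ oddly across $z=0$ (as in \eqref{tildef}) removes the Dirichlet condition, and then the tangential Lagrangian flow \eqref{tildeg} associated to the frozen drift $w_y(t,y,0)$ eliminates the $\nabla_y$-transport, leaving the parameter-dependent one-dimensional equation $\partial_t g + z\gamma\partial_z g = \eps\partial_{zz}g$ with $\gamma(t,y)=\partial_z w_3(t,\Phi,0)$. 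This produces exactly the explicit representation \eqref{duhamel2}, namely $g(t,z)=\int_\mathbb{R} k(t,\tau,y,z-z')\,\tilde f_0(y,e^{-\Gamma(t)}z')\,dz'$, where $k$ is a centered Gaussian with variance of order $\eps A(t,\tau,y)$ and $A\sim(t-\tau)$ uniformly, because $|\Gamma(t)|\le MT$ thanks to the hypothesis $\|w\|_{E^{2,\infty}}\le M$ (so $|\gamma|\le M$). Tangential $L^\infty$ invariance then lets me translate bounds on $g$ back into bounds on $S(t,\tau)f_0$.

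For the first inequality, I simply differentiate the Gaussian once: a direct computation gives $\|\sqrt{\eps}\,\partial_z k\|_{L^1_{z'}}\le \Lambda(M)/\sqrt{t-\tau}$, so
\[
\sqrt{\eps}\,\|\partial_z g\|_{L^\infty}\;\le\;\|\sqrt{\eps}\,\partial_z k\|_{L^1_{z'}}\,\|\tilde f_0\|_{L^\infty}\;\le\;\frac{\Lambda(M)}{\sqrt{t-\tau}}\,\|f_0\|_{L^\infty},
\]
which is the desired bound (upon noting that for the application in Proposition \ref{dzzvLinfty} one uses this at $\tau=0$).

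For the second inequality, I expand $\partial_z(z\partial_z g)=\partial_z g+z\partial_{zz}g$; the first summand is already controlled. For the second summand I use in the integral the splitting $z=(z-z')+z'$, exactly the trick \eqref{trickconorm} already exploited in the proof of Lemma \ref{FP}. The $(z-z')\partial_{zz}k$ piece is a Gaussian multiplier whose $L^1_{z'}$ norm against $\sqrt{\eps}$ is still $O(1/\sqrt{t-\tau})$, giving a term bounded by $\|f_0\|_{L^\infty}$. For the $z'\partial_{zz}k$ piece I integrate by parts in $z'$ (using $\partial_{z'}k=-\partial_z k$ and no boundary terms by Gaussian decay); the derivative is transferred onto $z'\tilde f_0(y,e^{-\Gamma}z')$, producing a factor $e^{-\Gamma}z'\partial_z\tilde f_0(y,e^{-\Gamma}z')$. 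After the change of variable $\tilde z=e^{-\Gamma}z'$ this is estimated by $\|\tilde z\partial_z\tilde f_0\|_{L^\infty}\le \Lambda(M)\|z\partial_z f_0\|_{L^\infty}$, while the remaining kernel is again $\sqrt{\eps}\partial_z k$ with $L^1$-norm $\le\Lambda(M)/\sqrt{t-\tau}$. Summing, I obtain the claimed bound.

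The routine part is the reduction and the direct Gaussian $L^1$ estimates on $\sqrt{\eps}\partial_z k$ and $\sqrt{\eps}(z-z')\partial_{zz}k$, which are standard. The one point requiring care is the second inequality: I must verify that after the $z=(z-z')+z'$ splitting and the single integration by parts, no derivative higher than $z\partial_z$ of $f_0$ appears and that all residual kernels still integrate to $O(1/\sqrt{t-\tau})$ in $z'$; this is the only delicate bookkeeping, and it is precisely the same mechanism as in the proof of Lemma \ref{FP} that allowed the conormal bound \eqref{FP1} from $\|z\partial_z f_0\|_{L^\infty}$.
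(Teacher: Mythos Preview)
Your proof is correct and follows the same approach as the paper: both use the explicit Gaussian representation \eqref{duhamel2} for the Fokker--Planck semigroup and the pointwise kernel bound $|\partial_z k|\le \Lambda(M)(\eps(t-\tau))^{-1/2}|k|$. The paper's proof is a one-liner invoking ``standard convolution estimates''; you have simply spelled these out, in particular for the second inequality where you correctly reuse the splitting $z=(z-z')+z'$ and the integration by parts from the proof of Lemma~\ref{FP} (note that $\partial_{z'}[z'\tilde f_0(y,e^{-\Gamma}z')]$ also produces a harmless $\tilde f_0$ term, which is again just $\partial_z g$).
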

    Let us postpone the proof of this Lemma until the end of the section.
     By using Lemma \ref{FP1}, we thus get
     that
     $$ \sqrt{\eps} \| \partial_{z} \partial_{i} \rho(t) \|_{L^\infty}
      \leq  \Lambda(M)\Big(  {1 \over \sqrt{t} } \|\partial_{i} \rho (0) \|_{L^\infty} + \int_{0}^t {1 \over \sqrt{ t- \tau} }
       \big( \| \mathcal{H}\|_{1,\infty} + \|\partial_{i} w \cdot \nabla \rho \|_{L^\infty} + \|R^1\|_{L^\infty}\big)\Big).$$
      Next, we can use \eqref{hetab1} and \eqref{Rest} to get that
      $$ \| R^1\|_{L^\infty} \leq \Lambda(M) \big( \|\rho\|_{1, \infty} + \|\rho\|_{4}\big).$$
       This yields
       $$ \sqrt{\eps} \| \partial_{z} \partial_{i} \rho(t) \|_{L^\infty}
      \leq  \Lambda(M)\Big(  {1 \over \sqrt{t} } \|\partial_{i} \rho (0) \|_{L^\infty} + \int_{0}^t {1 \over \sqrt{ t- \tau} }
       \big( \| \mathcal{H}\|_{1,\infty} +  \|\rho\|_{1, \infty} + \|\rho\|_{4}\big)\Big).$$
     The estimate of $\| \partial_{z} \big( Z_{3} \rho\big)\|_{L^\infty}$, we directly  use the Duhamel formula \eqref{Duh}
      and we use the second estimate given in Lemma \ref{FP1}.
    This ends the proof of Lemma \ref{lemFP1bis}. It only remains to prove Lemma \ref{FP1}:
     \subsubsection*{Proof of Lemma \ref{FP}}
     We can use  again  that the solution of the equation is given by the representation \eqref{tildeg}, \eqref{duhamel2}
      and it suffices to notice that  the kernel $k$ has the property
      $$ | \partial_{z} k | \leq { \Lambda(M) \over \sqrt{\eps(t- \tau)}}\, | k| $$
       and  the result follows from standard convolution estimates. 
  \subsection{Proof of Proposition \ref{Korn}}
  \label{sectionKorn}
   Thanks to Lemma \ref{mingrad}, the estimate  \eqref{estKorn}  for $v$ is actually equivalent to the standard Korn inequality
   in  $\Omega_{t}$  for $ u= v  \circ \Phi^{-1}$. For the sake of completeness, we shall 
   sketch the argument.
   
    We first note that
  $$ \int_{\mathcal{S}} |S^\varphi v|^2 \, d\V  \geq c_{0} \|S^\varphi v\|_{L^2(\mathcal{S})}^2.$$
  Next, we shall reduce the problem to the classical Korn inequality in $\mathcal{S}$ for an auxiliary
    vector field. Let us set
   $$   w_{i }= v_{i}+ \partial_{i}\varphi  \, v_{3},\, i= 1, \, 2,  \quad w_{3}=\partial_{z}\varphi v_{3}.$$
   We note that for $ 1 \leq i , \, j \leq 2$, we have
   \begin{align*}
   2 (Sw)_{ij}=  & \D_{i}v_{j} + \D_{j} v_{i} + \partial_{j}\varphi\big( \D_{z}v_{i} + \D_{i} v_{3}\big)
    +   \partial_{i}\varphi\big( \D_{z}v_{j} + \D_{j} v_{3}\big)  \\
   &  +  2 \partial_{i} \varphi \partial_{j}   \varphi \, \D_{z} v_{3}  +2 \partial^2_{ij}\varphi \, v_{3}
    \end{align*}
    and hence that
    $$ (Sw)_{ij}= (S^\varphi v)_{ij}+ \partial_{j}\varphi (S^\varphi v)_{i,3}+ \partial_{j}\varphi(S^\varphi v)_{j,3}
    + \partial_{i}\varphi \partial_{j}\varphi S^\varphi(v)_{3, 3}+  \partial^2_{ij} \varphi \, v_{3}.$$
  In a similar way, we have that
  \begin{eqnarray*} (Sw)_{i,3} & = &  \partial_{z}\varphi \, (S^\varphi v)_{i,3} + \partial_{i}\varphi \, \partial_{z}\varphi
   (S^\varphi v)_{3, 3}+ \partial^2_{i,3} \varphi\,  v_{3}, \quad i= 1, \, 2, \\
    (Sw)_{3, 3}& = & ( \partial_{z}\varphi)^2 (S^\varphi v)_{3, 3}+ \partial^2_{zz} \varphi\, v_{3}.
    \end{eqnarray*}
   This yields 
    \beq
    \label{korn1}
     \| S w \|_{L^2(\mathcal{S})} \leq \Lambda_{0}\big( \|S^\varphi v \|_{L^2(\mathcal{S})} +  \|v\|_{L^2(\mathcal{S})} \big).
     \eeq
         Next,  the Korn inequality in $\mathcal{S}$ for $w$,  yields for some $C>0$: 
     $$ \|\nabla w \|_{L^2(\mathcal{S})} \leq  C\big(   \| S  w \|_{L^2(\mathcal{S})} + \|w\|_{L^2(\mathcal{S})}.$$  
     Consequently, we obtain that
     $$  \|\nabla w \|_{L^2(\mathcal{S})} \leq \Lambda_{0}\big( \|S^\varphi v \|_{L^2(\mathcal{S})} +  \|v\|_{L^2(\mathcal{S})}\big) + C \| w \|_{L^2(\mathcal{S})}.$$
   Moreover,   since $\partial_{z} \varphi \geq \eta$, we have  from the definition of $w$ that 
     $$ \|w \|_{L^2(\mathcal{S})} \leq \Lambda_{0} \| v\|_{L^2(\mathcal{S})}, \quad \|\nabla v \|_{L^2(\mathcal{S})}
      \leq \Lambda_{0}\big( \|\nabla w \|_{L^2(\mathcal{S})}  + \|v \|_{L^2(\mathcal{S})} \big),$$
       the result follows by combining these inequalities.
      
      Finally, let us recall the proof of \eqref{korn1}.
      We can define an extension of $w$ in $\mathbb{R}^3$ by $\tilde{w}= w, \, z<0$ and
      $$  \tilde{w}_{i}(y,z)= 2 w_{i}(y, -z) - w_{i}(y,-3z), \quad \tilde{w}_{3}(y,z)= -2w_{3} (y, -z) + 3u_{3}(y, -z), \quad z>0.$$
       Since, 
      $$ \|S \tilde w \|_{L^2(\mathbb{R}^3)} \leq  5 \| S w \|_{L^2(\mathcal{S})},$$
 In $\mathbb{R}^3$, we obviously have that 
 $$ \|S \tilde w \|_{L^2(\mathbb{R}^3)}^2= {1 \over 2 } \| \nabla \tilde{w}\|_{L^2(\mathbb{R}^3)}^2 + {1 \over 2}
  \| \nabla \cdot \tilde{w} \|_{L^2(\mathbb{R}^3)}^2  \geq  {1 \over 2 } \| \nabla \tilde{w}\|_{L^2(\mathbb{R}^3)}^2$$
  and  the conclusion follows  from the remark that
  $$ \|\nabla w \|_{L^2(\mathcal{S})} \leq  \| \nabla \tilde w \|_{L^2(\mathbb{R}^3)}.$$
  
  \subsection{Proof of Lemma \ref{hardybis}}
  \label{prooflemmahardy}
  Let us start with the first inequality.  For any test function $f$, we  get  by an integration by parts
  $$ \int_{-\infty}^0 {1 \over z(1-z)} f \partial_{z}f\, dz={1 \over 2} \int_{-\infty}^0 { 1 - 2z \over z^2 (1-z)^2} |f|^2\, dz \geq {1 \over 2}
   \int_{-\infty}^0 { 1 \over z^2 (1-z)^2} |f|^2\, dz$$
    and we get the result from the Cauchy-Schwarz inequality.
    
    For the second inequality, we just write
    $$ \int_{-\infty}^0\big( {1 -z \over z}\big)^ 2 f(z)^2 \, dz \leq 4  \int_{-1}^0   {1 \over z^2}  f(z)^2 \, + 4  \int_{-\infty}^{- 1}  |f(z)|^2\, dz$$
     and we use the standard Hardy inequality on $(-1, 0)$ to estimate the first term.
     
  \section{Results on paradifferential calculus}
  \label{sectionparadiff}
  In this section, we  shall state  the results  on paradifferential calculus with parabolic homogeneity that we need for our estimates 
   of section \ref{sectionnorm2}. We shall  first  define a calculus as in
 the Appendix B of  \cite{Metivier-Zumbrun}  without the semiclassical parameter
    $\eps$.
  In a second step, we shall deduce the result for  the  "partially" semiclassical
       version (that is to say with weight $\langle \zeta^\eps \rangle= \big( \gamma^2 + \tau^2 + \eps^2 | \xi |^4 \big)^{1 \over 4}$) that we need.
       
     We first define operators acting on functions defined on $\mathbb{R}^3_{(t,y)}$. The calculus for functions defined on $\mathbb{R}_{t}\times
\mathcal{S}$ will immediately follow since the $z$ variable  will only be  a parameter. 

For notational convenience, we use in this section the notation $X=(t,y)=(x_{0}, x_{1}, x_{2})$ for the "space" variable and $(\tau, \xi)$
 for the corresponding Fourier  variables, we also use the notation $\zeta=(\gamma, \tau, \xi)= (\gamma, \eta)\in \mathbb{R}^4_{+}= [1, + \infty[ \times \mathbb{R}^3$
  where $\gamma \geq 1$ will be  a  parameter. We define the weight:
 $$ \langle \zeta \rangle=\big(\gamma^2 + \tau^2 + |\xi|^4\big)^{1\over 4}$$
 with $|\cdot|$ the Euclidian norm of $\mathbb{R}^2$.
 Note that this weight corresponds to the quasihomogeneous weight with $p=2$, $p_{0}= 1$, $p_{1}= p_{2}= 2$ in the general framework
  of \cite{Metivier-Zumbrun}. We also point out that since we shall only consider  
 $\gamma \geq 1$, we do  not need to  make any difference
    between $\langle \zeta\rangle$ and $\Lambda(\zeta)= \big( 1+ \langle \zeta \rangle^{2p})^{1 \over 2p}$ in the notation of \cite{Metivier-Zumbrun}.
    By using this weight, we define a scale of modified Sobolev type spaces by
    $$ \mathcal{H}^{s, \gamma}(\mathbb{R}^3)= \{ u \in \mathcal{S}'(\mathbb{R}^3), \quad \|u\|_{\mathcal H^{s, \gamma}}^2 <+\infty\}$$
     where
     $$ \|u\|_{\mathcal{H}^{s, \gamma}}^2 = \int_{\mathbb{R}^3} \langle \zeta \rangle^{2s} |\hat u (\tau, \xi)|^2\, d\tau d\xi,$$
     $\hat u$ being the Fourier transform of $u$.
     
     Next, we define our class of symbols:
     \begin{defi}
     For $\mu \in \mathbb{R}$
     \begin{itemize}
     \item  the class $\Gamma^\mu_{0}$  denotes the space of locally bounded matrices  $a(X, \zeta)$ on $\mathbb{R}^3\times \mathbb{R}^4_{+}$
      which are $\mathcal{C}^\infty$ with respect to  $(\tau, \xi)= \eta$ and such that for all $\alpha \in \mathbb{N}^d$, there is a constant
       $C_{\alpha}$ such that 
       $$ \forall (X, \zeta), \quad | \partial_{\eta}^\alpha a(X, \zeta)|=  C_{\alpha} \langle \zeta \rangle^{\mu - \langle \alpha \rangle}$$
       where $\langle \alpha \rangle =  2 \alpha_{0}+ \alpha_{1}+ \alpha_{2}$.
       \item $\Gamma^\mu_{1}$ denotes the set of symbols $a \in \Gamma^\mu_{0}$ such that $\partial_{i}a \in \Gamma^{\mu}_{0}$,  for $ 
        i=0, \, 1, \, 2.$ 
     \end{itemize}  
     \end{defi}
     The spaces $\Gamma_{0}^\mu$ are equipped with the seminorms
     $$ \| a \|_{(\mu, N)}=  \sup_{\langle \alpha \rangle \leq N} \sup_{\mathbb{R}^3 \times \mathbb{R}^4_{+}} \langle \zeta \rangle^{\langle \alpha \rangle - \mu} |\partial_{\eta}^\alpha a(X, \zeta)|.$$
     
    Paradiffential operators are defined as  pseudodifferential operators associated to  a suitable regularization of   the above symbols.
    For a symbol $a$  in the above class, we define a smooth symbol $\sigma_{a}( X, \zeta)$ defined by 
    $$ \mathcal{F}_{X}\sigma_{a}(\hat X, \zeta)= \psi(\hat X, \zeta) \mathcal{F}_{X}a(\hat X, \zeta)$$
    where $\mathcal{F}_{X}$ stands for the partial Fourier transform with respect to the $X$ variable and $\psi$ is an admissible cut-off
     function. We say that  a smooth function $\psi$ is admissible if  there exists $0<\delta_{1}<\delta_{2}<1$ such that
     $$ \psi(\hat X, \zeta)= 1,\mbox{ for } \langle \hat X \rangle \leq \delta_{1} \langle \zeta\rangle, \quad
       \psi(\hat X, \zeta)= 0,\mbox{ for } \langle \hat X \rangle \geq \delta_{2} \langle \zeta\rangle$$
       where $\langle \hat X \rangle= \big( \gamma^2 + \hat{X}_{0}^2 + |(\hat{X}_{1}, \hat{X}_{2})|^4 \big)^{1\over 4}$ and  that
        for every $\alpha, \, \beta \in \mathbb{N}^d$, there exists $C_{\alpha, \beta}$ such that
        $$ \forall \hat X,\, \zeta, \quad  |\partial_{\eta}^\alpha \partial_{\hat X}^\beta \psi(\hat X, \zeta)| \leq C_{\alpha, \beta} \langle \zeta\rangle^{-
        \langle \alpha \rangle - \langle \beta \rangle}.$$
        
       For a given admissible cut-off function,  we then define an operator associated to the symbol $a$,  $T_{a}^\gamma$ by 
        $$ T_{a}^\gamma u= Op(\sigma_{a}) u$$
        where $Op(\sigma_{a})$ is the pseudodifferential operator  defined by
        \beq
        \label{defpseudo} Op(\sigma_{a}) u(X) = (2 \pi)^{-3} \int_{\mathbb{R}^3} e^{i X \cdot \eta}  \sigma_{a}(X, \zeta) \hat{u}(\eta) \, d\eta.
        \eeq
        
        In this definition, the operator $T_{a}^\gamma$  depends on the choice of the admissible cut-off function.  Nevertheless, it can be
         shown that the difference between two operators defined with the same symbol but with two different admissible cut-off function
          is a lower order operator. We refer to \cite{Metivier-Zumbrun} for more details.
     Note that  viewing  $a(X) \in L^\infty$ as a symbol in $\Gamma^0_{0}$,  the operator $T_{a}^\gamma$ can be related to Bony's
      paraproduct (\cite{Bony}).
      
      The main interest of this class of operators is that it enjoys a  nice symbolic calculus.
      \begin{theoreme}
      \label{symbolic}
      We have the following results:
      \begin{enumerate}
      \item {\bf Boundedness} For  $a \in \Gamma^\mu_{0}$, and every $s\in \mathbb{R}$,  there exists $C>0$ (which depends only on semi norms of $a$) such that for every $\gamma \geq 1$
       and $u \in \mathcal H^{s+\mu, \gamma}$, we have
       $$ \|T_{a}^\gamma u\|_{\mathcal H^{s, \gamma}} \leq C \| u \|_{\mathcal{H}^{s+\mu, \gamma}}.$$
       \item {\bf Product} For $a \in \Gamma^\mu_{1}$, $b \in \Gamma^{\mu'}_{1}$ and $s \in \mathbb{R}$, we have
       $$ \|T_{a}^\gamma T_{b}^\gamma u - T_{ab}^\gamma u \|_{\mathcal H^{s, \gamma}} \leq
        C\Big( \|a\|_{(\mu, N)}\, \|\nabla_{X}b\|_{(\mu', N)} + \|\nabla_{X} a \|_{(\mu, N)} \|b\|_{(\mu', N)}\Big) \|u\|_{\mathcal{H}^{s+\mu+\mu'-1, \gamma}}$$
        where $C$, $\mu$ and $N$ depend only on $s$, $\mu$, $\mu'$.
        \item {\bf Adjoint} For $a \in \Gamma^\mu_{1}$ (recall that we allow $a$ to be matrix valued), denote by $(T_{a}^\gamma)^*$ the adjoint of $a$ 
         and  $a^*(X, \zeta)$ the adjoint matrix of $a$, then we have
         $$ \|(T_{a}^\gamma)^* - T_{a^*}^\gamma\|_{\mathcal{H}^{s, \gamma}} \leq C \|\nabla_{X}a\|_{(\mu, N)} \, \|u\|_{\mathcal H^{s+\mu-1, \gamma}}$$
         where $C$ and $N$ only depend on $\mu$ and $s$.
         \item {\bf Garding inequality} For $a \in \Gamma_{1}^\mu$, assume that there exists $c>0$
           such that
          $$ \forall X, \, \zeta, \quad    \mbox{Re }a(X, \zeta) \geq c  \langle \zeta \rangle^\mu.$$
           Then  there exists $C>0$ (depending only on $\|a,  \nabla_{X} a  \|_{(\mu, N)}$)  such that for $\gamma \geq C$, we have
         $$  {c \over 2} \| u\|_{\mathcal H^{{\mu \over 2}, \gamma}}^2 \leq \mbox{Re } \big(T_{a}^\gamma  u, 
           u \big)_{L^2}$$
         \item {\bf Paraproduct} If $a(X) \in L^\infty$, $\nabla_{X}a \in L^\infty$, there exists $C>0$ such that for every $u$, we have the estimates
         \begin{eqnarray*}
          \| au - T^\gamma_{a} u \|_{\mathcal H^{1, \gamma}} \leq C \|\nabla_{X} a \|_{L^\infty} \|u\|_{\mathcal{H}^{0, \gamma}}, \\
          \gamma  \| au - T_{a}^\gamma u \|_{\mathcal{H}^{0, \gamma}} \leq C \| \nabla_{X} a \|_{L^\infty} \| u\|_{\mathcal{H}^{1, \gamma}}, \\
           \| a \partial_{j}u - T_{a}^\gamma\partial_{j}u \|_{\mathcal H^{0, \gamma}} \leq C \| \nabla_{X} a \|_{L^\infty} \|u\|_{\mathcal{H}^{  {2 \over p_{j} } - 1, \gamma}}
           \end{eqnarray*}
           where $\partial_{0}= \partial_{t}$, $p_{0}=1$  and $p_{j}= 2$, $j=1, \, 2$. 
      \end{enumerate}
      \end{theoreme}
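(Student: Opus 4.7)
My plan is to deduce Theorem \ref{symbolic} entirely from the general quasi-homogeneous paradifferential calculus developed by M\'etivier and Zumbrun, specialized to the parabolic weight $p=2$, $p_0=1$, $p_1=p_2=2$ that matches $\langle \zeta\rangle=(\gamma^2+\tau^2+|\xi|^4)^{1/4}$. The first step is thus bookkeeping: verify that our classes $\Gamma^\mu_0$, $\Gamma^\mu_1$, the spaces $\mathcal{H}^{s,\gamma}$, and the admissible cut-off $\psi(\hat X,\zeta)$ coincide with the objects of Appendix~B of \cite{Metivier-Zumbrun} under this choice of weights. Once this identification is made, one needs to check that the Littlewood--Paley decomposition
naturally attached to $\langle\zeta\rangle$ is well adapted: the dyadic shells $\{2^k\le \langle\zeta\rangle<2^{k+1}\}$ have anisotropic size $2^{2k}$ in $\tau$ and $2^k$ in $\xi$, and the cut-off $\psi$ localizes $\sigma_a(\cdot,\zeta)$ at frequency $\lesssim \delta_2\langle\zeta\rangle$, so the spectral localization lemma and Bernstein inequalities hold with the correct weighted exponents.

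With this groundwork in place, assertion~(1) is the standard continuity of the pseudodifferential quantization \eqref{defpseudo} on the scale $\mathcal{H}^{s,\gamma}$, reduced via the identity $T_a^\gamma=\Lambda^{s}\circ(\Lambda^{-s}T_a^\gamma \Lambda^{-s-\mu})\circ \Lambda^{s+\mu}$ (where $\Lambda$ denotes the Fourier multiplier by $\langle\zeta\rangle$) to the $L^2\to L^2$ case, which follows from the Calder\'on--Vaillancourt argument applied to $\Lambda^{-s}\sigma_{a}\Lambda^{-s-\mu}\in \Gamma^0_0$. For (2) and (3) I would expand $\sigma_a\sharp\sigma_b-\sigma_{ab}$ and $\sigma_a^*-\sigma_{a^*}$ via the usual oscillatory integral representation; the admissibility of $\psi$ forces the remainder to be a symbol of order $\mu+\mu'-1$ (resp.\ $\mu-1$) with seminorms controlled by $\|\nabla_X a\|_{(\mu,N)}$ and $\|\nabla_X b\|_{(\mu',N)}$, by repeating one integration by parts in $\hat X$ to gain one factor of $\langle\zeta\rangle^{-1}$. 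The anisotropy only influences the bookkeeping of $\langle\alpha\rangle=2\alpha_0+\alpha_1+\alpha_2$.

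The Garding inequality (4) is obtained by the standard symmetrization trick: set $b(X,\zeta)=(\mathrm{Re}\,a(X,\zeta)-\tfrac{c}{2}\langle\zeta\rangle^\mu)^{1/2}\in\Gamma^{\mu/2}_1$ (which is smooth because $\mathrm{Re}\,a\ge c\langle\zeta\rangle^\mu$), use (2) to write $(T_b^\gamma)^* T_b^\gamma=T_{|b|^2}^\gamma+R$ with $R$ of order $\mu-1$, and then absorb $R$ by taking $\gamma$ large, since $\|Ru\|_{\mathcal H^{-\mu/2,\gamma}}\le C\gamma^{-1}\|u\|_{\mathcal H^{\mu/2,\gamma}}$. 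Combined with (3), this yields the claim with the explicit threshold on $\gamma$ determined by the product seminorms of $\nabla_X a$.

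The hardest part, in my view, will be (5): these are the low-regularity paraproduct estimates which we actually use the most in Section \ref{sectionnorm2}. The natural approach is to decompose $a=\sum_k S_k a$ dyadically (with $S_k$ the spectral projector on $\langle\zeta\rangle\le 2^k$) and write $au-T_a^\gamma u=\sum_k (S_k a)(\Delta_k u)+\text{remainder}$; the key observation is that $(a-T_a^\gamma)u$ is spectrally supported in $\{\langle\zeta\rangle\gtrsim \langle\hat X\rangle_a\}$, so that each dyadic block gains one factor of $\langle\zeta\rangle^{-1}$ compared to the naive product estimate, which is exactly what the three inequalities require once one converts between $\|\nabla_X a\|_{L^\infty}$ and the corresponding seminorm bound. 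The third estimate, which concerns $a\partial_j u-T_a^\gamma \partial_j u$, is the most delicate because of the anisotropic weight $2/p_j$; I would handle it by writing $a\partial_j u=T_a^\gamma\partial_j u+T_{\partial_j u}^\gamma a+R(a,\partial_j u)$ (Bony's decomposition) and then estimating the symmetric paraproduct and the resonant term by the Bernstein-type inequality attached to the weight $\langle\zeta\rangle$, which gives a gain of $\langle\zeta\rangle^{-2/p_j}$ on the low-high interaction involving $\partial_j$.
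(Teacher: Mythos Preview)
Your approach is the same as the paper's: the paper does not prove Theorem~\ref{symbolic} at all but simply writes ``For the proof of these results, we refer to \cite{Metivier-Zumbrun} Appendix~B,'' after noting that the weight $\langle\zeta\rangle=(\gamma^2+\tau^2+|\xi|^4)^{1/4}$ is the case $p=2$, $p_0=1$, $p_1=p_2=2$ of their general quasi-homogeneous calculus. Your proposal is a correct and more explicit sketch of what that citation actually contains (Calder\'on--Vaillancourt for (1), oscillatory-integral remainders for (2)--(3), the square-root symmetrization for (4), and Bony's paraproduct decomposition for (5)), so there is nothing to correct; you have simply unpacked the reference rather than invoking it.
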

  For the proof of these results, we refer to \cite{Metivier-Zumbrun} Appendix B. 
  The meaning of the assumption in the Garding inequality is that $\mbox{Re} \,a : = {1 \over 2}(a+ a^*)$ for a matrix is bounded from
   below on the support of $w$.
  
  When dealing with paraproducts,   it is usefull to recall that
   thanks to the definition of the operators, we have when $a = a(X)$ that 
   $$ \partial_{j}\big( T^\gamma_{a} u \big) = T^\gamma_{a} \partial_{j} u + T^\gamma _{\partial_{j} a } u.$$
   Another usefull remark about products is that if $a= a(X, \zeta)$ but $b=b(\zeta)$ does not depend on X, then we have
   \beq
   \label{remarkproduit}
    T_{ab}^\gamma = T_{a}^\gamma T_{b}^\gamma.
   \eeq
   
   To conclude the description of the calculus, let us notice that if $a(X, z, \zeta)$ and $u(X,z)$ depends also on $z \in (-\infty,0) $ which plays
    the role  of a parameter, all the results
    of Theorem \ref{symbolic} remain true by  adding a supremum in $z$ in  the definition of the  $L^\infty$ norms and by defining
     the $\|\cdot\|_{\mathcal{H}^{s, \gamma}}$ norm of  a function defined on $\mathbb{R}\times \mathcal{S}$ by 
     $$ \|u\|_{\mathcal{H}^{s, \gamma}}^2 = \int_{-\infty}^0 \int_{\mathbb{R}^3} \langle \zeta \rangle^{2s}| \mathcal{F}_{t,y}u(\tau, \xi, z)|^2 \, d\tau
      d\xi dz.$$
      
    Let us turn to the semiclassical version of the calculus that we need. We  shall  use the semiclassical type  weight
     $\langle \zeta^\eps \rangle= \big( \gamma^2 + \tau^2 +| \sqrt{\eps} \xi|^4 \big)^{1\over 4}$
     where  $\eps \in (0,1)$.
      We thus define a new weighted norm as
    $$ \|u \|_{\mathcal{H}^{s, \gamma, \eps}}^2 = \int_{\mathbb{R}^3} \langle \zeta^\eps \rangle^{2s} |\hat u (\tau, \xi)|^2\, d\tau d\xi,$$ 
     For pseudodifferential operators, the semiclassical quantization corresponding to \eqref{defpseudo} for a symbol $\sigma(X, \zeta)$ is 
 $$ Op^\eps(\sigma) u(X) = (2 \pi)^{-3} \int_{\mathbb{R}^3} e^{i X \cdot \eta}  \sigma(X, \gamma, \tau, \sqrt{\eps}\, \xi) \hat{u}(\eta) \, d\eta, \quad
  \eta=(\tau, \xi)$$
 Note that this definition is different from the quantization used in \cite{Metivier-Zumbrun}, this is due to the fact that
  we shall study a problem of purely parabolic nature.
   Let us define the "scaling map" $H^\eps$ by
   $$ H^\eps u= \sqrt{\eps} u (t, \sqrt{\eps}y).$$
   Then, we note that
   $$   Op^\eps(\sigma) = H_{\eps}^{-1} Op(\sigma^\eps) H^\eps $$
   where $\sigma^\eps$ is given by $\sigma^\eps(X, \zeta)= \sigma(t, \sqrt{\eps} y, \zeta)$.
    This motivates the following definition: for a symbol $a(X, \zeta) \in \Gamma^\mu_{0}$, we  define a semiclassical paradifferential calculus by 
    $$ T^{\eps, \gamma}_{a} u = H_{\eps}^{-1} T^\gamma_{a^\eps} H_{\eps} u$$
    with $a^\eps(X, \zeta)= a(t, \sqrt{\eps} y, \zeta)$.
    
    Next, we observe that 
   $$ \|H^\eps u \|_{\mathcal H^{s, \gamma}}= \|u\|_{\mathcal{H}^{s, \gamma, \eps}}$$
  and that  if $a$ is  in $\Gamma^\mu_{0}$, then the family $(a^\eps)_{\eps \in (0, 1)}$ is uniformly bounded in $\Gamma^\mu_{0}$.
   Moreover, if $a \in \Gamma^\mu_{1}$, then  $a^\eps$ and $\nabla_{X} a^\eps$ are uniformly bounded in $\Gamma^\mu_{0}$.
    This allows to get from Theorem \ref{symbolic} the following properties of the semiclassical calculus:
     \begin{theoreme}
      \label{symbolic2}
      We have the following results:
      \begin{enumerate}
      \item {\bf Boundedness} For  $a \in \Gamma^\mu_{0}$, and every $s\in \mathbb{R}$,  we have
       $$ \|T_{a}^{\eps, \gamma} u\|_{\mathcal H^{s, \gamma, \eps}} \leq C \| u \|_{\mathcal{H}^{s+\mu, \gamma, \eps}}.$$
       \item {\bf Product} For $a \in \Gamma^\mu_{1}$, $b \in \Gamma^{\mu'}_{1}$ and $s \in \mathbb{R}$, we have
       $$ \|T_{a}^{\eps, \gamma} T_{b}^{\eps, \gamma} u - T_{ab}^{\eps, \gamma} u \|_{\mathcal H^{s, \gamma, \eps}} \leq
        C \|u\|_{\mathcal{H}^{s+\mu+\mu'-1, \gamma, \eps}}$$
        \item {\bf Adjoint} For $a \in \Gamma^\mu_{1}$,  we have
         $$ \|(T_{a}^{\eps, \gamma})^* - T_{a^*}^{\eps, \gamma}\|_{\mathcal{H}^{s, \gamma, \eps}} \leq C \|u\|_{\mathcal H^{s+\mu-1, \gamma, \eps}}$$
         \item {\bf Garding inequality}  For $a \in \Gamma_{1}^\mu$,  assume that there exists $c>0$
          such that 
          $$ \forall X, \, \zeta, \quad    \mbox{Re }a(X, \zeta) \geq c  \langle \zeta \rangle^\mu.$$
           Then  there exists $C>0$   such that for $\gamma \geq C$, we have
         $$  {c \over 2} \| T_{w}^{\eps, \gamma}  u\|_{\mathcal H^{{\mu \over 2}, \gamma, \eps}}^2 \leq \mbox{Re } \big(T_{a}^{\eps, \gamma}  u, 
           u \big)_{L^2}.$$
         \item {\bf Paraproduct} If $a(X) \in L^\infty$, $\nabla_{X}a \in L^\infty$,  then  we have 
         \begin{eqnarray*}
          \| au - T^{\eps, \gamma}_{a} u \|_{\mathcal H^{1, \gamma, \eps}} \leq C  \|u\|_{\mathcal{H}^{0, \gamma, \eps}}, \\
          \gamma  \| au - T_{a}^{\eps, \gamma} u \|_{\mathcal{H}^{0, \gamma}} \leq C\| u\|_{\mathcal{H}^{1, \gamma, \eps}}, \\
           \| a  \sqrt{\eps}^{\alpha_{j}}\partial_{j}u - T_{a}^{\eps, \gamma}\sqrt{\eps}^{\alpha_{j}}\partial_{j}u \|_{\mathcal H^{0, \gamma, \eps}} \leq C \|u\|_{\mathcal{H}^{{2 \over p_{j}}- 1 , \gamma, \eps}}
           \end{eqnarray*}

           where $\partial_{0}= \partial_{t}$, $p_{0}=1$  and $p_{j}= 2$, $j=1, \, 2$ and $\alpha_{0}= 0$, $\alpha_{j}= 1$, $j \geq 1$. 
      \end{enumerate}
     As before,  in all the above estimates $C$ only depends on seminorms of the symbols and  in particular, it is independent of $\eps$ for $\eps \in (0,1)$.
      \end{theoreme}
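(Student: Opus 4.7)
The strategy is to transfer each statement from Theorem \ref{symbolic} by conjugation with the scaling operator $H^\eps u(t,y) = \sqrt{\eps}\,u(t,\sqrt{\eps}\,y)$. The key algebraic identity, already recorded in the excerpt, is
$$T^{\eps,\gamma}_a = (H^\eps)^{-1}\, T^\gamma_{a^\eps}\, H^\eps, \qquad a^\eps(X,\zeta) = a(t,\sqrt{\eps}y,\zeta),$$
together with the fact that $H^\eps$ is unitary on $L^2$ (so $(H^\eps)^* = (H^\eps)^{-1}$) and satisfies $\|H^\eps u\|_{\mathcal{H}^{s,\gamma}} = \|u\|_{\mathcal{H}^{s,\gamma,\eps}}$, which is merely the change of variable $\xi \mapsto \sqrt{\eps}\xi$ in the Fourier integral combined with $\langle \zeta\rangle(\tau,\sqrt{\eps}\xi) = \langle \zeta^\eps\rangle(\tau,\xi)$. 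The transfer rests on the uniform seminorm control already pointed out before the statement: for $a\in\Gamma^\mu_0$ the family $(a^\eps)_\eps$ is bounded in $\Gamma^\mu_0$ (the $\eps$ enters only through the base point $\sqrt{\eps}y$, and the $\eta$-derivatives are untouched), and for $a\in\Gamma^\mu_1$ both $(a^\eps)_\eps$ and $(\nabla_X a^\eps)_\eps$ are bounded in $\Gamma^\mu_0$, because $\partial_t a^\eps = (\partial_t a)^\eps$ and $\partial_{y_j}a^\eps = \sqrt{\eps}\,(\partial_{y_j}a)^\eps$, so no blow-up in $\eps$ occurs.

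With this setup, (1)--(3) are immediate. For (1) one writes $\|T^{\eps,\gamma}_a u\|_{\mathcal{H}^{s,\gamma,\eps}} = \|T^\gamma_{a^\eps}(H^\eps u)\|_{\mathcal{H}^{s,\gamma}}$ and applies Theorem~\ref{symbolic}(1) to $a^\eps$; the constants depend only on uniform seminorms of $(a^\eps)_\eps$, hence on seminorms of $a$. The product law (2) follows from $(ab)^\eps = a^\eps b^\eps$ and the composition $T^\gamma_{a^\eps}T^\gamma_{b^\eps} - T^\gamma_{a^\eps b^\eps}$ being controlled by Theorem~\ref{symbolic}(2); similarly (3) uses $(a^\eps)^* = (a^*)^\eps$ and the fact that $(T^{\eps,\gamma}_a)^* = (H^\eps)^{-1}(T^\gamma_{a^\eps})^* H^\eps$ thanks to the unitarity of $H^\eps$. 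The Garding inequality (4) passes in the same way: the pointwise hypothesis on $a$ is identical to the one on $a^\eps$ (the weight $\langle\zeta\rangle$ does not see the $X$-scaling), so Theorem~\ref{symbolic}(4) applied to $a^\eps$ on $H^\eps u$ gives the bound, and the $\mathcal{H}^{\mu/2,\gamma}$-norm of $H^\eps u$ is precisely the $\mathcal{H}^{\mu/2,\gamma,\eps}$-norm of $u$.

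For (5) one uses the intertwining $H^\eps(au) = a^\eps\cdot (H^\eps u)$, so that
$$au - T^{\eps,\gamma}_a u = (H^\eps)^{-1}\bigl(a^\eps v - T^\gamma_{a^\eps} v\bigr), \qquad v = H^\eps u,$$
and one invokes Theorem~\ref{symbolic}(5) with symbol $a^\eps$. The constants involve $\|\nabla_X a^\eps\|_{L^\infty} = \max(\|\partial_t a\|_{L^\infty}, \sqrt{\eps}\|\nabla_y a\|_{L^\infty}) \leq \|\nabla_X a\|_{L^\infty}$, so no loss in $\eps$ occurs. The third estimate of (5), which features $\sqrt{\eps}^{\alpha_j}\partial_j$, matches the scaling exactly: for $j\geq 1$ one has $H^\eps(\sqrt{\eps}\partial_{y_j}u) = \partial_{y_j}(H^\eps u)$, so the semiclassically-weighted spatial derivative on the $\eps$-side corresponds under $H^\eps$ to the bare spatial derivative on the standard side, and the parabolic homogeneity ${2/p_j}$ is preserved verbatim.

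No step of the argument is deep; the theorem is essentially an exercise in conjugation. The only points demanding any attention are the scaling bookkeeping for $\nabla_y a^\eps$ (carrying the harmless $\sqrt{\eps}$-gain) and the verification that the Fourier-multiplier weight $\langle\zeta\rangle \circ (\tau,\sqrt{\eps}\xi) = \langle\zeta^\eps\rangle$ is preserved by the change of variable underlying $H^\eps$. Once these are checked, the semiclassical calculus is reduced entirely to its standard counterpart of \cite{Metivier-Zumbrun}.
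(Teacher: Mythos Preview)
Your proposal is correct and follows exactly the approach the paper takes: the paper's entire argument is the observation, stated immediately before Theorem~\ref{symbolic2}, that the conjugation identity $T^{\eps,\gamma}_a = (H^\eps)^{-1} T^\gamma_{a^\eps} H^\eps$, the isometry $\|H^\eps u\|_{\mathcal{H}^{s,\gamma}} = \|u\|_{\mathcal{H}^{s,\gamma,\eps}}$, and the uniform boundedness of $(a^\eps)_\eps$ and $(\nabla_X a^\eps)_\eps$ in $\Gamma^\mu_0$ reduce everything to Theorem~\ref{symbolic}. You have simply supplied the details the paper leaves implicit.
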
 
   Note that in the above result, due to the anisotropy in our definition of the semiclassical calculus we do not gain powers of  $\eps$ in 
    the  product and adjoint estimates.
    
Finally, if  the symbols and $u$ depend on the parameter $z$ all the above results remain true  as before with the definition
$$ \|u\|_{\mathcal{H}^{s, \gamma, \eps}}^2 = \int_{-\infty}^0 \int_{\mathbb{R}^3} \langle \zeta^\eps \rangle^{2s}| \mathcal{F}_{t,y}u(\tau, \xi, z)|^2 \, d\tau
      d\xi dz.$$

  \vspace{0.5cm}      

\subsection*{Acknowledgements}
N. M.  was partially supported by an NSF-DMS grant  0703145
 and F. R.  by the ANR project ANR-08-JCJC-0104-01.
This work was carried out during visits to the University  of 
 Rennes 1 and the Courant Institute.
The hospitality of these institutions is highly acknowledged.

\end{document}